\documentclass[12pt]{amsart}

\usepackage{graphicx,amsthm}
\usepackage{enumerate}
\usepackage{empheq}
\usepackage{amssymb}
\usepackage[margin=1in]{geometry}
\usepackage{pst-all}
\usepackage{array}
\usepackage{amsmath,amscd,amsfonts,bbm,mathabx,graphicx}
\allowdisplaybreaks
\usepackage[onehalfspacing]{setspace}

\usepackage[all]{xy} 

\usepackage{caption}
\usepackage{subcaption}

\usepackage{enumitem}

\usepackage[mathscr]{euscript}
\usepackage{booktabs} 
\usepackage{amssymb}
\usepackage{hyperref}

\usepackage{tikz}
\usepackage{tikz-cd}
\usetikzlibrary{arrows.meta,positioning}

\definecolor{darkblue}{RGB}{0,65,120}
\definecolor{specialred}{RGB}{190,20,70}
\definecolor{grayedge}{RGB}{90,90,90}
\definecolor{lightpink}{RGB}{250,235,245}

\newtheorem{theorem}{Theorem}[section]
\newtheorem{lemma}[theorem]{Lemma}
\newtheorem{proposition}[theorem]{Proposition}
\newtheorem{corollary}[theorem]{Corollary}
\newtheorem{conjecture}[theorem]{Conjecture}

\theoremstyle{definition}
\newtheorem{definition}[theorem]{Definition}
\newtheorem{example}[theorem]{Example}

\newtheorem{remark}[theorem]{Remark}

\numberwithin{equation}{section}

\newtheorem{thmy}{Theorem}
\newenvironment{thmx}{\stepcounter{theorem}\begin{thmy}}{\end{thmy}}

\newcommand{\C}{\mathbb{C}}

\DeclareMathOperator{\GL}{GL}

\newcommand{\flag}[1]{\mathcal{F}\ell(\mathbb{C}^{#1})} 

\newcommand{\swh}[1]{\sigma_{#1,h}}
\DeclareMathOperator{\codim}{\codim}

\newcommand{\hpat}[1]{{#1}_h} 
\DeclareMathOperator{\diag}{diag} 
\DeclareMathOperator{\Stab}{Stab} 
\DeclareMathOperator{\supp}{supp} 
\DeclareMathOperator{\Hess}{Hess} 
\DeclareMathOperator{\asc}{asc} 
\DeclareMathOperator{\ch}{ch} 

\allowdisplaybreaks

\newcommand{\cblock}[1]{\colorbox{gray!20}{$#1$}}

\newcommand{\solidbar}{%
  \,\tikz[baseline=-0.5ex]{
    \draw[line width=0.4pt] (0,-1.5ex) -- (0,1.5ex);
  }\,%
}

\newcommand{\dashbar}{%
  \,\tikz[baseline=-0.5ex]{
    \draw[dashed, line width=0.4pt] (0,-1.5ex) -- (0,1.5ex);
  }\,%
}
\newcommand{\mystr}{\rule[-6pt]{0pt}{20pt}}

\newcommand{\bb}{\mathbf{b}}

\newcommand\set[1]{\{ #1 \}}

\newcommand{\Sn}[1]{\mathfrak{S}_{#1}} 
\newcommand{\Ahat}[1]{ \widehat{\mathcal A}_{s_{#1},w} }
\newcommand{\Atil}[1]{ \widetilde{\mathcal A}_{s_{#1},w} }
\newcommand{\Gh}{\mathcal{G}_{h}} 
\newcommand{\lh}[1]{\ell_{h}(#1)} 
\newcommand\dra{\dashrightarrow}
\newcommand\ra{\rightarrow}

\newcommand\da{\downarrow}

\newcommand{\bx}{{\mathbf x}}
\newcommand{\up}{\!\!\uparrow}

\definecolor{cadmiumgreen}{rgb}{0.0, 0.42, 0.24}

\newcommand{\x}{\mathbf{x}}

\begin{document}

\title[Hessenberg varieties associated with lollipop graphs]{Permutation module decomposition of the cohomology of Hessenberg varieties associated with lollipop graphs}

\author{Soojin Cho}
\address{Department of Mathematics, Ajou University, Suwon  16499, Republic of Korea}
\email{chosj@ajou.ac.kr}

\author{Seonjeong Park}
\address{Department of Mathematics Education, Jeonju University, Jeonju 55069, Republic of Korea}
\email{seonjeongpark@jj.ac.kr}

\thanks{This work was supported by the National Research Foundation of Korea [NRF-2020R1A2C1A01011045]. S.P. was also supported by the National Research Foundation of Korea(NRF) grant funded
by the Korea government(MSIT) (RS-2026-25490927).
}

\begin{abstract} 
We study the cohomology of regular semisimple Hessenberg varieties associated with lollipop graphs as a module under the dot action. Using the natural basis introduced by Cho, Hong, and Lee, which we call the CHL basis, we establish structural properties of the dot action, including a result for classes satisfying \(i\)-decomposability. We also obtain an explicit elementary symmetric function expansion of the chromatic quasisymmetric functions of lollipop graphs in terms of \(h\)-admissible permutations and their associated partitions. Combining these geometric and combinatorial results, we construct a permutation module decomposition of the cohomology of the corresponding Hessenberg varieties, thereby proving a conjecture of Cho, Hong, and Lee for lollipop graphs.
\end{abstract}

\keywords{chromatic quasisymmetric function, Hessenberg variety, representation of the symmetric group, permutation module decomposition, lollipop graph}

\subjclass[2010]{Primary 14M15, 05E14; Secondary 14L30, 05E05} 
\maketitle

\setcounter{tocdepth}{1}

\section{Introduction } \label{sec:intro}

Chromatic quasisymmetric functions, introduced by Shareshian and Wachs in \cite{SW}, refine Stanley's chromatic symmetric functions (\cite{S1}):  For a graph $G=(V, E)$ with $V=[k]$,
\[ X_{G}({\bf x};q)= \sum_\kappa  \,q^{\asc(\kappa)}{\bf x}^\kappa\,, \]
where the sum is over all proper colorings $\kappa\colon \,V \rightarrow \mathbb{P}$ of $G$ with colors from positive integers,  ${\bf x}^\kappa=x_{\kappa(1)}\cdots x_{\kappa(k)}$, and $\asc(\kappa)=|\{ i<j \mid \{i,j\}\in E, \kappa(i)<\kappa(j)\}|$.

The Stanley--Stembridge conjecture asserts the \(e\)-positivity of the chromatic symmetric functions of incomparability graphs of \((3+1)\)-free posets. This conjecture has recently
been proved by Hikita (\cite{Hik}), while its refinement for chromatic quasisymmetric functions is still open. 

A \emph{Hessenberg function} is a nondecreasing function $h\colon [k] \rightarrow [k]$ satisfying $h(i)\geq i$ for all $i\in [k]$. The graph associated with $h$ is the graph $G_h=(V,E)$ defined by $V=[k]$ and $E=\{\{i,j\}\mid i<j\leq h(i)\}$. 
We remark that if the underlying graph $G$ is $G_h$ for a  Hessenberg function $h$, then  $X_{G}({\bf x};q)$ is a \emph{symmetric} function with coefficients in $\mathbb{Z}[q]$; see~\cite{SW}.  
The space of symmetric functions of degree $k$ has well known bases indexed by the integer partitions $\lambda$ of $k$ including the  \emph{elementary symmetric functions} $\{e_\lambda\}$ and the \emph{complete homogeneous symmetric functions} $\{h_\lambda\}$.
\begin{conjecture}[Refined Stanley--Stembridge Conjecture \cite{SW}]\label{conj:refinedSS}  Let $h\colon [k] \rightarrow [k]$ be a Hessenberg function, and let $G_h$ be the graph associated with $h$. Then  the chromatic quasisymmetric function $X_{G_h}({\bf x};q)$ of $G_h$ 
is $e$-positive; that is,
$$X_{G_h}({\bf x}; q)=\sum_{\lambda\vdash k} c_\lambda(q) e_\lambda({\bf x})\,,$$
where $c_\lambda(q)$ is a polynomial in $q$ with nonnegative coefficients for each $\lambda$.
\end{conjecture}

A regular semisimple Hessenberg variety $\Hess(S, h)$ is a subvariety of the full flag variety $\flag{k}$, determined by a Hessenberg function $h\colon [k] \rightarrow [k]$ and a regular semisimple operator~$S$. The cohomology space $H^{2d}(\Hess(S, h))$ carries a $\mathbb{C}\Sn{k}$-module structure due to Tymoczko's dot action, where $\Sn{k}$ is the \emph{symmetric group} on $[k]$. 

A remarkable relation between chromatic quasisymmetric functions and the cohomology of Hessenberg varieties is stated in the following theorem.
\begin{theorem}[\cite{BC, G-P, KL}]\label{thm:rel} Let $h\colon [k] \rightarrow [k]$ be a Hessenberg function and $S$ be a regular semisimple operator on $\mathbb{C}^k$. Then
$$\omega X_{G_h}({\bf x};q)= \sum_d \operatorname{ch}\left( H^{2d}(\Hess(S, h))\right) q^d\,,$$ where $\omega$ is the involution on the algebra of symmetric functions sending $e_\lambda$ to the complete homogeneous symmetric function $h_\lambda$ and $\operatorname{ch}$ is the Frobenius characteristic map. 
\end{theorem}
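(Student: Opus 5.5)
Theorem~\ref{thm:rel} is the Shareshian--Wachs conjecture, proved independently by Brosnan--Chow and Guay-Paquet, and the cited Kiem--Lee treatment follows the same pattern. I would follow the strategy of Brosnan and Chow: compare two computations of the same graded family of representations, one via the local invariant cycle theorem and one via explicit geometry of regular (non-semisimple) Hessenberg varieties. The argument proceeds by induction on the Jordan type of a degenerating operator.

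First I would set up the family. Let $\mathcal{X}\to \mathfrak{g}_{\mathrm{reg}}$ be the family of Hessenberg varieties $\Hess(X,h)$ over regular elements $X$ of $\mathfrak{gl}_k$; it is projective, and smooth over the regular semisimple locus. The monodromy of the local systems $R^{2d}\pi_*\mathbb{C}$ over the regular semisimple locus factors through $\Sn{k}$ and recovers Tymoczko's dot action. One checks this via the GKM description of $H^*(\Hess(S,h))$ as tuples of polynomials on $\Sn{k}$, using the $T$-fixed points $w\in\Sn{k}$ and edges $w\to (i\,j)w$ for $\{i,j\}\in E(G_h)$.

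Next I would apply the decomposition theorem and the local invariant cycle theorem, using the fact that the family is smooth over all regular elements, which is the key geometric input. For a regular element $X$ with semisimple part of Jordan type $\mu$, these give that $H^*(\Hess(X,h))$ equals the invariants $H^*(\Hess(S,h))^{\Sn{\mu}}$ under the Young subgroup. Hence $\dim H^{2d}(\Hess(X,h))=\langle \ch H^{2d}(\Hess(S,h)),h_\mu\rangle$.

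Then I would compute the left side combinatorially. A regular $X$ with eigenvalue multiplicities $\mu$ has a torus action whose fixed points are paved by affine cells. Counting these cells by dimension, as Tymoczko did for regular nilpotent Hessenberg varieties and extended through the product of blocks, gives the Poincaré polynomial $\sum_d \dim H^{2d}(\Hess(X,h))q^d$. This polynomial should equal the $q$-count of colorings of $G_h$ in which each color class has the sizes prescribed by $\mu$ (ascent statistic). That count is precisely $\langle \omega X_{G_h}({\bf x};q),h_\mu\rangle$, the coefficient of the monomial $m_\mu$ in $X_{G_h}$ after applying $\omega$ duality. Equivalently, it is the coefficient extracted by pairing with $h_\mu$ after converting $X_{G_h}$ via $\omega$ into the dual basis setup.

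Since the $h_\mu$ span, the pairings against all $h_\mu$ determine the symmetric function, and the theorem follows. The main obstacle is the cell-counting step: matching the dimension of each affine cell in $\Hess(X,h)$, for $X$ regular with repeated eigenvalues, with the ascent statistic on colorings of type $\mu$. I would handle this first for $X$ regular nilpotent, where the count is the $q$-analogue of Shareshian--Wachs giving $\langle\omega X,h_{(k)}\rangle$. I would then reduce the general regular $X$ to products of nilpotent blocks, using a Białynicki-Birula decomposition with respect to a torus centralizing the semisimple part.
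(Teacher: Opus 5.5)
The paper does not prove this statement; it quotes it from [BC, G-P, KL]. Your outline follows the Brosnan--Chow route, but two steps do not work as written.

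\textbf{Step 2 gives only a surjection.} The local invariant cycle theorem does not give ``$H^*(\Hess(X,h))$ equals the invariants''. It gives only a surjection $H^{2d}(\Hess(X,h))\twoheadrightarrow H^{2d}(\Hess(S,h))^{\Sn{\mu}}$. It also requires the \emph{total space} of the family over $\mathfrak{g}_{\mathrm{reg}}$ to be smooth. The morphism itself is in general not smooth at non-semisimple regular $X$, since the total Betti number drops there.

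The decomposition theorem identifies the invariants with the contribution of the full-support summands of $R\pi_*\mathbb{C}$. Nothing in your sketch excludes summands supported on the discriminant, and such summands do occur in general: a smooth conic degenerating to a pair of lines inside a smooth total space already produces one. Without excluding them you only get the inequalities $\langle \ch H^{2d}(\Hess(S,h)),h_\mu\rangle\le\dim H^{2d}(\Hess(X,h))$, and these do not determine the character.

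Brosnan--Chow supply the missing input through the palindromicity of the Betti numbers of regular Hessenberg varieties. By relative hard Lefschetz, a summand supported on a stratum of codimension $c\ge 1$ contributes to $H^*(\Hess(X,h))$ a piece symmetric about some degree $\ge d_h+c$. The invariant part is symmetric about $d_h$, so palindromicity forces those summands to vanish. Some argument of this kind has to be added.

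\textbf{Step 3 aims at the wrong combinatorial quantity.} The $q$-count, by ascents, of proper colorings of $G_h$ with color classes of sizes $\mu$ is the coefficient of $m_\mu$ in $X_{G_h}$. That is $\langle X_{G_h},h_\mu\rangle$, not $\langle\omega X_{G_h},h_\mu\rangle$, and it is not the Poincar\'e polynomial of $\Hess(X,h)$.

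Already for $k=2$, $h=(2,2)$, $\mu=(2)$ the two differ. Here $X_{G_h}=(1+q)e_2$, and the regular nilpotent Hessenberg variety is $\flag{2}\cong\mathbb{P}^1$, with Poincar\'e polynomial $1+q=\langle\omega X_{G_h},h_2\rangle$. There is no proper coloring with one color, so your count is $0=\langle X_{G_h},h_2\rangle$.

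The identity you actually need is
\[
\sum_d\dim H^{2d}(\Hess(X,h))\,q^d=\langle\omega X_{G_h},h_\mu\rangle=\langle X_{G_h},e_\mu\rangle .
\]
This is the $m_\mu$-coefficient of $\omega X_{G_h}$; for $\mu=(k)$ it is the sum of the $e$-coefficients of $X_{G_h}$. The identity is true, but it is a different combinatorial statement from the one you set up. So the cell-by-cell matching with ascents of colorings of type $\mu$, which you single out as the main step, cannot succeed. It must be replaced by a combinatorial model for this coefficient.

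Also, $\Hess(X,h)$ can be singular for non-semisimple regular $X$. The Bia{\l}ynicki-Birula step therefore has to be run in the ambient flag variety, as in the known affine pavings, not on $\Hess(X,h)$ itself.
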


For a partition $\lambda=(\lambda_1, \dots, \lambda_\ell)$ of $k$, let  $\Sn{\lambda}=\Sn{\{1, \dots, \lambda_1\}}\times \cdots \times \Sn{\{\lambda_1+\cdots+\lambda_{\ell-1}+1, \dots, k \}}$ be the corresponding Young subgroup of $\Sn{k}$. 
The \emph{permutation module} $M^\lambda$ is the induced representation
\(
M^\lambda=\operatorname{Ind}_{\Sn{\lambda}}^{\Sn{k}}\mathbf{1},
\) of the trivial representation $\mathbf{1}$ of $\Sn{\lambda}$
and satisfies $\ch(M^\lambda)=h_\lambda({\bf x})$.
Hence, Theorem~\ref{thm:rel} allows us to restate Conjecture~\ref{conj:refinedSS} as follows.
\begin{conjecture}[\cite{SW}]\label{conj:refinedSS'}  Let $h\colon [k] \rightarrow [k]$ be a Hessenberg function. Then for each $d$, the $\mathbb{C}\Sn{k}$-module $H^{2d}(\Hess(S, h))$ is a direct sum of permutation modules $M^\lambda$.
\end{conjecture}

In \cite{CHL1,CHL2}, Cho, Hong, and Lee constructed a natural basis
$\{\sigma_{w,h}\mid w\in \Sn{k}\}$ for $H^*(\Hess(S,h))$ and studied its properties.
In particular, they identified a distinguished set
$\{\sigma_{w,h}\mid w\in \Gh^d\}$ of module generators for $H^{2d}(\Hess(S,h))$, 
where 
\[ \Gh^d\coloneqq \left\{ w\in \Sn{k}\,|\, \ell_h(w)=d, w^{-1}(w(j)+1)\leq h(j) \text{ for all }j\text{ with } w(j)\in [k-1]\right\}\,,\] 
and
\(\ell_h(w)\coloneqq |\{ (i, j)\,|\, i<j \leq h(i)  \text{ and } w(i)>w(j) \}|\,\) is the \emph{$h$-length} of $w$.
They formulated the following conjecture, which is related to the refined Stanley--Stembridge conjecture:
\begin{conjecture}[Conjecture 1.3 in \cite{CHL2}]\label{conj:CHL}
For each $w\in  \Gh^d$, there exists $\widehat{\sigma}_{w,h}\in \mathbb C\Sn{k}(\sigma_{w,h})$ such that $\mathbb C\Sn{k}(\widehat{\sigma}_{w,h})$ is a permutation module and 
$$ H^{2d}(\Hess(S, h))=\bigoplus_{w\in \Gh^d } \mathbb C\Sn{k}(\widehat{\sigma}_{w,h})\,.$$
\end{conjecture} 

Conjecture~\ref{conj:CHL} was proved for the permutohedral variety in \cite{CHL1}, and for $H^{2}(\Hess(S, h))$ with all Hessenberg functions $h$ in \cite{CHL2}. 
One of the main results of this paper is to prove Conjecture~\ref{conj:CHL} 
for the Hessenberg function 
$h=h_{m,n}\colon [k]\to [k]$, where $n=k-m$, defined by
\[
h_{m,n}(i)=
\begin{cases}
m, & 1\leq i\leq m-1,\\
\min(i+1,k), & m\leq i\leq k.
\end{cases}
\]
Then the associated graph $G_h$ is a \emph{lollipop graph}, obtained by joining a complete graph on $m$ vertices to a path graph by an edge. 
It is known that the chromatic quasisymmetric functions associated with lollipop graphs expand as a positive sum of elementary symmetric function basis, but no explicit expansion formula was known.

To prove Conjecture~\ref{conj:CHL}, it is essential to understand the $\Sn{k}$-action, or dot action, on the classes $\sigma_{w,h}$ for $w\in\Gh$; see Section~\ref{subsec:dot action}. 
When $h=h_{m,n}$ corresponds to a lollipop graph, many permutations $w\in \Gh$ have a special property; $w$ \emph{admits an $i$-block decomposition} for some $i$, that is, 
\[
\{w(j) \mid j=1, \dots, i\}=\{k-i+1,\ldots,k\}
\quad\text{and}\quad
\{w(j) \mid j=i+1\ldots, k\}=\{1,\ldots,k-i\}.
\]
This motivates us to study the dot actions on $\sigma_{w,h}$ when $w$ admits an $i$-block decomposition. Our first  main theorem is the following.

\begin{thmx}[Theorem~\ref{thm:separation}]\label{thmx:2} Suppose that $w\in\Sn{k}$ admits an $i$-block decomposition. Let 
\[^{\da i}w =(w(1)-(k-i))\,\, \cdots\,\, (w(i)-(k-i))\in \Sn{i} \text{ \,\,and \,\,  }
w^{\da i} = w(i+1)\, \cdots \, w(k)\in\Sn{k-i}\]
be two permutations given in one-line notation. For a Hessenberg function $h\colon [k] \to  [k]$, define Hessenberg functions $^{\da i}h$ on $[i]$ and $h^{\da i}$ on $[k-i]$ by
\[^{\da i}h(j)=\min(h(j), i) \,\,\,\text{ for \,\,} j=1, \dots, i \,, 
\text{ \,\,and \,\,  }
h^{\da i}(j)=h(i+j)-i \,\,\,\text{ for \,\, } j=1, \dots, k-i.  \]
Then $^{\da i}w\in \mathcal{G}_{^{\da i}h}$ and $w^{\da i}\in \mathcal{G}_{h^{\da i}}$. Moreover, if $j\neq k-i$, then   $s_j$ acts on $\sigma_{w,h}$ blockwise.
\end{thmx}

Our second main theorem gives an explicit \(e\)-positive expansion of
\(X_{L_{m,n}}({\bf x};q)\), described in terms of \(h_{m,n}\)-admissible permutations and their associated partitions.

\begin{thmx}[Theorem~\ref{thm:Acyclic}]\label{thmx:1} Let $h=h_{m,n}$. If we define a map from $\Gh$ to the set of partitions of $k$ as in Definition~\ref{def:lambda}, then 
\[  X_{G_h}({\bf x};q)=\sum_{w\in \Gh} q^{\ell_h(w)} e_{\lambda(w)}({\bf x})\,. \]
Equivalently, for each $d$, we have  
 \[H^{2d}(\Hess(S, h))=\bigoplus_{w\in \Gh^d}\ M^{\lambda(w)}\,.\] 
\end{thmx}

We then construct a permutation module decomposition of the cohomology space of the corresponding Hessenberg variety, thereby proving that  Conjecture~\ref{conj:CHL} holds for lollipop graphs. 
\begin{thmx}[Theorem~\ref{thm:main}]\label{thmx:3}
Let $h=h_{m,n}$ be a Hessenberg function corresponding to a lollipop graph on $k$ vertices.  Then for each $d$, we have
 \[H^{2d}(\Hess(S, h))=\bigoplus_{u\in \Gh^d}\mathbb C\Sn{k}(\widehat{\sigma}_{u,h}) 
 \,.\] 
In particular, $\Stab(\widehat{\sigma}_u)=\Sn{\lambda(u)}$ and $\mathbb C\Sn{k}(\widehat{\sigma}_u)\cong M^{\lambda(u)}$ for every  $u\in \Gh^d$.
\end{thmx}

The present paper is organized as follows. Necessary terminology and background material are introduced in Section~\ref{sec:prelim}.  Section~\ref{sec:dot action} is devoted to investigating the properties of the dot action especially on basis classes admitting block decompositions, and proving Theorem~\ref{thmx:2}. In Section~\ref{sec:A-set}, we continue investigating the properties of the dot action. We then prove our second main theorem on the expansion of the chromatic quasisymmetric functions in Section~\ref{sec:chromatic}.  We use the results from previous sections to construct $\widehat{\sigma}_{w,h}$ for $w\in \Gh^d$ and investigate their properties in Sections~\ref{sec:generators} and ~\ref{sec:new}. In Section~\ref{sec:decomposition} we provide a proof of Theorem~\ref{thmx:3}; that is, we show that $\mathbb{C}\Sn{k}(\widehat{\sigma}_{w,h})$ for $w\in\Gh^d$, form a permutation module decomposition of $H^{2d}(\Hess(S,h))$ when $h$ is the Hessenberg function for the lollipop graph.


\section{Preliminaries}\label{sec:prelim}
 \subsection{Basic terminology and Bruhat order}
For two integers $i<j$, we let $[i, j]\coloneqq \{i, i+1, \dots, j\}$ and $[i]\coloneqq [1, i]=\{1, 2,\dots, i\}$. 
For a set $A$ of integers we use $A\up$ to denote the ordered set $\{a_1<a_2< \cdots <a_m\}$ and for two sets $A$ and $B$ with the same cardinality $m$,  $A\up\leq B\up$ means $a_i\leq b_i$ for all $i=1, \dots, m$. 
  
The \emph{symmetric group on $[k]$} is denoted by $\Sn{k}$ and we usually use one-line notation $$w= w_1\, w_2\,\dots \, w_k= w(1)\, w(2)\,\dots \, w(k)$$ to represent the permutation $w$ where $w_i=w(i)$ is the value of $w$ at $i$. We also use $s_{i,j}$ to denote the transposition of $i$ and $j$, and $s_i$ for the adjacent transposition $s_{i, i+1}$. The \emph{length} $\ell(w)$ of a permutation $w\in \Sn{k}$ is the number of inversions in $w$:
\[\ell(w)\coloneqq |\{ (i, j)\,|\, i<j  \text{ and } w_i>w_j \}|\,.\] 
The \emph{Bruhat order} on $\Sn{k}$ is the transitive closure of the following relations:
\[u< v  \quad \text{ if and only if } \quad  v=us_{i,j} \text{ for some $i, j$  and  } \ell(u)<\ell(v)\,.\]
For $u, v\in \Sn{k}$ with $u\leq v$, we use $[u,v]$ to denote the set $\{w\in \Sn{k} \,|\, u\leq w\leq v\}$.
The unique maximal element in $\Sn{k}$ in the Bruhat order is denoted by $w_0$; it is given by $w_0=k\, (k-1)\, \cdots \, 2\, 1$. 
For a permutation $w\in\Sn{k}$, we define the \emph{left descent set} and the \emph{right descent set} of $w$ by
\[
D_L(w)\coloneqq \{\, i \mid \ell(s_i w)<\ell(w)\,\},
\qquad
D_R(w)\coloneqq \{\, i \mid \ell(w s_i)<\ell(w)\,\},
\]
respectively.
The following proposition is useful for determining the comparability of permutations in the Bruhat order. 

\begin{proposition}[{\cite[Theorem 2.6.4]{BB}}]\label{prop:Bruhat} For two permutations $u$ and $v$ in $\Sn{k}$, the following statements are equivalent:
\begin{enumerate}
\item $u\leq v$ in the Bruhat order.
\item $\{u_1, u_2,\dots, u_i\}\up \leq \{v_1, v_2,\dots, v_i\}\up$ for all $i=1, 2, \dots, k$.
\item $\{u_{k-i}, u_{k-i+1}, \dots , u_k\}\up \geq \{v_{k-i}, v_{k-i+1}, \dots , v_k\}\up$ for all $i=0, 1, \dots, k-1$.
\item  $\{u_1, u_2,\dots, u_i\}\up \leq \{v_1, v_2,\dots, v_i\}\up$ for all $i \in [k-1]-D_R(v)$.
\end{enumerate}
\end{proposition}

\begin{proposition}[Lifting Property, \cite{BB}]\label{prop:lifting}
Suppose that $u<v$ and $i\in D_L(v)-D_L(u)$. Then $u\leq s_i v$ and $s_iu \leq v$.
\end{proposition}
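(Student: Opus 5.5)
We plan to derive the Lifting Property from the subword characterization of Bruhat order: $u\leq v$ if and only if some (equivalently, every) reduced word for $v$ contains a subword that is a reduced word for $u$. This is standard (see \cite{BB}), and we take it as the working description of the order. The hypotheses translate as follows. The condition $i\in D_L(v)$ means $\ell(s_iv)<\ell(v)$, so $v$ has a reduced word beginning with $s_i$, say $v=s_i s_{a_2}\cdots s_{a_q}$, where $s_{a_2}\cdots s_{a_q}$ is a reduced word for $s_iv$. The condition $i\notin D_L(u)$ means $\ell(s_iu)=\ell(u)+1$.

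\textbf{Step 1: $u\leq s_iv$.} Since $u<v$, choose a reduced subword of $s_i s_{a_2}\cdots s_{a_q}$ representing $u$. Suppose this subword used the first letter $s_i$. Then $u=s_i u'$, where $u'$ is a reduced subword of $s_{a_2}\cdots s_{a_q}$ of length $\ell(u)-1$. This would give $\ell(s_iu)=\ell(u')<\ell(u)$, contradicting $i\notin D_L(u)$. Hence the subword lies entirely inside $s_{a_2}\cdots s_{a_q}$, and so $u\leq s_iv$.

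\textbf{Step 2: $s_iu\leq v$.} Let $b_1\cdots b_p$ be the reduced subword for $u$ found in Step 1, sitting inside $s_{a_2}\cdots s_{a_q}$. Prepending the first letter $s_i$ of $v$'s word gives the word $s_i\, b_1\cdots b_p$. This is a subword of $s_i s_{a_2}\cdots s_{a_q}$ and represents $s_iu$. It is reduced because $\ell(s_iu)=\ell(u)+1=p+1$. The subword property then gives $s_iu\leq v$.

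\textbf{Main obstacle.} The only delicate point is the direction of the subword property being used: it requires that \emph{every} reduced word of $v$ contains a reduced subword for $u$, not merely some reduced word. If we instead want to stay within what the paper has stated, we would prove both inequalities with the tableau criterion of Proposition~\ref{prop:Bruhat}(2). Left multiplication by $s_i$ swaps the values $i$ and $i+1$ in one-line notation, and the hypothesis says that $i+1$ precedes $i$ in $v$ but follows $i$ in $u$. We would then compare the sorted prefix sets $\{u_1,\dots,u_j\}\up$ with those of $s_iv$ and $s_iu$. The only prefixes that change are those containing exactly one of $i,i+1$, and for these the replacement shifts a single entry by one. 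A short case analysis then shows that the componentwise inequality survives. We expect this case check to be the main work if we take that route.
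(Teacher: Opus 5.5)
Your argument is correct and is essentially the standard subword-property proof of the lifting property in \cite{BB}; the paper gives no proof of its own and simply cites that source. You take a reduced word for $v$ beginning with $s_i$, observe that a reduced subword for $u$ cannot use the leading $s_i$ because $\ell(s_iu)>\ell(u)$, and then prepend $s_i$ to obtain $s_iu\leq v$ (the tableau-criterion fallback is not needed).
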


 \subsection{Hessenberg varieties} 
We fix a linear (regular semisimple) operator $S$ on $\mathbb{C}^k$ which is represented by a diagonal matrix with distinct diagonal entries. Let $h\colon [k] \rightarrow [k]$ be a \emph{Hessenberg function}, meaning that $h$ is an nondecreasing function satisfying $h(i)\geq i$ for all $i=1, \dots, k$.

Let $\mathfrak{gl}_k(\mathbb{C})$ be the Lie algebra of $\GL_k(\mathbb{C})$. The \emph{Hessenberg space} associated with $h$ is the subspace
\[
H_h
\coloneqq
\{\, X=(x_{ij})\in \mathfrak{gl}_k(\mathbb{C})
\mid
x_{ij}=0 \text{ whenever } i>h(j)\,\}.
\]
The full flag variety $\flag{k}$ can be identified with the homogeneous space $\GL_k(\mathbb{C})/B$, where $B$ is the Borel subgroup of upper triangular matrices in $\GL_k(\mathbb{C})$. Under this identification, the regular semisimple \emph{Hessenberg variety} determined by $S$ and $h$ is defined by
\[
\Hess(S,h)
\coloneqq
\{\, gB\in \GL_k(\mathbb{C})/B
\mid
g^{-1}Sg\in H_h\,\}.
\]
If $gB$ corresponds to the flag
\[
V_\bullet=(\{0\}=V_0\subset V_1\subset \cdots \subset V_k=\mathbb{C}^k),
\]
then the condition $g^{-1}Sg\in H_h$ is equivalent to
\[
S(V_i)\subset V_{h(i)}
\qquad\text{for } i=1,\dots,k.
\]
Note that  $\Hess(S, h)$ is a smooth projective variety of dimension $\displaystyle d_h=\sum_{i=1}^k (h(i)-i)$. 

The \emph{$h$-length} $\ell_h(w)$ of a permutation $w\in \Sn{k}$ is the number of $h$-inversions in $w$:
\[\ell_h(w)\coloneqq |\{ (i, j)\,|\, i<j \leq h(i)  \text{ and } w(i)>w(j) \}|\,.\] 
 The \emph{$h$-Bruhat order} on $\Sn{k}$ is the transitive closure of the following relations:
\[u<_h v  \quad \text{ if and only if } \quad  v=u s_{i,j} \text{ for some $i<j\leq h(i)$  and  } \ell(u)<\ell(v)\,.\]
For $u, v\in \Sn{k}$ with $u\leq_h v$, we use $[u,v]_h$ to denote the set $\{w\in \Sn{k} \,|\, u\leq_h w\leq_h v\}$.

\begin{remark}
\begin{itemize}
\item We usually represent a Hessenberg function $h\colon [k] \rightarrow [k]$ by listing its values:
\[
h=(h(1),h(2),\dots,h(k)).
\] 
\item If $h=(k, k, \dots, k)$, then  $\Hess(S, h)=\flag{k}$, $\ell_h=\ell$, and the $h$-Bruhat order coincides with the Bruhat order. 
\item For a Hessenberg function $h\colon [k] \rightarrow [k]$ and $u, v\in \Sn{k}$, if $u<_h v$ then $u< v$.
\item The relation $u <_h v$ does not imply $\ell_h (u)<\ell_h(v)$. For example, let $h=(3,3,4,4)$, $u=2\,3\,1\,4$ and  $v=2\,3\,4\,1$. Then $u <_h v$ since $v=us_{3, 4}$ and $\ell(u)=2<\ell(v)=3$. However, $\ell_h (u)=2$ and $\ell_h(v)=1$.
\end{itemize}
\end{remark}

\begin{proposition}[Chain Property of the $h$-Bruhat order, \cite{CHP}]\label{prop:h-chain}
    Let $h$ be a Hessenberg function on $[k]$. If $u<_h v$ in $\mathfrak S_k$, then 
    there exists a chain
\[
u=x^{(0)}<_h x^{(1)}<_h \cdots <_h x^{(j)}=v
\]
such that $\ell(x^{(i)})=\ell(u)+i$ for all $0\leq i\leq j$.
\end{proposition}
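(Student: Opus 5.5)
The plan is to reduce to a single defining relation and then refine that relation by induction on the length gap. By definition, $<_h$ is the transitive closure of the relations $u <_h u s_{i,j}$ with $i<j\leq h(i)$ and $\ell(u)<\ell(us_{i,j})$. A chain of such relations can be concatenated into a saturated chain once each individual step is saturated. So it suffices to prove the following. If $v=us_{i,j}$ with $i<j\leq h(i)$ and $\ell(u)<\ell(v)$, then there is a chain from $u$ to $v$ in which every step is an $h$-transposition $s_{a,b}$ with $a<b\leq h(a)$ and the length goes up by exactly one. I will prove this by induction on $\ell(v)-\ell(u)$.

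First I recall the standard formula for multiplying by a transposition. If $u(i)<u(j)$ with $i<j$, then
\[
\ell(us_{i,j})-\ell(u)=1+2\,\bigl|\{p \mid i<p<j,\ u(i)<u(p)<u(j)\}\bigr|.
\]
Note that $\ell(u)<\ell(v)$ forces $u(i)<u(j)$. If no such $p$ exists, the gap is $1$ and the single relation is already a saturated step, so we are done.

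Otherwise, choose $p$ with $i<p<j$ and $u(i)<u(p)<u(j)$. I claim that
\[
v=u\,s_{i,p}\,s_{p,j}\,s_{i,p},
\]
and that each of the three steps is an $h$-relation with strictly increasing length.

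\begin{enumerate}
\item The first step swaps positions $i$ and $p$. This is an $h$-transposition because $p<j\leq h(i)$, and the length increases because $u(i)<u(p)$. The resulting permutation $x$ has entries $u(p),u(i),u(j)$ at positions $i,p,j$.
\item The second step swaps positions $p$ and $j$. This is an $h$-transposition because $h$ is nondecreasing, so $h(p)\geq h(i)\geq j$. The length increases because $x(p)=u(i)<u(j)=x(j)$. The result $y$ has entries $u(p),u(j),u(i)$ at positions $i,p,j$.
\item The third step swaps positions $i$ and $p$ again. This is allowed as in the first step, and the length increases because $y(i)=u(p)<u(j)=y(p)$. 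The result has entries $u(j),u(p),u(i)$ at positions $i,p,j$, which is exactly $v$.
\end{enumerate}

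Each of the three relations has a length gap strictly smaller than $\ell(v)-\ell(u)$. Applying the induction hypothesis to each and concatenating the resulting chains gives the desired chain.

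The only real point requiring care is the second step. There we need $(p,j)$ to be an allowed pair even though $p$ is not the original left endpoint. This is exactly where the monotonicity of the Hessenberg function enters. Everything else is the classical argument for ordinary Bruhat order.
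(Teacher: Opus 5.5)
Your argument is correct and complete. The paper gives no proof of this proposition; it is quoted from \cite{CHP}, so there is no in-paper route to compare against.

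You reduce to a single defining relation $u<_h us_{i,j}$ and induct on the (odd) length gap. For a $p$ with $i<p<j$ and $u(i)<u(p)<u(j)$ you use the factorization $s_{i,j}=s_{i,p}s_{p,j}s_{i,p}$. This is exactly the classical saturation argument for the ordinary Bruhat order.

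You correctly identify the only point that is specific to the $h$-setting. The two outer steps use the pair $(i,p)$, which is admissible since $p<j\le h(i)$. The middle step uses $(p,j)$, which is admissible because $j\le h(i)\le h(p)$ by monotonicity of $h$.

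All three steps strictly increase length, so each has gap at most $\ell(v)-\ell(u)-2$. The induction hypothesis therefore applies to each step, and concatenating the resulting saturated chains gives the required chain. Your proof also makes explicit that nondecreasingness of $h$ is the only property of $h$ needed for the chain property.
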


A useful feature of $\GL_k(\mathbb{C})/B$ is that it admits the
 \emph{Bruhat decomposition}: 
\[\GL_k(\mathbb{C})/B=\bigsqcup_{w \in \mathfrak{S}_k} B^-wB/B\,,\] where $B^-$ is the Borel subgroup of lower triangular matrices in $\GL_k(\mathbb{C})$. We let $\Omega_w^\circ\coloneqq B^-wB/B$ be the \emph{(opposite) Schubert cell}. 
 Then  $\Hess(S, h)$ is decomposed into their  \emph{minus cells} $\Omega_{w,h}^{\circ}=\Omega_w^{\circ}\cap\Hess(S, h)$:
\[\Hess(S, h) = \bigsqcup_{w \in \mathfrak{S}_k}\Omega_{w, h}^\circ\,. \]
Here, we note that each minus cell $\Omega_{w,h}^\circ$ is isomorphic to $\mathbb{C}^{d_h-\ell_h(w)}$.

We now recall the GKM description of the ordinary and equivariant cohomology of the Hessenberg variety $\Hess(S,h)$. Throughout this paper, all cohomology and equivariant cohomology groups are taken with coefficients in $\mathbb{C}$ unless otherwise stated.

\begin{definition}\label{def:GKM graph} For a Hessenberg function $h$ on $[k]$, the associated \emph{GKM graph}  is the edge-labeled graph $\Gamma_h=(V, E, \alpha)$, where $V=\Sn{k}$, $E=\{ \{u, v\}\,|\, v=us_{i,j} \text{ for } i<j\leq h(i)\}$, and the edge labeling $\alpha\colon E \rightarrow \mathbb C [t_1, t_2, \dots, t_k]$ is defined as $\alpha(\{u, v \})=t_{u_i}-t_{u_j}$ assuming that $\ell_h(u)<\ell_h(v)$.
\end{definition}

The graph $\Gamma_h$ encodes the one-dimensional $T$-orbits in $\Hess(S,h)$ and gives the following combinatorial model for the equivariant cohomology ring.

\begin{proposition}\label{prop:Hess} Let $h$ be a Hessenberg function on $[k]$. 
\begin{enumerate} 
\item The torus $T=(\mathbb{C}^*)^k$ acts on the Hessenberg variety $\Hess(S, h)$  by left multiplication. The fixed point set $\Hess(S, h)^T$ is naturally identified with $\Sn{k}$.\footnote{We write $T$ for the algebraic torus $(\mathbb{C}^*)^k$. When it is necessary to emphasize the dimension, we write $T^k$.}

\item $\Hess(S, h)$ is a GKM space with the GKM graph $\Gamma_h=(V, E, \alpha)$. Moreover, its $T$-equivariant cohomology ring is identified with
\[H_T^*(\Gamma_h)\coloneqq\left\{(p(v))_{v\in \Sn{k}} \in \bigoplus_{v \in \Sn{k}} \C[t_1,\dots,t_k] \,\middle|\, \alpha(\{u,v\}) \mid p(u) - p(v) \text{ for all } \{u,v\} \in E   \right\} \] 
as a $\C[t_1,\dots,t_k]$-algebra.
\end{enumerate}
\end{proposition}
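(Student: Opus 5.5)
The plan is to deduce the statement from the general theorem of Goresky--Kottwitz--MacPherson: for a smooth projective $T$-variety $X$ that is equivariantly formal and has finitely many fixed points and finitely many one-dimensional orbits, the restriction map $H_T^*(X)\to H_T^*(X^T)$ is injective. Its image is cut out by the divisibility conditions along the closures of the one-dimensional orbits, each of which is a $\mathbb{P}^1$ joining two fixed points. So I will verify the hypotheses for $X=\Hess(S,h)$ and identify the resulting fixed points, edges and weights with $\Gamma_h$.

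\emph{Fixed points (part (1)).} Since $S$ is diagonal, it commutes with $T$. Hence for $t\in T$ and $gB\in\Hess(S,h)$ we have $(tg)^{-1}S(tg)=g^{-1}Sg\in H_h$, so $\Hess(S,h)$ is $T$-stable. Because $T$ has distinct characters on the coordinate lines, the $T$-fixed flags in $\flag{k}$ are exactly the coordinate flags $wB$ for $w\in\Sn{k}$. Each of them lies in $\Hess(S,h)$: the matrix $w^{-1}Sw$ is diagonal, hence belongs to $H_h$ because $h(j)\ge j$. Therefore $\Hess(S,h)^T=\flag{k}^T\cong\Sn{k}$.

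\emph{Equivariant formality.} $\Hess(S,h)$ is smooth and projective, and it is paved by the affine cells $\Omega^\circ_{w,h}\cong\mathbb{C}^{d_h-\ell_h(w)}$. Hence its odd cohomology vanishes, and the Serre spectral sequence degenerates, so $H_T^*$ is a free $\C[t_1,\dots,t_k]$-module.

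\emph{One-dimensional orbits and weights.} Every $T$-invariant curve in $\Hess(S,h)$ is also a $T$-invariant curve in $\flag{k}$. The latter are known: they are the closures $C_{u,i,j}=\overline{\{u\exp(cE_{ji})B : c\in\C\}}$ for $i<j$, and such a curve joins $u$ and $us_{i,j}$. I will compute directly whether the curve lies in $\Hess(S,h)$. Conjugating $u^{-1}Su=\diag(s_{u_1},\dots,s_{u_k})$ by $\exp(cE_{ji})$ changes only the $(j,i)$ entry, which becomes $c\,(s_{u_j}-s_{u_i})$ up to sign. This entry lies below the diagonal, and it is allowed in $H_h$ exactly when $j\le h(i)$. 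Since the eigenvalues are distinct, the curve lies in $\Hess(S,h)$ if and only if $i<j\le h(i)$, which are precisely the edges $E$ of $\Gamma_h$.

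On this curve $T$ acts on the tangent line at $uB$ through the character $t_{u_j}-t_{u_i}$, that is, $\pm(t_{u_i}-t_{u_j})$. Labels are only relevant up to sign, which matches $\alpha$. At a fixed point $u$, the weights of different edges are $t_{u_i}-t_{u_j}$ for distinct pairs $\{u_i,u_j\}$, so they are pairwise linearly independent. This gives the GKM condition. Moreover, the number of such edges is $\sum_i(h(i)-i)=d_h=\dim\Hess(S,h)$, so these curves span the tangent space at each fixed point and there are no other one-dimensional orbits.

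Applying the GKM theorem then yields part (2). The step needing the most care is the curve computation above, specifically confirming that the $T$-curves through a fixed point contained in $\Hess(S,h)$ are exactly those with $i<j\le h(i)$. I will handle it by the explicit conjugation $\exp(-cE_{ji})u^{-1}Su\exp(cE_{ji})$ together with the dimension count against $d_h$, which rules out additional orbits.
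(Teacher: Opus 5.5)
The paper gives no proof of this proposition. It is recalled as standard background, and the fixed-point tangent weights are taken from De Mari--Procesi--Shayman in the proof of Lemma~\ref{lem:equiv_euler}. Your argument is the standard derivation and is correct: fixed points as the diagonal conjugates $w^{-1}Sw$, equivariant formality from the affine paving, and the identity $(I-cE_{ji})\,u^{-1}Su\,(I+cE_{ji})=u^{-1}Su+c(\lambda_{u(j)}-\lambda_{u(i)})E_{ji}$ singling out the $T$-curves of $\flag{k}$ that lie in $\Hess(S,h)$.

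One minor remark: the closing dimension count against $d_h$ is not needed to rule out further one-dimensional orbits. Your classification of $T$-curves in $\flag{k}$ already does that; the count only confirms that these curves account for the whole tangent space at each fixed point.
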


The ordinary cohomology ring is obtained from the equivariant cohomology ring by setting the equivariant parameters equal to zero.

\begin{proposition} The ordinary cohomology ring $H^*(\Hess(S, h))$ of  $\Hess(S, h)$ is isomorphic to $$H^*(\Gamma_h)\coloneqq H_T^*(\Gamma_h)/(t_1, \dots, t_k),$$ 
where $t_i$ denotes the element of $H_T^*(\Gamma_h)$ whose value at each vertex $v\in\Sn{k}$ is $t_i$, and $(t_1,\dots,t_k)$ is the ideal of $H_T^*(\Gamma_h)$ generated by $t_1,\dots,t_k$.
\end{proposition}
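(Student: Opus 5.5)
The plan is to derive the statement from equivariant formality of $\Hess(S,h)$, combined with the GKM identification in Proposition~\ref{prop:Hess}. The first step is to show that the odd cohomology of $\Hess(S,h)$ vanishes and that $H^*(\Hess(S,h))$ is concentrated in even degrees. For this I use the decomposition $\Hess(S,h)=\bigsqcup_{w}\Omega_{w,h}^\circ$ into minus cells, each isomorphic to an affine space $\mathbb{C}^{d_h-\ell_h(w)}$. These cells are the intersections with the opposite Schubert cells, which are $B^-$-orbits. Ordering the cells compatibly with closure in the flag variety, so that each union of the form $\bigsqcup_{v\geq w}$ is closed, gives an affine paving of the projective variety $\Hess(S,h)$. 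The standard long exact sequence argument for pavings then shows that $H^*(\Hess(S,h))$ is free with one generator in degree $2(d_h-\ell_h(w))$ for each $w$, and all odd cohomology vanishes.

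The second step is to deduce equivariant formality. Consider the Serre spectral sequence of the Borel fibration $\Hess(S,h)\to ET\times_T\Hess(S,h)\to BT$. Since $BT$ is simply connected, the local system is trivial. Both $H^*(BT)=\mathbb{C}[t_1,\dots,t_k]$ and $H^*(\Hess(S,h))$ vanish in odd degrees, so the spectral sequence degenerates at $E_2$. By the Leray--Hirsch theorem, $H_T^*(\Hess(S,h))$ is therefore a free $\mathbb{C}[t_1,\dots,t_k]$-module, and the restriction to a fiber induces an isomorphism
\[
H_T^*(\Hess(S,h))\otimes_{\mathbb{C}[t_1,\dots,t_k]}\mathbb{C}\;\cong\; H^*(\Hess(S,h)),
\]
that is, $H^*(\Hess(S,h))\cong H_T^*(\Hess(S,h))/(t_1,\dots,t_k)H_T^*(\Hess(S,h))$.

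The final step transports this isomorphism through Proposition~\ref{prop:Hess}(2), which identifies $H_T^*(\Hess(S,h))$ with $H_T^*(\Gamma_h)$ as $\mathbb{C}[t_1,\dots,t_k]$-algebras. Under this identification, the module action of $t_i$ corresponds to multiplication by the constant tuple $(t_i)_{v\in\Sn{k}}$, which lies in $H_T^*(\Gamma_h)$ because each GKM divisibility condition is trivially satisfied by a constant tuple. Consequently the submodule $(t_1,\dots,t_k)H_T^*$ coincides with the ideal of $H_T^*(\Gamma_h)$ generated by these constant elements, and the quotient is $H^*(\Gamma_h)$.

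The main point requiring care is the first step, namely that the minus cells form a genuine paving in which the closure of each cell is a union of cells. I would verify this by intersecting the Schubert closure relations with the closed subvariety $\Hess(S,h)$. Alternatively, one can obtain the cells as Białynicki-Birula plus/minus cells for a generic one-parameter subgroup of $T$ acting on the smooth projective $T$-variety $\Hess(S,h)$. In that version, the even-degree vanishing and the freeness both follow directly from the Białynicki-Birula decomposition.
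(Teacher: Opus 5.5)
The paper gives no proof of this proposition. It records it as the standard consequence of equivariant formality (``setting the equivariant parameters equal to zero'') that accompanies the GKM description in Proposition~\ref{prop:Hess}. Your argument is exactly that standard justification, and it is correct. It runs: odd vanishing from the minus-cell decomposition; $E_2$-degeneration of the Serre spectral sequence of $ET\times_T\Hess(S,h)\to BT$, plus Leray--Hirsch, to get $H^*(\Hess(S,h))\cong H_T^*(\Hess(S,h))/(t_1,\dots,t_k)H_T^*(\Hess(S,h))$ as rings; then transport through the $\C[t_1,\dots,t_k]$-algebra isomorphism with $H_T^*(\Gamma_h)$.

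One correction to your last paragraph. The property you single out, that the closure of each minus cell is a union of minus cells, is false in general, and the argument does not use it. Take $k=3$ and $h=(2,3,3)$. Proposition~\ref{prop:support}, with $\widetilde{213}=312$ and $\phi=132$, gives $(\Omega_{213,h})^T=\{213,231\}$. Hence $\Omega_{213,h}=\Omega^\circ_{213,h}\sqcup\{231\}$, whereas $\Omega^\circ_{231,h}\cong\C$. This is the same phenomenon that forces the paper to handle $\Omega_u^\circ\cap\Omega_{v,h}$ through the sets $\mathcal A_{s_i,w}$ and the classes $\tau_u$.

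What intersecting the Schubert closure relations with $\Hess(S,h)$ actually gives is weaker: for every upward-closed subset $U$ of the Bruhat order, the union $\bigsqcup_{v\in U}\Omega^\circ_{v,h}=\bigl(\bigcup_{v\in U}\Omega_v\bigr)\cap\Hess(S,h)$ is closed. List the permutations so that $v$ precedes $u$ whenever $v>u$. The initial segments are then upward closed, and they give a filtration by closed subvarieties whose successive differences are single affine cells. That is all the compactly supported long exact sequence argument requires, and it is also what the filtrable Bia\l{}ynicki-Birula decomposition in your alternative provides.
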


We identify $H_T^*(\Hess(S, h))$ with $H^*_T(\Gamma_h)$ and $H^*(\Hess(S, h))$ with $H^*(\Gamma_h)$. Under these identifications, classes are often regarded as collections
of polynomials indexed by permutations
in $\Sn{k}$.
We define the \emph{support} of an element $\sigma\in H_T^*(\Hess(S, h))$ by
\[ \supp(\sigma)\coloneqq\{ v\in \Sn{k}\,|\, \sigma(v)\neq 0\}\,.\]

\subsection{Hessenberg Schubert varieties}

For each $w\in \Sn{k}$, let $\Omega_{w,h}$ denote the Zariski closure of the minus cell $\Omega_{w,h}^\circ$. We call $\Omega_{w,h}$ the \emph{Hessenberg Schubert variety} indexed by $w$. Since the Hessenberg variety $\Hess(S,h)$ admits a decomposition into minus cells, one obtains a natural basis of $H_T^*(\Hess(S,h))$ from the Hessenberg Schubert varieties, as introduced in~\cite{CHL1}. Each Hessenberg Schubert variety $\Omega_{w,h}$ determines a class $[\Omega_{w,h}]$ in the equivariant Chow ring $A_T^*(\Hess(S,h))$, graded by codimension. Moreover, the equivariant cycle map

\[
cl_{\Hess(S,h)}^T \colon A_T^*(\Hess(S,h))_{\mathbb{Q}} \stackrel{\cong}{\longrightarrow} H_T^{2*}(\Hess(S,h);\mathbb{Q})
\]
is an isomorphism. After extending scalars from $\mathbb{Q}$ to $\mathbb{C}$, we regard this as an isomorphism
\[
cl_{\Hess(S,h)}^T \colon
A_T^*(\Hess(S,h))_{\mathbb{C}}
\stackrel{\cong}{\longrightarrow}
H_T^{2*}(\Hess(S,h)).
\]

\begin{definition}[{\cite[Definition~2.9]{CHL1}}]\label{def:classes}
	Let $\Hess(S, h)$ be a regular semisimple Hessenberg variety.
	For $w \in \mathfrak{S}_k$, we define $\sigma_{w,h}^T \in H^{2 \ell_h(w)}_{T}(\Hess(S,h))$ to be the image of the class 
    \[
        [\Omega_{w,h}] \in A^{\ell_h(w)}_T(\Hess(S, h))_{\mathbb{C}}
    \] 
    under the complexified equivariant cycle map $cl_{\Hess(S,h)}^T$.
    We denote by $\sigma_{w, h}\in H^{2 \ell_h(w)}(\Hess(S, h))$ the image of 
    $\sigma_{w,h}^T$
   in ordinary cohomology.
\end{definition}

\begin{proposition}[\cite{CHL1}]\label{prop:basis}
Let $\Hess(S, h)$ be a regular semisimple Hessenberg variety. Then $\{ \swh{w}^T \,|\, w  \in \mathfrak{S}_k, \ell_h(w)=d\} $ is a basis of $H^{2d}_T(\Hess(S, h))$ and $\{ \swh{w}\,|\, w  \in \mathfrak{S}_k, \ell_h(w)=d\} $ is a basis  of $H^{2d}(\Hess(S, h))$.
\end{proposition}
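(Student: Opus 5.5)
The plan is to combine the affine paving of $\Hess(S,h)$ by minus cells with the GKM description in Proposition~\ref{prop:Hess}. First I will check that the decomposition $\Hess(S,h)=\bigsqcup_{w}\Omega^\circ_{w,h}$ is an honest affine paving. Each $\Omega^\circ_{w,h}\cong\mathbb{C}^{d_h-\ell_h(w)}$ is $T$-stable and contains exactly one fixed point, namely $w$. The cells can be ordered so that each union of an initial segment is closed; for this I would use the Bialynicki-Birula decomposition for a generic one-parameter subgroup, whose attracting sets are exactly the minus cells. The closure of a cell is then a union of cells.

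Standard facts about pavings then give three consequences.
\begin{itemize}
\item All odd cohomology vanishes.
\item $\dim H^{2d}(\Hess(S,h))=\#\{w\in\Sn{k}\mid \ell_h(w)=d\}$, since the cell $\Omega^\circ_{w,h}$ contributes to codimension $\ell_h(w)$.
\item The fundamental classes $[\Omega_{w,h}]$ of the closures span the Chow group $A^*(\Hess(S,h))$ (Fulton, Example~19.1.11). The equivariant analogue holds as well: the classes $[\Omega_{w,h}]$ span $A_T^*(\Hess(S,h))$ as a $\mathbb{C}[t_1,\dots,t_k]$-module.
\end{itemize}

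For the equivariant statement I will prove linear independence via localization. Restricted to fixed points, $\sigma^T_{w,h}(v)=0$ unless $v\in\Omega_{w,h}$. Moreover, $\sigma^T_{w,h}(w)$ is the equivariant Euler class of the normal space to $\Omega^\circ_{w,h}$ at $w$. That normal space has dimension $\ell_h(w)$, and its weights are the labels $\alpha$ of the GKM edges at $w$ not tangent to the cell. So $\sigma^T_{w,h}(w)$ is a nonzero product of $\ell_h(w)$ linear forms.

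Every fixed point $v\neq w$ lying in $\Omega_{w,h}$ lies in a strictly smaller cell in the closure order. Hence, after choosing a linear extension of this order, the matrix $(\sigma^T_{w,h}(v))_{v,w}$ is triangular with nonzero diagonal entries. This gives linear independence over $\mathbb{C}[t_1,\dots,t_k]$.

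To get that the classes form a basis of the free module $H_T^{*}$, I combine independence with spanning from the paving. Alternatively, use equivariant formality (odd vanishing), which makes $H_T^*(\Hess(S,h))\cong H^*(\Hess(S,h))\otimes\mathbb{C}[t_1,\dots,t_k]$ free of the right graded rank. Then spanning follows by the usual triangular induction: subtract $\frac{p(w)}{\sigma^T_{w,h}(w)}\sigma^T_{w,h}$ at a minimal support point $w$ of a class $p$. The divisibility needed here comes from the GKM conditions along the edges leaving $w$.

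The ordinary statement follows by setting $t_i=0$. The images $\sigma_{w,h}$ with $\ell_h(w)=d$ span $H^{2d}$, because $H_T^*\to H^*$ is surjective with kernel $(t_1,\dots,t_k)$. Their number equals $\dim H^{2d}$ by the paving count, so they form a basis.

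I expect the main obstacle to be the spanning/divisibility step in the equivariant setting. One must check that $p(w)$ is divisible by the full Euler class $\sigma^T_{w,h}(w)$ when $w$ is minimal in $\supp(p)$. The GKM conditions only give divisibility by each edge label separately; since these labels are pairwise non-proportional linear forms, divisibility by their product follows from unique factorization. I would try to avoid this step entirely by citing the paving-based spanning result for Chow groups, leaving only the triangularity argument for independence.
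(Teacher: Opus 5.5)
Your argument is sound, and your main route is the standard one behind the paper's citation of \cite{CHL1}; the paper gives no proof of its own. The minus cells form a $T$-stable, filtrable Bia\l{}ynicki-Birula paving, so the classes $[\Omega_{w,h}]$ form a $\mathbb{C}[t_1,\dots,t_k]$-basis of $A_T^*(\Hess(S,h))_{\mathbb{C}}$. The cycle-map isomorphism recorded before Definition~\ref{def:classes} carries this to $H_T^*$, and reducing modulo $(t_1,\dots,t_k)$ together with the cell count gives the ordinary statement. You implicitly use the right reading of the equivariant statement: $\{\sigma^T_{w,h}\}_{w\in\Sn{k}}$ is a module basis, with $\sigma^T_{w,h}$ in degree $2\ell_h(w)$. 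Read literally as a $\mathbb{C}$-basis of $H^{2d}_T$, it would miss classes such as $t_1\sigma^T_{v,h}$ with $\ell_h(v)=d-1$.

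One assertion in your proposal is false and should be removed: the closure of a minus cell is in general \emph{not} a union of minus cells, and this does not follow from filtrability. Take $h=(2,3,3)$. Then $\Omega_{213,h}$ is just the $T$-curve joining $213$ and $231$, so $\Omega_{213,h}\cap\Omega^\circ_{231,h}=\{231\}$, while $\Omega^\circ_{231,h}\cong\mathbb{C}$. This phenomenon is exactly why the sets $\mathcal{A}_{s_i,w}$ and classes $\tau_u$ appear in Proposition~\ref{prop:simple action}. So there is no well-defined ``closure order'' on cells in the sense you use.

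Nothing downstream breaks, however. State triangularity with respect to the Bruhat order: $\Omega_{w,h}\subseteq\Omega_w$ gives $\Omega_{w,h}^T\subseteq[w,w_0]$. Alternatively, use the filtration order, since $\overline{\Omega^\circ_{w,h}}$ lies in the closed union of the cells up to $w$. The paving results you cite need only filtrability.

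The Bruhat choice also makes your GKM spanning step precise. The $\ell_h(w)$ labels whose product is $\sigma^T_{w,h}(w)$ are those of the edges $\{w,ws_{a,b}\}$ with $a<b\le h(a)$ and $w(a)>w(b)$, and all of these go down in Bruhat order. Hence $p$ vanishes at their other endpoints whenever $w$ is Bruhat-minimal in $\supp(p)$. Since these labels are pairwise non-proportional linear forms, unique factorization then gives divisibility of $p(w)$ by their product, as you say.
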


Following~\cite{CHL1}, we introduce a directed graph $G_{w,h}$ associated with $w$ and $h$, which was used to explicitly describe the minus cell $\Omega_{w,h}^\circ$.

\begin{definition}
    Let $h$ be a Hessenberg function, and $w\in \Sn{k}$. We define a directed graph $G_{w,h}$ with the vertex set $[k]$ such that for each pair of indices $1\leq j<i\leq k$, there is an edge $j\to i$ in $G_{w,h}$ if and only if $j<i\leq h(j)$ and $w(j)<w(i)$.
\end{definition}

\begin{remark} If we disregard the direction of each (directed) edge in  $G_{w,h}$, then it is a subgraph of $G_h$. In particular, $G_{e,h}$ becomes $G_{h}$ when we replace each arrow with an edge in $G_{e,h}$.    
\end{remark}

\begin{example}
    Let $h=(3,4,5,5,6,7,7)$. When $w=e$, the set of directed edges of $G_{e,h}$ is:
    $$E(G_{e,h})=\{(1,2),(1,3),(2,3),(2,4),(3,4),(3,5),(4,5),(5,6),(6,7)\},$$ and hence for $w=4\ 6\ 5\ 7\ 1\ 2\ 3$, the set of directed edges of $G_{w,h}$ is:
    $$E(G_{w,h})=\{(1,2),(1,3),(2,4),(3,4),(5,6),(6,7)\}.$$
    See Figure~\ref{fig:Geh-Gwh}.
\end{example}
\begin{figure}[htbp]
\centering

\begin{subfigure}{0.48\textwidth}
\centering
\begin{tikzpicture}[>=stealth, thick, scale=0.8]
  \node[circle, draw, inner sep=1.5pt] (1) at (0,0) {$1$};
  \node[circle, draw, inner sep=1.5pt] (2) at (1.3,0) {$2$};
  \node[circle, draw, inner sep=1.5pt] (3) at (2.6,0) {$3$};
  \node[circle, draw, inner sep=1.5pt] (4) at (3.9,0) {$4$};
  \node[circle, draw, inner sep=1.5pt] (5) at (5.2,0) {$5$};
  \node[circle, draw, inner sep=1.5pt] (6) at (6.5,0) {$6$};
  \node[circle, draw, inner sep=1.5pt] (7) at (7.8,0) {$7$};

  \draw[->] (1) to (2);
  \draw[->] (1) to[bend left=35] (3);
  \draw[->] (2) to (3);
  \draw[->] (2) to[bend left=35] (4);
  \draw[->] (3) to (4);
  \draw[->] (3) to[bend left=35] (5);
  \draw[->] (4) to (5);
  \draw[->] (5) to (6);
  \draw[->] (6) to (7);
\end{tikzpicture}
\caption{$G_{e,h}$}
\end{subfigure}
\hfill
\begin{subfigure}{0.48\textwidth}
\centering
\begin{tikzpicture}[>=stealth, thick, scale=0.8]
  \node[circle, draw, inner sep=1.5pt] (1) at (0,0) {$1$};
  \node[circle, draw, inner sep=1.5pt] (2) at (1.3,0) {$2$};
  \node[circle, draw, inner sep=1.5pt] (3) at (2.6,0) {$3$};
  \node[circle, draw, inner sep=1.5pt] (4) at (3.9,0) {$4$};
  \node[circle, draw, inner sep=1.5pt] (5) at (5.2,0) {$5$};
  \node[circle, draw, inner sep=1.5pt] (6) at (6.5,0) {$6$};
  \node[circle, draw, inner sep=1.5pt] (7) at (7.8,0) {$7$};

  \draw[->] (1) to (2);
  \draw[->] (1) to[bend left=35] (3);
  \draw[->] (2) to[bend left=35] (4);
  \draw[->] (3) to (4);
  \draw[->] (5) to (6);
  \draw[->] (6) to (7);
\end{tikzpicture}
\caption{$G_{w,h}$ for $w=4657123$}
\end{subfigure}

\caption{Directed graphs associated with $h=(3,4,5,5,6,7,7)$.}
\label{fig:Geh-Gwh}
\end{figure}

A vertex $i$ is \emph{reachable} from $j$ in $G_{w,h}$ if there exists a sequence of vertices
$v_0, v_1,\dots, v_k$ such that $j= v_0 \to v_1 \to \dots \to v_{k-1}\to v_k = i$. 
The following corollary is a streamlined version of~\cite[Corollary 3.9]{CHL1}; 
for reachable pairs $(j,i)$, the original statement gives an explicit recursive formula for $x_{i,j}$.

\begin{corollary}\label{cor:reachable_entry}
Let $x=(x_{i,j})$ be a lower triangular matrix with diagonal entries equal to~$1$. Then for each $w\in\Sn{k}$, $wx\in \Omega_{w,h}^\circ$ if and only if
\begin{enumerate}
\item $x_{i,j}=0$ whenever $i$ is not reachable from $j$ in $G_{w,h}$;
\item for each reachable pair $(j,i)$, the entry $x_{i,j}$ is determined recursively.
\end{enumerate}
\end{corollary}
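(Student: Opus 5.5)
This is a streamlined restatement of \cite[Corollary~3.9]{CHL1}, so the proof consists of recalling how the minus cell is parametrized and reading off the two conditions. First I fix the model. Every point of $\Omega_w^\circ=B^-wB/B$ can be written uniquely as $wxB$ with $x$ lower unitriangular and $w^{-1}$-conjugation conditions satisfied, i.e. $x\in U^-\cap w^{-1}U^-w$. Equivalently, $x_{i,j}$ may be nonzero only if $j<i$ and $w(j)<w(i)$. Membership in $\Hess(S,h)$ is the condition $(wx)^{-1}S(wx)=x^{-1}(w^{-1}Sw)x\in H_h$. Here $S'\coloneqq w^{-1}Sw$ is again diagonal with distinct entries, say $S'=\diag(c_1,\dots,c_k)$ with $c_j=s_{w(j)}$.

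The main step is to expand the Hessenberg condition. I would write $Sx$-type relations as $S'x=xY$ with $Y\in H_h$, and compare entries column by column. The $(i,j)$ entry gives
\[
c_i x_{i,j}=\sum_{l} x_{i,l}Y_{l,j}\qquad (Y_{l,j}=0 \text{ if } l>h(j)),
\]
with $Y_{j,j}=c_j$ on the diagonal. For $i>h(j)$ this yields
\[
(c_i-c_j)x_{i,j}=\sum_{j<l\le h(j)} x_{i,l}Y_{l,j},
\]
while entries with $j<i\le h(j)$ of $Y$ are free parameters. Solving inductively on $i-j$ then gives the structure. The free coordinates are exactly $x_{i,j}$ with $j<i\le h(j)$ and $w(j)<w(i)$, namely the edges $j\to i$ of $G_{w,h}$; there are $d_h-\ell_h(w)$ of them, matching $\dim\Omega_{w,h}^\circ$. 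Every other $x_{i,j}$ is a polynomial in earlier entries, obtained as a sum over $l$ with $j\to l$ an edge, times $x_{i,l}$, divided by $c_i-c_j$.

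Induction on $i-j$ then gives the two conditions. If $i$ is not reachable from $j$, then for every edge $j\to l$ the vertex $i$ is not reachable from $l$ either, so by induction $x_{i,l}=0$ and hence $x_{i,j}=0$. This is condition (1). For a reachable pair, $x_{i,j}$ is either a free edge coordinate or is given by the recursive formula, which is condition (2). Conversely, any $x$ satisfying (1) and (2) solves the equations, so $wx\in\Omega_{w,h}^\circ$.

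The main obstacle is checking that the vanishing forced for non-reachable pairs is consistent. The constraints for $i>h(j)$ must not impose extra relations on the free edge variables. I would handle this exactly as in \cite{CHL1}, using the nondecreasing property of $h$ so that the recursion only involves indices $l$ with $l<i$. In practice I would simply cite \cite[Corollary~3.9]{CHL1} for this point and record the reachability consequence above.
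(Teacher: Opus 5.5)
The paper gives no argument for this statement; it imports it as a streamlined form of \cite[Corollary~3.9]{CHL1}. Your proposal is correct, and unlike the paper it actually unpacks that citation. You parametrize $\Omega_w^\circ$ by $x\in U^-\cap w^{-1}U^-w$, rewrite the Hessenberg condition as $S'x=xY$ with $Y\in H_h$, and induct on $i-j$. For the ``only if'' direction with an arbitrary lower unitriangular $x$, it is worth saying explicitly that $x\mapsto wxB$ is injective on $U^-$ because $U^-\cap B=\{1\}$; this is what forces $x\in w^{-1}U^-w$.

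You leave two things unproved: the claim that the recursive sum for $x_{i,j}$ ($i>h(j)$) runs only over edges $j\to l$, and the consistency point you defer to \cite{CHL1}. Both follow from one observation. $Y=x^{-1}S'x$ is lower triangular and lies in $w^{-1}B^-w$, so $Y_{l,j}\neq 0$ with $l\neq j$ forces $l>j$ and $w(l)>w(j)$. Now take a non-reachable pair with $i>h(j)$. Both sides of $(c_i-c_j)x_{i,j}=\sum_{j\to l}x_{i,l}Y_{l,j}$ vanish by induction, since $i$ is not reachable from any such $l$. Hence no relation is imposed on the edge variables. Monotonicity of $h$ is not what is used here; it only matters for $\Hess(S,h)$ to be well defined on $\GL_k(\C)/B$.

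Two cosmetic fixes. For $j<i\le h(j)$, the equation defines $Y_{i,j}$ in terms of $x$; it is not a free parameter. And write the eigenvalues as $\lambda_{w(j)}$, not $s_{w(j)}$, which clashes with the simple reflections.
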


In~\cite{CHL1}, the support of $\sigma_{w,h}$ was described in terms of the reachability condition in the directed graph $G_{w,h}$. We now introduce the notion of an $h$-admissible permutation, which allows one to express that description more simply.\footnote{Although the term ``$h$-admissible'' does not appear in~\cite{CHL1}, the notation $\mathcal{G}_h$ was already used there for the set of $h$-admissible permutations.}

\begin{definition}
Let \( h \) be a Hessenberg function on \( [k] \).
\begin{enumerate}
\item A permutation $w\in \Sn{k}$ is called \emph{$h$-admissible} if
\[
w^{-1}(w(j)+1)\le h(j)\qquad\text{for all }j\text{ with } w(j)\in [k-1].
\]
We denote by $\Gh$ the set of all $h$-admissible permutations. We decompose $\Gh$ according to the $h$-length:
\[
\Gh=\bigsqcup_d \Gh^d,
\qquad
\Gh^d\coloneqq \{\,w\in \Gh\mid \ell_h(w)=d\,\}.
\]
\item For an $h$-admissible permutation $w\in \Gh$, we define 
$P(w)$ to be the set of permutations whose relative order agrees with that of $w$ along every edge of $G_h$; that is,
\[ P(w)\coloneqq \{v\in \Sn{k}\,|\, v_i<v_j \iff w_i<w_j \text{ for all } i<j\leq h(i) \} \,. \]
\end{enumerate}
\end{definition}

\begin{proposition}\cite{CHL2}\label{prop:support}
Let $\Hess(S, h)$ be a regular semisimple Hessenberg variety. 
\begin{enumerate}
\item For each $v\in\Sn{k}$, there is a unique permutation $\widetilde{v}\coloneqq w\in  \Gh$ such that $v\in P(w)$; therefore, $\Sn{k}$ is partitioned into $P(w)$'s for $w\in \Gh$:
 $$\displaystyle \Sn{k}= \bigsqcup_{w \in \Gh} P(w).$$
\item For $w\in \Sn{k}$,  $\supp(\swh{w}^T)=(\Omega_{w,h})^T=\phi[\widetilde{w}, w_0]$, where $\phi=w\widetilde{w}^{-1}$. In particular, if  $w\in \Gh$, then $\supp(\swh{w}^T)=[w, w_0]$.
\end{enumerate}
\end{proposition}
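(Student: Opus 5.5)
We would prove the two parts separately: part (1) is purely combinatorial, and part (2) is geometric.

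\textbf{Part (1).} Say $v\sim v'$ if $v$ and $v'$ induce the same relative order on every edge $\{i,j\}$ of $G_h$, i.e.\ the same acyclic orientation of $G_h$. The classes are then the sets of linear extensions of a fixed poset $Q$ on the positions $[k]$. Two elements of the same class are connected by moves $v\mapsto s_a v$, where the values $a,a+1$ sit at positions that are not joined by an edge of $G_h$.

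\emph{Existence.} Let $w$ have minimal length in the class of $v$. If $w$ were not $h$-admissible, there would be $j$ with $p=w^{-1}(w(j)+1)>h(j)\ge j$. Then $\{j,p\}$ is not an edge, so $s_{w(j)}w\sim w$. Moreover $j<p$ and $w(j)<w(p)$, so $\ell(s_{w(j)}w)<\ell(w)$, contradicting minimality. Hence every class contains an $h$-admissible element.

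\emph{Uniqueness.} We would show that an $h$-admissible $w$ is the lexicographically smallest linear extension of $Q$, read via $w^{-1}$: value $1$ goes to the smallest minimal position, value $2$ to the smallest minimal position of the remainder, and so on. Suppose value $a+1$ is placed at $p$ while some $q<p$ is minimal among the remaining positions. Look at where the values $a+1,a+2,\dots$ sit until $q$ is reached. Admissibility bounds each successive position by $h$ of the previous one. Since $h$ is nondecreasing, this chain cannot jump over $q$ without creating an edge of $G_h$ that forces $q$ to be larger than a value placed before it, contradicting that $q$ is minimal. The lex-minimal linear extension is unique, so uniqueness follows. \textbf{This step is the main obstacle.} To control the chain we would first try induction on $k$, deleting the position of the value $k$.

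\textbf{Part (2).} The support of $\sigma^T_{w,h}$ consists of the $T$-fixed points $u$ with $\sigma^T_{w,h}(u)\neq0$. The restriction of the class of the $T$-stable subvariety $\Omega_{w,h}$ to a fixed point $u$ vanishes if $u\notin\Omega_{w,h}$. If $u\in\Omega_{w,h}$, it equals the equivariant multiplicity times the product of tangent weights. The equivariant multiplicity is nonzero because all weights at $u$ lie in an open half-space for a generic cocharacter. Therefore $\supp(\sigma^T_{w,h})=(\Omega_{w,h})^T$.

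To compute $(\Omega_{w,h})^T$:
\begin{enumerate}
\item Treat the admissible case first. We have $\Omega_{w,h}\subset\Omega_w$, so $(\Omega_{w,h})^T\subset[w,w_0]$.
\item For the reverse inclusion, use the description of $\Omega^\circ_{w,h}$ in Corollary~\ref{cor:reachable_entry}. Admissibility of $w$ makes the reachability graph $G_{w,h}$ rich enough that degenerating along one-parameter subgroups in the free coordinates $x_{i,j}$ reaches each $ws_{a}$-type cover. Induction along Bruhat chains then gives all of $[w,w_0]$.
\item For general $w$, let $\phi=w\widetilde w^{-1}$. Left multiplication by $\phi$ (the torus-translate) preserves $G_h$-orientations, and the corresponding automorphism carries $\Omega_{\widetilde w,h}$ onto $\Omega_{w,h}$. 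This gives $\phi[\widetilde w,w_0]$.
\end{enumerate}
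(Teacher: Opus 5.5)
The paper quotes this result from CHL2 without proof, so I am judging your argument on its own. It has two genuine gaps: part (1) has the direction reversed, and part (2) leaves the key inclusion unproved.

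\textbf{Part (1).} The admissible element of a class is the one of \emph{maximal} length, not minimal. If $w(j)=a$ and $p=w^{-1}(a+1)>h(j)\ge j$, then swapping the values $a,a+1$ turns the non-inversion at positions $j<p$ into an inversion. So $\ell(s_aw)=\ell(w)+1$, and minimality is not contradicted. For $h=(2,3,3)$, the class $\{1\,3\,2,\ 2\,3\,1\}$ has minimal element $1\,3\,2$, which is not admissible since $w^{-1}(2)=3>h(1)$. The admissible member is $2\,3\,1$, consistent with Proposition~\ref{prop:representative}(2), where one descends from $w\in\Gh$ along dashed arrows.

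Your uniqueness rule fails for the same reason. Placing value $a$ at the smallest available minimal position yields $1\,3\,2$ here, and yields $e$ rather than $w_0$ when $h(i)=i$. This is why your chain argument stalls: admissibility bounds $w^{-1}(a+1)$ only from above, so a chain starting to the right of $q$ can pass left of $q$ without creating an edge.

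Reversing both steps repairs part (1):
\begin{itemize}
\item For existence, take $w$ of maximal length in the class; then the swap above contradicts maximality.
\item For uniqueness, show that $w^{-1}(a)$ is the \emph{largest} position, among those carrying values $\ge a$, that has no $G_h$-neighbor in that set with a smaller value. Suppose $q>w^{-1}(a)$ were another such position, with $w(q)=b$. Let $c\in[a,b-1]$ be maximal with $w^{-1}(c)<q$. Then $w^{-1}(c)<q\le w^{-1}(c+1)\le h(w^{-1}(c))$, so $\{w^{-1}(c),q\}$ is an edge of $G_h$ with $c<b$, a contradiction. The rule depends only on the orientation, so by induction on $a$ the class determines $w$.
\end{itemize}

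\textbf{Part (2).} The essential inclusion $[w,w_0]\subseteq(\Omega_{w,h})^T$ for $w\in\Gh$ is not proved. Your ``induction along Bruhat chains'' needs that $v\in\Omega_{w,h}$ forces $[v,w_0]\subseteq\Omega_{w,h}$. That holds for $B^-$-stable Schubert varieties, but it does not transfer here:
\begin{itemize}
\item $\Omega_{w,h}$ is not $B^-$-stable.
\item The intermediate permutations on your chain are not admissible, so no inductive hypothesis applies to them.
\item The property is false for Hessenberg Schubert varieties in general. For $h=(2,3,3)$, $\Omega_{132,h}$ is the $T$-curve with fixed points $\{1\,3\,2,\ 3\,1\,2\}$, which misses $3\,2\,1>3\,1\,2$.
\end{itemize}
So admissibility must enter in an essential way. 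For instance, you could produce, for each $v\ge w$, a curve in $\Omega^\circ_{w,h}$ (in the coordinates of Corollary~\ref{cor:reachable_entry}) whose limit is $v$. Your sketch does not do this.

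Two parts of your argument are fine: the equivariant-multiplicity step giving $\supp(\swh{w}^T)=(\Omega_{w,h})^T$ (every fixed point of the flag variety is attractive), and the inclusion $(\Omega_{w,h})^T\subseteq[w,w_0]$.

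Step (c) is misstated. Left multiplication by $\phi$ does not preserve $\Hess(S,h)$; it carries $\Hess(\phi^{-1}S\phi,h)$ onto $\Hess(S,h)$. The correct statement is $\Omega^\circ_{w,h}(S)=\phi\,\Omega^\circ_{\widetilde w,h}(\phi^{-1}S\phi)$. This holds because the parametrization in Corollary~\ref{cor:reachable_entry} depends only on $G_{w,h}=G_{\widetilde w,h}$ and on $w^{-1}Sw$. Alternatively, combine $\swh{w}^T=\phi\cdot\swh{\widetilde w}^T$ from Proposition~\ref{prop:representative}(2) with $\supp(\phi\cdot\sigma)=\phi\,\supp(\sigma)$.
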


\begin{remark}\label{rmk:generator_flag}
    If $h=(k, k, \dots, k)$, so that $\Hess(S, h)=\flag{k}$, then every permutation in $\Sn{k}$ is $h$-admissible. Hence,  
    for each $0\leq d\leq d_h= {\binom{k}{2}}$, we have  $$\Gh^d=\{u\in \Sn{k} \mid \ell_h(u)=\ell(u)=d\}.\,$$ 
\end{remark}

For given $h$ and $w\in \Sn{k}$, let $\Gamma_{w, h}=(V_w, E_w, \alpha_w)$ be the induced subgraph of $\Gamma_h$ by the set of vertices $ \supp(\swh{w}^T)=(\Omega_{w,h})^T$, which was considered in \cite{CHP}. Then, due to Proposition~\ref{prop:support}, $\Gamma_{w,h}$ and $\Gamma_{\widetilde{w},h}$ are isomorphic as edge-labeled graphs, where the isomorphism is given by the map $\phi$.

\begin{proposition}\cite[Proposition 2.11]{CHL1}\label{prop:computing the value}
Let $h$ be a Hessenberg function and $w$ be a permutation. 
\begin{enumerate}
\item If $v\in \supp (\sigma_{w,h}^T)$, then $w\leq_h v$. Moreover, if  $w\in\Gh$ and $w\leq v$, 
then $w\leq_h v$.
\item $\sigma_{w, h}^T(w)=\prod_{\{w, u\}\in E-E_w} \alpha(\{w, u\})$
\item  $\sigma_{w, h}^T(v)$ is homogeneous of degree $\ell_h(w)$ for each $v\in \supp(\swh{w}^T)$.
\item For $v\in \supp(\swh{w}^T)$, if the degree of $v$ in $\Gamma_{w,h}$ is equal to the degree of $w$  in $\Gamma_{w,h}$, that is $d_h-\ell_h(w)$, then  $$\sigma_{w, h}^T(v)=\prod_{\{v, u\}\in E-E_w} \alpha(\{v, u\})\,.$$
\end{enumerate}
\end{proposition}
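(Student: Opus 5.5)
The statement has four parts. I would prove (1), (3) and (2) from general facts about the cell decomposition and equivariant localization. Part (4) needs an additional smoothness argument, and that is where I expect the difficulty.

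\emph{Part (1).} Since $\Omega_{w,h}$ is $T$-stable and closed, its $T$-fixed points are exactly $\supp(\sigma_{w,h}^T)$. For the first claim, take $v\in(\Omega_{w,h})^T$. I would pick a generic one-parameter subgroup $\lambda$ that contracts $\Omega^\circ_{w,h}$ onto $w$. The closure of the cell is then a union of cells $\Omega^\circ_{u,h}$. Moreover $v$ is joined to $w$ by a chain of $T$-invariant curves inside $\Omega_{w,h}$, each going up in the Bruhat order, since every such curve lies in a Schubert variety $\Omega_u$. By Proposition~\ref{prop:Hess}, the $T$-invariant curves of $\Hess(S,h)$ are exactly the edges $\{u,us_{i,j}\}$ with $i<j\le h(i)$. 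Each step of the chain is therefore a relation $u<_h us_{i,j}$, and transitivity gives $w\le_h v$.

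For the second claim, suppose $w\in\Gh$ and $w\le v$. Proposition~\ref{prop:support}(2) gives $\supp(\sigma^T_{w,h})=[w,w_0]$, so $v$ lies in the support. The first claim then yields $w\le_h v$.

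\emph{Part (3).} This is immediate: $\sigma_{w,h}^T$ lies in $H_T^{2\ell_h(w)}$, and each localization $\sigma_{w,h}^T(v)$ is a homogeneous polynomial of that degree.

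\emph{Part (2).} Near $w$, the variety $\Omega_{w,h}$ coincides with the affine cell $\Omega^\circ_{w,h}\cong\C^{d_h-\ell_h(w)}$, so it is smooth at $w$. By the self-intersection formula, the restriction of its class to $w$ is the equivariant Euler class of the normal space $T_w\Hess(S,h)/T_w\Omega_{w,h}$. The $T$-weights of $T_w\Hess(S,h)$ are the labels of the $d_h$ edges of $\Gamma_h$ at $w$; these labels are pairwise non-proportional.

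The weights of $T_w\Omega_{w,h}$ are the labels of those $T$-curves through $w$ that lie in $\Omega_{w,h}$, namely the edges of $\Gamma_{w,h}$ at $w$. Using the coordinates of Corollary~\ref{cor:reachable_entry}, I would check that these are exactly the weights of the cell. The normal weights are then the labels of the edges in $E-E_w$ at $w$. Finally, I would check that the orientation convention in Definition~\ref{def:GKM graph} gives the correct sign. Because $\ell_h(w)<\ell_h(u)$ for these $u$, the label is $t_{w_i}-t_{w_j}$, which matches the normal weight.

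\emph{Part (4).} First, for every edge $\{v,u\}\in E-E_w$ we have $\sigma^T_{w,h}(u)=0$. The GKM condition then gives $\alpha(\{v,u\})\mid\sigma^T_{w,h}(v)$. Since $v$ has degree $d_h-\ell_h(w)$ in $\Gamma_{w,h}$, there are exactly $\ell_h(w)$ such edges, and their labels are pairwise coprime linear forms. By part (3), the product of these labels has the same degree as $\sigma^T_{w,h}(v)$. Hence $\sigma^T_{w,h}(v)=c\prod\alpha(\{v,u\})$ for some constant $c\in\C$.

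The main obstacle is showing $c=1$. I would first try to prove that $\Omega_{w,h}$ is smooth at $v$. Then the same Euler class argument as in part (2) gives $c=1$, provided the tangent space at $v$ is spanned by the $d_h-\ell_h(w)$ $T$-curves in $\Gamma_{w,h}$. To get smoothness, I would use the isomorphism $\phi$ between $\Gamma_{w,h}$ and $\Gamma_{\widetilde w,h}$ to reduce to $w\in\Gh$. Then I would apply a Carrell--Peterson type criterion: a $T$-stable subvariety with isolated fixed points that has exactly $\dim$ many $T$-curves at a fixed point is rationally smooth there. By Brion's theorem, rational smoothness at $v$ forces the equivariant multiplicity of $\Omega_{w,h}$ at $v$ to be $1/\prod(\text{tangent weights})$, which gives $c=1$. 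If this route stalls, the fallback is to compute $c$ directly from the local equations of $\Omega_{w,h}$ in the chart $vU^-$, following the recursion of \cite[Corollary 3.9]{CHL1}.
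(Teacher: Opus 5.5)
Your proofs of (1)--(3) are essentially sound, and in (4) the reduction to $\sigma^T_{w,h}(v)=c\prod_{\{v,u\}\in E-E_w}\alpha(\{v,u\})$ by GKM divisibility and degree counting is correct. The genuine gap is the normalization $c=1$, and the route you sketch cannot supply it.

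First, the criterion you attribute to Carrell--Peterson is not what they, or Brion, prove. Their criteria for rational smoothness at $v$ involve the curve count at all fixed points between $w$ and $v$; Brion's version needs rational smoothness on a punctured neighbourhood of $v$. The count at $v$ alone is not enough.

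Second, and decisively, rational smoothness at an attractive fixed point only gives $e_v\Omega_{w,h}=d/\prod\chi$ for some positive rational $d$. The case $d=1$ is genuine smoothness; this is Kumar's distinction between smooth and rationally smooth points of Schubert varieties. Here is a concrete example. Let $T=(\C^*)^2$ act on $\C^3$ with the pairwise non-proportional weights $\alpha$, $\alpha+2\beta$, $\alpha+\beta$, all in an open half-space, and put $Z=\{z^2=xy\}$.
\begin{itemize}
\item $Z$ is $T$-stable of dimension $2$.
\item Its only $T$-curves through $0$ are the $x$- and $y$-axes.
\item It is rationally smooth.
\item Yet $[Z]|_0=2(\alpha+\beta)$, i.e.\ $c=2$.
\end{itemize}
So every hypothesis your argument in (4) actually uses is satisfied, while the conclusion fails.

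A correct proof must use something specific to $\Omega_{w,h}$. For instance, you could show that $\Omega_{w,h}$ is smooth at $v$. Then $T_v\Omega_{w,h}$ is spanned by the $d_h-\ell_h(w)$ curve directions, and your self-intersection argument from (2) applies verbatim. Such a smoothness proof has to exploit either the type-A nature of the weights $t_{v(a)}-t_{v(b)}$ (a relation like $\chi_1+\chi_2=2\chi_3$ cannot occur among them) or the explicit equations of $\Omega_{w,h}$ in a $T$-stable chart around $v$. Your fallback only names this computation without carrying it out, so (4) is not established.

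There are also some smaller issues.
\begin{itemize}
\item In (1), the claim that $\Omega_{w,h}$ is a union of cells $\Omega^\circ_{u,h}$ need not hold for Hessenberg varieties. Lemma~\ref{lem:Hess_Sch_decomp} only gives $\Omega_u^\circ\cap\Omega_{w,h}\subset\Omega^\circ_{u,h}$, and the sets $\mathcal T_u$ exist precisely because this inclusion can be strict. You do not need the claim.
\item Still in (1), the Bruhat-increasing chain of $T$-curves from $w$ to $v$ inside $\Omega_{w,h}$ should be justified by the standard limit argument. At each fixed point $v\neq w$ of the irreducible variety $\Omega_{w,h}$, a point of the dense cell near $v$ produces a $T$-curve of $\Omega_{w,h}$ from $v$ to a strictly smaller fixed point. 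Your remark that curves lie in Schubert varieties does not establish this.
\item For the second claim of (1) you import Proposition~\ref{prop:support}(2) from \cite{CHL2}. Make sure its proof does not itself rely on the implication $w\le v\Rightarrow w\le_h v$ that you are deriving.
\item In (2), your sign justification is wrong. For the normal edge $\{s_1,e\}$ at $w=s_1$ one has $\ell_h(e)<\ell_h(s_1)$, not the reverse. The sign must come from a consistent tangent-weight convention, not from comparing $h$-lengths; the later computations in the paper use $t_{w_j}-t_{w_i}$ for $i<j$ with $w_i>w_j$.
\end{itemize}
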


 \subsection{Symmetric group action}\label{subsec:dot action} 
For a Hessenberg function $h$ on $[k]$, the $T$-equivariant cohomology space $H_T^{2d}(\Hess(S, h))$ 
becomes a $\mathbb{C}\Sn{k}$-module by the \emph{dot action} of Tymoczko defined as follows.
For $\sigma=(\sigma(v))_{v\in \Sn{k}} \in H_T^*(\Hess(S, h))$ and $u\in \Sn{k}$, 
\[(u\cdot \sigma)(v)(t_1,\dots,t_k)\coloneqq \sigma(u^{-1}v)(t_{u(1)},\dots,t_{u(k)}).\]

\begin{definition} Let $h$ be a Hessenberg function on $[k]$. 
\begin{enumerate}
\item For $u, v\in \Sn{k}$ with $u<v$ in the Bruhat order, we write 
\[v\rightarrow u\quad\text{ if }u<_h v,\qquad v \dashrightarrow u\quad\text{ if }u\not<_h v.\]
\item For $w\in \Sn{k}$ and $i\in[k-1]$ satisfying $w \rightarrow s_iw$, we define a set $\mathcal A_{s_i, w}$ as:
\[ \mathcal A_{s_i, w} \coloneqq \{ u \in \Omega_{s_{i}w,h}^T \cap \Omega_w^T \,|\, \dim_{\C}(\Omega_u^{\circ} \cap \Omega_{s_{i}w,h}) =\dim_{\C} \Omega_{w,h}, u \dashrightarrow s_iu \}\,. \]
\item For $w\in \Sn{k}$ and $i\in[k-1]$ satisfying $w \rightarrow s_iw$, if $u \in \mathcal A_{s_i, w}$, then we define two closed subvarieties $\mathcal T_u$ and $\mathcal T_{s_iu}$ as follows:
 \[\mathcal T_u\coloneqq \overline{\Omega_u^{\circ} \cap \Omega_{s_{i}w,h}} \quad \text{ and } \quad  \mathcal T_{s_iu}=\overline{\Omega_{s_iu}^{\circ} \cap \Omega_{s_{i}w,h}}\,.\]
We let  $\tau_u$ and $\tau_{s_iu}$ be the classes in $H^*_T(\Hess(S,h))$ induced by $\mathcal T_u$ and $\mathcal T_{s_iu}$, respectively.
\end{enumerate}
\end{definition}

Note that $w\not\in \mathcal{A}_{s_i,w}$ by definition. Furthermore, since $\mathcal{A}_{s_i,w}\subset [w,w_0]$  by Proposition~\ref{prop:computing the value}(1), if $u\in\mathcal{A}_{s_i,w}$ then $u>w$ in the Bruhat order.
The dot actions on the classes $\sigma_{w,h}\in H^*_T(\Hess(S,h))$ were calculated in \cite{CHL1}.

\begin{proposition}[{\cite[Theorem~B]{CHL1}}]\label{prop:simple action}
	Let $w$ be a permutation in $\mathfrak{S}_k$ and let $s_i$ be a simple reflection.
\begin{enumerate}
	\item If $ w  \dashrightarrow s_iw$ or $s_iw \dasharrow w$, then $s_i\cdot\swh{w}^T =\swh{s_i w}^T$.
	\item If $s_i w \rightarrow w$, then $s_i\cdot \swh{w}^T = \swh{w}^T$.
	\item If $w \rightarrow s_iw$, then
	\[
	\left(s_i \cdot\swh{w}^T + \sum_{u \in \mathcal A_{s_i, w} } \tau_{s_i u} \right) = \left(\swh{w}^T + \sum_{u   \in \mathcal A_{s_i, w}} \tau_u\right) + (t_{i+1}-t_i)\swh{s_i w}^T \,.
	\]
\end{enumerate}
\end{proposition}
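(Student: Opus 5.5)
Since this is Theorem~B of \cite{CHL1}, I would prove it by GKM localization, resting on one uniqueness principle. \textbf{Claim:} a class $\sigma\in H_T^{2\ell_h(w)}(\Hess(S,h))$ equals $\swh{w}^T$ as soon as three conditions hold:
\begin{itemize}
\item $\supp(\sigma)\subset(\Omega_{w,h})^T$;
\item each $\sigma(v)$ is homogeneous of degree $\ell_h(w)$;
\item $\sigma(w)=\prod_{\{w,u\}\in E-E_w}\alpha(\{w,u\})$, as in Proposition~\ref{prop:computing the value}(2).
\end{itemize}
To prove the claim, expand $\sigma-\swh{w}^T$ in the basis of Proposition~\ref{prop:basis}. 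The difference vanishes at $w$ and is supported on $(\Omega_{w,h})^T$. A triangularity argument then shows that only $\swh{v}^T$ with $v\in(\Omega_{w,h})^T\setminus\{w\}$ can occur. Here one uses Proposition~\ref{prop:computing the value}(1) and evaluates at a minimal element of the support. For such $v$, the cell $\Omega_{v,h}^\circ$ lies in the boundary of $\Omega_{w,h}$, so $\ell_h(v)>\ell_h(w)$. The coefficients would therefore be polynomials of negative degree, hence they vanish.

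For (1) and (2), the key geometric input is that translating by a lift $\dot s_i$ of $s_i$ (which commutes with $S$ up to relabeling of the torus) realizes the dot action on classes of $T$-stable subvarieties. Concretely, $s_i\cdot[Z]=[\dot s_i Z]$ with weights permuted.

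In case (1) I would check two things combinatorially from the definitions of $\ell_h$ and $G_{w,h}$. First, a dashed arrow means the transposition $s_i$ is not an $h$-edge move, so $\ell_h(s_iw)=\ell_h(w)$. Second, the graphs $G_{w,h}$ and $G_{s_iw,h}$ agree, because swapping the values $i,i+1$ changes no $h$-relative order. Then Corollary~\ref{cor:reachable_entry} gives $\dot s_i\,\Omega^\circ_{w,h}=\Omega^\circ_{s_iw,h}$, and taking closures yields $s_i\cdot\swh{w}^T=\swh{s_iw}^T$. As a cross-check, the support transforms as $s_i\phi[\widetilde w,w_0]$, consistent with Proposition~\ref{prop:support}(2).

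In case (2), $s_iw\to w$, I would show that $\Omega_{w,h}$ is stable under the one-parameter unipotent subgroup attached to the root $\pm(t_i-t_{i+1})$. It then follows that $\dot s_i\Omega_{w,h}=\Omega_{w,h}$, which gives invariance. Alternatively, I would verify directly that $s_i\cdot\swh{w}^T$ satisfies the three conditions of the claim.

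Case (3), $w\to s_iw$, is the main obstacle. Here $\dot s_i\Omega_{w,h}$ is no longer a Hessenberg Schubert variety. My plan is to work inside $\Omega_{s_iw,h}$, which has dimension one larger. Consider the family of translates of $\Omega_{w,h}$ along the $\mathbb P^1$ orbit of the root subgroup for $t_i-t_{i+1}$ inside $\Omega_{s_iw,h}$. The two ends of this rational equivalence are $\Omega_{w,h}$ together with extra components, and $\dot s_i\Omega_{w,h}$ together with extra components.

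The extra components are exactly the closures $\mathcal T_u$ and $\mathcal T_{s_iu}$ for $u\in\mathcal A_{s_i,w}$. These are the pieces $\Omega_u^\circ\cap\Omega_{s_iw,h}$ of the right dimension on which $u\dashrightarrow s_iu$, where the degeneration splits. The equivariant discrepancy between the two ends is the Euler class of the moving direction, namely $(t_{i+1}-t_i)$, times $[\Omega_{s_iw,h}]$. That gives the stated identity.

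To make this rigorous, I would check the identity pointwise at each $v\in\Sn{k}$. Both sides are GKM classes. The difference $\swh{w}^T-s_i\cdot\swh{w}^T$ is divisible by $t_{i+1}-t_i$ after the $\tau$ corrections are added, and the quotient is identified using the claim applied to $s_iw$. The hard part is pinning down $\mathcal A_{s_i,w}$ as precisely the set of components contributing to the degeneration, which requires the dimension count $\dim(\Omega_u^\circ\cap\Omega_{s_iw,h})=\dim\Omega_{w,h}$.
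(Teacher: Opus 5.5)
Your uniqueness Claim is false, and everything resting on it fails with it: the alternative route in case (2), the cross-checks, and the identification of the quotient in case (3).

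The faulty step is ``for $v\in(\Omega_{w,h})^T\setminus\{w\}$ the cell $\Omega^\circ_{v,h}$ lies in the boundary of $\Omega_{w,h}$, so $\ell_h(v)>\ell_h(w)$.'' A Hessenberg Schubert variety is not a union of minus cells. A fixed point $v\in\Omega_{w,h}$ only tells you that $\Omega_v^\circ\cap\Omega_{w,h}\neq\emptyset$, and this intersection can be a small piece of $\Omega^\circ_{v,h}$. Correspondingly, $\ell_h$ is not monotone on $[w,w_0]$, as the paper remarks for $h=(3,3,4,4)$.

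Concretely, take $h=(3,3,4,4)$, $w=2134$ and $v=2341$. Both are $h$-admissible, $\ell_h(w)=\ell_h(v)=1$, and $v>w$. Hence $\supp(\swh{v}^T)=[v,w_0]\subset[w,w_0]=\supp(\swh{w}^T)$ and $\swh{v}^T(w)=0$. So $\swh{w}^T+\swh{v}^T\neq\swh{w}^T$ satisfies all three of your conditions. Your degree argument only kills coefficients $c_v$ with $\ell_h(v)>\ell_h(w)$; constant multiples of same-$h$-length classes supported inside $(\Omega_{w,h})^T$ survive. This is exactly the phenomenon behind the $\tau$-terms in (3), so no characterization by support, degree and value at $w$ can prove the proposition. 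Your pointwise check of (3) is also not executable, since nothing available computes $\tau_u(v)$.

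The geometric input is misstated as well. $\dot s_i$ does not commute with $S$; it carries $\Hess(S,h)$ isomorphically onto $\Hess(S',h)$ with $S'=\dot s_iS\dot s_i^{-1}$. Similarly, for $u=I+aE_{i+1,i}$ one has $u\,\Hess(S,h)=\Hess(uSu^{-1},h)$ with $uSu^{-1}=S+a(\lambda_i-\lambda_{i+1})E_{i+1,i}$. In general neither $\dot s_i$ nor the root subgroups preserve $\Hess(S,h)$. So your case (2) argument already fails for $h=(2,3,3)$ and $w=e$: there $s_1e\rightarrow e$ and $\Omega_{e,h}=\Hess(S,h)$, but $\dot s_1\Hess(S,h)\neq\Hess(S,h)$. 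For the same reason, the $\mathbb P^1$-family of translates ``inside $\Omega_{s_iw,h}$'' in case (3) does not exist.

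A workable framework is to regard $\dot s_iZ$ as a subvariety of $\Hess(S',h)$ and read both classes in the common GKM ring $H_T^*(\Gamma_h)$. There $s_i\cdot[Z]=[\dot s_iZ]$ is correct, but you must also show that the GKM class of $\Omega_{w,h}$ does not depend on the regular semisimple $S$. In that setting the three cases go as follows.
\begin{enumerate}
\item Your reachability observation gives $\dot s_i\Omega_{w,h}(S)=\Omega_{s_iw,h}(S')$.
\item Since $\dot s_iB^-wB\subset B^-wB\cup B^-s_iwB$ and $\Omega^\circ_{s_iw,h}(S')$ has dimension one less, a generic point of $\dot s_i\Omega^\circ_{w,h}(S)$ lies in $B^-wB$. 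Hence $\dot s_i\Omega_{w,h}(S)=\Omega_{w,h}(S')$.
\item $\dot s_i\Omega_{w,h}(S)$ is a $T$-stable prime divisor of $\Omega_{s_iw,h}(S')$. The identity should then come from the divisor of a $T$-eigenfunction of weight $\pm(t_{i+1}-t_i)$ on $\Omega_{s_iw,h}$.
\end{enumerate}
The real content of the theorem is showing that the remaining components of that divisor are exactly $\Omega_{w,h}$ and the $\mathcal T_u$, $\mathcal T_{s_iu}$ for $u\in\mathcal A_{s_i,w}$, each with multiplicity one and the stated signs. Your proposal asserts this rather than proving it, so for (3) it restates the formula instead of establishing it.
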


Note that if $h=(k, k, \dots, k)$, then either $s_i w \rightarrow w$ or $w \rightarrow s_iw$ must hold for all $s_i$ and $w\in \Sn{k}$. Moreover $\mathcal{A}_{s_i,w}=\varnothing$ whenever  $w \rightarrow s_iw$. Therefore, we obtain the following well-known statement from the above proposition.
\begin{corollary}\label{cor:dot action on flag}
If $h=(k, k, \dots, k)$ so that  $\Hess(S, h)=\flag{k}$, then $s_i\cdot \sigma_w=\sigma_w$ for all $s_i$ and $w$ in $\Sn{k}$.    
\end{corollary}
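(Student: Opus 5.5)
The statement is Corollary~\ref{cor:dot action on flag}. We deduce it directly from Proposition~\ref{prop:simple action}. Since the dot action of $\Sn{k}$ is generated by the simple reflections, it suffices to prove $s_i\cdot\sigma_w=\sigma_w$ for every $i\in[k-1]$ and every $w\in\Sn{k}$. For $h=(k,\dots,k)$ we have $\ell_h=\ell$, and $<_h$ coincides with the Bruhat order, since every transposition $s_{i,j}$ with $i<j$ satisfies $j\le h(i)=k$.

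\emph{Step 1: only the solid arrows occur.} Fix $w$ and $i$. The permutations $w$ and $s_iw$ differ by left multiplication by a simple reflection, so they are Bruhat comparable and their lengths differ by one. Write $s_iw=ws_{a,b}$ with $a=w^{-1}(i)$ and $b=w^{-1}(i+1)$; with $a<b$ after reordering, this transposition satisfies $b\le h(a)=k$. The larger of $w,s_iw$ is therefore obtained from the smaller by a transposition allowed by $h$, so the $h$-Bruhat relation holds. Hence exactly one of $s_iw\rightarrow w$ or $w\rightarrow s_iw$ holds, and the dashed cases in Proposition~\ref{prop:simple action}(1) never arise.

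\emph{Step 2: the case $s_iw\rightarrow w$.} Here Proposition~\ref{prop:simple action}(2) gives $s_i\cdot\sigma_w^T=\sigma_w^T$ directly.

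\emph{Step 3: the case $w\rightarrow s_iw$.} We claim $\mathcal A_{s_i,w}=\varnothing$. By definition, any $u\in\mathcal A_{s_i,w}$ must satisfy $u\dashrightarrow s_iu$, meaning $s_iu<u$ in Bruhat order but $s_iu\not<_h u$. As in Step 1, $s_iu<u$ always implies $s_iu<_h u$ when $h=(k,\dots,k)$, so no such $u$ exists. Proposition~\ref{prop:simple action}(3) then reduces to
\[
s_i\cdot\sigma_w^T=\sigma_w^T+(t_{i+1}-t_i)\,\sigma_{s_iw}^T .
\]
Passing to ordinary cohomology sets $t_{i+1}-t_i$ to $0$, so $s_i\cdot\sigma_w=\sigma_w$.

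\emph{Step 4: descent to ordinary cohomology.} The dot action on $H^*$ is induced from the action on $H_T^*$, which permutes the $t_j$ and therefore preserves the ideal $(t_1,\dots,t_k)$. So the equivariant identities from Steps 2 and 3 descend to $H^*$. Since the $s_i$ generate $\Sn{k}$, the claim holds for every $u\in\Sn{k}$.

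There is no serious obstacle. The only points needing care are the observation in Step 1 that every Bruhat cover is an $h$-cover for this $h$, which forces $\mathcal A_{s_i,w}=\varnothing$, and the compatibility of the dot action with the reduction to ordinary cohomology in Step 4.
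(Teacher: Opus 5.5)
Your proof is correct and follows essentially the same route as the paper. The paper likewise observes that for $h=(k,\dots,k)$ one always has $s_iw\rightarrow w$ or $w\rightarrow s_iw$, and that $\mathcal A_{s_i,w}=\varnothing$, so Proposition~\ref{prop:simple action}(2),(3) gives the result once the $t_j$ are set to $0$; your identity $s_iw=ws_{a,b}$ supplies the details the paper leaves implicit.
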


\section{Block decompositions and the dot action}\label{sec:dot action}

In this section, we establish a factorization formula for equivariant Hessenberg Schubert classes associated with an $i$-block decomposition. This factorization will be used to study the dot action.

We first introduce two truncation operations on Hessenberg functions, obtained by restricting $h$ to the first $i$ indices and to the remaining $k-i$ indices.

\begin{definition}
Let \( h=(h(1), h(2), \dots, h(k)) \) be a Hessenberg function on \( [k] \). For an integer $0\leq i\leq k$, we define Hessenberg functions $^{\da i}h$ on $[i]$ and $h^{\da i}$ on $[k-i]$ as follows:
\[^{\da i}h(j)=\min(h(j), i) \quad \text{ for \,\,} j=1, \dots, i \,, \]
and
\[h^{\da i}(j)=h(i+j)-i \quad \text{ for \,\, } j=1, \dots, k-i.  \]
We use the convention that the Hessenberg function on \([0]\) is empty;
thus \({}^{\da 0}h\) and \(h^{\da k}\) are empty, while
\(h^{\da 0}=h\) and \({}^{\da k}h=h\).
\end{definition}

By construction, ${}^{\da i}h$ and $h^{\da i}$ are again Hessenberg functions, encoding the induced Hessenberg conditions in the two blocks.

\begin{example}\label{ex:h-i} Consider a Hessenberg function $h=(3, 3, 4, 5, 6, 7, 7)$ on $[k]=[7]$.
\begin{enumerate}
\item $^{\da 2}h=(2,2)$ and $h^{\da 2}=(2,3,4,5,5)$.
\item $^{\da 4}h=(3, 3, 4, 4)$ and $h^{\da 4}=(2, 3, 3)$.
\end{enumerate}
\end{example}
These truncations are most transparent at the level of diagrams, where they appear as the northwest and southeast blocks separated by the cut at $i$, see Figure~\ref{fig:hessenberg_block}.

\begin{figure}[htbp]
\centering

\begin{subfigure}{0.32\textwidth}
  \centering
  \begin{tikzpicture}[scale=0.5]
  \def\n{7}
  \def\h{3,3,4,5,6,7,7}

  \foreach \val [count=\j from 1] in \h {%
    \foreach \i in {1,...,7} {%
      \ifnum\i>\val\relax\else
        \fill[black!35] ({\j-1},{\n-\i}) rectangle ({\j},{\n-\i+1});
      \fi
    }%
  }

  \draw[step=1, black] (0,0) grid (\n,\n);

  \foreach \k in {1,...,\n} {
    \node[above] at (\k-0.5,\n) {\small $\k$};
    \node[left]  at (0,\n-\k+0.5) {\small $\k$};
  }
  \end{tikzpicture}
  \caption{$h$}
\end{subfigure}
\hfill
\begin{subfigure}{0.32\textwidth}
  \centering
  \begin{tikzpicture}[scale=0.5]
  \def\n{7}
  \def\h{3,3,4,5,6,7,7}

  \foreach \val [count=\j from 1] in \h {%
    \foreach \i in {1,...,7} {%
      \ifnum\i>\val\relax\else
        \fill[black!35] ({\j-1},{\n-\i}) rectangle ({\j},{\n-\i+1});
      \fi
    }%
  }

  \draw[step=1, black] (0,0) grid (\n,\n);

  \foreach \k in {1,...,\n} {
    \node[above] at (\k-0.5,\n) {\small $\k$};
    \node[left]  at (0,\n-\k+0.5) {\small $\k$};
  }

  \draw[very thick, black] (0,5) rectangle (2,7);

  \draw[very thick, black] (2,0) rectangle (7,5);
  \end{tikzpicture}
  \caption{${}^{\da 2}h$ and $h^{\da 2}$}
\end{subfigure}
\hfill
\begin{subfigure}{0.32\textwidth}
  \centering
  \begin{tikzpicture}[scale=0.5]
  \def\n{7}
  \def\h{3,3,4,5,6,7,7}

  \foreach \val [count=\j from 1] in \h {%
    \foreach \i in {1,...,7} {%
      \ifnum\i>\val\relax\else
        \fill[black!35] ({\j-1},{\n-\i}) rectangle ({\j},{\n-\i+1});
      \fi
    }%
  }

  \draw[step=1, black] (0,0) grid (\n,\n);

  \foreach \k in {1,...,\n} {
    \node[above] at (\k-0.5,\n) {\small $\k$};
    \node[left]  at (0,\n-\k+0.5) {\small $\k$};
  }

  \draw[very thick, black] (0,3) rectangle (4,7);

  \draw[very thick, black] (4,0) rectangle (7,3);
  \end{tikzpicture}
  \caption{${}^{\da 4}h$ and $h^{\da 4}$}
\end{subfigure}

\caption{Hessenberg diagrams associated with block decompositions.}
\label{fig:hessenberg_block}
\end{figure}

Let $B_r$ denote the Borel subgroup of upper triangular matrices in $\GL_r(\C)$. 
Consider the block-anti-diagonal map
\begin{equation*}
\GL_i(\C)\times \GL_{k-i}(\C)\longrightarrow \GL_k(\C),
\qquad
(x_1,x_2)\longmapsto
\begin{pmatrix}
O & x_2\\
x_1 & O
\end{pmatrix}.
\end{equation*}
This map induces a well-defined embedding
\begin{equation}\label{eq:block_anti_diagonal}
\GL_i(\C)/B_i\times \GL_{k-i}(\C)/B_{k-i}
\hookrightarrow
\GL_k(\C)/B_k.
\end{equation}
Indeed, if $b_1\in B_i$ and $b_2\in B_{k-i}$, then
\[
\begin{pmatrix}
O & x_2b_2\\
x_1b_1 & O
\end{pmatrix}
=
\begin{pmatrix}
O & x_2\\
x_1 & O
\end{pmatrix}
\begin{pmatrix}
b_1 & O\\
O & b_2
\end{pmatrix},\quad\text{and}\quad \begin{pmatrix}
b_1 & O\\
O & b_2
\end{pmatrix}\in B_k.
\]
Hence, the image in $\GL_k(\C)/B_k$ is independent of the choice of representatives.
Moreover, if
\[
\begin{pmatrix}
O & x_2\\
x_1 & O
\end{pmatrix}B_k
=
\begin{pmatrix}
O & y_2\\
y_1 & O
\end{pmatrix}B_k,
\]
then
\[
\begin{pmatrix}
x_1^{-1}y_1 & O\\
O & x_2^{-1}y_2
\end{pmatrix}
\in B_k.
\]
Thus $x_1^{-1}y_1\in B_i$ and $x_2^{-1}y_2\in B_{k-i}$, so $x_1B_i=y_1B_i$ and $x_2B_{k-i}=y_2B_{k-i}$. Therefore, the induced map is injective.

For a regular semisimple element $S=\diag(\lambda_1,\dots,\lambda_k)$, we define
\[
S^{\da i}\coloneqq \diag(\lambda_1,\dots,\lambda_{k-i}),
\qquad
{}^{\da i}S\coloneqq \diag(\lambda_{k-i+1},\dots,\lambda_k)
\]
for $1\le i<k$; similarly, we let $T^{\da i}$ and ${}^{\da i}T$ be the subtori of $T$ determined by the first $k-i$ coordinates and the last $i$ coordinates, respectively. That is,
\[S=\begin{pmatrix}S^{\da i}&O\\O&{}^{\da i}S\end{pmatrix},\qquad T=\begin{pmatrix}
 T^{\da i}&O\\O&{}^{\da i}T   
\end{pmatrix}.\]
Then for $t=(t_1,\dots,t_k)\in T$, we write
\[{}^{\da i}t\coloneqq(t_{k-i+1},\dots,t_k),\qquad t^{\da i}\coloneqq (t_1,\dots,t_{k-i}).\]
We define the action of $T$ on  \(\Hess({}^{\da i}S,{}^{\da i}h)\times\Hess(S^{\da i},h^{\da i})\) by $$t\cdot(x_1,x_2)=({}^{\da i}t\cdot x_1,t^{\da i}\cdot x_2).$$ 

\begin{lemma}\label{lem:block_hess_embedding}
Let $h$ be a Hessenberg function on $[k]$ and let $1\le i<k$.
Then the restriction of the embedding in~\eqref{eq:block_anti_diagonal} to $\Hess({}^{\da i}S,{}^{\da i}h)\times\Hess(S^{\da i},h^{\da i})$ induces a natural $T$-equivariant closed embedding
\[
\iota\colon\Hess({}^{\da i}S,{}^{\da i}h)\times\Hess(S^{\da i},h^{\da i})
\hookrightarrow
\Hess(S,h).
\]
\end{lemma}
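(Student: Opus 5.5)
The plan is to verify three things directly from the matrix description: the image lands in $\Hess(S,h)$, the map is a closed embedding, and it is $T$-equivariant. Write $g=\begin{pmatrix} O & x_2\\ x_1 & O\end{pmatrix}$ with $x_1\in\GL_i(\C)$ and $x_2\in\GL_{k-i}(\C)$. Then
\[
g^{-1}=\begin{pmatrix} O & x_1^{-1}\\ x_2^{-1} & O\end{pmatrix}
\quad\text{and}\quad
g^{-1}Sg=\begin{pmatrix} x_1^{-1}\,{}^{\da i}S\,x_1 & O\\ O & x_2^{-1}S^{\da i}x_2\end{pmatrix}.
\]
This follows because the last $i$ rows of $g$ are $(x_1\ O)$, and these pair with the diagonal block ${}^{\da i}S$ of $S$. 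The conjugate is block diagonal, with an $i\times i$ block in the northwest and a $(k-i)\times(k-i)$ block in the southeast.

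Next I check the Hessenberg condition $x_{ab}=0$ for $a>h(b)$, block by block.

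For the northwest block, fix $a,b\le i$ with $a>h(b)$. Since $a\le i$, this gives $a>\min(h(b),i)={}^{\da i}h(b)$. So the entry vanishes whenever $x_1^{-1}\,{}^{\da i}S\,x_1\in H_{{}^{\da i}h}$, that is, whenever $x_1B_i\in\Hess({}^{\da i}S,{}^{\da i}h)$.

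For the southeast block, take indices $a=i+a'$ and $b=i+b'$. Then $a>h(b)$ is equivalent to $a'>h(i+b')-i=h^{\da i}(b')$. This is exactly the condition for $x_2B_{k-i}\in\Hess(S^{\da i},h^{\da i})$.

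The off-diagonal blocks are zero, so they impose no constraint. Hence $\iota$ maps the product into $\Hess(S,h)$.

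The map \eqref{eq:block_anti_diagonal} is already shown to be well defined and injective. It is a morphism of projective varieties, so its image is closed. To see that it is a closed embedding rather than merely an injective morphism, I would identify it with the composite of the standard closed embedding $\GL_i/B_i\times\GL_{k-i}/B_{k-i}\hookrightarrow \GL_k/B_k$, $(x_1,x_2)\mapsto\diag(x_1,x_2)B_k$, with left translation by the permutation matrix $\begin{pmatrix} O & I_{k-i}\\ I_i & O\end{pmatrix}$. The standard embedding realizes the product as a Levi-type fixed-point component. Equivalently, in flag language, $V_j=$ the span of the first $j$ columns, embedded into the last $i$ coordinates for $j\le i$, and $V_j=\C^{(k-i)}\oplus(\text{part})$ beyond that, which is visibly a closed condition cut out by $V_i=\langle e_{k-i+1},\dots,e_k\rangle$. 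Restricting a closed embedding to the closed subvariety $\Hess\times\Hess$ stays a closed embedding.

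For equivariance, let $t=\diag(t_1,\dots,t_k)$. Then
\[
t\,g=\begin{pmatrix} O & t^{\da i}x_2\\ {}^{\da i}t\,x_1 & O\end{pmatrix},
\]
because the top $k-i$ rows are scaled by $t^{\da i}$ and the bottom $i$ rows by ${}^{\da i}t$. This is precisely $\iota({}^{\da i}t\cdot x_1,\;t^{\da i}\cdot x_2)$, matching the defined action.

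The main obstacle is only the bookkeeping: keeping the correct pairing of ${}^{\da i}S$ with $x_1$, which sits in the last rows. Everything else is a routine check.
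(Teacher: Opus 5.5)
Your proof is correct and follows essentially the same route as the paper: compute $g^{-1}Sg$ as the block-diagonal matrix with blocks $x_1^{-1}\,{}^{\da i}S\,x_1$ and $x_2^{-1}S^{\da i}x_2$, match the vanishing conditions of $H_h$ blockwise against those of $H_{{}^{\da i}h}$ and $H_{h^{\da i}}$, and check $T$-equivariance by row scaling. Your added argument that the map is a closed embedding (its image is the closed locus where $V_i=\langle e_{k-i+1},\dots,e_k\rangle$, equivalently a permutation-matrix translate of the standard Levi embedding) fills in a step that the paper only asserts.
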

\begin{proof}
With our convention, the first factor corresponds to the bottom-left block in the block-anti-diagonal embedding, while the second factor corresponds to the top-right block. 
Note that for $(x_1,x_2)\in \GL_i(\C)\times \GL_{k-i}(\C)$, we have\[\begin{pmatrix}O&x_{2}\\x_{1}&O\end{pmatrix}^{-1}\begin{pmatrix}S^{\da i}&O\\O&{}^{\da i}S\end{pmatrix}\begin{pmatrix}O&x_{2}\\x_{1}&O\end{pmatrix}=\begin{pmatrix}x_1^{-1}{}^{\da i}S\ x_1&O\\O&x_2^{-1}S^{\da i}x_2\end{pmatrix}.\]
By the definitions of ${}^{\da i}h$ and $h^{\da i}$, the matrix on the right belongs to $H_h$ if and only if $x_1^{-1} {}^{\da i}S\ x_1\in H_{{}^{\da i}h}$ and $x_2^{-1} S^{\da i}x_2\in H_{h^{\da i}}$.
Hence the block-anti-diagonal embedding restricts to a well-defined closed embedding
\[
\iota\colon
\Hess({}^{\da i}S,{}^{\da i}h)\times \Hess(S^{\da i},h^{\da i})
\hookrightarrow
\Hess(S,h).
\]

From the definition of the action of $T$ on \(\Hess({}^{\da i}S,{}^{\da i}h)\times\Hess(S^{\da i},h^{\da i})\), we get
\[\iota(t\cdot(x_1,x_2))=\iota({}^{\da i}t\cdot x_1,t^{\da i}\cdot x_2)=\begin{pmatrix}O&t^{\da i}\cdot x_2\\{}^{\da i}t\cdot x_1&O\end{pmatrix}=t\cdot\begin{pmatrix}O& x_2\\x_1&O\end{pmatrix}=t\cdot\iota(x_1,x_2),\]
so $\iota$ is $T$-equivariant.
\end{proof}

Via the block-anti-diagonal embedding $\iota$, we can identify $$\left(\Hess({}^{\da i}S,{}^{\da i}h)\times\Hess(S^{\da i},h^{\da i})\right)^T$$ with the set of permutations $w\in \Sn{k}$ satisfying
\[
\{w_1,\ldots,w_i\}=\{k-i+1,\ldots,k\},
\qquad
\{w_{i+1},\ldots,w_k\}=\{1,\ldots,k-i\}.
\]
This observation motivates the following definition.

\begin{definition}
Let $w\in \Sn{k}$.
\begin{enumerate}
\item 
Let $1\le i\le k-1$.
We say that $w$ admits an \emph{$i$-block decomposition} if
\[
\{w_1,\ldots,w_i\}=\{k-i+1,\ldots,k\}
\quad\text{and}\quad
\{w_{i+1},\ldots,w_k\}=\{1,\ldots,k-i\}.
\]
We denote by
\(
\Sn{k}^{[i]}
\)
the set of permutations admitting an $i$-block decomposition.
\item If $w$ admits an $i$-block decomposition, we define the
\emph{left block permutation} and \emph{right block permutation} by
\begin{align*}
^{\da i}w 
  &\coloneqq (w_{1}-(k-i))\,\, \cdots\,\, (w_{i}-(k-i))\in \Sn{i}, \\[0.3em]
w^{\da i} 
  &\coloneqq w_{i+1}\, \cdots \, w_{k}\in\Sn{k-i}.
\end{align*}
\end{enumerate}
\end{definition}

Note that $\Sn{k}^{[i]}=\iota(\Sn{i}\times \Sn{k-i})$, and 
for each $(x,y)\in \Sn{i}\times\Sn{k-i}$, there exists a unique permutation $w\in\Sn{k}$ such that ${}^{\da i}w=x$ and $w^{\da i}=y$.

\begin{remark}\label{rem:block_upward_closed}
It is immediate from the definition that if $w\in \Sn{k}$ admits an
$i$-block decomposition, then every permutation $v\ge w$ in the Bruhat order
also admits an $i$-block decomposition.
\end{remark}

\begin{lemma}
    Let $h$ be a Hessenberg function on $[k]$. If $w\in\Sn{k}$ admits an $i$-block decomposition for some $1\leq i<k$, then the directed graph $G_{w,h}$ decomposes as a disjoint union
\begin{equation*}
G_{w,h}=G_{{}^{\da i}w,{}^{\da i}h}\bigsqcup G_{w^{\da i},h^{\da i}}.
\end{equation*}
\end{lemma}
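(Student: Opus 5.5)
The plan is to compare the edge sets of the two sides directly. The vertex set of $G_{{}^{\da i}w,{}^{\da i}h}$ is $[i]$, and we identify the vertex set $[k-i]$ of $G_{w^{\da i},h^{\da i}}$ with $\{i+1,\dots,k\}$ via $j\mapsto i+j$; the disjoint union is then a graph on $[k]$. The claim amounts to three statements: there is no edge of $G_{w,h}$ joining $[i]$ to $[i+1,k]$; the edges of $G_{w,h}$ inside $[i]$ are exactly those of $G_{{}^{\da i}w,{}^{\da i}h}$; and the edges inside $[i+1,k]$ are, after the shift by $i$, exactly those of $G_{w^{\da i},h^{\da i}}$.

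\textbf{Crossing edges.} Let $j\le i<l$. An edge $j\to l$ of $G_{w,h}$ would require $w(j)<w(l)$. However, the $i$-block decomposition gives $w(j)\ge k-i+1>k-i\ge w(l)$, so no such edge exists.

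\textbf{Edges inside $[i]$.} For $1\le j<l\le i$, we have $j\to l$ in $G_{w,h}$ if and only if $l\le h(j)$ and $w(j)<w(l)$. Since $l\le i$, the condition $l\le h(j)$ is equivalent to $l\le \min(h(j),i)={}^{\da i}h(j)$. Subtracting the constant $k-i$ from all values preserves their order, so $w(j)<w(l)$ is equivalent to ${}^{\da i}w(j)<{}^{\da i}w(l)$. Together these are exactly the conditions for $j\to l$ in $G_{{}^{\da i}w,{}^{\da i}h}$.

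\textbf{Edges inside $[i+1,k]$.} For $1\le j<l\le k-i$, there is an edge $i+j\to i+l$ in $G_{w,h}$ if and only if $i+l\le h(i+j)$ and $w(i+j)<w(i+l)$. The first condition is $l\le h(i+j)-i=h^{\da i}(j)$, and the second is $w^{\da i}(j)<w^{\da i}(l)$. These are exactly the conditions for $j\to l$ in $G_{w^{\da i},h^{\da i}}$.

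Combining the three cases gives the decomposition. The proof is purely definitional, so there is no real obstacle. The only care needed is the truncation $\min(h(j),i)$ in the second case, which is harmless precisely because we only consider $l\le i$ there.
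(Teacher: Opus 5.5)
Your proof is correct and follows the paper's argument: rule out crossing edges because $w(j)>w(l)$ whenever $j\le i<l$, then match the edges inside $[i]$ and inside $[i+1,k]$ (after the shift by $i$) with those of $G_{{}^{\da i}w,{}^{\da i}h}$ and $G_{w^{\da i},h^{\da i}}$. You spell out the vertex identification and the role of $\min(h(j),i)$ a bit more explicitly, which the paper leaves implicit under ``by definition.''
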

\begin{proof}
    Recall that for $1\le j_1<j_2\le k$, there is a directed edge $j_1\to j_2$ in $G_{w,h}$ if and only if
\[
j_1<j_2\le h(j_1)
\quad\text{and}\quad
w(j_1)<w(j_2).
\]
Since $w$ admits an $i$-block decomposition, if $j_1\le i<j_2$, then $w(j_1)>w(j_2)$, and hence no edge
$j_1\to j_2$ occurs in $G_{w,h}$, which implies that no vertex $j_2>i$ is reachable from any vertex $j_1\le i$ in $G_{w,h}$.

If $1\le j_1<j_2\le i$, then by definition of ${}^{\da i}w$ and ${}^{\da i}h$,
\[
j_1<j_2\le h(j_1)\ \text{and}\ w(j_1)<w(j_2)
\quad\Longleftrightarrow\quad
j_1<j_2\le {}^{\da i}h(j_1)\ \text{and}\ {}^{\da i}w(j_1)<{}^{\da i}w(j_2).
\]
Thus, $j_1\to j_2$ is an edge of $G_{w,h}$ if and only if it is an edge of
$G_{{}^{\da i}w,{}^{\da i}h}$.
Similarly, if $i<j_1<j_2\le k$, then $j_1\to j_2$ is an edge of $G_{w,h}$ if and only if it is an edge of
$G_{w^{\da i},h^{\da i}}$.
This proves the graph decomposition.
\end{proof}
\begin{lemma}\label{lem:block_generator}
    Let $h$ be a Hessenberg function on $[k]$. For given $0\leq i\leq k$, an $h$-admissible permutation $w\in\mathcal{G}_h$ admits an $i$-block decomposition if and only if there exists $u\in\mathcal{G}_{{}^{\da i}h}$ and $v\in\mathcal{G}_{h^{\da i}}$ such that \(^{\da i}w=u\) and \(w^{\da i}=v\).
\end{lemma}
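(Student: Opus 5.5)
The plan is to check the admissibility condition $w^{-1}(w(j)+1)\le h(j)$ index by index, sorting the indices $j$ by the block in which $j$ and $w(j)+1$ lie. The right-hand side only makes sense when $w$ admits an $i$-block decomposition, because ${}^{\da i}w$ and $w^{\da i}$ are defined only then. So the content of the ``if'' direction is simply that $w\in\Sn{k}^{[i]}$. The real content is the ``only if'' direction: if $w\in\Gh$ admits an $i$-block decomposition, then ${}^{\da i}w\in\mathcal{G}_{{}^{\da i}h}$ and $w^{\da i}\in\mathcal{G}_{h^{\da i}}$. I will also check the converse statement that $w=\iota(u,v)$ is $h$-admissible whenever $u$ and $v$ are, since this makes the correspondence bijective. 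The degenerate cases $i=0$ and $i=k$ follow from the stated conventions: one block is empty and the other is $w$ itself with $h$ unchanged.

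\emph{Left block.} Set $x={}^{\da i}w$ and take $j\le i$ with $x(j)\le i-1$. Then $w(j)=x(j)+(k-i)\le k-1$, and $w(j)+1\ge k-i+1$, so the value $w(j)+1$ lies in the top block. Hence $w^{-1}(w(j)+1)\le i$, and this index equals $x^{-1}(x(j)+1)$. Admissibility of $w$ gives $w^{-1}(w(j)+1)\le h(j)$. Combining the two bounds gives $x^{-1}(x(j)+1)\le\min(h(j),i)={}^{\da i}h(j)$.

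\emph{Right block.} Set $y=w^{\da i}$ and take $j'\le k-i$ with $y(j')\le k-i-1$. Put $j=i+j'$. Then $w(j)=y(j')$, and $w(j)+1\le k-i$, so $w^{-1}(w(j)+1)=i+y^{-1}(y(j')+1)$. Admissibility of $w$ gives $i+y^{-1}(y(j')+1)\le h(i+j')$, that is, $y^{-1}(y(j')+1)\le h^{\da i}(j')$.

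\emph{Converse.} The same two computations, read backwards, give the admissibility inequalities for $w$ at every $j$ except one. The exception is the unique $j>i$ with $w(j)=k-i$, where the successor value $k-i+1$ lies in the other block. There $w^{-1}(k-i+1)\le i<j\le h(j)$, so the inequality holds automatically. The only step needing care is this cross-block index; it is why the left block uses $\min(h(j),i)$ and the right block uses the shift by $i$. Everything else is bookkeeping with the definitions of ${}^{\da i}h$ and $h^{\da i}$.
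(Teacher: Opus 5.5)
Your proof is correct and takes essentially the same route as the paper: you check admissibility of ${}^{\downarrow i}w$ and $w^{\downarrow i}$ index by index for the forward direction. For the converse you glue $u$ and $v$ and reuse the same computations, plus the single cross-block index $j>i$ with $w(j)=k-i$, which is settled by $w^{-1}(k-i+1)\le i<j\le h(j)$.
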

\begin{proof}
    It suffices to prove for $1\leq i<k$. Suppose that $w\in\Gh$ admits an $i$-block decomposition. For $1\leq j\leq i$, the $i$-block decomposition implies $k-i<w(j)\leq k$.
If $w(j)<k$, then the $h$-admissibility of $w$ gives
\[
w^{-1}(w(j)+1)\leq h(j),
\]
and the block condition forces $w^{-1}(w(j)+1)\leq i$.
Hence
\[
w^{-1}(w(j)+1)\leq \min\{h(j),i\}={}^{\da i}h(j),
\]
so $^{\da i}w$ is $^{\da i}h$-admissible.
For $i+1\leq j\leq k$, we have $w(j)\leq k-i$. When $w^{\da i}(j)<k-i$, again by $h$-admissibility, we obtain
\[
(w^{\da i})^{-1}(w^{\da i}(j)+1)=w^{-1}(w(j)+1)-i\leq h(j)-i=h^{\da i}(j),
\]
which shows that $w^{\da i}$ is $h^{\da i}$-admissible. 

Now we prove the other direction. Suppose that $u\in\mathcal{G}_{{}^{\da i}h}$ and $v\in\mathcal{G}_{h^{\da i}}$. Then there exists a permutation $w\in\Sn{k}$ such that ${}^{\da i}w=u$ and $w^{\da i}=v$. We claim that $w\in \Gh$. If $1\leq w_j<k-i$, then
\[
w^{-1}(w_j+1)
=
v^{-1}(v_j+1)+i
\leq {h^{\da i}}(j)+i
=
h(j).
\]
If $w_j=k-i$, then $j>i$ and $h(j)>i$. So we have 
\[
w^{-1}(k-i+1)=u^{-1}(1)\leq {i}<h(j)
\]
Finally, if $k-i+1\leq w_j<k$, then
\[
w^{-1}(w_j+1)
=
w^{-1}(u_j+k-i+1)
=
u^{-1}(u_j+1)
\leq {}^{\da i}h(j)
\leq h(j).
\]
Therefore, $w$ is an $h$-admissible permutation.
\end{proof}

We now turn to the relationship between Hessenberg Schubert varieties and block
decompositions.

\begin{proposition}\label{prop:block_prod}
	Let $h$ be a Hessenberg function on $[k]$ and let $\iota$ be as in Lemma~\ref{lem:block_hess_embedding}. Assume 
$w\in\Sn{k}$ admits an $i$-block decomposition for some $1\leq i<k$. Then $\iota$ restricts to a $T$-equivariant isomorphism \[\Omega_{{}^{\da i}w,{}^{\da i}h}\times \Omega_{w^{\da i},h^{\da i}}\xrightarrow{\;\sim\;} \Omega_{w,h}.\] 
\end{proposition}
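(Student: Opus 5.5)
We first show that $\iota$ carries the minus cell $\Omega^\circ_{{}^{\da i}w,{}^{\da i}h}\times\Omega^\circ_{w^{\da i},h^{\da i}}$ isomorphically onto $\Omega^\circ_{w,h}$. Then we take closures.

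For the cells, use the description in Corollary~\ref{cor:reachable_entry}. A point of $\Omega^\circ_{w,h}$ has a unique representative $wx$ with $x$ lower unitriangular, $x_{a,b}=0$ unless $a$ is reachable from $b$ in $G_{w,h}$, and the reachable entries determined recursively. By the preceding lemma, $G_{w,h}=G_{{}^{\da i}w,{}^{\da i}h}\sqcup G_{w^{\da i},h^{\da i}}$, and no vertex $>i$ is reachable from a vertex $\le i$. Hence $x$ is block diagonal, $x=\diag(x_1,x_2)$, with $x_1\in\GL_i$ and $x_2\in\GL_{k-i}$ lower unitriangular and supported on the reachability patterns of the two smaller graphs.

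Since $w$ sends $[1,i]$ to $[k-i+1,k]$ and $[i+1,k]$ to $[1,k-i]$, its permutation matrix is block-anti-diagonal, $\begin{pmatrix}O&w^{\da i}\\ {}^{\da i}w&O\end{pmatrix}$. Therefore
\[
wx=\begin{pmatrix}O&w^{\da i}x_2\\ {}^{\da i}w\,x_1&O\end{pmatrix}=\iota({}^{\da i}w\,x_1,\;w^{\da i}x_2).
\]
The recursive formulas for the reachable entries in \cite[Corollary 3.9]{CHL1} come from the condition $x^{-1}w^{-1}Swx\in H_h$. The computation in Lemma~\ref{lem:block_hess_embedding} splits this condition into the two block conditions $x_1^{-1}({}^{\da i}w)^{-1}\,{}^{\da i}S\,({}^{\da i}w)x_1\in H_{{}^{\da i}h}$ and the analogous one for $x_2$. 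So $x_1$ and $x_2$ satisfy precisely the defining equations of the smaller minus cells.

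An alternative, more robust argument avoids relying on the exact form of the recursion. The opposite Schubert cell $B^-wB/B$ meets the image of $\iota$ in $\iota(B_i^-\,{}^{\da i}wB_i/B_i\times B_{k-i}^-w^{\da i}B_{k-i}/B_{k-i})$, which is seen by comparing rank conditions or via the block-diagonal $x$. Intersecting with $\Hess(S,h)$ and applying Lemma~\ref{lem:block_hess_embedding} then gives
\[
\iota^{-1}(\Omega^\circ_{w,h})=\Omega^\circ_{{}^{\da i}w,{}^{\da i}h}\times\Omega^\circ_{w^{\da i},h^{\da i}}.
\]
To get surjectivity onto $\Omega^\circ_{w,h}$, note that every point of the cell lies in the image of $\iota$, since its $x$ is block diagonal. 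This is the main obstacle: it needs the reachability vanishing from Corollary~\ref{cor:reachable_entry} together with the graph decomposition lemma. A dimension count gives a consistency check: $\dim\Omega^\circ_{w,h}=d_h-\ell_h(w)$, and the $h$-inversions and $h$-edges crossing the cut $i$ contribute equally to $d_h$ and to $\ell_h(w)$, so this equals the sum of the two smaller cell dimensions.

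Finally, $\iota$ is a closed embedding by Lemma~\ref{lem:block_hess_embedding}. The closure of the product of cells is therefore the product of the closures, and its image under $\iota$ is closed and equal to $\overline{\Omega^\circ_{w,h}}=\Omega_{w,h}$. The restriction $\Omega_{{}^{\da i}w,{}^{\da i}h}\times\Omega_{w^{\da i},h^{\da i}}\to\Omega_{w,h}$ is thus an isomorphism, and it is $T$-equivariant because $\iota$ is.
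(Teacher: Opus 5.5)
Your proposal is correct and follows essentially the same route as the paper: $x_{p,q}$ vanishes across the cut, so $x$ is block diagonal and $wx=\iota({}^{\da i}w\,x_1,\,w^{\da i}x_2)$; the defining conditions then split blockwise, and the closure step uses that $\iota$ is a closed embedding. Your appeal to the computation in Lemma~\ref{lem:block_hess_embedding} fills in a step the paper only asserts, namely why the remaining equations on $x_1,x_2$ are exactly those of the smaller cells. Also, the step you call the main obstacle is milder than you suggest: since $w(p)<w(q)$ for $q\le i<p$, the Schubert-cell condition alone already forces $x_{p,q}=0$, without any appeal to reachability.
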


\begin{proof}
Let $U_k$ denote the group of $k\times k$ lower triangular matrices with all diagonal entries equal to $1$.
Then, for a matrix $(x_{p,q})\in U_k$, $wx\in\Omega_{w,h}^\circ$ if and only if
\begin{enumerate}
\item $x_{p,q}=0$ whenever $p$ is not reachable from $q$ in $G_{w,h}$;
\item for each reachable pair $(q,p)$, the entry $x_{p,q}$ is determined recursively; an explicit recursive formula is given in~\cite[Corollary 3.9]{CHL1}.
\end{enumerate}
Hence, if $q\leq i  <p$, then the entry $x_{p,q}$ must vanish.
Since $x$ is lower triangular, this forces $x$ to be block diagonal of the form
\[
x=\begin{pmatrix}
x_1 & O\\
O & x_2
\end{pmatrix},
\qquad x_1\in U_i,\ x_2\in U_{k-i}.
\]
In particular, since $w$ admits an $i$-block decomposition, $wx$ is a block-anti-diagonal matrix of the form:
\[wx=\begin{pmatrix}O&w^{\da i}\\{}^{\da i}w&O\end{pmatrix}\begin{pmatrix}x_1&O\\O&x_2\end{pmatrix}=\begin{pmatrix}O&w^{\da i}x_2\\{}^{\da i}wx_1&O\end{pmatrix}.\]
Since the reachability conditions on the entries of $x_1$ and $x_2$ are precisely those determined by
the graphs $G_{{}^{\da i}w,{}^{\da i}h}$ and $G_{w^{\da i},h^{\da i}}$, respectively, we obtain
\[
\Omega_{w,h}^\circ
=
\iota(\Omega_{{}^{\da i}w,{}^{\da i}h}^\circ
\times
\Omega_{w^{\da i},h^{\da i}}^\circ).
\]
Since $\iota$ is the $T$-equivariant closed embedding coming from the block-anti-diagonal embedding~\eqref{eq:block_anti_diagonal}, we obtain
\[
\Omega_{w,h}
=\iota(
\Omega_{{}^{\da i}w,{}^{\da i}h}
\times
\Omega_{w^{\da i},h^{\da i}}).
\]
This completes the proof.
\end{proof}

The following is an immediate consequence of the proposition.
\begin{corollary}\label{cor:block_torus_action} 
If \(w\in\Sn{k}\) admits an \(i\)-block decomposition, then the following statements hold.
\begin{enumerate}
\item
The set of $T$-fixed points decomposes as
\[
\Omega_{w,h}^T=\Omega_{{}^{\da i}w,{}^{\da i}h}^{{}^{\da i}T}
\times
\Omega_{w^{\da i},h^{\da i}}^{T^{\da i}}
\] via $v=\iota({}^{\da i}v,v^{\da i})$.

\item
A one-dimensional $T$-orbit in $\Omega_{w,h}$ connects two fixed points
$u=\iota({}^{\da i}u,u^{\da i})$ and $v=\iota({}^{\da i}v,v^{\da i})$ if and only if it arises from a
one-dimensional orbit in exactly one factor, while the other factor is fixed.
\end{enumerate}
\end{corollary}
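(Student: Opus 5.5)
Both parts follow from Proposition~\ref{prop:block_prod}: the isomorphism $\iota$ is $T$-equivariant, so it identifies the fixed points and the one-dimensional $T$-orbits of $\Omega_{{}^{\da i}w,{}^{\da i}h}\times \Omega_{w^{\da i},h^{\da i}}$ with those of $\Omega_{w,h}$. The work is therefore to describe the fixed points and one-dimensional orbits of a product, and to check that $\iota$ matches them with the permutation labels.

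For (1), I compute the fixed points of the product under the product action $t\cdot(x_1,x_2)=({}^{\da i}t\cdot x_1,t^{\da i}\cdot x_2)$. The homomorphism $T\to {}^{\da i}T\times T^{\da i}$, $t\mapsto ({}^{\da i}t,t^{\da i})$, is an isomorphism. Hence a point $(x_1,x_2)$ is $T$-fixed if and only if $x_1$ is ${}^{\da i}T$-fixed and $x_2$ is $T^{\da i}$-fixed. This gives $(\Omega_{{}^{\da i}w,{}^{\da i}h}\times\Omega_{w^{\da i},h^{\da i}})^T=\Omega^{{}^{\da i}T}_{{}^{\da i}w,{}^{\da i}h}\times\Omega^{T^{\da i}}_{w^{\da i},h^{\da i}}$. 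Applying $\iota$ and using Proposition~\ref{prop:Hess}(1) for each factor, the fixed point $(x,y)\in\Sn{i}\times\Sn{k-i}$ goes to the permutation matrix $\begin{pmatrix}O&y\\x&O\end{pmatrix}$. This is exactly the permutation $v$ with ${}^{\da i}v=x$ and $v^{\da i}=y$, so $v=\iota({}^{\da i}v,v^{\da i})$ as claimed. Consistently with this, Remark~\ref{rem:block_upward_closed} shows that every $v\ge w$ admits an $i$-block decomposition.

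For (2), I transport the question through $\iota$ and work on the product, where the torus acts through the isomorphism above as the full product torus. Take a one-dimensional orbit $O=T\cdot(x_1,x_2)$. Its projections $T\cdot x_1$ and $T\cdot x_2$ are orbits of the factor tori, and $O$ maps onto their product. Since the torus splits as a product, $O$ is the product of the two factor orbits, so $\dim O=\dim(T\cdot x_1)+\dim(T\cdot x_2)$. Therefore exactly one factor orbit is one-dimensional and the other is a point, i.e.\ a fixed point. Conversely, if $C$ is a one-dimensional orbit in one factor and $p$ is a fixed point of the other, then $C\times\{p\}$ (or $\{p\}\times C$) is a one-dimensional $T$-orbit. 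Its closure adds two fixed points, which differ only in that one factor. Translating back with the labelling from (1) gives the stated description of the orbits connecting $u$ and $v$.

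The only delicate point is the dimension count in (2). It depends on the acting torus being the full product ${}^{\da i}T\times T^{\da i}$, rather than a diagonal subtorus, and the coordinate splitting $t\mapsto({}^{\da i}t,t^{\da i})$ guarantees this. Once that is in place, the remaining steps are bookkeeping.
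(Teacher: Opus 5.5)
Your proposal is correct and follows the paper's own route. The paper records this corollary as an immediate consequence of the $T$-equivariant isomorphism in Proposition~\ref{prop:block_prod} and gives no further argument. You supply the routine details it leaves implicit: $T\cong {}^{\da i}T\times T^{\da i}$ acts as the full product torus, so fixed points and orbits of the product split factorwise, and the permutation matrix $\iota(x,y)$ carries the labels ${}^{\da i}v=x$, $v^{\da i}=y$.
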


Let $N_\iota$ be the equivariant Euler class of the normal bundle of
the embedding $\iota$ in Lemma~\ref{lem:block_hess_embedding}.
Then at each $T$-fixed point, the $T$-equivariant Euler class $e^T(N_\iota)$ of $N_\iota$ is given as follows.

\begin{lemma}\label{lem:equiv_euler}
For $(x,y)\in \Sn{i}\times \Sn{k-i}$, let $v\in \Sn{k}$ be the unique
permutation satisfying ${}^{\da i}v=x$ and $v^{\da i}=y$.
Then we have
\[
\left.e^T(N_\iota)\right|_{(x,y)}
=
\prod_{1\le a\le i< b\le h(a)}
\bigl(t_{v(a)}-t_{v(b)}\bigr).
\]
\end{lemma}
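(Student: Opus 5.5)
The plan is to compute the normal bundle at a fixed point as a quotient of tangent spaces. At the fixed point $v=\iota(x,y)$ we have
\[
e^T(N_\iota)|_v=\frac{e^T(T_v\Hess(S,h))}{e^T(T_{(x,y)}(\Hess({}^{\da i}S,{}^{\da i}h)\times\Hess(S^{\da i},h^{\da i})))}.
\]
Both factors on the right are products of $T$-weights. The argument then reduces to matching weights, which we read off from the GKM graph (Proposition~\ref{prop:Hess}). Since all spaces are smooth GKM spaces, the tangent space at a fixed point splits into one-dimensional weight spaces, one for each GKM edge at that point. So $N_\iota|_v$ is the sum of the weight lines tangent to $\Hess(S,h)$ at $v$ but not tangent to the image of $\iota$.

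First I list the tangent weights of $\Hess(S,h)$ at $v$. These are indexed by the edges $\{v,vs_{a,b}\}$ with $a<b\le h(a)$, and the weight is $\pm(t_{v(a)}-t_{v(b)})$. The sign convention follows the standard description: $T_v\Hess(S,h)$ is $v\cdot(H_h/\mathfrak b)$, with weights $t_{v(b)}-t_{v(a)}$ or their negatives up to a uniform convention. I will fix the sign so that it agrees with the convention producing $t_{v(a)}-t_{v(b)}$, consistent with the formula $\sigma^T_{w,h}(w)=\prod\alpha$ in Proposition~\ref{prop:computing the value}.

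Next I sort the pairs $(a,b)$ with $a<b\le h(a)$ into three types: $a<b\le i$, $i<a<b$, and $a\le i<b$. By Corollary~\ref{cor:block_torus_action}(2) and Lemma~\ref{lem:block_hess_embedding}, the first two types are exactly the tangent weights of the two factors at $x$ and $y$. For the first type, the condition $b\le h(a)$ with $b\le i$ is the same as $b\le{}^{\da i}h(a)$. For the second type, $b\le h(a)$ is the same as $b-i\le h^{\da i}(a-i)$. The $T$-action on the product is twisted so that the weights match, since $\iota$ is equivariant. The third type is exactly the set of edges leaving the image, because $vs_{a,b}$ with $a\le i<b$ no longer admits an $i$-block decomposition.

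Cancelling the first two types leaves the product over $1\le a\le i<b\le h(a)$ of $(t_{v(a)}-t_{v(b)})$. The main obstacle is getting the sign of each weight right, that is, showing that the normal weights are $t_{v(a)}-t_{v(b)}$ and not their negatives. I would settle this by a direct computation in the chart $vU_k$ around $v$, where $U_k$ is the lower unitriangular group as in the proof of Proposition~\ref{prop:block_prod}. The normal directions are the entries $x_{b,a}$ with $a\le i<b\le h(a)$, and the torus acts on such a coordinate by $t_{v(b)}/t_{v(a)}$ or its inverse, depending on the chosen orientation. Matching this against the paper's convention for $\alpha$ in Definition~\ref{def:GKM graph} fixes the sign.
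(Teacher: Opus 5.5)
Your bookkeeping of the pairs $(a,b)$ is the paper's argument. The pairs with $a<b\le i$ or $i<a<b$ are the tangent weights of the two factors, via $b\le {}^{\da i}h(a)\iff b\le h(a)$, via $b-i\le h^{\da i}(a-i)\iff b\le h(a)$, and via the character shift $e_r\mapsto e_{k-i+r}$ on the first factor. The cross pairs give $N_\iota$. The paper uses the split sequence $0\to T_xX\oplus T_yY\to T_vZ\to (N_\iota)_{(x,y)}\to 0$ rather than a ratio of Euler classes, which is only cosmetic. For the factor weights, apply the tangent-space description to $X$ and $Y$ directly, as the paper does. Corollary~\ref{cor:block_torus_action}(2) is about $\Omega_{w,h}$, not all of $X\times Y$.

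The genuine gap is the sign. You rightly call it the main obstacle, but then settle it by fiat: you ``fix the sign'' to get $t_{v(a)}-t_{v(b)}$ and assert this agrees with $\sigma^T_{w,h}(w)=\prod\alpha$. It does not, and the chart check you propose shows why.

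With $ve_j=e_{v(j)}$ we have $t\cdot vxB=v\,(t'xt'^{-1})B$, where $t'=v^{-1}tv=\diag(t_{v(1)},\dots,t_{v(k)})$. So the coordinate $x_{b,a}$ is scaled by $t_{v(b)}/t_{v(a)}$.

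The normalization of the $t_j$ is forced by the smallest case $k=2$, $h=(2,2)$, $i=1$. There $X\times Y$ is a point and $\iota(X\times Y)=\{w_0\}=\Omega_{w_0,h}$. Hence $e^T(N_\iota)|_{w_0}=\sigma^T_{w_0,h}(w_0)=\alpha(\{e,w_0\})=t_1-t_2=t_{v(2)}-t_{v(1)}$. The displayed formula gives the opposite, $t_{v(1)}-t_{v(2)}$.

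A genuine normal bundle behaves the same way. Take $k=3$, $h=(2,3,3)$, $v=3\,2\,1$. Proposition~\ref{prop:computing the value}(2) gives $\sigma^T_{v,h}(v)=(t_2-t_3)(t_1-t_2)$. The factor $Y$ is $\mathbb{P}^1$ with tangent weight $t_1-t_2$ at $2\,1$, so the normal weight is forced to be $t_2-t_3$, not $t_3-t_2$.

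Carried out honestly, your method proves
\[
e^T(N_\iota)|_{(x,y)}=\prod_{1\le a\le i<b\le h(a)}\bigl(t_{v(b)}-t_{v(a)}\bigr)=(-1)^{N}\prod_{1\le a\le i<b\le h(a)}\bigl(t_{v(a)}-t_{v(b)}\bigr),\qquad N=\bigl|\{(a,b)\mid 1\le a\le i<b\le h(a)\}\bigr|.
\]
The paper's proof never meets this issue, because it imports the weights $e_{v(a)}-e_{v(b)}$ from De Mari--Procesi--Shayman without reconciling them with the GKM labels $\alpha$.

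The discrepancy is a global sign, independent of $v$. It is therefore harmless for the only use of the lemma, the $s_j$-invariance ($j\neq k-i$) in Theorem~\ref{thm:separation}. Still, you cannot claim that matching conventions confirms the displayed sign. State the result with factors $t_{v(b)}-t_{v(a)}$, or say it holds up to $(-1)^N$.
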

\begin{proof}
For simplicity, let $X=\Hess({}^{\da i}S,{}^{\da i}h)$, $Y=\Hess(S^{\da i},h^{\da i})$, and $Z=\Hess(S,h)$. 
Recall that for a $T$-fixed point $v\in \Sn{k}$, the tangent space $T_vZ$ of $Z=\Hess(S,h)$ at $v$ is of the form
\[
T_vZ= 
\bigoplus_{1\le a<b\le h(a)} \C_{e_{v(a)}-e_{v(b)}}\quad\text{ as $T$-modules},
\]
where $\C_{\alpha}$ denotes the one-dimensional $T$-module determined by $\alpha$, see~\cite{DPS}. 

For $(x,y)\in\Sn{i}\times\Sn{k-i}$, let $v\in\Sn{k}^{[i]}$ be the unique permutation satisfying $x={}^{\da i}v$ and $y=v^{\da i}$. Then, by definition,
\[
x(a)=v(a)-(k-i)
\quad\text{for }1\le a\le i,
\qquad
y(c)=v(i+c)
\quad\text{for }1\le c\le k-i.
\]

Note that $T=T^{\da i}\times {}^{\da i}T$, where ${}^{\da i}T$ acts on $X$ and $T^{\da i}$ acts on $Y$. Hence for $1\leq r\leq i$, the character $e_{x(r)}$ of ${}^{\da i}T$ is identified with the character $e_{k-i+x(r)}$ of $T$. Similarly, for $1\leq s\leq k-i$, the character $e_{y(s)}$ of $T^{\da i}$ is identified with the character $e_{y(s)}$ of $T$. Therefore, the $T$-weights of $T_xX$ are identified as
\[T_{x}X\cong\bigoplus_{\substack{1\le a<b\le i\\ b\le {}^{\da i}h(a)}} \C_{e_{k-i+x(a)}-e_{k-i+x(b)}}= \bigoplus_{\substack{1\le a<b\le i\\ b\le h(a)}} \C_{e_{v(a)}-e_{v(b)}}.\] 
Here the condition $b\le {}^{\da i}h(a)$ is equivalent to $b\le h(a)$, since
\(
{}^{\da i}h(a)=\min(h(a),i)
\)
and $b\le i$.
Similarly, the $T$-weights of $T_yY$ are identified as
\[T_{y}Y=\bigoplus_{\substack{1\le c<d\le k-i\\ d\le h^{\da i}(c)}} \C_{e_{y(c)}-e_{y(d)}}\cong \bigoplus_{\substack{i+1\le a<b\le k\\ b\le h(a)}} \C_{e_{v(a)}-e_{v(b)}}\quad (\text{by taking } a=c+i\text{ and }b=d+i).
\]

For the embedding \(
\iota\colon X\times Y\hookrightarrow Z
\)
from Lemma~\ref{lem:block_hess_embedding}, the fixed point $(x,y)$ maps to $v$. Hence, we have an exact sequence of $T$-modules
\[
0
\longrightarrow
T_xX\oplus T_yY
\longrightarrow
T_vZ
\longrightarrow
(N_\iota)_{(x,y)}
\longrightarrow
0.
\]
Since $T$ is diagonalizable, this sequence splits as a sequence of $T$-modules. Comparing the above weight decompositions, the weights appearing in $T_xX\oplus T_yY$ are precisely those corresponding to pairs
\[
1\le a<b\le i
\qquad\text{or}\qquad
i+1\le a<b\le k
\]
with $b\le h(a)$. Thus the remaining weights in $T_vZ$ are exactly those corresponding to pairs
\[
1\le a\le i< b\le h(a).
\]
Therefore
\[
(N_\iota)_{(x,y)}
\cong
\bigoplus_{1\le a\le i< b\le h(a)}
\C_{e_{v(a)}-e_{v(b)}}.
\]
Thus the restriction of the $T$-equivariant Euler class to the fixed point $(x,y)$ is
\[
\left.e^T(N_\iota)\right|_{(x,y)}
=
\prod_{1\le a\le i< b\le h(a)}
\bigl(t_{v(a)}-t_{v(b)}\bigr).
\]
This proves the lemma.
\end{proof}

We are now ready to prove the blockwise description of the dot action of \(s_j\), $j\neq k-i$. We use the index-shift homomorphism
\[
{\operatorname{sh}}_{k-i}:\C[t_1,\dots,t_i]\longrightarrow \C[t_1,\dots,t_k]
\]
defined by $\operatorname{sh}_{k-i}(t_j)=t_{k-i+j}$.

\begin{theorem}\label{thm:separation}
Let $h$ be a Hessenberg function on $[k]$, and let $w\in \Sn{k}$ admit an
$i$-block decomposition.
Then, after $T$-equivariant localization, the dot action of $s_j$, $j\neq k-i$, on
$\sigma^T_{w,h}$ is determined blockwise. More precisely, for $j\neq k-i$ and $v\in\Sn{k}$, we have
\[
(s_j\cdot \sigma_{w,h}^T)(v)=0
\qquad
\text{if } v\notin \Sn{k}^{[i]}.
\]
If $v\in \Sn{k}^{[i]}$, then
\[
(s_j\cdot \sigma_{w,h}^T)(v)
=
\begin{cases}
\displaystyle
{\operatorname{sh}}_{k-i}\bigl((s_{j-(k-i)}\cdot \sigma_{{}^{\da i}w,{}^{\da i}h}^T)({}^{\da i}v)\bigr)\,
\sigma_{w^{\da i},h^{\da i}}^T(v^{\da i})\,
\left.e^T(N_\iota)\right|_{({}^{\da i}v,v^{\da i})},
& \text{if } j>k-i,\\[1.2em]
\displaystyle
{\operatorname{sh}}_{k-i}\bigl(\sigma_{{}^{\da i}w,{}^{\da i}h}^T({}^{\da i}v)\bigr)\,
\bigl((s_j\cdot \sigma_{w^{\da i},h^{\da i}}^T)(v^{\da i})\bigr)\,
\left.e^T(N_\iota)\right|_{({}^{\da i}v,v^{\da i})},
& \text{if } j<k-i.
\end{cases}
\]
\end{theorem}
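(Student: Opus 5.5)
The plan has two stages. First we establish a product formula for $\sigma^T_{w,h}$ itself, and then we push the dot action of $s_j$ through that formula.

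\emph{Stage one: the product formula.} We claim that for every $v\in\Sn{k}$,
\[
\sigma^T_{w,h}(v)=
\begin{cases}
\operatorname{sh}_{k-i}\bigl(\sigma^T_{{}^{\da i}w,{}^{\da i}h}({}^{\da i}v)\bigr)\,\sigma^T_{w^{\da i},h^{\da i}}(v^{\da i})\,\left.e^T(N_\iota)\right|_{({}^{\da i}v,v^{\da i})}, & v\in\Sn{k}^{[i]},\\
0, & v\notin \Sn{k}^{[i]}.
\end{cases}
\]
Vanishing off $\Sn{k}^{[i]}$ is immediate. Indeed, $\supp(\sigma^T_{w,h})=\Omega_{w,h}^T\subset[w,w_0]$ by Proposition~\ref{prop:computing the value}(1), and Remark~\ref{rem:block_upward_closed} says every element of $[w,w_0]$ admits an $i$-block decomposition.

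For the formula on $\Sn{k}^{[i]}$, we use Proposition~\ref{prop:block_prod}: $\Omega_{w,h}=\iota_*\bigl(\Omega_{{}^{\da i}w,{}^{\da i}h}\times\Omega_{w^{\da i},h^{\da i}}\bigr)$ as cycles, so $\sigma^T_{w,h}=\iota_*(p_1^*\sigma^T_{{}^{\da i}w}\cdot p_2^*\sigma^T_{w^{\da i}})$. The class of a product of invariant subvarieties in $X\times Y$ is the external product of their classes. The equivariant pushforward along a closed embedding of smooth $T$-varieties, localized at a fixed point $\iota(x,y)$, multiplies by $e^T(N_\iota)|_{(x,y)}$, which Lemma~\ref{lem:equiv_euler} computes. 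The shift $\operatorname{sh}_{k-i}$ records the identification of characters of ${}^{\da i}T$ with those of $T$ made in Lemma~\ref{lem:equiv_euler}.

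\emph{Stage two: applying $s_j$.} By definition of the dot action,
\[
(s_j\cdot\sigma^T_{w,h})(v)=\sigma^T_{w,h}(s_jv)\big|_{t_j\leftrightarrow t_{j+1}}.
\]
Suppose $j\neq k-i$. Then $s_j$ swaps two values lying both in $[1,k-i]$ or both in $[k-i+1,k]$, so $s_j$ preserves $\Sn{k}^{[i]}$ and its complement. This gives the vanishing for $v\notin\Sn{k}^{[i]}$. For $v\in\Sn{k}^{[i]}$ we split into two cases.

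\begin{itemize}
\item If $j>k-i$, then ${}^{\da i}(s_jv)=s_{j-(k-i)}\,{}^{\da i}v$ and $(s_jv)^{\da i}=v^{\da i}$. The swap $t_j\leftrightarrow t_{j+1}$ does not touch $\sigma^T_{w^{\da i}}(v^{\da i})$, since that factor only involves $t_1,\dots,t_{k-i}$. On the first factor, this swap composed with $\operatorname{sh}_{k-i}$ is $\operatorname{sh}_{k-i}$ composed with the swap $t_{j-(k-i)}\leftrightarrow t_{j-(k-i)+1}$, which produces exactly $\operatorname{sh}_{k-i}\bigl((s_{j-(k-i)}\cdot\sigma^T_{{}^{\da i}w})({}^{\da i}v)\bigr)$.
\item If $j<k-i$, the argument is symmetric: $s_j$ acts only on the right block, so $(s_jv)^{\da i}=s_j\,v^{\da i}$, while the first factor is unaffected.
\end{itemize}

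It remains to check that the Euler factor is compatible. Write $e_v=e^T(N_\iota)|_{({}^{\da i}v,v^{\da i})}$. The factor $e_{s_jv}$ is $\prod_{a\le i<b\le h(a)}(t_{s_jv(a)}-t_{s_jv(b)})$. Applying $t_j\leftrightarrow t_{j+1}$ replaces each $t_{s_j(c)}$ by $t_c$, so the result is exactly $e_v$. This holds for any $j$ and needs no further case analysis.

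\emph{Main obstacle.} The delicate step is the first one: justifying the product formula, and in particular the normalization of the pushforward. One must check that the classes $\sigma^T_{{}^{\da i}w}$ and $\sigma^T_{w^{\da i}}$ are taken with respect to the tori ${}^{\da i}T$ and $T^{\da i}$ embedded as in Lemma~\ref{lem:block_hess_embedding}, so that the localization formula $\iota_*(\alpha)|_{\iota(p)}=\alpha|_p\cdot e^T(N_\iota)|_p$ applies with the shifted variables. A sanity check is to compare degrees. Codimensions add: $\ell_h(w)=\ell_{{}^{\da i}h}({}^{\da i}w)+\ell_{h^{\da i}}(w^{\da i})+\#\{a\le i<b\le h(a)\}$, because every cross pair with $a\le i<b$ is an inversion under the block condition. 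This matches Proposition~\ref{prop:computing the value}(3).
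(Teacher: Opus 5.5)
Your proposal is correct and follows essentially the same route as the paper: you derive the product formula for $\sigma^T_{w,h}$ from Proposition~\ref{prop:block_prod}, the external product of classes, and the self-intersection formula $\iota^*\iota_*\alpha=\alpha\cdot e^T(N_\iota)$, and then use that $s_j$ with $j\neq k-i$ preserves $\Sn{k}^{[i]}$, acts on only one block, and leaves the Euler factor invariant. The only cosmetic difference is that you obtain the vanishing off $\Sn{k}^{[i]}$ from $\supp(\sigma^T_{w,h})\subset[w,w_0]$ together with Remark~\ref{rem:block_upward_closed}, whereas the paper reads it off from the localization of $\iota_*$.
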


\begin{proof}
Let $X=\Hess({}^{\da i}S,{}^{\da i}h)$,
$Y=\Hess(S^{\da i},h^{\da i})$, and
$Z=\Hess(S,h)$. Let 
$\pi_X\colon X\times Y\to X$ and $\pi_Y\colon X\times Y\to Y$ be the natural projections, and consider the embedding
\(
\iota\colon X\times Y \hookrightarrow Z
\) in Lemma~\ref{lem:block_hess_embedding}.
Under the identification of fixed points with permutations, we have $\iota((X\times Y)^T)=\Sn{k}^{[i]}$. Moreover,
\[
\begin{aligned}
\dim_\C Z-\dim_\C (X\times Y)&=d_h-(d_{{}^{\da i}h}+d_{h^{\da i}})\\
&=|\{(a,b)\mid 1\leq a\leq i <b \leq h(a)\}|\\
&=\ell_h(w)-(\ell_{{}^{\da i}h}({}^{\da i}w)+\ell_{h^{\da i}}(w^{\da i})).
\end{aligned}
\]

We use the identification $T=T^{\da i}\times {}^{\da i}T$ to regard the external products below as $T$-equivariant classes on $X\times Y$. Consider the following commutative diagram:
\[
\begin{tikzcd}[column sep=1.5em, row sep=large]
A_{{}^{\da i}T}^{\ell_{{}^{\da i}h}({}^{\da i}w)}(X)_\C \otimes_\C A_{T^{\da i}}^{\ell_{h^{\da i}}(w^{\da i})}(Y)_\C
  \arrow[r, "\boxtimes"]
  \arrow[d, "cl_X^{{}^{\da i}T}\otimes cl_Y^{T^{\da i}}"']
&
A_T^{\ell_{{}^{\da i}h}({}^{\da i}w)+\ell_{h^{\da i}}(w^{\da i})}(X\times Y)_\C
  \arrow[r, "\iota_*"]
  \arrow[d, "cl_{X\times Y}^{T}"']
&
A_T^{\ell_{h}(w)}(Z)_\C
  \arrow[d, "cl_Z^{T}"']
\\
H_{{}^{\da i}T}^{2{\ell_{{}^{\da i}h}({}^{\da i}w)}}(X)\otimes_\C H_{T^{\da i}}^{2{\ell_{h^{\da i}}(w^{\da i})}}(Y)
  \arrow[r, "\times"]
&
H_T^{2(\ell_{{}^{\da i}h}({}^{\da i}w)+\ell_{h^{\da i}}(w^{\da i}))}(X\times Y)
  \arrow[r, "\iota_*"]
&
H_T^{2\ell_{h}(w)}(Z),
\end{tikzcd}
\]
where the cohomology external product $\times$ (resp. Chow external product $\boxtimes$) is defined by
\[
\alpha\times\beta
\coloneqq  \pi_X^*(\alpha)\,\cup\,\pi_Y^*(\beta)
\quad
(\text{resp.\ }\alpha\boxtimes\beta\coloneqq \pi_X^*(\alpha)\cdot \pi_Y^*(\beta)).
\]
Here, $\iota_*$ denotes the proper pushforward induced by $\iota$. 

By Proposition~\ref{prop:block_prod}, the embedding $\iota$ restricts to a $T$-equivariant isomorphism \[\Omega_{{}^{\da i}w,{}^{\da i}h}\times \Omega_{w^{\da i},h^{\da i}}\xrightarrow{\;\sim\;} \Omega_{w,h}.\] Therefore, in equivariant
Chow groups, we have
\[
[\Omega_{w,h}]
=
\iota_*\Big(
\pi_X^*\big([\Omega_{{}^{\da i}w,{}^{\da i}h}]\big)
\cdot
\pi_Y^*\big([\Omega_{w^{\da i},h^{\da i}}]\big)
\Big)
\quad\text{in } A_T^*(Z)_\C.
\]

Applying the equivariant cycle class isomorphism
$cl_Z^T\colon A_T^*(Z)_\C\to H_T^{2*}(Z)$ and using its
compatibility with external products and proper pushforwards, we obtain
\[
\sigma_{w,h}^T
=
\iota_*\Big(
\pi_X^*\big(\sigma_{{}^{\da i}w,{}^{\da i}h}^T\big)
\;\cup\;
\pi_Y^*\big(\sigma_{w^{\da i},h^{\da i}}^T\big)
\Big)
\quad\text{in } H_T^{2*}(Z).
\]

By the self-intersection formula, we further have
\[
\iota^*(\sigma_{w,h}^T)
=
\pi_X^*\big(\sigma_{{}^{\da i}w,{}^{\da i}h}^T\big)
\;\cup\;
\pi_Y^*\big(\sigma_{w^{\da i},h^{\da i}}^T\big)
\;\cup\;
e^T(N_\iota) \quad\text{in }H_T^{2*}(X\times Y),
\]
see \cite[Appendix~A.6]{AF}.

Note that under the identification \(T\cong T^{\da i}\times {}^{\da i}T\), pulling back a class from \(X\) to \(X\times Y\) and regarding it as a \(T\)-equivariant class shifts the equivariant parameters by
\[
\operatorname{sh}_{k-i}\colon \C[t_1,\dots,t_i]\longrightarrow \C[t_1,\dots,t_k],
\qquad
t_j\longmapsto t_{j+k-i}.
\]
By contrast, pulling back a class from \(Y\) to \(X\times Y\) leaves the equivariant parameters unchanged, via the inclusion
\[
\C[t_1,\dots,t_{k-i}]\hookrightarrow \C[t_1,\dots,t_k],
\qquad
t_j\longmapsto t_j.
\]

After applying \(T\)-equivariant localization, the equality
\[
\sigma_{w,h}^T
=
\iota_*\Big(
\pi_X^*\big(\sigma_{{}^{\da i}w,{}^{\da i}h}^T\big)
\cup
\pi_Y^*\big(\sigma_{w^{\da i},h^{\da i}}^T\big)
\Big)
\]
can be evaluated at each fixed point \(v\in Z^T\). Since
\(
\iota((X\times Y)^T)=\Sn{k}^{[i]},
\)
the localization formula for the proper pushforward \(\iota_*\) gives
\[\sigma_{w,h}^T(v)=\begin{cases}
 {\operatorname{sh}}_{k-i}(\sigma_{^{\da i}w,{}^{\da i}h}^T({}^{\da i}v))\,\sigma_{w^{\da i},h^{\da i}}^T(v^{\da i})\,\left.e^T(N_\iota)\right|_{({}^{\da i}v,v^{\da i})},&\text{ if }v\in \Sn{k}^{[i]},\\
 0,&\text{ otherwise}.
\end{cases}
 \]

Recall that the dot action of \(\Sn{k}\) on \(H_T^*(\Hess(S,h))\) is given by
\[
(u\cdot \sigma)(v)
\coloneqq
\sigma(u^{-1}v)\bigl(t_{u(1)},\dots,t_{u(k)}\bigr),
\qquad u\in \Sn{k}.
\]
If \(j\neq k-i\), then \(s_j\) preserves the two blocks of values
\[
\{1,\dots,k-i\}
\quad\text{and}\quad
\{k-i+1,\dots,k\}.
\]
Hence \(s_j\) preserves the subset \(\Sn{k}^{[i]}\subset\Sn{k}\). Therefore, if \(v\notin\Sn{k}^{[i]}\), then \(s_jv\notin\Sn{k}^{[i]}\), and consequently
\[
(s_j\cdot \sigma_{w,h}^T)(v)=0.
\]

Now assume that \(v\in\Sn{k}^{[i]}\). The $T$-equivariant Euler class $e^T(N_\iota)$ is invariant under the dot action of \(s_j\) for \(j\neq k-i\), in the sense that
\[
\left.
(s_j\cdot e^T(N_\iota))
\right|_{({}^{\da i}v,v^{\da i})}
=
\left.e^T(N_\iota)\right|_{({}^{\da i}v,v^{\da i})}.
\]
Indeed, \(s_j\) only permutes weights within one of the two blocks, and hence preserves the product of the normal weights
\[
\prod_{1\le a\le i<b\le h(a)}
\bigl(t_{v(a)}-t_{v(b)}\bigr).
\]
Thus the above localized formula yields the following blockwise formulas. If \(j>k-i\), then \(s_j\) acts on the first factor as \(s_{j-(k-i)}\), and we get
\[
(s_j\cdot \sigma_{w,h}^T)(v)
=
\operatorname{sh}_{k-i}
\Bigl(
(s_{j-(k-i)}\cdot
\sigma_{{}^{\da i}w,{}^{\da i}h}^T)({}^{\da i}v)
\Bigr)\,
\sigma_{w^{\da i},h^{\da i}}^T(v^{\da i})\,
\left.e^T(N_\iota)\right|_{({}^{\da i}v,v^{\da i})}.
\]
If \(j<k-i\), then \(s_j\) acts on the second factor, and we get
\[
(s_j\cdot \sigma_{w,h}^T)(v)
=
\operatorname{sh}_{k-i}
\bigl(
\sigma_{{}^{\da i}w,{}^{\da i}h}^T({}^{\da i}v)
\bigr)\,
(s_j\cdot \sigma_{w^{\da i},h^{\da i}}^T)(v^{\da i})\,
\left.e^T(N_\iota)\right|_{({}^{\da i}v,v^{\da i})}.
\]
This proves the stated blockwise formulas and completes the proof.
\end{proof}

\begin{definition} Let  $h_1$ and $h_2$ be two  Hessenberg functions on $[k_1]$ and $[k_2]$, respectively, and $u\in \Sn{k_1}$, $v\in\Sn{k_2}$ be two permutations. Then we define another Hessenberg function on $[k_1+k_2]$ and a permutation in $\Sn{k_1+k_2}$ as follows:
\[(h_1 : h_2)\coloneqq (h_1(1),\dots,h_1(k_1-1),h_1(k_1)+1,h_2(1)+k_1,\dots,h_2(k_2)+k_1)\,,\]
\[(u : v)\coloneqq (u_1+k_2)\ \dots \ (u_{k_1}+k_2)\ v_1 \ \dots\ v_{k_2}.\]

We also use the notation $(\sigma_u: \sigma_v)$ to denote the class $\sigma_{(u:v), h}$ where $h=(h_1:h_2)$. We then extend the definition of $(\sigma_u: \sigma_v)$ linearly; 
\[ \left(\big(\sum_{u\in\Sn{k_1}} c_u\sigma_u\big) : \big(\sum_{v\in\Sn{k_2}} d_v\sigma_v \big)\right)\coloneqq \sum_{u\in\Sn{k_1}}\sum_{v\in\Sn{k_2}} c_u d_v (\sigma_u: \sigma_v)\,\,\,\,\text{for }\,\,\, c_u, d_v\in \mathbb{C}\,.\]
\end{definition}

Applying Theorem~\ref{thm:separation} to $h=(h_1:h_2)$, we obtain the following.
\begin{corollary}
    Let  $h_1$ and $h_2$ be two  Hessenberg functions on $[k_1]$ and $[k_2]$, respectively, and $u\in \Sn{k_1}$, $v\in\Sn{k_2}$ be two permutations. If $h=(h_1:h_2)$, then
    \[
    s_i\cdot \sigma_{(u:v)} 
    =\begin{cases}
        ((s_{i-k_2}\cdot \sigma_u) :\sigma_v) & \text{ for }k_2<i<k_1+k_2,\\
        (\sigma_u:(s_i\cdot\sigma_v))&\text{ for }i<k_2.
    \end{cases}
    \]
\end{corollary}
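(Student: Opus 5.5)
The plan is to apply Theorem~\ref{thm:separation} with $k=k_1+k_2$, $h=(h_1:h_2)$ and block size $i=k_1$, so that $k-i=k_2$. The hypotheses need three checks.

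\emph{The permutation splits.} By definition, $(u:v)$ takes the values $\{k_2+1,\dots,k\}$ on positions $1,\dots,k_1$ and the values $\{1,\dots,k_2\}$ on positions $k_1+1,\dots,k$. Hence $(u:v)\in\Sn{k}^{[k_1]}$, with ${}^{\da k_1}(u:v)=u$ and $(u:v)^{\da k_1}=v$.

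\emph{The Hessenberg function splits.} For $j<k_1$ we have $h(j)=h_1(j)\le k_1$, so ${}^{\da k_1}h(j)=h_1(j)$. For $j=k_1$ we have $h(k_1)=h_1(k_1)+1=k_1+1$, and $\min(k_1+1,k_1)=k_1=h_1(k_1)$. Thus ${}^{\da k_1}h=h_1$. Also $h^{\da k_1}(j)=h_2(j)+k_1-k_1=h_2(j)$.

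\emph{The indices match.} The excluded index $k-i$ is exactly $k_2$. The range $k_2<i<k_1+k_2$ is the case ``$j>k-i$'' of the theorem, with shifted index $i-k_2$. The range $i<k_2$ is the case ``$j<k-i$''.

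The main step is to turn the pointwise localization formula of Theorem~\ref{thm:separation} into an identity of classes. I would package the proof of that theorem as a bilinear map
\[
\Phi\colon H^*_{{}^{\da k_1}T}(X)\otimes H^*_{T^{\da k_1}}(Y)\to H_T^*(Z),\qquad \Phi(a\otimes b)=\iota_*\bigl(\pi_X^*a\cup\pi_Y^*b\bigr).
\]
Its localization at $v$ is $\operatorname{sh}_{k_2}(a({}^{\da k_1}v))\,b(v^{\da k_1})\,e^T(N_\iota)|_{({}^{\da k_1}v,v^{\da k_1})}$ on $\Sn{k}^{[k_1]}$ and $0$ elsewhere. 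The theorem's proof shows $\Phi(\sigma^T_{u'}\otimes\sigma^T_{v'})=\sigma^T_{(u':v'),h}$ for \emph{every} $u'\in\Sn{k_1}$ and $v'\in\Sn{k_2}$, since $(u':v')$ always admits a $k_1$-block decomposition. Theorem~\ref{thm:separation} then reads, pointwise at every $v$,
\[
s_j\cdot\Phi(\sigma_u^T\otimes\sigma_v^T)=\Phi\bigl((s_{j-k_2}\cdot\sigma_u^T)\otimes\sigma_v^T\bigr)\qquad(j>k_2),
\]
and similarly for $j<k_2$. This pointwise agreement uses the $s_j$-invariance of the normal Euler class proved there. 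Because localization $H_T^*(Z)\to\bigoplus_{v}\C[t]$ is injective (Proposition~\ref{prop:Hess}), the pointwise equality is an equality of equivariant classes.

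Next I expand $s_{j-k_2}\cdot\sigma^T_u=\sum_{u'}c_{u'}(t)\,\sigma^T_{u'}$ with polynomial coefficients, using Proposition~\ref{prop:basis}. Bilinearity of $\Phi$ then gives
\[
s_j\cdot\sigma^T_{(u:v)}=\sum_{u'}\operatorname{sh}_{k_2}(c_{u'})\,\sigma^T_{(u':v)}.
\]
The final step passes to ordinary cohomology by quotienting by $(t_1,\dots,t_k)$. Under this quotient, $\operatorname{sh}_{k_2}(c_{u'})$ and $c_{u'}$ have the same constant term, so the image is $\sum_{u'}c_{u'}(0)\,(\sigma_{u'}:\sigma_v)=((s_{j-k_2}\cdot\sigma_u):\sigma_v)$, using the linear extension of $(\,\cdot:\cdot\,)$. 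The case $j<k_2$ is identical with the roles of the factors exchanged and no index shift.

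I expect the only real obstacle to be this last step: justifying the interchange of the dot action with $\Phi$ at the level of classes rather than values, and checking that specializing equivariant coefficients commutes with $\operatorname{sh}_{k_2}$. Both reduce to injectivity of localization and the fact that $\operatorname{sh}_{k_2}$ preserves the augmentation ideal, which I would verify directly.
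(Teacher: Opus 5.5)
Your proposal is correct and follows the paper's argument: the paper obtains the corollary in one line by applying Theorem~\ref{thm:separation} with block index $k_1$ (so $k-i=k_2$) to $h=(h_1:h_2)$. Your checks that ${}^{\da k_1}h=h_1$, $h^{\da k_1}=h_2$, ${}^{\da k_1}(u:v)=u$ and $(u:v)^{\da k_1}=v$ fill in details the paper leaves implicit, as does your passage from the pointwise localization formula to an identity of equivariant classes (via injectivity of localization and the $\C[t]$-linearity of $\Phi$) and then to ordinary cohomology.
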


\begin{example}
Let $h=(3, 3, 4, 5, 6, 7, 7)$ be  a Hessenberg function on $[7]$. Then $h=(^{\da 3}h : h^{\da 3})$ and $h = (^{\da 4}h : h^{\da 4})$, but $h\neq (^{\da 2}h : h^{\da 2})=(2,3,4,5,6,7,7)$.
\end{example}


\section{Set \texorpdfstring{$\mathcal{A}_{s_i, w}$}{Asi,w}}\label{sec:A-set}

For arbitrary permutations $u,v\in\Sn{k}$, little is known about the intersection
\(
\Omega_u^\circ\cap \Omega_{v,h}.
\)
This makes it difficult to determine the set $\mathcal{A}_{s_i,w}$. Moreover,
even if this set is known, there is no general method for explicitly computing
the classes $\tau_u$'s. These difficulties present a major
obstacle to applying Proposition~\ref{prop:simple action} to Conjecture~\ref{conj:CHL}.
In this section, we develop several tools to overcome these difficulties.

Let \( h \) be a Hessenberg function on \( [k] \). 
Recall that \[\mathcal{A}_{s_i,w}\coloneqq \{u\in\Omega_{s_iw,h}^T\cap \Omega_w^T \mid \dim_\C(\Omega_u^\circ \cap \Omega_{s_iw,h})=\dim_\C\Omega_{w,h}\text{ and }u\dasharrow s_i u\}.\]
For simplicity, we denote by $\mathcal{A}_i$ the set $\mathcal{A}_{s_i,s_i}$. Then the following holds.

\begin{proposition}[Corollary 4.9 in \cite{CHL2}]\label{prop:s_i}
Let $h$ be a Hessenberg function. Then 
\[s_i\cdot \sigma_{s_i, h}=\sigma_{s_i, h}+\sum_{u\in \mathcal{A}_i}\left(\sigma_{u, h}-\sigma_{s_iu, h} \right)= \sigma_{s_i, h}+\sum_{u\in \mathcal{A}_i}\left(\sigma_{u, h}-s_i\cdot\sigma_{u, h} \right)\,,   \]
where $\mathcal{A}_i=\{ u\,|\, s_i\leq u, \ell_h(u)=1, u \dasharrow s_iu \}$.
\end{proposition}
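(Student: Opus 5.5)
The plan is to specialize Proposition~\ref{prop:simple action}(3) to $w=s_i$, to identify every term in it explicitly, and then to pass to ordinary cohomology. First I check that the hypothesis $w\rightarrow s_iw$ holds. Here $s_iw=e$ and $s_i=e\,s_{i,i+1}$. Since $i<i+1\le h(i)$ and $\ell(e)<\ell(s_i)$, we get $e<_h s_i$, that is, $s_i\rightarrow e$. Proposition~\ref{prop:simple action}(3) therefore gives
\[
s_i\cdot\sigma_{s_i,h}^T+\sum_{u\in\mathcal A_i}\tau_{s_iu}
=\sigma_{s_i,h}^T+\sum_{u\in\mathcal A_i}\tau_u+(t_{i+1}-t_i)\,\sigma_{e,h}^T .
\]
Setting $t_1=\dots=t_k=0$ kills the last term. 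It then remains to describe $\mathcal A_i$ and to identify the classes $\tau_u$ and $\tau_{s_iu}$.

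Next I unwind the definition of $\mathcal A_i=\mathcal A_{s_i,s_i}$. Since $\Omega_{e,h}^\circ$ is open and dense (its dimension is $d_h$), we have $\Omega_{s_iw,h}=\Omega_{e,h}=\Hess(S,h)$. Hence $\Omega_u^\circ\cap\Omega_{e,h}=\Omega_{u,h}^\circ$, which has dimension $d_h-\ell_h(u)$. The dimension condition $d_h-\ell_h(u)=\dim\Omega_{s_i,h}=d_h-1$ is therefore equivalent to $\ell_h(u)=1$. The condition $u\in\Omega_{s_i}^T$ means $u\ge s_i$, and the condition $u\in\Omega_{e,h}^T$ is automatic. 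This yields
\[
\mathcal A_i=\{u\mid s_i\le u,\ \ell_h(u)=1,\ u\dashrightarrow s_iu\}.
\]
By the same reasoning, $\mathcal T_u=\overline{\Omega_u^\circ\cap\Hess(S,h)}=\Omega_{u,h}$, so $\tau_u=\sigma^T_{u,h}$ by Definition~\ref{def:classes}. Likewise $\mathcal T_{s_iu}=\Omega_{s_iu,h}$, so $\tau_{s_iu}=\sigma^T_{s_iu,h}$. Substituting these identifications and moving the $\tau_{s_iu}$ terms to the right-hand side gives the first equality.

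For the second equality I use that $u\dashrightarrow s_iu$ for each $u\in\mathcal A_i$. Proposition~\ref{prop:simple action}(1) then gives $s_i\cdot\sigma_{u,h}^T=\sigma_{s_iu,h}^T$, hence $\sigma_{s_iu,h}=s_i\cdot\sigma_{u,h}$ after specializing to ordinary cohomology.

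The only point needing care is the identification $\mathcal T_u=\Omega_{u,h}$. The reduced closure of $\Omega_u^\circ\cap\Omega_{e,h}$ must coincide with the Hessenberg Schubert variety, so that the induced class is exactly $\sigma^T_{u,h}$ and not a multiple of it. This holds because $\Omega_{e,h}$ is all of $\Hess(S,h)$, so the intersection is literally the minus cell $\Omega_{u,h}^\circ$. I expect this to be the main (though mild) obstacle; the rest is bookkeeping.
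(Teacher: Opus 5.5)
Your proof is correct and is essentially the paper's argument: the statement itself is only cited from \cite{CHL2}, but Lemma~\ref{lem:key} generalizes it and is proved exactly as you proceed. That is, it specializes Proposition~\ref{prop:simple action}(3), shows $\Omega_u^\circ\cap\Omega_{s_iw,h}=\Omega_u^\circ\cap\Hess(S,h)$, and concludes $\tau_u=\sigma^T_{u,h}$ and $\tau_{s_iu}=\sigma^T_{s_iu,h}$, with Corollary~\ref{cor:smooth} giving $\Omega_{e,h}=\Hess(S,h)$ where you invoke density of the big cell.

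One caveat: $i+1\le h(i)$ is not part of the paper's definition, and ``dimension $d_h$'' alone does not make $\Omega^\circ_{e,h}$ dense. Both steps use the tacit hypothesis $h(j)\ge j+1$ for all $j<k$, i.e.\ $e\in\Gh$, which is the hypothesis $s_iw\in\Gh$ of Lemma~\ref{lem:key}. Under it, $e$ is the only permutation of $h$-length $0$, so $\Omega^\circ_{e,h}$ is the unique top cell of the smooth equidimensional $\Hess(S,h)$ and is therefore dense. The statement genuinely needs this hypothesis. For $h=(2,2,3)$, the displayed formula has $\mathcal{A}_1=\{3\,2\,1\}$ and hence a nonzero correction $\sigma_{3\,2\,1,h}-\sigma_{3\,1\,2,h}$, whereas in fact $s_1\cdot\sigma_{s_1,h}=\sigma_{s_1,h}$.
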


The following lemma is an easy consequence, but it is useful.
\begin{lemma}\label{lem:Hess_Sch_decomp}
    If $u\geq v$ in the Bruhat order, then $\Omega_u^\circ\cap \Omega_{v,h}\subset \Omega_{u,h}^\circ.$
\end{lemma}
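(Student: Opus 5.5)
The plan is to use the disjoint decomposition of the Hessenberg variety into minus cells,
\[
\Hess(S,h)=\bigsqcup_{x\in\Sn{k}}\Omega_{x,h}^\circ, \qquad \Omega_{x,h}^\circ=\Omega_x^\circ\cap\Hess(S,h).
\]
Because $\Omega_{v,h}\subset \Hess(S,h)$, every point of $\Omega_u^\circ\cap\Omega_{v,h}$ is a point of $\Omega_u^\circ\cap\Hess(S,h)=\Omega_{u,h}^\circ$. So the inclusion $\Omega_u^\circ\cap \Omega_{v,h}\subset \Omega_{u,h}^\circ$ is almost tautological.

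Formally, I would argue as follows. Take $p\in\Omega_u^\circ\cap\Omega_{v,h}$. Since $\Omega_{v,h}$ is the Zariski closure of $\Omega^\circ_{v,h}\subset\Hess(S,h)$, and $\Hess(S,h)$ is closed in $\flag{k}$ (it is defined by the closed conditions $S(V_i)\subset V_{h(i)}$), we get $\Omega_{v,h}\subset\Hess(S,h)$. Hence $p\in\Hess(S,h)$. Also $p\in\Omega_u^\circ$ by assumption, so $p\in \Omega_u^\circ\cap\Hess(S,h)=\Omega_{u,h}^\circ$ by the definition of minus cells.

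The hypothesis $u\ge v$ is not needed for the inclusion itself. It only ensures that the intersection can be nonempty: $\Omega_{v,h}\subset\Omega_v=\overline{B^-vB/B}$, and the opposite Schubert variety $\Omega_v$ is the union of the cells $\Omega_x^\circ$ with $x\geq v$. I would note this in a remark to explain why the statement is phrased this way, since it matters in the later application to $\mathcal A_{s_i,w}$, where $u\in\Omega_w^T$.

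There is no real obstacle. The only point to check is that closures are taken in $\flag{k}$ and that $\Hess(S,h)$ is closed, so the closure does not leave the Hessenberg variety.
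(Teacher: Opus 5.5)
Your proof is correct and is essentially the paper's argument without one detour. The paper writes $\Omega_{v,h}\subset\Omega_v\cap\Hess(S,h)$, expands $\Omega_v=\bigsqcup_{z\geq v}\Omega_z^\circ$, and uses $u\geq v$ only to single out the surviving piece $\Omega_u^\circ\cap\Hess(S,h)=\Omega_{u,h}^\circ$; you go straight from $\Omega_{v,h}\subset\Hess(S,h)$ to the definition of the minus cell. You are right that the hypothesis $u\geq v$ is not needed for the inclusion and only bears on whether the intersection is nonempty.
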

\begin{proof}
    Since $\Omega_{v,h}\subset \Omega_v\cap \Hess(S,h)$, if $u\geq v$, then we get
    \begin{align*}
        \Omega_u^\circ\cap \Omega_{v,h}
        &\subset \Omega_u^\circ \cap (\Omega_v\cap\Hess(S,h))\\
        &=\Omega_u^\circ\cap \left(\Bigl(\bigsqcup_{z\geq v}\Omega_{z}^\circ\Bigr) \cap\Hess(S,h)\right)\\
        &= \bigsqcup_{z\geq v}(\Omega_u^\circ\cap\Omega_z^\circ\cap \Hess(S,h))\\
        &=\Omega_{u}^\circ \cap \Hess(S,h),
    \end{align*}
    which proves the lemma.
\end{proof}

For a permutation \( w\in \Sn{k} \) and a simple reflection $s_i$ satisfying \( w \ra s_{i}w \), we define another set:
\[ 
\Atil{i} \coloneqq \set{ u\in\Omega_{s_i w,h}^{T} \cap \Omega_{w}^{T} \mid u \dra s_i u, \lh{u}\leq \lh{w} }\,.
\]

\begin{lemma}\label{lem:Atil} For a Hessenberg function \( h \) on \( [k] \), if \( w \ra s_{i}w \) and \( s_{i}w\in \Gh \), then 
\[ \Atil{i}=\{u\in [w, w_0] \,|\, u \dra s_i u,\, \lh{u}\leq \lh{w} \}, \] and \(\mathcal{A}_{s_i, w}\) is a subset of \(\Atil{i}\).
\end{lemma}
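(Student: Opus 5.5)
The lemma has two parts: identify $\Atil{i}$ explicitly, and show $\mathcal{A}_{s_i,w}\subset \Atil{i}$. I would first compute the two fixed-point sets $\Omega_{s_iw,h}^T$ and $\Omega_w^T$ separately, then intersect them.

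For the first set, since $s_iw\in\Gh$, Proposition~\ref{prop:support}(2) gives $\Omega_{s_iw,h}^T=\supp(\sigma^T_{s_iw,h})=[s_iw,w_0]$. For the second, $\Omega_w$ is the ordinary opposite Schubert variety, so $\Omega_w^T=[w,w_0]$ by the Bruhat decomposition used in the proof of Lemma~\ref{lem:Hess_Sch_decomp}. The hypothesis $w\ra s_iw$ means in particular that $s_iw<w$ in Bruhat order, so $[w,w_0]\subset[s_iw,w_0]$. Hence
\[
\Omega_{s_iw,h}^T\cap\Omega_w^T=[w,w_0],
\]
and the two remaining conditions ($u\dra s_iu$ and $\lh{u}\le\lh{w}$) carry over unchanged. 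This gives the displayed description of $\Atil{i}$.

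For the inclusion, take $u\in\mathcal{A}_{s_i,w}$. It already lies in $\Omega_{s_iw,h}^T\cap\Omega_w^T$ and satisfies $u\dra s_iu$, so only $\lh{u}\le\lh{w}$ remains to be checked.

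1. Since $u\in[w,w_0]$, we have $u\ge w\ge s_iw$, and Lemma~\ref{lem:Hess_Sch_decomp} gives $\Omega_u^\circ\cap\Omega_{s_iw,h}\subset\Omega_{u,h}^\circ$.
2. Comparing dimensions,
\[
\dim\Omega_{w,h}=\dim(\Omega_u^\circ\cap\Omega_{s_iw,h})\le\dim\Omega_{u,h}^\circ=d_h-\lh{u}.
\]
3. Since $\dim\Omega_{w,h}=d_h-\lh{w}$ (the minus cell is $\mathbb{C}^{d_h-\lh{w}}$), this yields $\lh{u}\le\lh{w}$.

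The only delicate point is the dimension step: it needs the intersection $\Omega_u^\circ\cap\Omega_{s_iw,h}$ to be nonempty so that its dimension is meaningful. This is built into the definition of $\mathcal{A}_{s_i,w}$, which requires that dimension to equal $\dim_{\C}\Omega_{w,h}$. The rest is bookkeeping of Bruhat intervals.
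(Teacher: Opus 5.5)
Your proposal is correct and follows the paper's proof. You intersect $[s_iw,w_0]$ with $[w,w_0]$ using $s_iw\in\Gh$ and $s_iw<w$, then obtain $\lh{u}\le\lh{w}$ for $u\in\mathcal{A}_{s_i,w}$ from the inclusion $\Omega_u^\circ\cap\Omega_{s_iw,h}\subset\Omega_{u,h}^\circ$ of Lemma~\ref{lem:Hess_Sch_decomp} together with the dimension count, exactly as the paper does.
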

\begin{proof}
Let $w$ be a permutation $w$ satisfying $w\rightarrow s_iw$ and $s_iw\in \Gh$. Then $$\Omega_{s_iw,h}^T\cap \Omega_w^T=[s_iw,w_0]\cap[w,w_0]= [w,w_0].$$

Now we assume $u\in [w,w_0]$. Then $u\geq s_iw$. By Lemma~\ref{lem:Hess_Sch_decomp}, we get $\Omega_u^\circ\cap \Omega_{s_iw,h}\subset \Omega_{u,h}^\circ$.
Therefore, if $u\in \mathcal{A}_{s_i, w}$, then we get $$\dim(\Omega_{w,h})=\dim(\Omega_u^\circ \cap \Omega_{s_iw,h})\leq \dim (\Omega_{u,h}^\circ),$$ so $\ell_h(w)\geq \ell_h(u)$.
\end{proof}

Now we introduce the notion of $h$-associated patterns, and we will show that $\Atil{i}$ becomes $\mathcal{A}_{s_i,w}$ under a suitable condition related to the $h$-associated patterns.

\begin{definition}\cite[Definition~3.9]{CHP}
Let $h$ be a Hessenberg function on $[k]$. An $h$-admissible permutation $w\in \Gh$ \emph{contains the $h$-associated pattern $\hpat{\pi}$} if there exist indices $i<j<m<\ell$ satisfying the following conditions:
\begin{enumerate}
    \item \textbf{$\hpat{2143}$}: $w(j)<w(i)<w(\ell)<w(m)$, with $i<j<m<\ell\leq h(i)$.

    \item \textbf{$\hpat{1324}$}: $w(i)<w(m)<w(j)<w(\ell)$, with $i<j<m<\ell\leq h(j)$ and $m \leq h(i)$.

    \item \textbf{$\hpat{1243}$}: $w(i)<w(j)<w(\ell)<w(m)$, with $i<j<m<\ell\leq h(j)$ and $j \leq h(i)<\ell$.

    \item \textbf{$\hpat{2134}$}: $w(j)<w(i)<w(m)<w(\ell)$, with $i<j<m<\ell\leq h(m)$ and $m \leq h(i)<\ell$.

    \item \textbf{$\hpat{1423}$}: $w(i)<w(m)<w(\ell)<w(j)$, with $i<j<m<\ell\leq h(j)$ and $m \leq h(i)<\ell$.

    \item \textbf{$\hpat{2314}$}: $w(m)<w(i)<w(j)<w(\ell)$, with $i<j<m<\ell\leq h(j)$ and $m \leq h(i)<\ell$.

    \item \textbf{$\hpat{2413}$}: $w(m)<w(i)<w(\ell)<w(j)$, with $i<j\leq h(i)<m\leq h(j)<\ell\leq h(m)$.
\end{enumerate}
We say that $w$ avoids $\pi_h$ if $w$ does not contain the pattern $\pi_h$.
\end{definition}

\begin{example}
When $h=(4,4,4,5,6,7,7)$ and $w={1\, 4\,2 \,5}7\,3\,6$, the permutation $w$ contains the $h$-associated pattern \( \hpat{{213}4} \)  
since $w(3)<w(2)<w(4)<w(5)$ where $2<3<4<5\leq h(4)=5$ and $4\leq h(2)=4\leq 5$. One can check that $w$ contains the $h$-associated pattern  \( \hpat{{1324}} \), but avoids the $h$-associated pattern  \( \hpat{{2143}} \).
\end{example}

\begin{corollary}\cite[Corollary 4.9]{HLP}\label{cor:smooth}
    Let $h$ be a Hessenberg function on $[k]$. For an $h$-admissible permutation $w\in\Sn{k}$, the intersection $\Omega_{w}\cap\Hess(S,h)$ is smooth if and only if $w$ avoids all the $h$-associated patterns $\pi_h$. Furthermore, if $w$ avoids all the $h$-associated patterns, then $\Omega_{w,h}=\Omega_w\cap \Hess(S,h)$ and it is smooth.
\end{corollary}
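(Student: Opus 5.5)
The statement is quoted from \cite[Corollary 4.9]{HLP}. To reprove it, I would reduce smoothness of the $T$-stable projective variety $X_w\coloneqq\Omega_w\cap\Hess(S,h)$ to a tangent-space computation at its $T$-fixed points.

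\textbf{Reduction to fixed points.} The singular locus of $X_w$ is closed and $T$-stable. If it is nonempty, it contains a $T$-fixed point, since every $T$-orbit closure in a projective variety contains a fixed point. The fixed points of $X_w$ are $\Omega_w^T\cap\Sn{k}=[w,w_0]$. So it suffices to decide smoothness at each $v\in[w,w_0]$.

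\textbf{Expected dimension.} For $h$-admissible $w$, Proposition~\ref{prop:support} and Proposition~\ref{prop:computing the value}(1) give that $\Omega_{w,h}\subset X_w$ has fixed point set $[w,w_0]$ as well. Its dimension is $d_h-\ell_h(w)$. I would also show that every irreducible component of $X_w$ meets the open cell $\Omega_{w,h}^\circ$, or at least has dimension at least $d_h-\ell_h(w)$. Then $X_w$ is smooth if and only if, for every $v\in[w,w_0]$,
\[
\dim T_vX_w=d_h-\ell_h(w),
\]
and in that case $X_w$ is irreducible and equals $\Omega_{w,h}$. Irreducibility follows because $X_w$ is smooth and connected: it is connected since it contains the fixed point $w_0$, which lies in the closure of every $T$-stable component.

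\textbf{Tangent weights.} The Zariski tangent space satisfies $T_vX_w\subseteq T_v\Omega_w\cap T_v\Hess(S,h)$. By the description in the proof of Lemma~\ref{lem:equiv_euler}, $T_v\Hess(S,h)$ is the sum of the weight lines $e_{v(a)}-e_{v(b)}$ with $a<b\le h(a)$. The tangent space $T_v\Omega_w$ is spanned by the weights of $T$-curves $v\leftrightarrow vs_{a,b}$ with $vs_{a,b}\ge w$ (Carrell--Peterson), together with possibly extra weights when $\Omega_w$ is singular at $v$. So the first estimate to make is
\[
N_h(v,w)\coloneqq\#\{(a,b)\mid a<b\le h(a),\ vs_{a,b}\ge w\},
\]
the number of $h$-edges of $\Gamma_h$ at $v$ staying inside $[w,w_0]$. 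Smoothness at $v$ should be equivalent to $N_h(v,w)=d_h-\ell_h(w)$, and the tangent space should not pick up extra directions. I would establish the latter by working in the explicit coordinates $vx$, $x\in U_k$, of Corollary~\ref{cor:reachable_entry}: there the defining equations of $\Hess(S,h)$ are linear to first order in the entries of $x$.

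\textbf{Combinatorial core.} For the \emph{contained} direction, take each of the seven $h$-associated patterns occurring at positions $i<j<m<\ell$. I would exhibit an explicit $v\in[w,w_0]$, obtained by sorting the four entries appropriately, and check that among the $h$-edges at $v$ there is one more edge into $[w,w_0]$ than $d_h-\ell_h(w)$. This is a finite local check on four positions, using the inequalities in each pattern's definition. For the \emph{avoiding} direction, I would show $N_h(v,w)=d_h-\ell_h(w)$ for all $v\ge w$ by downward induction in Bruhat order from $v=w_0$, where $N_h=d_h$ and $[w,w_0]$ imposes no constraint. Moving down an $h$-edge, I would use the lifting property (Proposition~\ref{prop:lifting}) and the chain property (Proposition~\ref{prop:h-chain}) to pair up edges. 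The main obstacle is showing that a failure of this edge count always localizes to one of the seven patterns. I would first attempt it by taking a minimal failing $v$ and a maximal ``bad'' reflection $s_{a,b}$, and then extracting the four positions from $a$, $b$, and the Bruhat witness given by Proposition~\ref{prop:Bruhat}(2).
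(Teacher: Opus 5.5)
The paper does not prove this statement; it is quoted from \cite{HLP}. Judged on its own, your outline leaves the essential step undone. The content of the corollary is the combinatorial equivalence
\[
N_h(v,w)=d_h-\ell_h(w)\ \text{ for all } v\in[w,w_0]\quad\Longleftrightarrow\quad w \text{ avoids the seven } h\text{-associated patterns},
\]
and you prove neither implication.

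\emph{Contains $\Rightarrow$ singular.} You never write down the witness $v$, and this is not a four-position check. Whether $vs_{a,b}\ge w$ is decided by Proposition~\ref{prop:Bruhat}(2) and depends on all entries. So you must choose a suitable (e.g.\ minimal) occurrence of each pattern and control the entries whose values lie between the pattern values, as in the Lakshmibai--Sandhya argument for Schubert varieties.

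\emph{Avoids $\Rightarrow$ smooth.} You only describe what you would try, and the proposed base case is wrong. The interval $[w,w_0]$ does constrain the $h$-edges at $w_0$; for instance $N_h(w_0,w_0)=0$. So $N_h(w_0,w)\neq d_h$ in general, and $N_h(w_0,w)=d_h-\ell_h(w)$ is itself part of what must be proved.

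The geometric reduction also has a hole. Since $\Omega_{w,h}^\circ$ is open in $X_w$, the claim ``every component of $X_w$ meets $\Omega_{w,h}^\circ$'' is equivalent to $X_w=\Omega_{w,h}$. This fails when $w$ contains a pattern. Take $h=(3,3,4,4)$ and $w=2134\in\Gh$, which contains $\hpat{2134}$. Then $2341\ge w$ and $\ell_h(2341)=\ell_h(w)=1$, so $\Omega_{2341,h}$ is a second $3$-dimensional component of $X_w$.

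Your fallback, that every component has dimension $\ge d_h-\ell_h(w)$, has no source. $\Omega_w$ and $\Hess(S,h)$ meet with excess, and the intersection inequality in $\flag{k}$ only gives $d_h-\ell(w)$. Without that bound, $\dim T_vX_w=d_h-\ell_h(w)$ at all fixed points does not exclude a smaller component tangent to $\Omega_{w,h}$.

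One repair is a Bialynicki-Birula argument. Fix a generic cocharacter. The attracting set of $v$ in $X_w$ is a closed subvariety of an affine space with positive weights, so it embeds in its own Zariski tangent space. If $\dim T_uX_w=d_h-\ell_h(w)$ for all $u\in[w,w_0]$, semicontinuity makes $\Omega_{w,h}$ smooth with $T_v\Omega_{w,h}=T_vX_w$. This forces the attracting set to equal the attracting cell of $\Omega_{w,h}$, hence $X_w=\Omega_{w,h}$.

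Two smaller points.
\begin{enumerate}
\item Your connectedness argument is unjustified: closures of cells need not contain $w_0$. For $h=(2,3,3)$, Proposition~\ref{prop:support}(2) gives $\Omega_{132,h}^T=\{132,312\}$. Connectedness instead follows because every component contains a fixed point of $[w,w_0]=\Omega_{w,h}^T$.
\item Corollary~\ref{cor:reachable_entry} describes only the cell, not a neighbourhood of $v$, so it cannot control $T_vX_w$. What does control it is the type A theorem of Lakshmibai--Seshadri (the tangent space of a Schubert variety is spanned by tangent lines to $T$-curves), together with multiplicity-freeness of the weights of $T_v\flag{k}$. These give $\dim T_vX_w=N_h(v,w)$ directly.
\end{enumerate}
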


\begin{example}
    \begin{enumerate}
        \item Let $h=(k,\dots,k)$. Then $\mathcal{G}_h=\Sn{k}$, and every $w\in\Sn{k}$ avoids the $h$-associated patterns except possibly $\hpat{2143}$ and $\hpat{1324}$. In this case, $\Hess(S,h)=\flag{k}$, so $\Omega_{w}$ is smooth if and only if $w$ avoids the patterns $2143$ and $1324$.
        \item Let $h=(2,3,\dots,k,k)$. Then every $w\in\Gh$ avoids all the $h$-associated patterns. Therefore, every Hessenberg Schubert variety is smooth in this case.
        \item Let $h=(m,\dots,m,m+1,\dots,k,k)$ with $m\geq 1$. Then every $w\in\Gh$ avoids the patterns except possibly $\hpat{2143}$, $\hpat{1324}$, and $\hpat{2134}$. In this case, if $w$ avoids $\hpat{{2143}}$, $\hpat{{1324}}$, and $\hpat{{213}4}$, then $\Omega_{w,h}=\Omega_w\cap\Hess(S,h)$ and it is smooth.
    \end{enumerate}
\end{example}

Although computing $\tau_u$ for $u\in \mathcal A_{s_i, w}$ is difficult in general, we can fortunately compute it when $s_iw$ avoids the $h$-associated patterns.

\begin{lemma}\label{lem:key}Let $h$ be a Hessenberg function on $[k]$. When  \( w \ra s_{i}w \)\,, \( s_{i}w\in \Gh \), and $s_i w$ avoids all the $h$-associated patterns, we have 
\[ \mathcal A_{s_i, w}=\Atil{i}=\{u\in [w, w_0] \,|\, u \dra s_i u,\, \lh{u} = \lh{w} \}. \]
Moreover, $\tau_u=\sigma_{u,h}^T$ and $\tau_{s_iu}=\sigma_{s_iu,h}^T$ for all $u\in \mathcal A_{s_i, w}$, that implies
\[ s_{i}\cdot \sigma^T_{w,h}=\sigma^T_{w,h}+(t_{i+1}-t_{i})\cdot \sigma^T_{s_{i}w,h}+\sum_{u\in \Atil{i}}(\sigma^T_{u,h}-\sigma^T_{s_iu,h}).\] 
\end{lemma}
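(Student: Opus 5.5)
The plan has three steps: identify $\mathcal A_{s_i,w}$, identify the classes $\tau$, and then substitute into Proposition~\ref{prop:simple action}(3).

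\textbf{Step 1: the set $\mathcal A_{s_i,w}$.} The key input is Corollary~\ref{cor:smooth}. Since $s_iw\in\Gh$ avoids all $h$-associated patterns, we have $\Omega_{s_iw,h}=\Omega_{s_iw}\cap\Hess(S,h)$. For any $u\in[w,w_0]$ we have $u\ge w>s_iw$, so intersecting with the cell gives
\[
\Omega_u^\circ\cap\Omega_{s_iw,h}=\Omega_u^\circ\cap\Omega_{s_iw}\cap\Hess(S,h)=\Omega_u^\circ\cap\Hess(S,h)=\Omega_{u,h}^\circ,
\]
using $\Omega_u^\circ\subset\Omega_{s_iw}$. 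This sharpens Lemma~\ref{lem:Hess_Sch_decomp} to an equality. Hence $\dim_\C(\Omega_u^\circ\cap\Omega_{s_iw,h})=d_h-\ell_h(u)$. This equals $\dim\Omega_{w,h}=d_h-\ell_h(w)$ exactly when $\ell_h(u)=\ell_h(w)$. Combined with Lemma~\ref{lem:Atil}, which gives $\Omega^T_{s_iw,h}\cap\Omega^T_w=[w,w_0]$ and $\mathcal A_{s_i,w}\subset\Atil{i}$, this yields
\[
\mathcal A_{s_i,w}=\{u\in[w,w_0]\mid u\dra s_iu,\ \ell_h(u)=\ell_h(w)\}.
\]
In the statement $\Atil{i}$ is displayed with equality $\ell_h(u)=\ell_h(w)$. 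I would show that elements of $\Atil{i}$ with $\ell_h(u)<\ell_h(w)$ cannot occur, or else read the statement as identifying $\mathcal A_{s_i,w}$ with the equal-length part. The likely argument for excluding them is a degree count: any $u\ge w$ should satisfy $\ell_h(u)\ge\ell_h(w)$ when $w\to s_iw$ with $s_iw$ admissible. I would try to prove this via Proposition~\ref{prop:computing the value}(1), which gives $w\le_h u$, together with the chain property, Proposition~\ref{prop:h-chain}.

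\textbf{Step 2: the classes $\tau_u$ and $\tau_{s_iu}$.} By Step 1, $\mathcal T_u=\overline{\Omega_{u,h}^\circ}=\Omega_{u,h}$, so $\tau_u=\sigma^T_{u,h}$. For $\tau_{s_iu}$ I argue the same way, provided $s_iu\ge s_iw$. Since $u\dra s_iu$, the two elements are Bruhat comparable. If $s_iu>u$, then $s_iu>u\ge w>s_iw$ and we are done. If instead $s_iu<u$, I would apply the lifting property, Proposition~\ref{prop:lifting}: from $s_iw<w\le u$ and $i\in D_L(u)$ it gives $s_iu\ge s_iw$ in the relevant case.

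Having $s_iu\ge s_iw$, the same equality gives $\Omega_{s_iu}^\circ\cap\Omega_{s_iw,h}=\Omega^\circ_{s_iu,h}$. Its closure is $\Omega_{s_iu,h}$, so $\tau_{s_iu}=\sigma^T_{s_iu,h}$.

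\textbf{Step 3: substitution.} Putting these into Proposition~\ref{prop:simple action}(3) and moving $\sum\tau_{s_iu}$ to the right-hand side gives the displayed formula.

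\textbf{Main obstacle.} I expect the delicate point to be Step 2 when $s_iu<u$. There the lifting property has to be applied with care, because $i\in D_L(w)$ and possibly $i\in D_L(u)$, so the hypotheses of Proposition~\ref{prop:lifting} must be checked. If lifting does not apply directly, I would fall back on the subword property to get $s_iu\ge s_iw$ from $u\ge w$ when both have $i$ as a left descent.
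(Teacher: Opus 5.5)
Your Steps 1--3 follow the paper's own proof. Corollary~\ref{cor:smooth} turns $\Omega_u^\circ\cap\Omega_{s_iw,h}$ into $\Omega_{u,h}^\circ$ for $u\ge s_iw$. That single fact converts the dimension condition into $\ell_h(u)=\ell_h(w)$ and identifies $\mathcal T_u$ and $\mathcal T_{s_iu}$ with Hessenberg Schubert varieties, after which Proposition~\ref{prop:simple action}(3) gives the formula.

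Your treatment of $\tau_{s_iu}$ is more careful than the paper's ``same argument'', and it can be simplified. By definition $u\dra s_iu$ already means $s_iu<u$, so only one case occurs. The lifting property then applies directly to the pair $s_iw<u$, since $i\in D_L(u)$ and $i\notin D_L(s_iw)$ (because $s_i(s_iw)=w>s_iw$). This gives $s_iw\le s_iu$, so no subword fallback is needed.

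The one loose end is the middle equality. You need that $\Atil{i}$, defined with $\ell_h(u)\le\ell_h(w)$, contains no $u$ with $\ell_h(u)<\ell_h(w)$. The paper's proof does not spell this out either; it only derives $\ell_h(u)=\ell_h(w)$ for $u\in\mathcal A_{s_i,w}$. The route you suggest would not work, for two reasons. First, $w$ itself need not be $h$-admissible, so Proposition~\ref{prop:computing the value}(1) does not give $w\le_h u$. Second, $\le_h$ does not make $\ell_h$ monotone: the paper's example $h=(3,3,4,4)$, $2314<_h 2341$, has $h$-lengths $2$ and $1$. So the chain property cannot close this.

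Use the geometry you already have instead.
\begin{enumerate}
\item Since $\ell(w)=\ell(s_iw)+1$, the relation $s_iw<_h w$ must be a single step. Hence $w=(s_iw)s_{a,b}$ with $a<b\le h(a)$, where $a,b$ are the positions of the adjacent values $i+1,i$ in $w$. Only the pair $(a,b)$ changes relative order, so $\ell_h(w)=\ell_h(s_iw)+1$.
\item For $u\in[w,w_0]$ you have shown $\Omega_{u,h}^\circ\subset\Omega_{s_iw}\cap\Hess(S,h)=\Omega_{s_iw,h}$. The latter is irreducible of dimension $d_h-\ell_h(s_iw)$, and $\Omega^\circ_{s_iw,h}=\Omega^\circ_{s_iw}\cap\Omega_{s_iw,h}$ is open and dense in it.
\item Since $u\ge w>s_iw$, we have $u\ne s_iw$. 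So the nonempty set $\Omega^\circ_{u,h}$ lies in the proper closed subset $\Omega_{s_iw,h}\setminus\Omega^\circ_{s_iw,h}$. Hence $d_h-\ell_h(u)<d_h-\ell_h(s_iw)$, which is $\ell_h(u)\ge\ell_h(w)$.
\end{enumerate}
This gives $\Atil{i}=\mathcal A_{s_i,w}$. It also makes the final sum over $\Atil{i}$ correct under either reading of its definition.
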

\begin{proof}
Suppose that  \( w \ra s_{i}w \)\,, \( s_{i}w\in \Gh \), and $s_i w$ avoids all the $h$-associated patterns. Then
$\Omega_{s_iw,h}=\Omega_{s_iw}\cap \Hess(S,h)$ by Corollary~\ref{cor:smooth}. Therefore, for each $u\in [w,w_0]$, we have
\begin{align*}
\Omega_u^\circ \cap \Omega_{s_iw,h}&=\Omega_u^\circ \cap \Omega_{s_iw}\cap\Hess(S,h)\\
&=\Omega_u^\circ\cap\left(\bigsqcup_{v\geq s_iw}\Omega_v^\circ \right)\cap\Hess(S,h)\\
&=\Omega_u^\circ\cap\Hess(S,h).
\end{align*}
Hence we get
\[\mathcal{T}_u=\overline{\Omega_u^\circ \cap\Omega_{s_iw,h}}=\overline{\Omega_u^\circ\cap\Hess(S,h)}=\Omega_{u,h}.\]
Using the same argument, we also obtain $\mathcal{T}_{s_iu}=\Omega_{s_iu,h}$. Therefore, we get $\tau_u=\sigma_{u,h}$ and $\tau_{s_iu}=\sigma_{s_iu,h}$. Since \(\tau_u\) and \(\tau_{s_i u}\) appear in the formula for
\(s_i\cdot \sigma_{w,h}^T\), they have the same cohomological degree as
\(\sigma_{w,h}^T\). Hence
\(
\ell_h(u)=\ell_h(s_i u)=\ell_h(w).
\)
Therefore, the set $\mathcal{A}_{s_i,w}$ becomes the set
\[\Atil{i}=\{u\in [w, w_0] \,|\, u \dra s_i u,\, \lh{u} = \lh{w} \}. \]
Applying this to Proposition~\ref{prop:simple action}, the latter part immediately follows.
\end{proof}

For a permutation \( w\in \Sn{k} \), if \( w \ra s_{i}w \), then we define a set 
\[ 
\Ahat{i} \coloneqq \set{ u\in \Omega_{s_i w,h}^{T} \cap \Omega_{w}^{T} \mid u \dra s_i u }\,.
\]

Note that we have $\mathcal{A}_{s_i,w}\subset \Ahat{i}$  and $\Atil{i}\subset \Ahat{i}$.

\begin{lemma}\label{lem:Ahat0}
For a Hessenberg function \( h=(m,\dots,m,m+1,\dots,k,k) \) with $m\geq 1$, let \( w \in \Sn{m+n} \) such that \( w \ra s_i w \).  If \( w \) is an $h$-admissible permutation that avoids the $h$-associated pattern \( \hpat{{213}4} \), then \( \Ahat{i} \) is empty. Therefore, \[ s_{i}\cdot \sigma^T_{w,h}=\sigma^T_{w,h}+(t_{i+1}-t_{i})\cdot \sigma^T_{s_{i}w,h}.\]
Consequently, we have $s_i\cdot\sigma_{w,h}=\sigma_{w,h}$.
\end{lemma}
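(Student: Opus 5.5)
The plan is to reduce the emptiness of $\Ahat{i}$ to a purely combinatorial statement about permutations above $w$ in the Bruhat order. Then I would read off the formula from Proposition~\ref{prop:simple action}(3). Since $w\in\Gh$, Proposition~\ref{prop:support} gives $\Omega_{w}^T\cap\Omega_{s_iw,h}^T\subset[w,w_0]$. So it suffices to show that no $u\ge w$ satisfies $u\dra s_iu$.

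First I would make $u\dra s_iu$ explicit. Let $a=u^{-1}(i+1)$ and $b=u^{-1}(i)$, and suppose $s_iu<u$, so that $a<b$. Then $s_iu=us_{a,b}$ and $\ell(u)=\ell(s_iu)+1$. Suppose $s_iu<_h u$. By the chain property (Proposition~\ref{prop:h-chain}) there is a chain of length exactly one. Its single step must be the reflection $s_{a,b}$, so $b\le h(a)$. Conversely, $b\le h(a)$ clearly gives $s_iu<_h u$. Thus $u\dra s_iu$ holds exactly when $u^{-1}(i+1)<u^{-1}(i)$ and $u^{-1}(i)>h(u^{-1}(i+1))$. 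Applying the same criterion to $w$, the hypothesis $w\ra s_iw$ says that $a_0=w^{-1}(i+1)<b_0=w^{-1}(i)\le h(a_0)$. For our $h$ this means one of two things: either $a_0<b_0\le m$ (the clique part), or $b_0=a_0+1$ with $a_0\ge m$.

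The core step, and the one I expect to be the main obstacle, is the following claim. If some $u\ge w$ has $a=u^{-1}(i+1)<b=u^{-1}(i)$ with $b>h(a)$, then $w$ contains $\hpat{2134}$. I would argue by contrapositive, taking such a $u$ that is Bruhat-minimal, so that it is as close to $w$ as possible. Then I would use the rank criterion Proposition~\ref{prop:Bruhat}(2), applied to the value thresholds $\ge i+1$ and $\ge i+2$ on the prefixes ending just before $b$ and at $a$. The comparison of $u$ and $w$ should force the existence of positions around $a_0<b_0$ carrying values smaller than $i$ and larger than $i+1$ in the right places. The $h$-admissibility of $w$ enters here: it forces $w^{-1}(w(j)+1)\le h(j)$, so consecutive values cannot be spread too far to the right. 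Combined with $b>h(a)$, this should produce indices $p<q<r<s$ with $w(q)<w(p)<w(r)<w(s)$, $s\le h(r)$ and $r\le h(p)<s$. That is precisely $\hpat{2134}$, with $p$ in the clique block and $r,s$ on the path.

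I would split this into two cases, $b_0\le m$ and $b_0=a_0+1>m$. In the second case $h$ is a path near $a_0$, and admissibility should directly contradict $u\ge w$ without any pattern. So the pattern is really needed only when the inversion sits in the clique. What I would try first is to take $p=a_0$ or $p=b_0$ (whichever carries the larger value of the pair), $q$ the other one, and $r=m$, $s=m+1$, and then check the inequalities using the rank condition.

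Once $\Ahat{i}=\varnothing$, the inclusion $\mathcal A_{s_i,w}\subset\Ahat{i}$ gives $\mathcal A_{s_i,w}=\varnothing$. Proposition~\ref{prop:simple action}(3) then reduces to
\[ s_i\cdot\sigma^T_{w,h}=\sigma^T_{w,h}+(t_{i+1}-t_i)\,\sigma^T_{s_iw,h}. \]
Setting $t_1=\cdots=t_k=0$ kills the second term in ordinary cohomology, which yields $s_i\cdot\sigma_{w,h}=\sigma_{w,h}$.
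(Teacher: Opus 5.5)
\textbf{The core claim is asserted, not proved.} Several parts of your proposal are correct:
\begin{itemize}
\item the reduction to showing that no $u\ge w$ satisfies $u\dra s_iu$;
\item the criterion that $u\dra s_iu$ if and only if $u^{-1}(i+1)<u^{-1}(i)$ and $u^{-1}(i)>h(u^{-1}(i+1))$ (the chain property is not even needed, since every $h$-Bruhat step raises length, so a length gap of one allows a single step);
\item the two resulting shapes of $w\ra s_iw$;
\item the final deduction from Proposition~\ref{prop:simple action}(3).
\end{itemize}
The gap is the combinatorial core. You write that the rank comparison ``should force'' positions and ``should produce'' $\hpat{2134}$, and that admissibility ``should directly contradict'' $u\ge w$ in the path case, but you prove neither case. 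The tool you propose also does not drive the argument: Bruhat-minimality of $u$ plays no role, and the useful comparison is not at thresholds $i+1,i+2$ on prefixes near $a,b$. As written, the lemma is not established.

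\textbf{What is missing.} You need to extract from $w$ alone a property that is upward closed in Bruhat order and forces $u\ra s_iu$.

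\emph{Case $b_0>m$.} Here $b_0=a_0+1$ with $a_0\ge m$. Iterate admissibility on $i-1,i-2,\dots$ and on $i+1,i+2,\dots$. This pushes every value below $i$ to the right of $b_0$ and every value above $i$ to its left. Hence $w^{-1}(i+1)=k-i$, $w^{-1}(i)=k-i+1$ and $w[k-i+1,k]=[i]$. The suffix criterion, Proposition~\ref{prop:Bruhat}(3), transfers exactly these positions to every $u\ge w$. Since $k-i\ge m$, this gives $u\ra s_iu$, and no pattern is needed.

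\emph{Case $b_0\le m$.} Here $a_0<m$. Admissibility says that a value $c\ge i+1$ in a position $<m$ forces $c+1$ into $[m]$. So the chain $i+1,i+2,\dots$ can leave $[m]$ only through the edge $m\to m+1$, that is, only if $w(m)\ge i+2$ and $w(m+1)=w(m)+1$. In that situation $(a_0,b_0,m,m+1)$ is an occurrence of $\hpat{2134}$, so your guess for the location of the pattern is right. Under avoidance it follows that $[i,k]\subset w[m]$. Proposition~\ref{prop:Bruhat}(2) at prefix length $m$ then gives $[i,k]\subset u[m]$ for every $u\ge w$. But every $u\dra s_iu$ has $u^{-1}(i)>h(u^{-1}(i+1))\ge m$, which is a contradiction.

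This is essentially how the paper argues. Your proposal needs these two arguments, or equivalent ones, to be complete.
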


\begin{proof}
For \(u\in [w,w_0]\) with \(u>s_i u\), we claim that \(u\to s_i u\). This
implies that \(\widehat{\mathcal{A}}_{s_i,w}\) is empty.
We divide the proof into two cases according to whether $w^{-1}(i)>m$ or $w^{-1}(i)\le m$.

\medskip

\noindent\underline{Case 1: $w^{-1}(i)>m$.}
Since $w\to s_i w$, we have $w^{-1}(i+1)=w^{-1}(i)-1$.
We first show that
\[
w^{-1}(j)>w^{-1}(i)\quad\text{for all } j<i.
\]
By $h$-admissibility, we have
\[
w^{-1}(i)\le h\bigl(w^{-1}(i-1)\bigr).
\]
Since $w^{-1}(i)>m$, it follows that
\[
h\bigl(w^{-1}(i-1)\bigr)=w^{-1}(i-1)+1,
\]
and hence $w^{-1}(i-1)>w^{-1}(i)$.
By repeated applications of $h$-admissibility, the same argument yields
\begin{equation}\label{eq:less}
w^{-1}(j)>w^{-1}(i)\quad\text{for all } j<i.
\end{equation}

Similarly, since
\[
w^{-1}(i+2)\le h\bigl(w^{-1}(i+1)\bigr)=w^{-1}(i),
\]
we obtain $w^{-1}(i+2)<w^{-1}(i)$.
Again, by repeated applications of $h$-admissibility, we obtain
\begin{equation}\label{eq:more}
w^{-1}(j)<w^{-1}(i)\quad\text{for all } j\ge i+1.
\end{equation}

Combining~\eqref{eq:less} and~\eqref{eq:more}, we conclude that
\[
w^{-1}(i+1)=k-i
\quad\text{and}\quad
w^{-1}(i)=k-i+1,
\]
and hence $w[k-i+1,k]=[i]$.
Since $u>w$, it follows that
\[
u^{-1}(i+1)=k-i
\quad\text{and}\quad
u^{-1}(i)=k-i+1,
\]
and therefore $u\to s_i u$.

\medskip

\noindent\underline{Case 2: $w^{-1}(i)\le m$.}
In this case, we have
\[
w^{-1}(i+1)\le w^{-1}(i)-1<m.
\]
Note that if $[i,k]\subset w[m]$, then $[i,k]\subset u[m]$ since $u>w$, which implies that $u\to s_i u$.

If $w(m)>w(m+1)$, then by repeated applications of $h$-admissibility we obtain
$[i,k]\subset w[m]$, as desired.

Now suppose that $w(m)<w(m+1)$.
Let $a=\min w[m]$.
Since $w$ avoids $\hpat{{213}4}$, we have
\[
w^{-1}(a)<w^{-1}(a+1)<\cdots<w^{-1}(w(m))=m.
\]
Since $w^{-1}(i+1)<w^{-1}(i)\le m$, it follows that $i\geq w(m)$.
Moreover,
\[
w^{-1}(i+2)\le h\bigl(w^{-1}(i+1)\bigr)=m,
\]
and hence $w^{-1}(i+2)<m$.
By repeated applications of $h$-admissibility, we conclude that
\[
[i,k]\subset w[m].
\]

This completes the proof.
\end{proof}

The following lemma shows which Hessenberg Schubert classes 
can appear in the
expansion of \(\tau_u\).

\begin{lemma}\label{lem:tau2}  Suppose that $w\rightarrow s_iw$.
For $u\in\mathcal A_{s_i, w}$, if
\[
\tau_u=\sum_v c_v\,\sigma^T_{v,h}
\quad\text{and}\quad
\tau_{s_i u}=\sum_v d_v\,\sigma^T_{v,h},
\]
then
\[
\{v\mid c_v\neq 0\}\subset [u,w_0]_h
\quad\text{and}\quad
\{v\mid d_v\neq 0\}\subset [s_i u,w_0]_h.
\]
\end{lemma}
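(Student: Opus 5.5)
The plan is to expand $\tau_u$ through its support. Recall that $\mathcal T_u=\overline{\Omega_u^\circ\cap\Omega_{s_iw,h}}$ is a $T$-stable closed subvariety of $\Hess(S,h)$. Since $\mathcal T_u\subset\overline{\Omega_u^\circ}\cap\Hess(S,h)=\Omega_u\cap\Hess(S,h)$, its fixed points satisfy $\mathcal T_u^T\subset [u,w_0]$. In fact $\mathcal T_u\subset\Omega_{s_iw,h}$, and we use this together with Lemma~\ref{lem:Hess_Sch_decomp}: because $u\geq w> s_iw$, we have $\Omega_u^\circ\cap\Omega_{s_iw,h}\subset\Omega_{u,h}^\circ$. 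Taking closures gives $\mathcal T_u\subset\Omega_{u,h}$. The analogous argument applies to $s_iu$, provided $s_iu\geq s_iw$. This holds because $u\dashrightarrow s_iu$ means $u>s_iu$, and by the lifting property (Proposition~\ref{prop:lifting}) applied with $i\in D_L(w)\cap D_L(u)$, we get $s_iu\geq s_iw$. Hence $\mathcal T_{s_iu}\subset\Omega_{s_iu,h}$.

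Next we transfer containment of varieties into a statement about the expansion coefficients. The key point is that $\Omega_{u,h}$ is itself paved by the minus cells $\Omega_{v,h}^\circ$ with $v\in(\Omega_{u,h})^T$. By Proposition~\ref{prop:computing the value}(1), each such $v$ satisfies $u\leq_h v$. The fundamental class of any $T$-stable subvariety $\mathcal T\subset\Omega_{u,h}$ is pushed forward from $\Omega_{u,h}$. The equivariant Chow group of $\Omega_{u,h}$ is spanned by the classes of the closures of the cells it contains, namely $\{[\Omega_{v,h}] : \Omega_{v,h}\subset\Omega_{u,h}\}$, because a variety with an affine paving has Chow groups generated by the closures of the cells (equivariantly, over $A_T^*(\mathrm{pt})$). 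Pushing forward to $\Hess(S,h)$ and applying the cycle map, $\tau_u$ becomes a combination of $\sigma^T_{v,h}$ with $v\in(\Omega_{u,h})^T\subset[u,w_0]_h$. By uniqueness of the expansion in the basis of Proposition~\ref{prop:basis}, this gives $\{v\mid c_v\neq 0\}\subset[u,w_0]_h$. The same argument gives the statement for $d_v$.

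The main obstacle is justifying the affine paving step. Specifically, we must check that the cells $\Omega_v^\circ\cap\Hess(S,h)$ that meet $\Omega_{u,h}$ are entirely contained in it (or at least that $\Omega_{u,h}$ is a union of minus cells), so that the Chow groups of $\Omega_{u,h}$ are spanned by these cell closures. The first thing I would try is an alternative route that avoids this issue. Since $\tau_u$ has support in $\mathcal T_u^T\subset(\Omega_{u,h})^T$, I would use the upper-triangularity of the basis: $\sigma^T_{v,h}(v)\neq0$, and $\supp\sigma^T_{v,h}\subset\{x: v\leq_h x\}$. Then I would peel off coefficients inductively, starting from a minimal $v$ in the support. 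Each $v$ with $c_v\ne0$ is minimal among the remaining terms, so it lies in the support of $\tau_u$, and therefore $u\leq_h v$. Transitivity of $\leq_h$ then finishes the argument.
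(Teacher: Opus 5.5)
Your fallback argument is exactly the paper's proof. First, the containments $\mathcal T_u\subset\Omega_{u,h}$ and $\mathcal T_{s_iu}\subset\Omega_{s_iu,h}$ follow from Lemma~\ref{lem:Hess_Sch_decomp} and the lifting property (most cleanly applied to $s_iw<u$ with $i\in D_L(u)\setminus D_L(s_iw)$). Then comes the minimal-element triangularity argument, using Proposition~\ref{prop:computing the value}(1),(2) and transitivity of $\leq_h$.

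You should discard the affine-paving route rather than present it as the primary argument, because $\Omega_{u,h}$ is in general not a union of minus cells. For example, take $h=(2,3,3)$: then $\Omega_{213,h}$ is an irreducible curve whose $T$-fixed points are $213$ and $231$, yet $\Omega^\circ_{231,h}\cong\mathbb{C}$ cannot lie in it.
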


\begin{proof}  Since $u\in\mathcal A_{s_i, w}$, we have $u\geq w> s_iw$ and $s_iu\geq s_iw$ by Proposition~\ref{prop:lifting}. 
It follows from Lemma~\ref{lem:Hess_Sch_decomp} that 
$$\mathcal{T}_u=\overline{\Omega_u^\circ\cap\Omega_{s_iw,h}}\subset \overline{\Omega_{u,h}^\circ}=\Omega_{u,h}.$$ Hence we get
\[
\operatorname{supp}(\tau_u)=\operatorname{supp}([\mathcal{T}_u]) \subset \mathcal{T}_u^T
\subset
\Omega^T_{u,h}
\subset
[u,w_0]_h.
\]
Similarly, $\mathcal{T}_{s_iu}\subset \Omega_{s_iu,h}$, so \(\operatorname{supp}(\tau_{s_iu})\subset [s_iu,w_0]_h.\)

Let $v_0$ be a minimal element with respect to the $h$-Bruhat order
such that $c_{v_0}\neq 0$.
Then
\[
\tau_u(v_0)
=
\sum_v c_v\,\sigma^T_{v,h}(v_0)
=
c_{v_0}\,\sigma^T_{v_0,h}(v_0)
\neq 0
\]
by Proposition~\ref{prop:computing the value}.
Hence $v_0\in \operatorname{supp}(\tau_u)$, so $v_0\geq_h u$.

{Now let \(v\) be any permutation such that \(c_v\neq 0\). Choose a minimal
element \(v_0\), with respect to the \(h\)-Bruhat order, among the permutations
\(z\) satisfying
\[
z\leq_h v
\quad\text{and}\quad
c_z\neq 0.
\]
By the preceding argument applied to \(v_0\), we have \(u\leq_h v_0\). Therefore
\[
u\leq_h v_0\leq_h v,
\]}
which proves the desired inclusion
$\{v\mid c_v\neq 0\}\subset [u,w_0]_h$.

The same argument applied to \(\tau_{s_i u}\) gives
\(
\{\,v\mid d_v\neq 0\,\}\subset [s_i u,w_0]_h.
\)
\end{proof}


\section{Chromatic quaisymmetric functions of lollipop graphs}\label{sec:chromatic}

In this section, we study the Hessenberg varieties associated with the Hessenberg functions \(h=h_{m,n}\), whose incomparability graphs are lollipop graphs. By defining a composition
\(\bb(w)\) for each \(h_{m,n}\)-admissible permutation \(w\), we obtain an
elementary symmetric function expansion of \(X_{L_{m,n}}(\bx;q)\), which
yields the corresponding permutation module decomposition.

Let $m\geq 1$, $n\geq 0$, and $k=m+n$. We consider the Hessenberg function 
\[
h=h_{m,n} \coloneqq (\underbrace{m,\dots,m}_{m-1},m+1,\dots,k-1,k,k),
\]
with the convention that \(h_{m,0}=(m,\dots,m)\).
Then the incomparability graph associated with $h_{m,n}$ is the $(m,n)$-lollipop graph. More precisely, it consists of a complete graph on the vertices $\{1,2,\ldots,m\}$ and a path graph on the vertices $\{m,m+1,\ldots,m+n\}$, sharing the vertex $m$. We denote this graph by $L_{m,n}$.

If $n=0$, then $L_{m,n}=K_m$, the complete graph. If $m=1$ or $m=2$, then $L_{m,n}=P_{m+n}$, the path graph.

We first record a simple consequence of \(h_{m,n}\)-admissibility.

\begin{lemma}\label{lem:admissible}
    Let $h=h_{m,n}$ with $n>0$. For $w\in\Gh$, if $w_m<w_{m+1}$, then $w_m=w_{m+1}-1$.
\end{lemma}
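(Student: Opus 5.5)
Write $a=w_m$ and suppose $a<w_{m+1}$. The idea is to use $h$-admissibility at position $m$ and read off where $a+1$ must sit. First, $a\neq k$, because $w_{m+1}>a$. So $a\in[k-1]$, and admissibility at $j=m$ gives
\[
w^{-1}(a+1)\le h(m)=m+1 .
\]
Since $w^{-1}(a+1)\neq m$, the value $a+1$ occurs either at position $m+1$ or at some position $p\le m-1$. In the first case $w_{m+1}=a+1$ and we are done, so it remains to rule out the second case.

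Suppose $w^{-1}(a+1)=p\le m-1$. Let $b=w_{m+1}$. We know $b>a$ and $b\neq a+1$, so $b\ge a+2$. I will follow the chain of values $a+1,a+2,\dots$ using admissibility and show it gets trapped in $[m]$.

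The key observation concerns any value $c$ with $w^{-1}(c)\le m$ and $c<k$. Such a position has $h(w^{-1}(c))\le m+1$, and it equals $m+1$ only at position $m$ itself. So if $w^{-1}(c)\le m-1$, then $w^{-1}(c+1)\le m$. The only place the chain can leave $[m]$ is through position $m$, whose value is $a$.

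Now start at $c=a+1$, with $w^{-1}(a+1)=p\le m-1$. By induction, each of $a+2,a+3,\dots$ lies in a position $\le m$. The chain never passes through position $m$, since that position holds $a<a+1$. Hence every position reached is $\le m-1$, and the induction continues up to $k$. It follows that every value $c\ge a+1$ lies in a position $\le m-1$. In particular $b=w_{m+1}\ge a+2$ would lie in a position $\le m-1$, which is a contradiction. Therefore $w^{-1}(a+1)=m+1$, that is, $w_m=w_{m+1}-1$.

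The only delicate point is the induction: each step uses admissibility at a position $\le m-1$, where $h=m$. That position is never $m$ because the chain's values exceed $a$. No earlier results beyond the definition of $\Gh$ and of $h_{m,n}$ are needed; the hypothesis $n>0$ just ensures that position $m+1$ exists.
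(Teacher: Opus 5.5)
Your proof is correct and rests on the same mechanism as the paper's. A value in a position $\le m-1$ must have its successor in a position $\le m$, and only position $m$ (where $h(m)=m+1$) can reach position $m+1$. The paper runs this downward from $x=w_{m+1}$: it takes $y$ to be the largest value below $x$ among $w_1,\dots,w_m$, uses maximality in place of your induction to get $y=x-1$, and then forces $w^{-1}(y)=m$ from $w^{-1}(y+1)=m+1\le h(w^{-1}(y))$; you run the same idea upward from $a=w_m$.
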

\begin{proof}
    Let $w\in \Gh$ with $w_m<w_{m+1}$. Set $$x\coloneqq w_{m+1}\quad\text{ and }\quad A\coloneqq \{w_i\mid 1\leq i\leq m,\,w_i<x\}.$$ Since $w_m<w_{m+1}$, we have $A\neq\emptyset$. Let $y\coloneqq \max A$. We claim that $y=x-1$ and $y=w_m$. Suppose $y<x-1$. Then $y+1<x$ and $y+1\not\in A$. Since $w^{-1}(y)\leq m$, we have $w^{-1}(y+1)\leq h(w^{-1}(y))\leq m+1$. Since $w^{-1}(y+1)\neq m+1$, we obtain $w^{-1}(y+1)\leq m$, which contradicts to $y+1\not\in A$. Therefore, $y=x-1$. 
    
    On the other hand, the $h$-admissibility of $w$ implies that $$m+1=w^{-1}((x-1)+1)\leq h(w^{-1}(y)).$$ Since $y\in A$, we have $h(w^{-1}(y))\leq m+1$, so $h(w^{-1}(y))=m+1$. Since \(h=h_{m,n}\) with \(n>0\), this forces \(w^{-1}(y)=m\). Therefore, $w^{-1}(y)=m$, and consequently, $w_m=y=x-1=w_{m+1}-1.$
\end{proof}

We now define a composition and a partition associated with a permutation \(w\), when the Hessenberg function is \(h=h_{m,n}\).

\begin{definition}\label{def:lambda}  
Let $w=w_1\cdots w_k\in\Sn{k}$, where $k=m+n$.
We define a composition $\bb(w)$ of $k$ by the following procedure.

\begin{enumerate}
    \item We place a dashed bar immediately before $w_m$ if $w_m=\min\{w_i\mid 1\leq i\leq m\}.$
   \item For \(i=m+1, m+2, \dots, m+n\), if \(w_{i-1}>w_i\) then
    \begin{itemize}
       \item[-]  place a dashed bar immediately before \(w_i\) if there is a bar immediately before \(w_{i-1}\); and
       \item[-]  place a solid bar immediately before \(w_i\), otherwise.
    \end{itemize}
\end{enumerate}

After this procedure, we decompose \(w\) into blocks using only the solid bars. Let
    \[
    \bb(w)=(b_1,b_2,\ldots,b_r)
    \]
    be the sequence of the lengths of these blocks, read from left to right. We let $\lambda(w)$ be the partition of $k$ that we obtain by rearranging the parts of $\bb(w)$ in nonincreasing  order.
\end{definition}

\begin{remark} If $h=(2, 3, 4, \dots, k, k)$ then there are two $(m,n)$ such that $h=h_{m,n}$; either $(m,n)=(1, k-1)$ or $(m,n)=(2,k-2)$; while one can check that $\bb(w)$ is independent of the choice of $(m,n)$.\footnote{
In this case, the composition $\bb(w)$ coincides with the erasure $\widehat{\mathbf{a}}(w)$ in  \cite[Definition~5.23]{CHL1}.}
\end{remark}

\begin{remark} The set $\Gh$ of $h$-admissible permutations and the set of acyclic orientations on $G_h$ are in bijection; see \cite{CHL2}. Thus, our  $\lambda$ from $\Gh$ to the set of partitions for $h=h_{m,n}$ defines a map from the set of acyclic orientations on $L_{m,n}$. Moreover, the map respects the well-known theorem on the relation of the coefficients of the $e$-expansion and the acyclic orientations due to Stanley \cite[Theorem~3.3]{S1}, Shareshian and Wachs \cite[Theorem~5.3]{SW}; in the sense that an acyclic orientation with $j$ sinks is mapped to a partition of length $j$.  
\end{remark}

The following lemma immediately follows from the definition of $\bb(w)$.

\begin{lemma}\label{lem:lambda_1}
   Let $w\in \Gh$ for $h=h_{m, n}$ and suppose that
\(\bb(w)=(b_1,\dots,b_\ell)\). Then $b_1\geq m$. Furthermore, if \(b_1=m\), then
\[
w_m>w_{m+1}
\quad\text{and}\quad
w_m\neq \min\{w_1,\dots,w_m\}.
\]
If \(b_1>m\), then either \(w_m<w_{m+1}\), or
\[
w_m>w_{m+1}
\quad\text{and}\quad
w_m=\min\{w_1,\dots,w_m\}.
\]
\end{lemma}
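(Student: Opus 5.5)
The plan is to read the statement directly off Definition~\ref{def:lambda}, tracking which bars can land in the first $m$ positions and immediately after them. The first solid bar determines $b_1$.

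First I would observe that no solid bar occurs before position $m+1$. Step (1) only places a dashed bar, immediately before $w_m$. Step (2) places bars only immediately before $w_i$ with $i\ge m+1$. Hence the first block contains at least $w_1,\dots,w_m$, which gives $b_1\ge m$.

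Next I would characterize when $b_1=m$. This happens exactly when there is a solid bar immediately before $w_{m+1}$. By step (2) with $i=m+1$, that requires $w_m>w_{m+1}$, and also requires that no bar sits immediately before $w_m$. The only bar that can precede $w_m$ is the dashed bar of step (1), because step (2) does not touch position $m$. So the absence of a bar before $w_m$ means exactly $w_m\neq\min\{w_1,\dots,w_m\}$. Conversely, if both conditions hold, a solid bar is placed before $w_{m+1}$ and $b_1=m$.

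Finally, the case $b_1>m$ is the negation of the previous characterization: either $w_m<w_{m+1}$, or else $w_m>w_{m+1}$ and $w_m=\min\{w_1,\dots,w_m\}$. In the first alternative no bar is placed before $w_{m+1}$. In the second, the bar placed there is dashed. In either case the first block extends past position $m$.

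I also need to handle the edge case $n=0$. Then $k=m$, there is no $w_{m+1}$, and $\bb(w)=(m)$, so $b_1=m$ and the conditions involving $w_{m+1}$ are vacuous. I would either note that the lemma implicitly assumes $n>0$, or state this case separately. There is no real obstacle here; the only care needed is checking that step (2) never places a bar at position $m$.
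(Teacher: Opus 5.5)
Your proposal is correct and matches the paper: the paper gives no separate proof, saying only that the lemma follows immediately from Definition~\ref{def:lambda}. Your observations are exactly that argument: solid bars can only appear before $w_i$ with $i\ge m+1$, and the only bar that can sit before $w_m$ is the dashed one from step (1).

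One small correction on the edge case. For $n=0$ the conclusion $w_m\neq\min\{w_1,\dots,w_m\}$ is not vacuous and can fail: for example $w=2\,3\,1$ with $m=3$ has $b_1=m$ but $w_m=\min\{w_1,w_2,w_3\}$. So the lemma should be read with $n>0$, which is how it is used in the proof of Theorem~\ref{thm:Acyclic}.
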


From now on, we shade the first $m$ entries of $w\in\Sn{k}$, where $k=m+n$, to distinguish them from the remaining entries. Table~\ref{tab:lambda-example} illustrates the construction of $\bb(w)$
from several permutations.

\begin{table}[ht]
\centering
\begin{tabular}{|c|c|c|c|}
\hline
$(m,n)$ & $w\in\Sn{m+n}$ & Construction & $\bb(w)$  \\
\hline
\mystr $(2,2)$ & $w=\cblock{4\ 2} 3\ 1$ &  $ \cblock{4\dashbar 2} 3 \solidbar 1$ & $(3,1)$\\[6pt]
\hline
\mystr $(3,3)$ & $w=\cblock{5\ 6\ 4} 3\ 1\ 2$   &$\cblock{5\ 6 \dashbar 4 }\dashbar 3 \dashbar 1\ 2$  & $(6)$\\[6pt]
\hline
\mystr $(3,3)$ & $w=\cblock{4\ 6\ 5} 3\ 1\ 2$  & $\cblock{4\ 6\ 5}\solidbar 3\dashbar 1\ 2$ &$(3,3)$\\[6pt]
\hline
\end{tabular}
\caption{Examples of the construction of $\bb(w)$ from permutations.}
\label{tab:lambda-example}
\end{table}

\begin{example}
Let $h=h_{3, 4}$ and $w= \cblock{7\, 5\, 6} 4\, 1\, 2\, 3 \in \Gh$. Then $\bb(w)=(3,4)$, $^{\da 3}w=3\, 1\, 2\in\mathcal{G}_{^{\da 3}h}$ and $w^{\da 3}=4\,1\,2\,3 \in \mathcal{G}_{h^{\da 3}}$. Furthermore, $w=(^{\da 3}w:w^{\da 3})$.
\end{example}

\begin{lemma}\label{lem:separation} Let $h=h_{m, n}$ and $w\in \Gh$. Assume $\bb(w)=(b_1, \dots, b_\ell)\neq (k)$  where $k=m+n$. For $j=1, \dots, \ell-1$, let $i_j\coloneqq b_1+\cdots+b_{j}$. Then $\{w_{i_j+1}, \dots, w_k\}=[k-i_j]$. In particular, $(w^{-1})_1>m$ and $\bb(w^{\da i_j})=(b_{j+1}, \dots,b_\ell)$ where $w^{\da i_j}=w_{i_j+1}\,\cdots w_{k}$.
\end{lemma}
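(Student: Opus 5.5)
We argue directly from the construction of $\bb(w)$ together with $h$-admissibility for $h=h_{m,n}$. Since $b_1\ge m$ by Lemma~\ref{lem:lambda_1}, every solid bar lies at some position $p\ge m+1$; that is, it sits immediately before $w_p$ with $w_{p-1}>w_p$. The cut points are $i_j=p-1\ge m$.

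The key claim is this: if a solid bar is placed before $w_p$, then $\{w_p,\dots,w_k\}=[k-p+1]$. For indices $q\ge m+1$ we have $h(q)=q+1$, so admissibility forces $w^{-1}(w_q+1)\le q+1$ whenever $w_q<k$. We first show that the values $1,\dots,w_p$ all occur at positions $\ge p$. Suppose $c<w_p$ and let $c$ occur at position $r<p$. Choose the largest such $c$, and follow the chain $c, c+1, \dots$. Each step $c\mapsto c+1$ moves the position at most one step to the right when the current position is $\ge m$. When the position lies inside the first $m$ entries, it may jump anywhere in $[m]$, or to position $m+1$ only from position $m$.

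From this we need to derive a contradiction with the value $w_{p-1}>w_p$ sitting at position $p-1$, or with how the bar was chosen. Reaching $w_p$ at position $p$ from a position $<p$ requires passing through position $p-1$ with a value smaller than $w_p$. That is impossible, since $w_{p-1}>w_p$ is the only value there. The delicate case is $p=m+1$. Here the chain could enter from inside the first block only via position $m$, and the solid bar at $m+1$ means there was no dashed bar before $w_m$, i.e.\ $w_m\neq\min\{w_1,\dots,w_m\}$. Then some $c<w_m$ lies in the first block, and we must check that this does not contradict the claim.

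Actually the claim only needs the values less than $w_p$ to lie to the right. So we argue the converse way, with values above. Every value $>w_{p-1}$, and then every value at positions $<p$, chains upward: starting from a position $\le p-1$, the successor $c+1$ lies at a position $\le$ current$+1$. We show it never exceeds $p-1$. It could only reach $p$ from position $p-1$, i.e.\ from the value $w_{p-1}$, and $w_{p-1}+1=w_p$ contradicts $w_{p-1}>w_p$. Hence the set of values at positions $<p$ is closed under $+1$, so it equals $[k-p+2,k]$, giving $\{w_p,\dots,w_k\}=[k-p+1]$. The inside-block jumps are harmless because positions in $[m]$ are $<p$ anyway.

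With $i_j=p-1$ this gives $\{w_{i_j+1},\dots,w_k\}=[k-i_j]$. Taking $j=1$ with $i_1=b_1\ge m$, the value $k$ lies in a position $\le i_1$. Also $1$ lies at a position $>i_1\ge m$, so $(w^{-1})_1>m$. Finally, $w^{\da i_j}$ occupies positions $\ge m+1$, where only rule (2) applies. The bar types there depend only on the descents and on whether a bar precedes the previous entry. At the first entry of $w^{\da i_j}$ there is a solid bar, which acts like the block start: the subsequent dashed-versus-solid choices propagate identically. One checks that the solid bars of $w^{\da i_j}$, computed with $(m,n)=(1,k-i_j-1)$, coincide with those of $w$ after the cut, so $\bb(w^{\da i_j})=(b_{j+1},\dots,b_\ell)$. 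The main obstacle I expect is exactly this bookkeeping: matching the rule-(1) dashed bar at the first position of the new word with the inherited solid bar, so that subsequent descents produce the same bar types.
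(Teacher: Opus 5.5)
Your proof is correct, but it is organized differently from the paper's. You fix one cut $p-1=i_j\ge m$ and show directly that the prefix set $\{w_1,\dots,w_{p-1}\}$ is closed under $c\mapsto c+1$:
\begin{itemize}
\item from a position $r\le m-1$, admissibility puts $c+1$ at a position in $[m]\subseteq[p-1]$;
\item from $m\le r<p-1$, it puts $c+1$ at a position in $[r+1]\subseteq[p-1]$;
\item the only possible exit, from $r=p-1$ to position $p$, would force $w_p=w_{p-1}+1$, which the descent $w_{p-1}>w_p$ forbids.
\end{itemize}
So the prefix is $[k-p+2,k]$.

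The paper instead inducts on $n$. It looks only at the block after the last solid bar, argues via admissibility that this block consists of $1,\dots,k-i_{\ell-1}$, and then applies the induction hypothesis to ${}^{\da i_{\ell-1}}w$. That permutation is admissible for $h_{m,n'}$ with $n'<n$ by Lemma~\ref{lem:block_generator}.

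Your route handles all $j$ at once with no induction. It also never uses the solid/dashed distinction, only a descent at a position $\ge m$. So it actually shows that every descent $w_{p-1}>w_p$ with $p-1\ge m$ of an $h_{m,n}$-admissible permutation yields a $(p-1)$-block decomposition.

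It is also sturdier than the paper's write-up. The paper asserts that $w_{i_{\ell-1}+1}$ is the minimum of the final block and that the block reads $1\,2\cdots(k-i_{\ell-1})$ in order. Both claims fail for $w=4\,6\,5\,3\,1\,2$ with $h=h_{3,3}$ from Table~\ref{tab:lambda-example}, whose final block is $3\,1\,2$; only the set equality survives. Patching that step (working with the true minimum of the block, then with all of its values) is essentially your closure argument.

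When you write it up, delete the abandoned first attempt and the ``delicate case $p=m+1$''; the closure argument needs neither. The appeal to Lemma~\ref{lem:lambda_1} is also unnecessary. Solid bars are created only by rule (2), so they always sit before some $w_p$ with $p\ge m+1$.
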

\begin{proof} We use an induction on $n$. If $n=0$ then $\bb(w)=(k)$ for all $w\in\Gh$ and there is nothing to prove. Let $n>0$ and suppose that the assertion is true for all $0\leq n'<n$. Let $w\in \Gh$ have $\bb(w)=(b_1, \dots, b_\ell)\neq (k)$. We know from the definition of $\bb(w)$ that $i_{\ell-1}\geq m$,  $w_{i_{\ell-1}} > w_{i_{\ell-1}+1}$ and $w_{i_{\ell-1}+1}$ is the minimum of the set $\{w_{i_{\ell-1}+1},\dots,  w_k\}$. Thus, if $x\coloneqq w_{i_{\ell-1}+1}>1$ then $x-1\in \{w_1, \dots, w_{i_{\ell-1}-1} \}$ while $h(1)\leq \cdots \leq h(i_{\ell-1}-1)\leq i_{\ell-1}<(w^{-1})_x$. This is a contradiction to the $h$-admissibility of $w$. Therefore, we obtain $w_{i_{\ell-1}+1}=1$ and a similar argument shows that $(w_{i_{\ell-1}+1},\dots,  w_k)=(1, 2, \dots, k-i_{\ell-1})$. Hence $w$ admits an $i_{\ell-1}$-block decomposition and $^{\da {i_{\ell-1}}}w$ is $^{\da {i_{\ell-1}}}h$-admissible where $^{\da {i_{\ell-1}}}h=h_{m,n'}$ with $n'<n$ and we can apply the induction hypothesis to finish the proof.
\end{proof}

From Lemma~\ref{lem:separation} we have the following corollary.
\begin{corollary}\label{cor:separation}
Let $w\in \Gh$ for $h=h_{m, n}$ and $\bb(w)=(b_1, \dots, b_\ell)\neq (k)$  where $k=m+n$. For $j=1, \dots, \ell-1$ let $i_j\coloneqq b_1+\cdots+b_{j}$. Then $w$ admits an $i_j$-block decomposition
and $h=(^{\da i_j}h : h^{\da i_j})$. 
\end{corollary}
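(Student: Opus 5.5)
The block decomposition itself comes straight from Lemma~\ref{lem:separation}. For each $j=1,\dots,\ell-1$ that lemma gives $\{w_{i_j+1},\dots,w_k\}=[k-i_j]$. Taking complements, $\{w_1,\dots,w_{i_j}\}=[k-i_j+1,k]$, which is exactly the definition of an $i_j$-block decomposition, so $w\in\Sn{k}^{[i_j]}$. By Lemma~\ref{lem:block_generator} we then also have ${}^{\da i_j}w\in\mathcal G_{{}^{\da i_j}h}$ and $w^{\da i_j}\in\mathcal G_{h^{\da i_j}}$, although the statement does not need this.

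The remaining claim, $h=({}^{\da i_j}h:h^{\da i_j})$, is purely about Hessenberg functions. Write $i=i_j$ and compare the two sides coordinate by coordinate using the definition of $(h_1:h_2)$ with $k_1=i$ and $k_2=k-i$. The right-hand side is
\[
\bigl({}^{\da i}h(1),\dots,{}^{\da i}h(i-1),\,{}^{\da i}h(i)+1,\,h^{\da i}(1)+i,\dots,h^{\da i}(k-i)+i\bigr).
\]
The last $k-i$ entries are $h(i+p)-i+i=h(i+p)$ by definition, so they agree automatically. We therefore need two things:
\begin{itemize}
\item[(a)] $\min(h(p),i)=h(p)$ for $p\le i-1$, that is, $h(p)\le i$ for all $p<i$;
\item[(b)] $\min(h(i),i)+1=h(i)$, that is, $h(i)=i+1$.
\end{itemize}

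Both follow from $i=i_j\ge b_1\ge m$ (Lemma~\ref{lem:lambda_1}) together with $i<k$, since $j\le \ell-1$. For $h=h_{m,n}$ we have $h(p)=\min(p+1,k)$ whenever $p\ge m$. Since $m\le i\le k-1$, this gives $h(i)=i+1$, proving (b). For (a), if $p\le m-1$ then $h(p)=m\le i$, and if $m\le p\le i-1$ then $h(p)=p+1\le i$.

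The only point needing care is the boundary case $i=m$ with $n>0$. Here the first $m-1$ entries are $m$, and $h(m)=m+1=i+1$, so the formula still holds. Also, $\ell\ge2$ forces $n>0$ (when $n=0$ we have $\bb(w)=(k)$), so $i<k$ is genuine and the formula for $h(i)$ applies. No step is a real obstacle; the substance lies in Lemma~\ref{lem:separation} and the bound $b_1\ge m$.
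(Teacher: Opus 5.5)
Your proof is correct and follows the paper's route. The paper simply states that the corollary follows from Lemma~\ref{lem:separation}. Your complement argument for the $i_j$-block decomposition, together with the coordinatewise check of $h=({}^{\da i_j}h:h^{\da i_j})$ using $m\le i_j\le k-1$, makes explicit the routine verification that the paper leaves to the reader.
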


For a positive integer $m$, we define $$[m]_q\coloneqq \sum_{i=0}^{m-1} \, q^i\quad\text{ and }\quad [m]_q!\coloneqq [m]_q[m-1]_q\cdots [1]_q.$$ In the following lemma we state basic properties of chromatic quasisymmetric functions $X_G(\bx ; q)$. 

\begin{lemma}[\cite{SW}, Section 2]\label{lem:X_G}
\begin{enumerate}
   \item If a graph $G$ is a disjoint union of two graphs $G^1$ and $G^2$, that is $G=G^1\bigsqcup G^2$, then 
      $$X_{G}(\bx ; q)=X_{G^1}(\bx ; q)X_{G^2}(\bx ; q)\,.$$
   \item If $h=h_{m,0}$ then $\Gh=\Sn{m}$ and $\bb(w)=\lambda(w)=(m)$ for all $w\in \Gh$. Moreover  $$X_{G_h}(\bx ; q)=X_{K_m}(\bx ; q)=\sum_{w\in \Sn{m}} q^{\ell(w)}e_{\lambda(w)}=\sum_{w\in \Sn{m}} q^{\ell(w)}e_{m}=[m]_q! e_{m}\,. $$    
\end{enumerate}
\end{lemma}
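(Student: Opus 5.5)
The statement to prove is Lemma~\ref{lem:X_G}. It is cited from \cite{SW}, so my plan is to check each item directly from the definitions.

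For (1), I work from the definition of $X_G(\bx;q)$ as a sum over proper colorings. I assume, as in \cite{SW}, that the vertex set of $G=G^1\sqcup G^2$ is ordered so that ascents are computed with respect to the induced orders on $V(G^1)$ and $V(G^2)$. Since there are no edges between the two components, a proper coloring $\kappa$ of $G$ is the same as a pair $(\kappa_1,\kappa_2)$ of proper colorings of $G^1$ and $G^2$. Both the monomial and the ascent count split:
\[
\bx^\kappa=\bx^{\kappa_1}\bx^{\kappa_2},\qquad \asc(\kappa)=\asc(\kappa_1)+\asc(\kappa_2),
\]
because every edge lies in exactly one component and its relative order is unchanged. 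Summing over all pairs factors the generating function into $X_{G^1}(\bx;q)\,X_{G^2}(\bx;q)$.

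For (2), when $n=0$ we have $h=(m,\dots,m)$, so $G_h=K_m$ and every pair $(i,j)$ with $i<j$ is an $h$-inversion pair. Hence $\ell_h=\ell$, and by Remark~\ref{rmk:generator_flag} every permutation is $h$-admissible, so $\Gh=\Sn{m}$. For $\bb(w)$, step (2) of Definition~\ref{def:lambda} is vacuous since $n=0$, and step (1) places at most a dashed bar. There are no solid bars, so $\bb(w)=(m)$ and $\lambda(w)=(m)$.

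For the chromatic function of $K_m$, a proper coloring uses $m$ distinct colors. Grouping colorings by their color set $c_1<\dots<c_m$, each set is assigned to the vertices by a bijection, that is, by a permutation $w\in\Sn{m}$. The number of ascents of such a coloring equals the number of non-inversions of $w$. Summing over color sets produces $e_m(\bx)$, and summing over $w$ gives $\sum_{w}q^{\binom m2-\ell(w)}=\sum_w q^{\ell(w)}$ by the symmetry $w\mapsto ww_0$. The last equality, $\sum_{w\in\Sn{m}}q^{\ell(w)}=[m]_q!$, is the standard inversion generating function. I would prove it by induction on $m$, inserting the value $m$ into a permutation of $\Sn{m-1}$: the $m$ possible positions contribute $0,1,\dots,m-1$ new inversions, which gives the factor $[m]_q$.

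There is no real obstacle here. The only delicate point is (1): one must observe that the ascent statistic depends only on each edge's endpoint order, so any interleaving of the vertex labels of the two components is harmless.
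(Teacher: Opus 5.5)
Your proof is correct. The paper gives no argument of its own for this lemma beyond citing \cite{SW}, and your direct verification is the standard argument behind that citation: for (1) you factor proper colorings and ascents over the two components, and for (2) you read off $\mathcal{G}_h=\mathfrak{S}_m$ and $\mathbf{b}(w)=(m)$ from the definitions and compute $X_{K_m}=\bigl(\sum_{w}q^{\ell(w)}\bigr)e_m=[m]_q!\,e_m$, including your observation that ascents depend only on the relative order of each edge's endpoints.
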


\begin{lemma}[\cite{CHL1}, Theorem 5.27]\label{lem:X_path} If $h=h_{2, n}$ then 
    $$X_{G_h}(\bx ; q)=X_{P_{n+2}}(\bx ; q)=\sum_{w\in \Gh} q^{\ell_h(w)}e_{\lambda(w)}\,.$$ 
\end{lemma}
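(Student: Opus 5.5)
The plan is to derive the statement from \cite[Theorem~5.27]{CHL1}. That theorem already gives an $e$-expansion of $X_{P_{n+2}}(\bx;q)$ indexed by $h$-admissible permutations for $h=(2,3,\dots,k,k)=h_{2,n}$, weighted by $q^{\ell_h(w)}$. Its partitions are obtained by sorting the \emph{erasure} composition $\widehat{\mathbf a}(w)$ of \cite[Definition~5.23]{CHL1}. So the task reduces to one combinatorial identity: for every $w\in\Gh$ with $h=h_{2,n}$, we must show $\bb(w)=\widehat{\mathbf a}(w)$, as asserted in the footnote to the remark following Definition~\ref{def:lambda}. Once this holds, $\lambda(w)$ is the sorted erasure and the two sums agree term by term.

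To check the identity, I would first use the independence remark to work with $(m,n)=(1,k-1)$. Step~(1) of Definition~\ref{def:lambda} then always puts a dashed bar before $w_1$. Step~(2) puts a bar before every descent position $w_{i-1}>w_i$. That bar is dashed exactly when the previous position also carries a bar, so only the first descent in each maximal run of consecutive descents gets a solid bar. Since a bar sits before $w_1$, the initial decreasing run produces only dashed bars. I would then unwind \cite[Definition~5.23]{CHL1}: the erasure records the descent set of $w$, erasing every descent that immediately follows another descent (or the start). I would check on runs of descents that the surviving cut points are exactly our solid bars.

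As a sanity check, or as a self-contained alternative, I would verify the formula against the Shareshian--Wachs expansion
\[
X_{P_k}(\bx;q)=\sum_{\alpha\models k}[\alpha_1]_q\prod_{j\ge2}q[\alpha_j-1]_q\, e_{\alpha}.
\]
For this I would group $w\in\Gh$ by the composition $\bb(w)=\alpha$. Lemma~\ref{lem:separation} and Corollary~\ref{cor:separation} factor such $w$ as $(w^{(1)}:\cdots:w^{(r)})$, with blocks of sizes $\alpha_j$ and each block having $\bb=(\alpha_j)$. Then $\ell_h(w)=\sum_j\ell_{h_j}(w^{(j)})+(r-1)$, since each cut contributes exactly one $h$-inversion, namely the path edge across the cut. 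What remains is to count single-block admissible permutations by $h$-length: there should be $[a]_q$ for the first block, and $[a-1]_q$ for later blocks, since a later block must begin with a descent from the previous block.

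I expect the main obstacle to be this per-block count, or equivalently a precise matching with CHL1's erasure conventions. A single-block path-admissible permutation is essentially determined by where the value $1$ sits and by a monotone structure around it. I would try to show that such permutations of length $a$ with $\bb=(a)$ are exactly the $a$ permutations of the form $j\,(j-1)\cdots 1\,(j+1)\cdots a$, with $h$-length $j-1$. These give $[a]_q$. Requiring a descent at the start then removes the case $j=1$, and the extra $q$ for the cut gives $q[a-1]_q$.
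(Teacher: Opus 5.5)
Your main route is the paper's own. Lemma~\ref{lem:X_path} is simply quoted from \cite[Theorem~5.27]{CHL1}, with the footnote to the remark after Definition~\ref{def:lambda} supplying $\bb(w)=\widehat{\mathbf a}(w)$. Your description of the bars for $(m,n)=(1,k-1)$ is also accurate.

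The self-contained alternative, however, fails at exactly the step you flagged. There are two problems.

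First, $j\,(j-1)\cdots 1\,(j+1)\cdots a$ is not $h$-admissible for $2\le j\le a-1$. The value $j$ is in position $1$, but $j+1$ is in position $j+1>h(1)=2$ (e.g.\ $213$). The admissible permutations with $\bb=(a)$ are the identity and $a\,(a-1)\cdots c\,1\,2\cdots(c-1)$ for $2\le c\le a$; these are types i), ii), iv) of Lemma~\ref{lem:trivial} with $m=1$. Their $h$-lengths are $0$ and $a-c+1$, so the total $[a]_q$ comes out right only by coincidence.

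Second, the condition at a cut is on the wrong side. The descent into a new block is automatic, because earlier blocks carry larger values. What is needed is that the bar there be solid. That means the \emph{preceding} block must end with an ascent, so it has length at least $2$ and is not fully decreasing. Identity blocks do occur later: for example, $3412$ is admissible with $\bb=(2,2)$.

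Hence for $\bb(w)=(\alpha_1,\dots,\alpha_r)$ the generating function is $q^{r-1}\bigl(\prod_{j<r}[\alpha_j-1]_q\bigr)[\alpha_r]_q$. The full $q$-integer attaches to the \emph{last} block, not the first. For $k=3$, the permutation $w=231$ contributes $q\,e_{(2,1)}$. Your composition-indexed formula instead gives coefficient $0$ to $(2,1)$ and $q$ to $(1,2)$.

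Since $e_\alpha$ depends only on the multiset of parts, reversing compositions recovers your Shareshian--Wachs expansion. So the alternative is repairable, but only after both corrections.
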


 \begin{proposition}[\cite{HNY}, Proposition~4.3]\label{prop:HNY}
Let $m\geq 2$ and $n\geq 1$. The chromatic quasisymmetric functions $X_{L_{m,n}}(\bx;q)$ of lollipop graphs are uniquely determined by the following recurrence relation:
 \begin{equation}\label{eq:recurrence}
     [m]_q X_{L_{m,n}}(\bx;q)= X_{L_{m+1,n-1}}(\bx;q)+ q[m-1]_q X_{K_m \bigsqcup P_n}(\bx;q)\,. 
 \end{equation} 
\end{proposition}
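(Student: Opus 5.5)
The plan is to obtain \eqref{eq:recurrence} from the modular law for chromatic quasisymmetric functions of Hessenberg graphs (Guay-Paquet, Abreu--Nigro). The idea is to interpolate between $L_{m,n}$ and $L_{m+1,n-1}$ through a chain of Hessenberg graphs, and then solve the resulting linear recurrence.

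\emph{The interpolating family.} For $1\le j\le m+1$, let $G_j$ be the graph on $[k]$ defined as follows:
\begin{itemize}
\item $[m]$ is a clique;
\item vertex $m+1$ is adjacent exactly to the vertices $j,j+1,\dots,m$;
\item $m+1,m+2,\dots,k$ is a path.
\end{itemize}
Each $G_j$ is $G_{h^{(j)}}$ for the Hessenberg function with $h^{(j)}(i)=m$ for $i<j$ and $i<m$, $h^{(j)}(i)=m+1$ for $j\le i\le m$, and $h^{(j)}(i)=\min(i+1,k)$ for $i>m$. At the two ends:
\begin{itemize}
\item $G_m=L_{m,n}$;
\item $G_1=L_{m+1,n-1}$;
\item $G_{m+1}=K_m\bigsqcup P_n$, where vertex $m+1$ has no neighbour in $[m]$. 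Its function is nondecreasing since $h(m)=m$ and $h(m+1)=m+2$.
\end{itemize}
Consecutive members $h^{(j)}$ and $h^{(j+1)}$ differ by one in the single value at position $j$.

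\emph{Modular relation.} For $2\le j\le m$, I would apply the modular law to the triple $h^{(j+1)},h^{(j)},h^{(j-1)}$. In the form for Hessenberg functions: if $h(i-1)<h(i)<h(i+1)$ and $h(h(i))=h(h(i)+1)$, then
\[(1+q)X_h=qX_{h^-}+X_{h^+}.\]
It may be necessary to use the transposed (row) version of the law, obtained through the reversal symmetry $X_{G_h}=X_{G_{h^{\mathrm{rev}}}}$. The expected output is
\[
(1+q)X_{G_j}=q\,X_{G_{j+1}}+X_{G_{j-1}}.
\]

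The main obstacle is this step: checking that the hypotheses of the modular law hold at each position. This depends on the values $m$ versus $m+1$ in columns $j-1,j,j+1$, on the path structure, and on the boundary cases $j=m$ and $j=2$. If the column form fails there, I would instead work directly with colorings. Vertex $m+1$ sees a set of clique vertices, and sorting colorings by the relative order of the colors of the clique vertices gives the three-term relation. This is the same bijective argument that proves the modular law.

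\emph{Solving the recurrence.} Set $a_t:=X_{G_{m+1-t}}$, so that
\[
a_0=X_{K_m\sqcup P_n},\qquad a_1=X_{L_{m,n}},\qquad a_m=X_{L_{m+1,n-1}}.
\]
The relation becomes $a_{t+1}-a_t=q(a_t-a_{t-1})$ for $1\le t\le m-1$, hence $a_t-a_{t-1}=q^{t-1}(a_1-a_0)$. Summing over $t$ gives $a_m=a_0+[m]_q(a_1-a_0)$, that is,
\[
[m]_q a_1=a_m+([m]_q-1)a_0=a_m+q[m-1]_q a_0,
\]
which is exactly \eqref{eq:recurrence}.

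\emph{Uniqueness.} I would induct on $n$. By Lemma~\ref{lem:X_G}(1), $X_{K_m\sqcup P_n}=X_{K_m}X_{P_n}$. Here $X_{K_m}=[m]_q!\,e_m$ by Lemma~\ref{lem:X_G}(2), and $X_{P_n}$ is a lollipop with smaller parameter $n$, covered by Lemma~\ref{lem:X_path}. The term $X_{L_{m+1,n-1}}$ has $n-1<n$ and terminates at $L_{m+n,0}=K_{m+n}$. Since $[m]_q$ is a nonzero divisor, \eqref{eq:recurrence} determines $X_{L_{m,n}}$ recursively from these base cases.
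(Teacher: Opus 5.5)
The paper gives no proof of this proposition. It imports the recurrence verbatim from \cite{HNY} and only uses it, together with the base cases $K_m$ and $P_n$, in the proof of Theorem~\ref{thm:Acyclic}. Your argument therefore supplies a proof rather than competing with one, and it is correct:
\begin{itemize}
\item The family $G_{m+1}\subset G_m\subset\cdots\subset G_1$, in which vertex $m+1$ is attached to a growing final segment $[j,m]$ of the clique, does interpolate between $K_m\sqcup P_n$, $L_{m,n}$ and $L_{m+1,n-1}$.
\item The relation $(1+q)X_{G_j}=qX_{G_{j+1}}+X_{G_{j-1}}$ holds for $2\le j\le m$.
\item Your telescoping to $[m]_qa_1=a_m+q[m-1]_qa_0$ is right.
\item Uniqueness by induction on $n$ works, since $P_n=L_{2,n-2}$ for $n\ge 2$, $X_{K_m}$ is explicit, and $[m]_q$ is invertible.
\end{itemize}
Compared with the citation, this makes the recurrence self-contained. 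It rests only on the modular law, or even on an elementary colour swap.

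The step you flagged does go through, but not in the form you wrote. The column form (one value $h(i)$ moved by $\pm1$) does not fit, because $h^{(j+1)},h^{(j)},h^{(j-1)}$ differ at two positions, $j$ and $j-1$. What applies is the row form of the law: if $h(i+1)=h(i)+1$ and no $l$ has $h(l)=i$, then $(1+q)X_h=qX_{h_0}+X_{h_2}$. Here $h_0(i)=h_0(i+1)=h(i)$ and $h_2(i)=h_2(i+1)=h(i+1)$, with all other values unchanged.

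For $h=h^{(j)}$ and $i=j-1$:
\begin{itemize}
\item $h(j-1)=m$ and $h(j)=m+1$;
\item every value of $h^{(j)}$ is at least $m>j-1$, so no $l$ has $h(l)=j-1$;
\item $h_0=h^{(j+1)}$ and $h_2=h^{(j-1)}$.
\end{itemize}
So the hypotheses hold for all $2\le j\le m$. After reversal this becomes the column form at position $n$, with $h(n)=k+1-j$. The conditions $n<k+1-j<k$ and $h(k+1-j)=h(k+2-j)=k$ are then exactly $2\le j\le m$.

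Your coloring fallback is right in spirit, but the mechanism is a swap of colours rather than a sort. Vertices $j-1$ and $j$ are adjacent and have the same neighbours other than $m+1$ in all three graphs. So exchanging their colours is a fixed-point-free involution on proper colourings of $G_{j+1}$. It preserves the number of ascents on edges other than $\{j-1,j\}$.

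Let the pair carry colours $a<b$ and let $m+1$ have colour $c$. Up to a factor common to the pair, the contributions to $X_{G_{j+1}},X_{G_j},X_{G_{j-1}}$ are:
\begin{itemize}
\item when $c\notin\{a,b\}$: $1+q$, $q^{1+[b<c]}+q^{[a<c]}$, and $(1+q)q^{[a<c]+[b<c]}$;
\item when $c\in\{a,b\}$: $1+q$, $q$, and $0$.
\end{itemize}
The identity then checks directly in each of the four cases $c\in\{a,b\}$, $c<a$, $a<c<b$, $b<c$.

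One small slip: your formula leaves $h^{(m+1)}(m)$ undefined. The first case should read ``$i<j$ and $i\le m$''.
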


\begin{theorem}\label{thm:Acyclic} Let $h=h_{m,n}$. Then the chromatic quasisymmetric function $X_{G_h}$ expands into the elementary symmetric functions in the following way:
\[  X_{L_{m,n}}(\bx;q)=\sum_{w\in \Gh} q^{\ell_h(w)} e_{\lambda(w)}(\bx)\,. \]
Equivalently, the cohomology of the Hessenberg variety associated with $h$ decomposes into permutation modules as follows: For all $0\leq d \leq d_h$,
 \[H^{2d}(\Hess(S,h))=\bigoplus_{w\in \Gh^d}\ M^{\lambda(w)}\,.\] 
\end{theorem}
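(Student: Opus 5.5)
Set $F_{m,n}\coloneqq\sum_{w\in\mathcal{G}_{h_{m,n}}} q^{\ell_h(w)}e_{\lambda(w)}$. By Proposition~\ref{prop:HNY}, the functions $X_{L_{m,n}}$ are uniquely determined by the recurrence~\eqref{eq:recurrence} together with initial data. So the plan is to show that the $F_{m,n}$ satisfy the same recurrence and agree with $X_{L_{m,n}}$ on that initial data. The cohomological statement then follows at once from Theorem~\ref{thm:rel}, since $\omega e_\lambda=h_\lambda=\ch(M^\lambda)$.

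\textbf{Initial cases.} These are $n=0$, handled by Lemma~\ref{lem:X_G}(2), together with $m\le 2$, handled by Lemma~\ref{lem:X_path} and the remark that $\bb(w)$ does not depend on the choice of $(m,n)$ when $m\in\{1,2\}$. We also need the disjoint union $K_m\sqcup P_n$. By Lemma~\ref{lem:X_G}(1), $X_{K_m\sqcup P_n}=[m]_q!\,e_m\cdot X_{P_n}$, and Lemma~\ref{lem:X_path} expands $X_{P_n}$ combinatorially.

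\textbf{Recurrence.} We must prove
\[
[m]_qF_{m,n}=F_{m+1,n-1}+q[m-1]_q\,[m]_q!\,e_m X_{P_n}.
\]
I would build a weight-preserving correspondence at the level of $h$-admissible permutations. The idea is to cut $\mathcal{G}_{h_{m,n}}$ according to Lemma~\ref{lem:lambda_1}.

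\emph{Case $b_1=m$.} Here $w_m>w_{m+1}$ and $w_m$ is not the minimum of the first block. By Lemma~\ref{lem:separation} and Corollary~\ref{cor:separation}, $w=({}^{\da m}w:w^{\da m})$, with ${}^{\da m}w\in\Sn{m}$ arbitrary except that its last entry is not $1$. Also $h^{\da m}$ is a path Hessenberg function, and $\ell_h(w)=\ell({}^{\da m}w)+\ell_{h^{\da m}}(w^{\da m})+1$, where the $+1$ comes from the single cross edge $(m,m+1)$. Summing over the first block gives $q\,([m]_q!-[m-1]_q!)\,e_m$ times a path sum. I expect the path sum to be $X_{P_n}$, once one checks that the path-type Hessenberg function on $n$ vertices appears correctly. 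Since $[m]_q!-[m-1]_q!=q[m-1]_q[m-1]_q!$, multiplying by $[m]_q$ should match $q[m-1]_q[m]_q!\,e_mX_{P_n}$ after bookkeeping. The mismatch in factors must be absorbed by the other class, so I would instead organize the argument as follows.

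\emph{Case $b_1>m$, compared with $F_{m+1,n-1}$.} Multiplying by $[m]_q$ corresponds to inserting an extra element into the clique. Concretely, I would define a map $\mathcal{G}_{h_{m+1,n-1}}\to\mathcal{G}_{h_{m,n}}\times[m]$, or its inverse, by moving the entry $w_{m+1}$ into or out of the clique. I would track how $\ell_h$ changes by $0,\dots,m-1$ (producing the factor $[m]_q$) and check that $\lambda$ is preserved whenever the first solid bar lies strictly after position $m+1$. The leftover terms, where moving the entry would create a solid bar at $m$ or $m+1$, would then be shown to account exactly for $q[m-1]_q$ times the $K_m\sqcup P_n$ sum computed via the block factorization above.

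\textbf{Main obstacle.} I expect the hardest step to be this bijective bookkeeping: making the insertion map respect $h$-admissibility (using Lemma~\ref{lem:admissible}, which forces $w_m=w_{m+1}-1$ when $w_m<w_{m+1}$) and the dashed-versus-solid bar rules, while keeping the $q$-weights consistent. If a direct bijection is too messy, my fallback is to prove the equivalent two-term identity by induction on $n$. I would peel off the last block using Lemma~\ref{lem:separation}, so that $F_{m,n}$ decomposes as a sum of the $b_1=(k)$ part plus $F$-type products. This reduces the claim to verifying the recurrence only for the permutations with $\bb(w)=(k)$, which admit an explicit description as a clique permutation followed by a descending-run-type tail.
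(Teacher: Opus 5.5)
Your skeleton is the paper's: verify the recurrence of Proposition~\ref{prop:HNY} for $F_{m,n}$, split $\Gh$ into $A=\{w:\bb(w)_1>m\}$ and $B=\{w:\bb(w)_1=m\}$, and treat $B$ through $w=({}^{\da m}w:w^{\da m})$, with one extra factor $q$ coming from the edge $(m,m+1)$. However, your $q$-count for $B$ is wrong, and this error is what creates the ``mismatch'' you then try to redistribute. In a permutation of $\Sn{m}$ whose last entry is $1$, that $1$ is inverted with all $m-1$ preceding entries. So these permutations contribute $q^{m-1}[m-1]_q!$, not $[m-1]_q!$. Therefore
\[
\sum_{w\in B}q^{\ell_h(w)}e_{\lambda(w)}=q\bigl([m]_q!-q^{m-1}[m-1]_q!\bigr)e_mX_{P_n}=q\,[m-1]_q\,[m-1]_q!\,e_mX_{P_n},
\]
and multiplying by $[m]_q$ gives exactly $q[m-1]_qX_{K_m}X_{P_n}$. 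For $m=3$, $n=1$, the four elements $2431,4231,2341,3241$ of $B$ contribute $q(1+q)^2$, not $q^2(1+q)^2$. So nothing needs to be absorbed: the recurrence splits term by term. What remains is to show $[m]_q\sum_{w\in A}q^{\ell_h(w)}e_{\lambda(w)}=F_{m+1,n-1}$. This means finding a bijection $\{0,\dots,m-1\}\times A\to\mathcal{G}_{h_{m+1,n-1}}$, onto all of $\mathcal{G}_{h_{m+1,n-1}}$, that shifts the length by exactly $i$ and preserves $\bb$. Looking for ``leftover terms'' on that side to compensate a discrepancy would send you in the wrong direction.

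That bijection is the actual content of the proof, and your proposal does not construct it. The paper sets $i_w=|\{j<m: w_j>w_{m+1}\}|$ for $w\in A$. Regarding $w$ as an element of $\mathcal{G}_{h_{m+1,n-1}}$ only adds the edges $(j,m+1)$ with $j<m$, so its length increases by exactly $i_w$. For $i\neq i_w$, one cyclically rotates the values at position $m$ and at the $|i-i_w|$ largest positions $j<m$ with $w_j>w_{m+1}$ (if $i<i_w$), respectively $w_j<w_{m+1}$ (if $i>i_w$). The tail $w_{m+1}\cdots w_k$ is never touched, and the total shift comes out to exactly $i$. Since the tail is fixed and $b_1>m$, Lemmas~\ref{lem:lambda_1} and~\ref{lem:admissible} give two cases: either $w_m=w_{m+1}-1$, or $w_m>w_{m+1}$ with $w_m$ the minimum of $w_1,\dots,w_m$. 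In both cases the image is $h_{m+1,n-1}$-admissible and $\bb$ is unchanged. Surjectivity is proved by an explicit inverse, split according to whether $w'_{m+1}$ is the minimum of $w'_1,\dots,w'_{m+1}$. Your fallback (peeling off the last block to reduce to $\bb(w)=(k)$) is too vague to replace this. It would still require an explicit weighted count of the permutations of types i)--iv) in Lemma~\ref{lem:trivial}, and an identity you have not formulated.
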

\begin{proof}
Let $Y_{L_{m,n}}(\bx;q)\coloneqq\sum_{w\in \Gh} q^{\ell_h(w)} e_{\lambda(w)}(\bx)$. Then by Lemma~\ref{lem:X_G}(2) and Lemma~\ref{lem:X_path}, we have $$Y_{K_m}(\bx;q)=X_{K_m}(\bx;q)\,\,\, \text{ and } \,\,\,Y_{P_n}(\bx;q)=X_{P_n}(\bx;q).$$ 
Thus it remains to show that $Y_{L_{m,n}}(\bx;q)$ satisfies the same recurrence relation (\ref{eq:recurrence})
as $X_{L_{m,n}}(\bx;q)$.
We show that the following recurrence relation is satisfied in what follows, where we note that $X_{K_m \bigsqcup P_n} =X_{K_m}X_{P_n}$:
 \begin{equation*}
     [m]_q Y_{L_{m,n}}(\bx;q)= Y_{L_{m+1,n-1}}(\bx;q)+ q[m-1]_q Y_{K_m}(\bx;q)Y_{P_n}(\bx;q)\,. 
 \end{equation*} 
 
Since \(e_{\bb(w)}=e_{\lambda(w)}\), we shall use \(e_{\bb(w)}\) in the proof. Let $h\coloneqq h_{m,n}$, and 
\[A\coloneqq \{w\in\Gh \mid \bb(w)_1>m \},\qquad B\coloneqq \{w\in\Gh \mid \bb(w)_1=m \}.\]
Then $\Gh= A \bigsqcup B$, and we get
$$ [m]_q Y_{L_{m,n}} = [m]_q  \sum_{w\in A} q^{\ell_h(w)}e_{\bb(w)}+ [m]_q \sum_{w\in B} q^{\ell_h(w)}e_{\bb(w)}\,.$$ 
We claim that 
$$[m]_q \sum_{w\in A} q^{\ell_h(w)}e_{\bb(w)}= Y_{L_{m+1,n-1}}\,\,\,\text{ and}$$
$$[m]_q \sum_{w\in B} q^{\ell_h(w)}e_{\bb(w)}= q[m-1]_q Y_{K_m}Y_{P_n}\,.$$

Let us consider 
$[m]_q  \sum_{w\in A} q^{\ell_h(w)}e_{\bb(w)}=(1+q+\cdots+q^{m-1})\sum_{w\in A} q^{\ell_h(w)}e_{\bb(w)}$ first.
Let $h'\coloneqq h_{m+1, n-1}$, and we find a bijective map $\varphi\colon\{0,1,\dots,m-1\}\times A\to \mathcal{G}_{h'}$, $\varphi(i,w)\coloneqq w'$, satisfying $\ell_{h'}(w')=\ell_h(w)+i$ and $\bb(w)=\bb(w')$ in the following. 

Note that if $w\in A$, then either $w_m<w_{m+1}$, or $w_m>w_{m+1} $ and $w_m=\min\{w_1,\dots,w_{m}\}$ by Lemma~\ref{lem:lambda_1}. In particular, if $w_m<w_{m+1}$, then $w_m=w_{m+1}-1$ by Lemma~\ref{lem:admissible}.
For given $w\in A$, let
\[U(w)\coloneqq \{1\leq j<m\mid w_j>w_{m+1}\}\quad \text{and}\quad L(w)\coloneqq \{1\leq j<m\mid w_j<w_{m+1}\}.\] Letting $i_w\coloneqq |U(w)|$, we can write $U(w)=\{p_{i_w}<\cdots<p_2<p_1\}$ and $L(w)=\{q_{m-1-i_w}<\cdots<q_2<q_1\}.$
Note that if $w_m>w_{m+1}$, then $U(w)=[m-1]$ and $L(w)=\emptyset$. Setting $j_w=|i_w-i|$, we then define
\[
\varphi(i,w)=w'\coloneqq\begin{cases}
    w&\text{if }i_w=i,\\
    ws_{p_1,m}s_{p_2,p_1}\dots s_{p_{j_w},p_{{j_w}-1}} &\text{if }i_w>i,\\
    ws_{q_1,m}s_{q_2,q_1}\dots s_{q_{j_w},q_{{j_w}-1}} &\text{if }i_w<i.
\end{cases}
\]
Since $w^{\da m}=(w')^{\da m}$ and $\bb(w)_1>m$, it is clear that $w'\in\mathcal{G}_{h'}$ and $\bb(w)=\bb(w')$. Moreover,
\[
\ell_{h'}(w')-\ell_h(w)=\begin{cases}
    i_w&\text{if }i=i_w,\\
    i_w-(i_w-i)&\text{if }i_w>i,\\
    i_w+(i-i_w)&\text{if }i_w<i.
\end{cases}
\]
Thus $\ell_{h'}(w')=\ell_h(w)+i$ in any case.
In the following, we show that for each $w'\in\mathcal{G}_{h'}$, there exists a unique pair $(i,w)\in \{0,1,\dots,m-1\}\times A$ such that $\varphi(i,w)=w'$. This proves that $\varphi$ is bijective. We divide the proof into two cases according to whether $w'_{m+1}=\min\{w'_1,\dots,w'_{m+1}\}$ or not.

If $w'_{m+1}=\min\{w'_1,\dots,w'_{m+1}\}$, then we find $j$ satisfying $w'_{j}= \min\{w'_1,\dots,w'_m\}$. Define $w\coloneqq w's_{j}s_{j+1}\dots s_{m-1}$. 
Then $w_m>w_{m+1}$ and $w_m=\min\{w_1,\dots,w_m\}$, so $w\in A$. Moreover, we have $w'=\varphi(j-1,w)$ and $\ell_{h'}(w')-\ell_h(w)=(m-1)-(m-j)=j-1$.

If $w'_{m+1}\neq\min\{w'_1,\dots,w'_{m+1}\}$, we can find $j$ satisfying $1\leq j\leq m$ and $w'_j=w'_{m+1}-1$ since $w'$ is $h'$-admissible. We let
\[U'(w')\coloneqq\{j<j'\leq m\mid w'_{j'}>w'_{m+1}\}\text{ and }L'(w')\coloneqq\{j<j'\leq m\mid w'_{j'}<w'_{m+1}\}.\]
Set
\[
i_{w'}\coloneqq 
\left|\{1\leq j'\leq m\mid w'_{j'}>w'_{m+1}\}\right|.
\]
Write
\[
U'(w')=\{p'_1<\cdots<p'_{|U'(w')|}\}
\quad\text{and}\quad
L'(w')=\{q'_1<\cdots<q'_{|L'(w')|}\}.
\]
We define \((i,w)\) by
\[
(i,w)\coloneqq
\begin{cases}
\left(i_{w'},\,w'\right),
&\text{if } w'_m=w'_{m+1}-1,\\[4pt]
\left(
i_{w'}-|U'(w')|,\,
w's_{j,p'_1}s_{p'_1,p'_2}\cdots s_{p'_{|U'(w')|},m}
\right),
&\text{if } w'_m>w'_{m+1}-1,\\[4pt]
\left(
i_{w'}+|L'(w')|,\,
w's_{j,q'_1}s_{q'_1,q'_2}\cdots s_{q'_{|L'(w')|},m}
\right),
&\text{if } w'_m<w'_{m+1}-1.
\end{cases}
\]
Then $w_m=w_{m+1}-1$, so $w\in A$. Moreover, we have $w'=\varphi(i,w)$.
Therefore, $\varphi$ is bijective.

\smallskip

We now consider $[m]_q  \sum_{w\in B} q^{\ell_h(w)}e_{\bb(w)}$. 
By Lemma~\ref{lem:lambda_1}, for $w\in B$, we have $w_m>w_{m+1}$ and $w_m\neq\min\{w_1,\dots,w_m\}$. Moreover, by Lemma~\ref{lem:separation},
we have $\{w_1, \dots, w_m\}=\{n+1, \dots, n+m\}$, which implies $w_{m}\neq n+1$.
Let $$H\coloneqq\{u\in\Sn{[n+1,n+m]}\mid u_{m+n}\neq n+1\},\quad H'=\Sn{[n+1,n+m]}\setminus H, \quad h''=h_{1,n-1}.$$ Then $w\in B$ if and only if there exists $(u,v)\in H\times \mathcal{G}_{h''}$ such that $$ u_{n+1}\ u_{n+2}\ \dots \ u_{m+n} =w_1\ w_2\ \dots\ w_m\quad\text{ and }\quad v=w_{m+1}\ w_{m+2}\ \dots \ w_{m+n}$$ by Lemma~\ref{lem:block_generator}. Then

\begin{align*}
[m]_q  \sum_{w\in B} q^{\ell_h(w)}e_{\bb(w)}&=[m]_q  \sum_{(u,v)\in H\times \mathcal{G}_{h''}} q^{\ell(u)+\ell_{h''}(v)+1} e_{\bb(u)}e_{\bb(v)}\\
&= q[m]_q \left(\sum_{u\in H} q^{\ell(u)} e_{(m)}\right)\left(\sum_{v\in \mathcal{G}_{h''}} q^{\ell_{h''}(v)} e_{\bb(v)}\right)\\
&=q[m]_q \left(\sum_{u\in \Sn{[n+1, n+m]}} q^{\ell(u)} e_{(m)}-\sum_{u\in H'} q^{\ell(u)} e_{(m)}\right)Y_{P_n}\\
&= q[m]_q\left([m]_q!-q^{m-1}[m-1]_q!\right)e_{(m)}Y_{P_n}\\
&=q[m-1]_q [m]_q! e_{(m)}Y_{P_n}\\
&=q[m-1]_q Y_{K_m}Y_{P_n}\,.
\end{align*}
This completes the proof.
\end{proof}


\section{Module generators}\label{sec:generators}

This section is devoted to the construction and study of the module generators 
\(\widehat{\sigma}_{w,h}\) for \(w\in\Gh^d\). We first define these elements
and restate Theorem~C. We then prove several properties
that \(\widehat{\sigma}_{w,h}\) satisfy, that will be used in Section~\ref{sec:decomposition}.

We only consider the Hessenberg functions  \( h=h_{m, n} \) on $[k]=[m+n]$ associated with the lollipop graph $L_{m,n}$ in this and all subsequent sections. We use $\sigma_{u}^T$ and $\sigma_{u}$ instead of $\sigma_{u,h}^T$  and $\sigma_{u,h}$, respectively, when there is no possible confusion.

\begin{definition}\label{def:module generators}
For $w\in \Gh$, let $\bb(w)=(b_1, \dots, b_\ell)$. Then, we define
\begin{enumerate}
\item  a subgroup $\mathcal{S}_w$, called the \emph{symmetrizer group of $\sigma_w$}, of $\Sn{k}$ as
\[\mathcal{S}_w=\Sn{\{w_1, \dots, w_{b_1}\}}\times \cdots \times \Sn{\{w_{b_1+\cdots+b_{\ell-1}+1}, \dots, w_{k}\} }, \text{ and }\]
\item an element $\widehat{\sigma}_w$ in $H^*(\Hess(S,h))$ as 
\[ \widehat{\sigma}_w= \sum_{v\in \mathcal{S}_w} v\cdot \sigma_w\]
\end{enumerate}
\end{definition}

\begin{example}\label{ex:sigma_hat}
For the permutations appearing in Table~\ref{tab:lambda-example}, we compute the symmetrizer group $\mathcal{S}_w$.
\begin{itemize}
\item[-] If $h=(2, 3, 4, 4)$ and  $w=\cblock{4\ 2} 3\ 1\in \Gh$,
then $\mathcal{S}_w = \Sn{\{4, 2, 3\}}\times \Sn{\{1\}}$.

\medskip
\item[-]
If $h=(3, 3, 4, 5, 6, 6)$ and $w=\cblock{5\ 6\ 4} 3\ 1\ 2\in \Gh$,
then $\mathcal{S}_w=\Sn{6}$.

\item[-]
If $h=(3, 3, 4, 5, 6, 6)$ and $w=\cblock{4\ 6\ 5} 3\ 1\ 2\in \Gh$,
then $\mathcal{S}_w=\Sn{\{4, 6, 5\}}\times\Sn{\{3, 1, 2\}}.$
\end{itemize}
\end{example}

Note that $b_1\geq m$ for any $w\in \Sn{k}$ when $\bb(w)=(b_1, \dots, b_\ell)$, by Lemma~\ref{lem:lambda_1}.

We are ready to state the main theorem, whose proof is deferred to Section~\ref{sec:decomposition}. 

\begin{theorem}\label{thm:main} Let $h=h_{m, n}$ and $k=m+n$, and let $d$ be an integer such that $0\leq d\leq d_h$. If we let $M=\sum_{u\in \Gh^d}\mathbb C\Sn{k}(\widehat{\sigma}_u)$, then $H^{2d}(\Hess(S,h))=M$ and it is decomposed as a direct sum of permutation modules:  
 \[H^{2d}(\Hess(S,h))=M=\bigoplus_{u\in \Gh^d}\mathbb C\Sn{k}(\widehat{\sigma}_u)\cong\bigoplus_{u\in \Gh^d} M^{\lambda(u)} \,.\] 
In particular,  $\Stab(\widehat{\sigma}_u)=\mathcal{S}_u$ and $\mathbb C\Sn{k}(\widehat{\sigma}_u)$ is isomorphic to the permutation module  $M^{\lambda(u)}$ for each $h$-admissible permutation $u\in \Gh^d$.
\end{theorem}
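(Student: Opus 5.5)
The proof has three stages: show each $\widehat{\sigma}_u$ is fixed by $\mathcal{S}_u$, show these elements generate $H^{2d}(\Hess(S,h))$, and conclude by a dimension count against Theorem~\ref{thm:Acyclic}.

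\textbf{Invariance.} First I would show that $\mathcal{S}_u$ fixes $\widehat{\sigma}_u$. This is immediate from the definition, since $\widehat{\sigma}_u$ is the $\mathcal{S}_u$-symmetrization of $\sigma_u$. It gives a surjection
\[
M^{\lambda(u)}\cong \operatorname{Ind}_{\mathcal{S}_u}^{\Sn{k}}\mathbf 1 \longrightarrow \mathbb{C}\Sn{k}(\widehat{\sigma}_u).
\]
Here $\mathcal{S}_u$ is a Young subgroup conjugate to $\Sn{\lambda(u)}$, so the induced module is $M^{\lambda(u)}$. Hence $\dim \mathbb{C}\Sn{k}(\widehat{\sigma}_u)\le [\Sn{k}:\mathcal{S}_u]$ for each $u$.

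\textbf{Generation.} Next I would prove $H^{2d}(\Hess(S,h))=M$. The classes $\sigma_u$ for $u\in\Gh^d$ already generate $H^{2d}$ by the CHL result. So it suffices to show $\sigma_u\in M$, and I would do this by downward induction on $u$ in the Bruhat order within $\Gh^d$.

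\begin{itemize}
\item Expand $\widehat{\sigma}_u=\sum_{v\in\mathcal{S}_u}v\cdot\sigma_u$.
\item Using Corollary~\ref{cor:separation}, $u$ has an $i_j$-block decomposition with $h=({}^{\da i_j}h:h^{\da i_j})$. By Theorem~\ref{thm:separation} and its corollary, the action of $\mathcal{S}_u$ factors blockwise, because $\mathcal{S}_u$ permutes only values inside the blocks.
\item On each path-type block, Lemma~\ref{lem:Ahat0}, Lemma~\ref{lem:key} and the path case from \cite{CHL1} give the simple-reflection actions explicitly.
\item On the first block, of size $b_1\ge m$, Lemma~\ref{lem:key} and Proposition~\ref{prop:simple action} give
\[
s_i\cdot\sigma_w=\sigma_w+\sum_{x\in\widetilde{\mathcal A}}(\sigma_x-\sigma_{s_ix}),
\]
where every correction term is indexed by a permutation strictly above $w$ (Lemma~\ref{lem:tau2}).
\end{itemize}

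Consequently $\widehat{\sigma}_u=|\mathcal{S}_u|\,\sigma_u+(\text{combination of }\sigma_x,\ x>u)$. Each such $\sigma_x$ is a translate $g\cdot\sigma_{\widetilde{x}}$, or lies in the span of such translates, for some $\widetilde{x}\in\Gh^d$ with $\widetilde{x}>u$. By induction these lie in $M$, so $\sigma_u\in M$.

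\textbf{Dimension count.} Once generation is known, the surjection
\[
\bigoplus_{u\in \Gh^d}M^{\lambda(u)}\longrightarrow \sum_{u\in\Gh^d}\mathbb{C}\Sn{k}(\widehat{\sigma}_u)=H^{2d}
\]
has source of dimension $\sum_{u\in\Gh^d}[\Sn{k}:\Sn{\lambda(u)}]$. By Theorem~\ref{thm:Acyclic} this equals $\dim H^{2d}(\Hess(S,h))$. So the map is an isomorphism, which forces the following:
\begin{itemize}
\item the sum is direct;
\item each $M^{\lambda(u)}\to\mathbb{C}\Sn{k}(\widehat{\sigma}_u)$ is injective;
\item $\Stab(\widehat{\sigma}_u)=\mathcal{S}_u$.
\end{itemize}
The last point holds because a strictly larger stabilizer would shrink the orbit span and break the dimension equality.

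\textbf{Main obstacle.} The hard step is the triangularity in the generation argument. I need that $\widehat{\sigma}_u$ has nonzero coefficient on $\sigma_u$ and that the correction terms are controlled. This requires that $s_iu\notin\Gh$-type terms do not cancel the leading term, and that the $\tau$-terms produced inside the first block, where $h$-associated patterns $2134_h$ can occur, are genuinely Bruhat-larger classes.

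My first attempt would be to localize at the fixed point $u$. Since $\supp(\sigma_x^T)=[x,w_0]$ (Proposition~\ref{prop:support}), only the $v\in\mathcal{S}_u$ with $v^{-1}u\ge$ the relevant index contribute at $u$. I would then show directly that $\widehat{\sigma}^T_u(u)\neq 0$ modulo $(t)$-torsion, using Proposition~\ref{prop:computing the value}(2). I expect this leading-term check to need separate handling of the cases of Lemma~\ref{lem:lambda_1}, namely $b_1=m$ versus $b_1>m$.
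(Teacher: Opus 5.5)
Your first and third stages are fine: the invariance of $\widehat{\sigma}_u$ under $\mathcal{S}_u$ and the dimension count against Theorem~\ref{thm:Acyclic} are exactly the paper's Proposition~\ref{prop:M}. The gap is in the generation step. Its key claim
\[
\widehat{\sigma}_u=|\mathcal{S}_u|\,\sigma_u+(\text{combination of }\sigma_x,\ x>u)
\]
is false. If $\bb(u)=(k)$ and $u$ is of type ii) or iii), then $\mathcal{S}_u=\Sn{k}$. But the orbit of $\sigma_u$ contains the distinct basis classes $\sigma_v$, $v\in P(u)$, reached from $u$ along dashed arrows (Proposition~\ref{prop:representative}). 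These classes have the same degree as $\sigma_u$ and lie Bruhat-\emph{below} $u$. Already for $h=h_{2,1}=(2,3,3)$ and $u=312$, you get $s_1\cdot\sigma_{312}=\sigma_{312}$, $s_2\cdot\sigma_{312}=\sigma_{213}$ (since $312\dashrightarrow 213$), and $s_1\cdot\sigma_{213}=\sigma_{213}+\sigma_{231}-\sigma_{132}$ by Lemma~\ref{lem:key}. Hence
\[
\widehat{\sigma}_{312}=2\sigma_{312}+4\sigma_{213}+2\sigma_{231}-2\sigma_{132},
\]
where $213<312$, $132<312$, and $231$ is incomparable to $312$. More fundamentally, for $\bb(u)=(k)$ the element $\widehat{\sigma}_u$ is $\Sn{k}$-invariant. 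So $\mathbb{C}\Sn{k}(\widehat{\sigma}_u)$ is one-dimensional and can never by itself produce the non-invariant class $\sigma_u$. A nonvanishing check at the fixed point $u$ cannot repair this. The obstruction is not the $\sigma_u$-coefficient but the other same-degree classes $\sigma_v$, $v\in P(u)$, which a single relation cannot separate from $\sigma_u$. Two smaller problems: Lemma~\ref{lem:tau2} bounds the $\tau_{s_iu}$-terms only from below by $s_iu$, not by $w$. And Lemma~\ref{lem:key} needs $s_iw$ to avoid all $h$-associated patterns, which is not available in general for $h_{m,n}$, where $\hpat{2143}$, $\hpat{1324}$ and $\hpat{2134}$ can occur.

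The missing idea is a source, coming from \emph{other} generators, for the differences $\sigma_u-\sigma_v$ with $u,v\in P(w)$. The paper obtains this from the partner permutation $\underline{w}$ of Definition~\ref{def:partner}, which satisfies $\widetilde{\underline{w}}\ll w$. Proposition~\ref{prop:partner1} gives $s_{x+1}\cdot\sigma_{\underline{w}}-\sigma_{\underline{w}}\equiv\sigma_{w'}-\sigma_{s_{x+1}w'}\pmod{M_{\ll w}}$, and Proposition~\ref{prop:P(w)_classes} spreads this to all $\sigma_w-\sigma_u$ with $u\in P(w)$. This is combined with the congruence $\widehat{\sigma}_w\equiv\sum_{u\in P(w)}c_u\sigma_u\pmod{M_{\ll w}}$ with all $c_u>0$. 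That congruence itself takes real work: Proposition~\ref{lem2:main}, plus a case-by-case proof that the $\tau$-corrections from $\mathcal{A}_{s_1,w^*}$ and $\mathcal{A}_{s_x,w^*}$ lie in $M_{\ll w}$. Proposition~\ref{lem1:main} then isolates $\sigma_w$. In the example, $\underline{312}=132$ and $\widetilde{132}=231\ll 312$. One has $s_2\cdot\sigma_{132}=\sigma_{132}+\sigma_{312}-\sigma_{213}$ with $\sigma_{132}=\tfrac12\,s_1\cdot\widehat{\sigma}_{231}$, and together with $\widehat{\sigma}_{312}$ this yields $\sigma_{312}\in M$. This is also why the induction must run over the order $\ll$ given by $(j_w,x_w)$, together with induction on $k$ when $\bb(w)\neq(k)$, rather than over the Bruhat order. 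The class $\widetilde{\underline{w}}$ that must already be handled need not be Bruhat-comparable to $w$; here $231$ and $312$ are incomparable.
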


\begin{remark}
Since $\mathcal{S}_u=u\Sn{\bb(u)}u^{-1}$ for $u\in \Gh$, we know $\mathbb C\Sn{k}(\widehat{\sigma}_u)\cong \operatorname{Ind}_{\Sn{\bb(u)}}^{\Sn{k}} \mathbf{1}$, which is isomorphic to $\operatorname{Ind}_{\Sn{\lambda(u)}}^{\Sn{k}} \mathbf{1}=M^{\lambda(u)}$ in a natural way.
\end{remark} 
\begin{example}
Let $h=h_{3,2}$ and $0\leq d=3\leq 5=d_h$, then 
$$\Gh^3=\{\cblock{3\ 5\ 4} 2\ 1, \cblock{4\ 3\ 5} 2\ 1, \cblock{5\ 3\ 4} 1\ 2, \cblock{4\ 5\ 2} 3\ 1, \cblock{5\ 2\ 3} 4\ 1, \cblock{4\ 5\ 3} 1\ 2, \cblock{5\ 4\ 1} 2\ 3 \}\,,$$
and the corresponding compositions are given as follows:
\[\begin{array}{c|ccccccc}
\hline
         u\in \Gh^3           & \cblock{3\ 5\ 4} 2\ 1 & \cblock{4\ 3\ 5} 2\ 1 & \cblock{5\ 3\ 4} 1\ 2 & \cblock{4\ 5\ 2} 3\ 1 & \cblock{5\ 2\ 3} 4\ 1 & \cblock{4\ 5\ 3} 1\ 2, & \cblock{5\ 4\ 1} 2\ 3 \\
\hline
  \bb(u)                 & (3,2) & (3,2)  &(3,2) & (4,1) & (4,1) &(5) & (5)\\
\hline
\end{array}\]   
Therefore, by Theorem~\ref{thm:main} we have 
\[H^{6}(\Hess(S,h))\cong 3 M^{(3,2)}\oplus 2 M^{(4,1)}\oplus 2 M^{(5)} \,.\] 
\end{example}

In the rest of this section, we investigate the properties that the $h$-admissible permutations $u\in \Gh$ and the corresponding cohomology class $\sigma_u$ satisfy, and develop background theory that we will need in the proof of Theorem~\ref{thm:main}.

\begin{lemma} \label{lem:trivial}
Let $h=h_{m,n}$ with $n>0$. For $w\in \Gh$, $\bb(w)=(m+n)$ if and only if $w$ satisfies one of the following conditions:
\begin{enumerate}
    \item[i)] $w_m w_{m+1} \cdots w_{m+n}= (n+1)\,n\,\cdots 2 \, 1$,
    \item[ii)] $w_m<w_i$ for all $i<m$, and there is $0< j <n$ satisfying 
    \begin{align*}
        &w_m\, w_{m+1} \cdots w_{m+j-1} = (n+1)\,n\, \cdots (n-j+2)\quad \text{ and }\\
        &w_{m+j}\, w_{m+j+1}\,\cdots \, w_{m+n}= 1\,\,2\,\cdots \, (n-j+1),
    \end{align*}
    \item[iii)] $w_m\, w_{m+1}\, \cdots \, w_{m+n} = x\,(x+1) \, \cdots \, (x+n)$ for some $1\leq x<m$, and
    \item[iv)] $w_m w_{m+1} \cdots w_{m+n}= m\, (m+1)\,\cdots \, (m+n)$.
\end{enumerate}
\end{lemma}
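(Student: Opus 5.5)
The plan is to first translate $\bb(w)=(m+n)$ into a purely local condition on the descent pattern of $w_m w_{m+1}\cdots w_{m+n}$. Then, in each resulting case, I will use $h_{m,n}$-admissibility to pin down the actual values.

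By Definition~\ref{def:lambda}, $\bb(w)=(k)$ exactly when no solid bar is placed. A bar before $w_i$ with $i>m$ is placed only when $w_{i-1}>w_i$. It is dashed precisely when a bar already stands before $w_{i-1}$. So the dashed bars starting at $w_m$ form a chain over an initial run of consecutive descents. Hence there is no solid bar if and only if there is some $0\le j\le n$ such that:
\begin{itemize}
\item[(a)] $w_m>w_{m+1}>\cdots>w_{m+j}$ and $w_{m+j}<w_{m+j+1}<\cdots<w_{m+n}$;
\item[(b)] if $j\ge 1$, then there is a bar before $w_m$, i.e.\ $w_m=\min\{w_1,\dots,w_m\}$, which is the condition $w_m<w_i$ for $i<m$.
\end{itemize}
The converse direction of the lemma follows immediately from this description. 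Each of i)--iv) has this shape: in i) the condition $w_m=n+1=\min\{w_1,\dots,w_m\}$ is automatic, since $\{w_1,\dots,w_m\}=[n+1,k]$. So the real work is the forward direction, split by $j$.

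For $j=0$, the tail $w_m<w_{m+1}<\cdots<w_{m+n}$ is increasing. Lemma~\ref{lem:admissible} gives $w_m=w_{m+1}-1$. The same argument applied at positions $i>m$, where $h(i)=i+1$, shows that the value $w_i+1$ must sit at a position $\le i+1$. Since $w_i+1\le w_{i+1}$ is the smallest candidate, the entry at position $i+1$ is forced to be $w_i+1$. Concretely, I would argue that $w_i+1$ cannot occur at a position $\le i$ without creating a value strictly between $w_i$ and $w_{i+1}$ that violates admissibility further along. Thus $w_m\cdots w_{m+n}=x(x+1)\cdots(x+n)$, and $x+n\le k$ forces $x\le m$, which gives iii) or iv).

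For $j\ge1$, I would imitate the proof of Lemma~\ref{lem:separation}. Let $y=w_{m+j}$, the minimum of the tail. If $y>1$, then $y-1$ sits at some position $p$. If $p\le m-1$, then $h(p)=m<m+j$. If $p\ge m$ with $p\ne m+j-1$, the bound $h(p)=p+1$ fails, because $y-1$ is not in the decreasing block immediately before $w_{m+j}$ (whose entries exceed $y$) and is not in the increasing tail (whose entries also exceed $y$). In either case admissibility is violated, so $y=1$. Next, the increasing tail $w_{m+j}\cdots w_{m+n}$ is consecutive, $1,2,\dots,n-j+1$, by the same step-by-step admissibility argument as in the case $j=0$. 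The decreasing run $w_m,\dots,w_{m+j-1}$ must then continue with $n-j+2,\dots$ upward. Here I apply admissibility to each value $v\in[n-j+1,\,\cdot\,]$: the value $v+1$ must lie at a position $\le h(\text{pos}(v))$. Condition (b) rules out $v+1$ hiding among $w_1,\dots,w_{m-1}$ while $w_m$ is still smaller. This yields ii) for $j<n$ and i) for $j=n$.

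I expect the main obstacle to be exactly this value-pinning in the descending part of the case $j\ge1$. There I must combine admissibility at the boundary position $m$, where $h(m)=m+1$, with the minimality condition (b) to exclude $w_m$ being larger than $n-j+2$ while a smaller value sits in the clique. That is where I would carry out the explicit check carefully.
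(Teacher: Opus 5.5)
Your proposal is correct. Its outer structure is the paper's: your split $j=0$ versus $j\ge 1$ is the paper's split $w_m<w_{m+1}$ versus $w_m>w_{m+1}$. You derive condition (b) directly from the bar rules, where the paper quotes Lemma~\ref{lem:lambda_1}. Your $j=0$ case is the paper's argument; it is cleanest phrased via $w_{i+1}-1$, which cannot sit at a position $\le m-1$ (there $h=m<i+1$), so it equals $w_i$.

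The routes differ when $j\ge 1$. The paper uses (b) to get $x=w_m\le n+1$. It then runs a separate admissibility argument (the smallest value $z>x$ beyond position $m+1$, compared with $z-1$) to force $x=n+1$. This gives $\{w_1,\dots,w_m\}=[n+1,k]$, and the tail is then analysed inside $[n]$. You instead first locate the valley $w_{m+j}=1$ and the consecutive increasing tail $[1,n-j+1]$, and recover the descending block last.

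The step you flag as the main obstacle is in fact easy, but not by the mechanism you describe. The value $n-j+1$ sits at position $m+n$, where $h(m+n)=m+n$ imposes nothing. More generally, admissibility alone cannot pin the block: $3\,4\,5\,1\,2$ is $h_{3,2}$-admissible with $j=1$ and increasing tail $1\,2$, yet $w_3=5$.

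What closes the step is (b) plus counting. We have $\{w_m,\dots,w_{m+j-1}\}\sqcup\{w_1,\dots,w_{m-1}\}=[n-j+2,k]$. Every element of the first set is $\le w_m$, and every element of the second set is $>w_m$. So the first set is $[n-j+2,n+1]$, arranged decreasingly. Similarly, $w_{m+j}=1$ follows at once from (b), since $w_{m+j}<w_m<w_i$ for $i<m$.

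So for $j\ge1$ your ordering needs admissibility only once (for the consecutive tail), whereas the paper's uses it twice. In exchange, the paper's ordering exhibits the block structure $\{w_1,\dots,w_m\}=[n+1,k]$ from the start.
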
 

\begin{proof} It is clear that the permutations satisfying i), ii), iii), or iv) are $h$-admissible permutations with $\bb(w)=(k)$. 

Suppose that $w\in \Gh$ is an $h$-admissible permutation with $\bb(w)=(k)$. We divide the proof into two cases: $w_m<w_{m+1}$ and $w_m>w_{m+1}$. Let $x\coloneqq w_m$ for convenience.

    If $w_m<w_{m+1}$, then Lemma~\ref{lem:admissible} implies that $w_{m+1}=x+1$. Moreover, by the same argument as in the proof of Lemma~\ref{lem:admissible}, we have \[w_{m+2}\, \cdots w_{m+n} = (x+2)\, \cdots\, (x+n).\] Hence $w$ satisfies condition iii) or iv). 
    
    Now assume $w_m>w_{m+1}$. Since $\bb(w)=(k)$, it follows from Lemma~\ref{lem:lambda_1} that $w_m=\min\{w_1, \dots, w_m\}$, which implies that $x\leq n+1$.
We first show that $x=n+1$. Suppose that $x<n+1$. Then there are more than $m-1$ numbers in $[m+n]$ that are greater than $x$, and there exists $z>x$ such that $w_p=z$ with $p>m+1$. Assume that  $z$ is the smallest among such numbers. Then $w^{-1}(z-1)\leq m$. Since $w\in \Gh$, we get $w^{-1}(z)\leq m$, which is a contradiction.
This shows that $\{w_1, \dots, w_m\}=[n+1, k]$ and $\{w_{m+1}, \dots, w_{m+n}\}=[n]$.

Let $j$ be the integer such that $$w_{m}=n+1>w_{m+1}>\cdots> w_{m+j}<w_{m+j+1}$$ and $0<j\leq n$. If $j=n$, then $w$ satisfies condition i). If $j<n$, then $w_{m+j}< w_{m+j+1}<\cdots< w_{m+n}$ since $\bb(w)=(k)$, which implies that $w_{m+j}$ is the smallest among $w_{m+1}, \dots, w_{m+n}$; that is  $w_{m+j}=1$. We finally show that  $w_{m+j}\, w_{m+j+1}\, \cdots \, w_{m+n}= 1\,2\, \cdots\, (n-j+1)$. Suppose not. Then there exists $y\in S\coloneqq \{w_{m+j},w_{m+j+1}, \cdots, w_{m+n}\}$ such that $y> 1$ and $y-1$ is not in $S$. This does not occur since $y-1\in [n]$ and $w^{-1}(y)\leq h(w^{-1}(y-1))=w^{-1}(y-1)+1$.
This shows that $w$ satisfies condition ii).
\end{proof}

\begin{definition} 
We say that $w\in \Gh$ is \emph{of type i), ii), iii) or iv)} if $\bb(w)=(k)$ and $w$ satisfies the corresponding condition in Lemma~\ref{lem:trivial}.
\end{definition}

\begin{remark} Permutations of type i) can be realized as the ones of type ii) with $j=0$, and permutations of type iv) can be understood as the ones of type iii) with $x=m$. However, we distinguish permutations of type i) and iv) from the ones of type ii) and iii), respectively, since they have different features in our proof of the main theorem; Theorem~\ref{thm:main}.
\end{remark}

\begin{example}\label{ex:(k)}
Let $h=h_{4, 3}=(4,4,4, 5, 6, 7,7)$.
\begin{itemize} 
\item[-] If  $w=\cblock{7\ 5\ 6\ 4}3\ 2\ 1\in \Gh$, then $\bb(w)=(7)$ and $w$ is of type i).

\item[-]
If  $w=\cblock{6\ 7\ 5\ 4}1\ 2\ 3\in \Gh$, then $\bb(w)=(7)$ and  $w$ is of type ii) with $j=1$. 

\item[-]
If  $w=\cblock{5\ 7\ 6\ 1}2\ 3\ 4\in \Gh$, then $\bb(w)=(7)$ and  $w$ is of type iii) with $x=1$.

\item[-]
If  $w=\cblock{2\ 7\ 1\ 3}4\ 5\ 6\in \Gh$, then $\bb(w)=(7)$ and  $w$ is of type iii) with $x=3$.  

\item[-]
If $w=\cblock{2\ 3\ 1\ 4}5\ 6\ 7\in \Gh$, then $\bb(w)=(7)$ and  $w$ is of type iv).

\end{itemize}
\end{example}

\begin{definition}\label{def:index}
    Let $h=h_{m,n}$ and $k=m+n$. For each $u\in\Sn{k}$, we assign a pair $(j_u, x_u)$ of nonnegative integers as follows:
    \begin{enumerate}
        \item If $w\in\Gh$, define $x_w\coloneqq \min\{ w_j\mid m\leq j\leq m+n\}$ and $j_w\coloneqq   (w^{-1})_{x_w}-m$.
        \item If $u\in\Sn{k}$, define $(j_u, x_u)\coloneqq (j_w, x_w)$ where $w=\widetilde{u}$ is the unique $h$-admissible permutation such that $u\in P(w)$.
    \end{enumerate}
\end{definition}

\begin{remark}
One can easily check the following.
\begin{enumerate}
\item For  $w\in\Gh$ with $\bb(w)\neq (k)$, $(j_w, x_w)=( (w^{-1})_1-m,1)$, so $j_w>0$ by Lemma~\ref{lem:separation}.
\item For $w\in\Gh$ with $\bb(w)=(k)$, 
$$
(j_w, x_w)=\begin{cases}
			( (w^{-1})_1-m,1) & \text{if $w$ satisfies the condition i) or ii)} \\
			(0,w_m) & \text{if $w$ satisfies the condition iii) or iv)}
		 \end{cases}
$$ 
    \item If  $w\in\Gh$ is of type i) or ii), then $(w^{-1})_1 >m$, so $j_w>0$. 
    \item For each $u\in\Sn{k}$ if we let $w\coloneqq \widetilde{u}$ then $w_{m+j_u}=x_u$. In particular, for each $w\in \Gh$, $w_{m+j_w}=x_w$. 
\end{enumerate}

\end{remark}

\begin{definition}[Partial order]\label{def:order}  Let $h=h_{m,n}$, and $k=m+n$. 
We define a partial order on the set $\Sn{k}$ of permutations as follows: For two permutations $u$ and $v$ in $\Sn{k}$, 
\[u \ll v \text{ if and only if } (j_u>j_v) \text{ or } (j_u=j_v=0 \text{ and } x_u<x_v).\]
\end{definition}

For a permutation $w$ in Examples~\ref{ex:sigma_hat} and~\ref{ex:(k)}, the pair $(j_w,x_w)$ are computed in Table~\ref{tab:new_order}. 
\begin{enumerate}
    \item The permutations $\cblock{5\ 6\ 4}3\ 1\ 2$ and $\cblock{4\ 6\ 5}3\ 1\ 2$ are not comparable  in the partial order associated with $h=h_{3,3}$ .
    \item For five permutations in the table associated with $h_{4,3}$, we get
    $$\cblock{7\ 5\ 6\ 4}3\ 2\ 1\ll \cblock{6\ 7\ 5\ 4}1\ 2\ 3\ll  \cblock{5\ 7\ 6\ 1}2\ 3\ 4\ll \cblock{2\ 7\ 1\ 3}4\ 5\ 6\ll \cblock{2\ 3\ 1\ 4}5\ 6\ 7.$$
\end{enumerate}

\begin{table}[h]
\centering
\begin{tabular}{|c||c|c|c|c|c|c|c|c|}
\hline
$h$ & $h_{2,2}$ & $h_{3,3}$ & $h_{3,3}$  & $h_{4,3}$  & $h_{4,3}$ & $h_{4,3}$ & $h_{4,3}$ & $h_{4,3}$ \\
\hline
$w$ & $\cblock{42}31$ & $\cblock{564}312$ & $\cblock{465}312$ & $\cblock{7564}321$  & $\cblock{6754}123$ & $\cblock{5761}234$ & $\cblock{2713}456$ & $\cblock{2314}567$ \\
\hline
$(j_w,x_w)$ & $(2,1)$  & $(2,1)$  & $(2,1)$  & $(3,1)$ & $(1,1)$ & $(0,1)$ & $(0,3)$ & $(0,4)$ \\
\hline
\end{tabular}
\caption{Examples of computations of $(j_w,x_w)$}
\label{tab:new_order}
\end{table}

\begin{lemma}\label{lem:u}
   Let $h=h_{m,n}$ and $w\in\Gh$ be an $h$-admissible permutation. For $u\in P(w)$, let $j=j_u=j_w$ and $x=x_u=x_w$.
   \begin{enumerate} 
   \item If $\bb(u)=\bb(w)=(k)$, then $u_{m+j}=x$.    
   \item If $\bb(u)=\bb(w) \neq (k)$, then $(u^{-1})_1\leq m+j$
   \end{enumerate}
\end{lemma}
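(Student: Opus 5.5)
The plan is to use only the definition of $P(w)$ and the shape of $w$ already worked out in Lemma~\ref{lem:trivial} and Lemma~\ref{lem:separation}; I expect the hypothesis $\bb(u)=\bb(w)$ will hardly be needed. Since $G_h=L_{m,n}$, the condition $u\in P(w)$ says two things. First, $u_1,\dots,u_m$ are in the same relative order as $w_1,\dots,w_m$, because $[m]$ spans a clique. Second, for every $m\le p<k$, $u_p<u_{p+1}$ if and only if $w_p<w_{p+1}$, because the path edges are $\{p,p+1\}$. So $u$ has the same ascent/descent pattern as $w$ along positions $m,m+1,\dots,k$, and the same ranking on the clique.

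For (1), I would split according to the four types in Lemma~\ref{lem:trivial}.

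\emph{Types i) and ii).} Here $x=1$, and $w_m$ is the minimum of $\{w_1,\dots,w_m\}$. Along the path, $w$ decreases strictly from position $m$ to $m+j$ and then increases up to $k$.
\begin{itemize}
\item Transferring the path pattern to $u$, the entry $u_{m+j}$ is the minimum of $u_m,\dots,u_k$.
\item Transferring the clique order, $u_m$ is the minimum of $u_1,\dots,u_m$.
\item Combining the two, $u_{m+j}$ is the global minimum, so $u_{m+j}=1=x$.
\end{itemize}

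\emph{Types iii) and iv).} Here $j=0$ and $w_m w_{m+1}\cdots w_k=x\,(x+1)\cdots(x+n)$.
\begin{itemize}
\item The values $1,\dots,x-1$ all lie among $w_1,\dots,w_{m-1}$, so $w_m$ is the $x$-th smallest of $w_1,\dots,w_m$.
\item By the clique condition, $u_m$ is the $x$-th smallest of $u_1,\dots,u_m$.
\item By the path condition, $u_m<u_{m+1}<\cdots<u_k$, so no entry at a position $>m$ is smaller than $u_m$.
\item Hence exactly $x-1$ entries of $u$ are smaller than $u_m$, which gives $u_m=x$.
\end{itemize}

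For (2), suppose $\bb(w)=(b_1,\dots,b_\ell)\neq(k)$. By Lemma~\ref{lem:separation} and its proof, $w_{i_{\ell-1}+1}\cdots w_k=1\,2\cdots(k-i_{\ell-1})$, so $(w^{-1})_1=i_{\ell-1}+1=m+j$ and $w$ is increasing on positions $m+j,\dots,k$. Note $m+j>m$ since $j>0$ by the Remark following Definition~\ref{def:index}. Every pair $(p,p+1)$ with $p\ge m+j\ge m$ is a path edge, so $u_{m+j}<u_{m+j+1}<\cdots<u_k$. Therefore no position $>m+j$ can carry the value $1$, that is, $(u^{-1})_1\le m+j$.

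The only step needing care is the counting argument for types iii) and iv): one must check that the rank of $w_m$ inside the clique really equals $x$, i.e.\ that all values below $x$ sit in the clique positions. That follows because $w_m\cdots w_k$ consists exactly of the consecutive values $x,\dots,x+n$.
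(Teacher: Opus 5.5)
Part (1) of your proof is correct and is exactly the paper's argument. For types i) and ii), the valley shape along the path together with $w_m$ being the clique minimum forces $u_{m+j}=1$. For types iii) and iv), the rank count gives $u_m=x$.

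Part (2) has the same shape as the paper's proof: show that $w$ increases from position $m+j=(w^{-1})_1$ to $k$, then transfer this along the path edges to $u$. However, your justification of the increase is false. The identity $w_{i_{\ell-1}+1}\cdots w_k=1\,2\cdots(k-i_{\ell-1})$, and with it $(w^{-1})_1=i_{\ell-1}+1$, does not hold in general. The sentence in the proof of Lemma~\ref{lem:separation} that you rely on is wrong, even though the lemma's statement is correct (the last block consists of the values $1,\dots,k-i_{\ell-1}$, and $(w^{-1})_1>m$).

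For a counterexample, take $h=h_{2,3}=(2,3,4,5,5)$ and $w=3\,4\,5\,2\,1\in\Gh$. There is a solid bar before $w_4=2$ and a dashed bar before $w_5=1$, so $\bb(w)=(3,2)$ and $i_{\ell-1}=3$. Yet the last block is $2\,1$ and $(w^{-1})_1=5$. The point is that a descent with a dashed bar can immediately follow the solid bar.

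The fact you actually need is that $w$ is increasing on positions $q,\dots,k$, where $q=(w^{-1})_1$. This is true, but it needs its own argument from admissibility.

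First, a one-step fact. We have $h(p)=m$ for $p<m$ and $h(p)\le p+1$ for $p\ge m$. Applying admissibility at the position of $v-1$ shows: if a value $v>1$ sits at a position $r>m$, then $v-1$ sits at a position $\ge r-1$.

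Now use $q>m$ and apply this successively for $r=k,k-1,\dots,q+1$. At each stage, positions $r,\dots,k$ carry consecutive increasing values, all larger than $1$. So $w_r-1$ cannot lie at a position $\ge r$, and it must lie at position $r-1$. Hence $w_q\cdots w_k=1\,2\cdots(k-q+1)$.

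From there, your transfer to $u$ and the conclusion $(u^{-1})_1\le m+j$ go through as written. In the example above, $m+j=k$ and the bound is trivial.
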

\begin{proof}
(1) Suppose that $w$ is of type i) or ii) so that $j_w>0$ and $x_w=1$. For $u\in P(w)$, since $u_m>\cdots>u_{m+j}<\cdots<u_{k}$ and $u_m<u_i$ for all $i\in [m-1]$, $u_{m+j}$ must be the smallest; that is $u_{m+j}=1$.\\ Suppose that  $w$ is of type iii) or iv) so that $j_w=0$. Then $w_m=x<\cdots<w_k=x+n$ and there are exactly $x-1$ elements that are smaller than $x$ in $\{ w_1, \dots, w_{m-1}\}$. Since $u$ and $w$ have the same relative order along every edge of $G_h$, 
we get 
$u_m=x$. 

(2) Let $\bb(u)=(b_1,\dots, b_\ell)$, $\ell>1$. By Corollary~\ref{cor:separation}, we have $m+j\geq k-b_\ell+1$ and $w_{m+j}=1<\cdots <w_k$, which implies that $u_{m+j}<\cdots <u_k$. Therefore, either $u_{m+j}=1$, or $u_{m+j}> 1$ and $(u^{-1})_1< m+j$.
\end{proof}

\begin{example}
Recall that when $h=h_{3, 3}$ and $w=\cblock{4\,6\,5}3\,1\,2\in \Gh$, we get $\bb(w)=(3,3)$ and $j_w=2$. For $u= \cblock{ 1\,6\,5} 3\,2\,4\in P(w)$, we get $(u^{-1})_1=1<m+j_w=5$.
\end{example}

\begin{proposition}[Lemma 3.7 and Proposition 3.8 in \cite{CHL2}]\label{prop:representative}
Let $h$ be a Hessenberg function on $[k]$.
\begin{enumerate}
\item If $u\dasharrow s_iu$ for a simple reflrection $s_i$, then there is a unique $h$-admissible permutation $w$ such that both $u$ and $s_iu$ belong to $P(w)$.
\item For $w\in \Gh$  and $u\in P(w)$, there is a sequence of simple reflections $s_{a_1}, \dots, s_{a_l}$ satisfying $u=s_{a_l}\cdots s_{a_1} w$ and $w\dasharrow s_{a_1}w \dasharrow \cdots \dasharrow s_{a_l} \cdots s_{a_1} w=u$, that implies $\sigma_u^T=s_{a_l}\cdots s_{a_1}\cdot\sigma_w^T$, and hence $\sigma_u=s_{a_l}\cdots s_{a_1}\cdot\sigma_w$.
\end{enumerate}
\end{proposition}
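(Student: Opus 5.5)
The plan is to prove (1) by a direct look at which relative orders change under left multiplication by $s_i$, and then to obtain (2) by repeatedly applying (1) along a length-increasing walk inside $P(w)$.

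\emph{Part (1).} Left multiplication by $s_i$ swaps the values $i$ and $i+1$. Put $a=u^{-1}(i)$ and $b=u^{-1}(i+1)$, so that $s_iu=us_{a,b}$.
\begin{enumerate}
\item The only pair of positions whose relative order differs between $u$ and $s_iu$ is $\{a,b\}$. Every other pair keeps its order because $i$ and $i+1$ are consecutive values.
\item The hypothesis $u\dasharrow s_iu$ says that $s_iu<u$ in the Bruhat order but $s_iu\not<_h u$. Since $s_iu=us_{a,b}$ and the two lengths differ, the definition of $<_h$ then forces $\{a,b\}$ not to be an edge of $G_h$: we cannot have $\min(a,b)<\max(a,b)\le h(\min(a,b))$.
\item Hence $u$ and $s_iu$ have the same relative order along every edge of $G_h$. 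By Proposition~\ref{prop:support}(1), $\Sn{k}$ is the disjoint union of the sets $P(w)$, so both lie in the same $P(w)$ for a unique $w\in\Gh$.
\end{enumerate}
The argument is symmetric in $u$ and $s_iu$, so it applies equally if the dashed relation runs in the other direction.

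\emph{Part (2).} Fix $w\in\Gh$ and $u\in P(w)$. I would induct on $\ell(w_0)-\ell(u)$ and move upward in length from $u$ until reaching an admissible permutation.
\begin{enumerate}
\item If $u$ is $h$-admissible, then $u\in P(u)\cap P(w)$, so $u=w$ by the uniqueness in Proposition~\ref{prop:support}(1), and there is nothing to prove.
\item Otherwise there is a position $j$ with $u_j=c\in[k-1]$ and $u^{-1}(c+1)>h(j)\ge j$.
\item Then $c$ sits to the left of $c+1$ in $u$, so $s_cu>u$ in the Bruhat order.
\item The positions $j$ and $u^{-1}(c+1)$ do not form an edge of $G_h$, so $s_cu\not>_h u$, which gives $s_cu\dasharrow u$.
\item By part (1), $s_cu\in P(w)$. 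Its length is strictly larger than $\ell(u)$, so the induction hypothesis yields a dashed chain from $w$ to $s_cu$.
\item Appending the step $s_cu\dasharrow u$ gives a dashed chain from $w$ to $u$, where each consecutive pair is related by a dashed arrow in one direction or the other.
\end{enumerate}
I read the displayed chain in the statement as recording exactly this: consecutive elements differ by a left simple reflection and are not $h$-Bruhat related. The process terminates because length is bounded by $\ell(w_0)$.

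\emph{Class identity.} Proposition~\ref{prop:simple action}(1) covers both $w\dasharrow s_iw$ and $s_iw\dasharrow w$. Applying it at each step of the chain gives
\[
\sigma^T_{s_{a_{r}}\cdots s_{a_1}w}=s_{a_r}\cdot\sigma^T_{s_{a_{r-1}}\cdots s_{a_1}w}
\]
for each $r$. Composing these equalities gives $\sigma_u^T=s_{a_l}\cdots s_{a_1}\cdot\sigma_w^T$, and specializing the $t$'s to zero gives the ordinary version.

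\emph{Main obstacle.} The key point is the termination step: a non-admissible $u$ always admits a length-increasing dashed move, and the only possible endpoint inside $P(w)$ is $w$ itself. This rests entirely on the uniqueness in Proposition~\ref{prop:support}(1), and I would check carefully that the violating pair of positions really is a non-edge of $G_h$, which is what makes the move dashed.
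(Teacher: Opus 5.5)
Your proof is correct. The paper gives no proof of this statement; it quotes it from \cite{CHL2}, so there is no internal argument to compare against. Your argument is a valid self-contained derivation from two results the paper states earlier: Proposition~\ref{prop:support}(1) and Proposition~\ref{prop:simple action}(1).

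Three points are worth making explicit.

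First, in Part (2), step 4, knowing that $\{j,u^{-1}(c+1)\}$ is a non-edge only rules out a single generating relation. To conclude $u\not<_h s_cu$, add the length argument: every generating step of $<_h$ raises $\ell$ by at least one, and $\ell(s_cu)=\ell(u)+1$. So any $h$-Bruhat chain from $u$ to $s_cu$ would be a single step $s_cu=us_{p,q}$ with $s_{p,q}=u^{-1}s_cu=s_{j,u^{-1}(c+1)}$, which is impossible.

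Second, you need not weaken your reading of the displayed chain. By construction, every appended step is $s_cu\dashrightarrow u$ with $u<s_cu$, and the base case is the empty chain at $w$. So all arrows decrease length, exactly as the statement literally says. In particular, $w$ is the unique element of maximal length in $P(w)$.

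Third, your upward walk by itself shows that every class $P(\cdot)$ contains an $h$-admissible permutation. The only input you genuinely need from Proposition~\ref{prop:support}(1) is its uniqueness clause, which identifies the endpoint of the walk with $w$. You are right that this is where the real content lies, and importing it from the earlier proposition is legitimate.
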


\begin{proposition}[Theorem 2 in \cite{Stumbo}] \label{prop:coset representative}
The minimal length (left) coset representatives of the subgroup $\Sn{\{1, 2, \dots, i\}}\times \Sn{\{i+1, i+2, \dots, k\}}$ of $\Sn{k}$ is 
\[ \{w_{k-1}(l_{k-1}) w_{k-2}(l_{k-2})\cdots w_i(l_i) \,|\, i\geq l_i\geq l_{i+1}\geq \cdots \geq l_{k-1} \geq 0  \}\,,  \]
where $w_j(l_j)=s_{j-l_j+1}\cdots s_{j-1}s_j$.
\end{proposition}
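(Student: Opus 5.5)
The plan is to identify both sides with the set of \emph{Grassmannian} permutations. For $W_J=\Sn{\{1,\dots,i\}}\times\Sn{\{i+1,\dots,k\}}$, generated by $J=\{s_1,\dots,s_{k-1}\}\setminus\{s_i\}$, a permutation $w$ is the minimal length representative of $wW_J$ if and only if $D_R(w)\cap J=\varnothing$. This is the standard parabolic fact, and I will quote it as such. In one-line notation it says $w_1<\cdots<w_i$ and $w_{i+1}<\cdots<w_k$. There are $\binom{k}{i}$ such permutations. The index set $\{(l_i,\dots,l_{k-1})\mid i\ge l_i\ge\cdots\ge l_{k-1}\ge 0\}$ is the set of weakly decreasing sequences of length $k-i$ with entries in $[0,i]$, so it also has $\binom{k}{i}$ elements. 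It therefore suffices to show that every product $w_{k-1}(l_{k-1})\cdots w_i(l_i)$ is Grassmannian, and that distinct sequences give distinct products.

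First I compute how $w_j(l)=s_{j-l+1}\cdots s_{j-1}s_j$ acts by left multiplication, i.e.\ on values. It sends $j+1\mapsto j-l+1$, sends $a\mapsto a+1$ for $j-l+1\le a\le j$, and fixes everything else. Set $v_{i-1}=e$ and $v_j=w_j(l_j)v_{j-1}$ for $i\le j\le k-1$, so that $v_{k-1}$ is the product in the statement. I will prove by induction on $j$ the following invariant.

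\emph{Invariant.} $v_j$ fixes every value and position greater than $j+1$. Its restriction to $[j+1]$ is Grassmannian with respect to the cut at $i$. Its last second-block entry is $v_j(j+1)=j+1-l_j$. Exactly $l_j$ of the first-block entries $v_j(1),\dots,v_j(i)$ exceed $j+1-l_j$. Finally, $\ell(v_j)=l_i+\cdots+l_j$.

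For the inductive step, $v_{j-1}$ has value $j+1$ at position $j+1$. Left multiplication by $w_j(l_j)$ changes that entry to $j+1-l_j$ and raises the values $j+1-l_j,\dots,j$ by one. This relabelling is order preserving on positions $1,\dots,j$, so both blocks stay increasing there. The remaining point is that the new last entry exceeds $v_j(j)$. This holds because, by the invariant for $j-1$, $v_{j-1}(j)=j-l_{j-1}\le j-l_j$, and $l_j\le l_{j-1}$ means the value at position $j$ either is not bumped or is bumped to at most $j+1-l_j-1$. For the count, the values above $j+1-l_j$ among the first $j$ positions are exactly the $l_j$ bumped ones. None of them lies in the second block, because the second block is increasing and ends below $j+1-l_j$. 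Hence exactly $l_j$ first-block entries exceed the new entry, which also shows that the new entry creates exactly $l_j$ new inversions, giving $\ell(v_j)=\ell(v_{j-1})+l_j$. The base case $j=i$ (with $l_i\le i$) is a direct check.

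For injectivity, the invariant lets us read the sequence off the final Grassmannian permutation $v=v_{k-1}$: $l_j$ equals the number of entries among $v_1,\dots,v_i$ exceeding the $(j-i+1)$-st second-block entry $v_{j+1}$. These entries are not changed by the later multiplications, which only relabel values order-preservingly relative to earlier positions. So the map from sequences to Grassmannian permutations is injective, and by the cardinality count it is a bijection onto the minimal coset representatives. As a bonus, $\ell(v)=\sum l_j$, which is the reduced-word statement implicit in the proposition. I expect the main obstacle to be the bookkeeping in the inductive step, namely verifying precisely that the weakly decreasing condition $l_j\le l_{j-1}$ is exactly what keeps the second block increasing. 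I would settle this first on small cases, for instance $k=4$ and $i=2$, before writing the general argument.
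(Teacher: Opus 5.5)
Your argument is correct. The paper itself gives no proof of this proposition. It is quoted from Stumbo's Theorem~2 and is used only to rewrite $\sum_{v\in\Sn{k}}v\cdot\sigma_w$ as $|H|\sum_v v\cdot\sigma_w$, where $v$ runs over left coset representatives of a Young subgroup $H\subseteq\Stab(\sigma_w)$. So what you give is a self-contained type-A verification, not an alternative to an argument in the text.

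Your route has four steps:
\begin{enumerate}
\item Characterize the minimal representatives of the left cosets $wW_J$ by $D_R(w)\cap J=\varnothing$, i.e.\ $w_1<\cdots<w_i$ and $w_{i+1}<\cdots<w_k$.
\item Show inductively that every product $w_{k-1}(l_{k-1})\cdots w_i(l_i)$ has this shape.
\item Recover the sequence $(l_j)$ from the final permutation.
\item Conclude from the common cardinality $\binom{k}{i}$.
\end{enumerate}
This is sound, and I checked the key step. Since $v_{j-1}(j)=j-l_{j-1}<j+1-l_j$ precisely when $l_j\le l_{j-1}$, the value at position $j$ is never bumped, so the second block stays increasing. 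If instead $l_j>l_{j-1}$, that value would be bumped above $j+1-l_j$ and create a descent at $j\neq i$.

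Your approach also gives more than the citation states. The formula $\ell=\sum_j l_j$ shows the displayed words are reduced. It also makes explicit that these factors act by left multiplication on values, which is the convention the paper relies on in $P(w)=\{w_{k-1}(l_{k-1})\cdots w\}$ and in the dot-action sums.

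Two phrasings should be tidied.
\begin{itemize}
\item In the inductive step, drop the alternative ``or is bumped to at most $j+1-l_j-1$''. The value $v_{j-1}(j)$ is simply never bumped.
\item In the injectivity step, the values at positions $1,\dots,j+1$ do change under later multiplications; only their relative order is preserved. The reason is that each later factor $w_{j'}(l_{j'})$ with $j'>j$ restricts to an order-preserving map on $[j']$, and $[j']$ contains all values occupying positions $1,\dots,j+1$. That preservation is what lets you read $l_j$ off the final permutation as the number of first-block entries exceeding the entry in position $j+1$.
\end{itemize}
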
 

In the following lemma, we compute the set $P(w)$ for $w\in \Gh$ with $\bb(w)=(k)$.
\begin{lemma}\label{lem:P(w)}
    Let $h=h_{m,n}$ with $n>0$ and $w\in \Gh$.  If $w$ is of type i) or iv), then $P(w)=\{w\}$; if $w$ is of type ii) or iii), then $P(w)$ is the set \[\{w_{k-1}(l_{k-1})w_{k-2}(l_{k-2})\cdots w_{n-j+x}(l_{n-j+x})w\,|\,n-j\geq l_{n-j+x}\geq l_{n-j+x+1}\geq \cdots \geq l_{k-1}\geq 0 \}.\] 
\end{lemma}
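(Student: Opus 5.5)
We use the explicit shape of $G_h$ for $h=h_{m,n}$. Its edges are all pairs $\{p,q\}\subset[m]$ together with the pairs $\{i,i+1\}$ for $m\le i<k$. Hence $v\in P(w)$ if and only if two conditions hold. First, $v_1\cdots v_m$ has the same relative order as $w_1\cdots w_m$. Second, for each $m\le i<k$ we have $v_i<v_{i+1}\iff w_i<w_{i+1}$. The plan is to translate these order conditions, using the explicit forms in Lemma~\ref{lem:trivial}, into conditions on the \emph{values} of $v$. Left multiplication by a permutation acts on values, so this gives a description of the form $v=uw$.

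\textbf{Types i) and iv).} For type i), any $v\in P(w)$ satisfies $v_m<v_p$ for all $p<m$ and $v_m>v_{m+1}>\cdots>v_k$. So $v_m$ exceeds exactly the $n$ values $v_{m+1},\dots,v_k$ and is smaller than the other $m-1$ values. This forces $v_m=n+1$ and $v_{m+1}\cdots v_k=n\cdots 1$. The head values are then $[n+2,k]$, arranged in the same relative order as in $w$, so $v=w$. Type iv) is symmetric. We have $v_p<v_m<v_{m+1}<\cdots<v_k$ for all $p<m$, so the head $v_1,\dots,v_{m-1}$ occupies $[m-1]$ in $w$'s pattern and the chain is $m,\dots,k$. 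Again $v=w$.

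\textbf{Types ii) and iii).} Set $a\coloneqq n-j+x$. First I will check the following description of $w$. In type ii) ($x=1$), the values $[a]=[1,n-j+1]$ are exactly the increasing tail $w_{m+j},\dots,w_k$. In type iii) ($j=0$), the values $[a]=[1,x+n]$ consist of the increasing chain $w_m,\dots,w_k$ together with the $x-1$ head entries smaller than $x$.

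For $v\in P(w)$ I will then show two things. The positions $w^{-1}([a])$ carry values in increasing order relative to each other exactly as in $w$, and likewise for the positions $w^{-1}([a+1,k])$. Only the interleaving of the two value sets may change. For this I compare entries pairwise. Within the head, order is fixed by the clique condition. Along the tail chain, order is fixed by the consecutive edges. A head entry and a tail entry of the same value block are compared through the pivot $v_m$, using $v_m<v_p$ (type ii) or $v_p\lessgtr v_m$ (type iii).

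Consequently $v=uw$, where $u$ maps $[a]$ and $[a+1,k]$ order-preservingly onto complementary sets. That is, $u$ is a minimal length coset representative for $\Sn{\{1,\dots,a\}}\times\Sn{\{a+1,\dots,k\}}$, and Proposition~\ref{prop:coset representative} with $i=a$ writes $u=w_{k-1}(l_{k-1})\cdots w_a(l_a)$ with $a\ge l_a\ge\cdots\ge l_{k-1}\ge0$.

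\textbf{Restricting the parameters.} The remaining task is to show that the extra constraint $l_a\le n-j=a-x$ is exactly what makes $uw$ lie in $P(w)$. Lemma~\ref{lem:u}(1) forces $v_{m+j}=x$. In terms of $u$, this means the $x$ smallest values of the block $[a]$ are not moved, i.e.\ $u$ fixes $1,\dots,x$. In the word $w_{k-1}(l_{k-1})\cdots w_a(l_a)$, this is equivalent to $l_a\le a-x$: the factor $w_a(l_a)$ moves value $a-l_a+1$, and so on. Conversely, given any $u$ with these parameters, I will verify $uw\in P(w)$ edge by edge. Edges inside one value block are fine because $u$ is order-preserving on each block. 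The only edges joining the two blocks are those incident to the pivot $m+j$ and the head entries compared with $v_m$, and fixing $x$ keeps those comparisons correct.

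\textbf{Main obstacle.} I expect the hardest step to be this last translation between the inequality $l_a\le n-j$ in Stumbo's parametrization and the combinatorial condition ``$u$ fixes $[x]$'' together with the cross-block edge checks, especially in type iii). There the small block $[a]$ is split between the head and the tail, so I would treat type iii) carefully with a small example such as $\cblock{2\ 7\ 1\ 3}4\ 5\ 6$ first.
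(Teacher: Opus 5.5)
Your proof is correct, but it takes a different route from the paper's.

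\textbf{The paper's route.} For types ii)--iv) the paper argues by counting. An element of $P(w)$ is determined by its increasing tail $x=v_{m+j}<v_{m+j+1}<\cdots<v_k$, which gives $\binom{k-x}{n-j}$ elements. The paper then checks that each $gw$, with $g$ in the Stumbo-type set, is reached from $w$ by a chain of dashed arrows, so $gw\in P(w)$ by Proposition~\ref{prop:representative}(1). It concludes by counting the parameter sequences, which again gives $\binom{k-x}{n-j}$; type iv) is the case where this number is $1$.

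\textbf{Your route.} You characterize $P(w)w^{-1}$ directly as the permutations that are increasing on the value blocks $[a]$ and $[a+1,k]$ and fix $[x]$. You then read off the parameters from Proposition~\ref{prop:coset representative}. This gives an explicit bijection and shows exactly where the bound $l_a\le n-j$ comes from: $u(a+1)=a+1-l_a$ is the least element of $u([a+1,k])$, and it must exceed $x$. It needs no counting and no Bruhat-order input.

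\textbf{What the paper's route gives in addition.} The dashed chain itself yields $g\cdot\sigma_w=\sigma_{gw}$. This is the form in which the lemma is used later, for example in Propositions~\ref{prop:P(w)_classes} and~\ref{lem2:main}. You can recover it from your description with two observations:
\begin{enumerate}
\item Right factors of $w_{k-1}(l_{k-1})\cdots w_a(l_a)$ stay in the allowed parameter range.
\item Two elements $y$ and $s_by$ of the same $P(w)$ can never be joined by a solid arrow, since that would flip the relative order along an edge of $G_h$. So $s_b\cdot\sigma_y=\sigma_{s_by}$ by Proposition~\ref{prop:simple action}(1).
\end{enumerate}

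\textbf{One small inaccuracy.} In type iii), the edges joining the two value blocks are not only the edges at the pivot $m$. They also include clique edges between a head entry with value $<x$ and a head entry with value $>x+n$. These are handled by the same observation: $u$ fixes $[x-1]$ and maps $[a+1,k]$ into $[x+1,k]$.
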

\begin{proof}
    We begin with the case where $w$ is of type i), and then treat types ii)--iv) uniformly.

    Suppose that $w$ is of type i). Then, for $u\in P(w)$, we have $u_m>u_{m+1}>\cdots>u_{m+n}$ and $u_i>u_m$ for $i=1, 2, \dots, m-1$. Thus $u_m=n+1$, and the other $u_j$'s are uniquely determined. This shows that $P(w)=\{w\}$.

    Now we assume that $w$ is of type ii), iii), or iv). Let $(j,x)\coloneqq (j_w,x_w)$. Then $w_{m+j}=x<w_{m+j+1}<\dots<w_{k}$. If $u\in P(w)$ then $u_{m+j}=x$ and $u$ can be identified with a sequence $x=u_{m+j}<u_{m+j+1}<u_{m+j+2}< \cdots < u_{k}$. Therefore there are $\binom{k-x}{n-j}$ permutations in $P(w)$. 
    Letting $$S\coloneqq \{w_{k-1}(l_{k-1}) w_{k-2}(l_{k-2})\cdots w_{n-j+x}(l_{n-j+x})\mid  n-j\geq l_{n-j+x}\geq l_{n-j+x+1}\geq \cdots \geq l_{k-1}\},$$ we can check that  $w \dasharrow s_{a_1}w \dasharrow s_{a_2}s_{a_1}w \dasharrow s_{a_l}\cdots s_{a_1}w$ when $s_{a_l}\cdots s_{a_1}$ is a permutation in $S$. Therefore, by Proposition~\ref{prop:representative} we have $P(w)\subset \{uw\,|\, u\in S\}$. We then finish the proof by computing $|S|$, the number of decreasing functions from $[n-j+x, k-1]$ to $[0,n-j]$, which is  $\binom{k-x}{n-j}$.
\end{proof}

In the following lemma, we investigate the stabilizers of $\sigma_w$ that we need for the subsequent argument. 
 
\begin{lemma}\label{lem:stabilizer}
    Let $h=h_{m,n}$ with $n>0$ and let $w\in \Gh$ with $(j_w,x_w)=(j,w)$.
    \begin{enumerate}
        \item If $w$ is of type i), then $\Stab(\sigma_w)=\Sn{k}$, so we have $\widehat{\sigma}_w=k!\sigma_w$.
        \item If $w$ is of type ii) or iii) with $x=1$, then $\Sn{\{ 1,2 \dots, n-j+1 \}}\times \Sn{\{n-j+2, \dots, k-1, k \}}$ is a subgroup of $\Stab(\sigma_w)$.
        \item If $w$ is of type iii) with $x>1$, then $\Sn{\{ x-1,x, \dots, n+x \}}\times \Sn{\{n+x+1, \dots, k-1, k \}}$ is a subgroup of $\Stab(\sigma_w)$.
        \item If $w$ is of type iv), then $\Sn{\{m-1,m,\dots,m+n\}}$ is a subgroup of $\Stab(\sigma_w)$.
    \end{enumerate}
\end{lemma}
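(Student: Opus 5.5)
The plan is to show that each listed subgroup is generated by simple reflections $s_i$ with $s_i\cdot\sigma_w=\sigma_w$ in ordinary cohomology, and to verify these fixings one generator at a time. Each of the groups in (1)--(4) is a Young subgroup on an interval of consecutive values, so it is generated by the adjacent transpositions $s_i$ with both $i$ and $i+1$ in the relevant interval. For each such $s_i$ I will decide which case of Proposition~\ref{prop:simple action} applies, by locating the positions $w^{-1}(i)$ and $w^{-1}(i+1)$ using the explicit shapes in Lemma~\ref{lem:trivial}.

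\emph{Case (1), type i).} Here $w_m\cdots w_k=(n+1)\,n\cdots 1$ and $\{w_1,\dots,w_{m-1}\}=[n+2,k]$.
\begin{itemize}
\item For $i\le n$, the values $i+1,i$ sit at adjacent positions $p,p+1$ with $p\ge m$, so $s_iw<w$ and $s_iw<_h w$. Thus $w\to s_iw$. Since $w$ has no $\hpat{2134}$ pattern (its tail is decreasing), Lemma~\ref{lem:Ahat0} gives $s_i\cdot\sigma_w=\sigma_w$.
\item For $i\ge n+1$, both $i$ and $i+1$ lie in the clique positions $[m]$, which are pairwise $h$-adjacent. Then either $s_iw\to w$, giving $s_i\cdot\sigma_w=\sigma_w$ by Proposition~\ref{prop:simple action}(2), or $w\to s_iw$, and Lemma~\ref{lem:Ahat0} applies again.
\end{itemize}
Hence $\Stab(\sigma_w)=\Sn{k}$, and the sum over $\mathcal S_w=\Sn{k}$ gives $k!\sigma_w$.

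\emph{Cases (2)--(4).} I treat these the same way.
\begin{itemize}
\item Generators $s_i$ with both values in the clique block $[m]$ are handled by Proposition~\ref{prop:simple action}(2) or by Lemma~\ref{lem:Ahat0}, as above.
\item Generators whose values lie in the increasing tail $w_{m+j}\cdots w_k$ need a separate argument. There $i$ sits immediately before $i+1$ at adjacent positions, so $s_iw>w$ and $s_iw\to w$, and Proposition~\ref{prop:simple action}(2) gives invariance directly.
\item In case (2), type ii), the decreasing segment $w_m\cdots w_{m+j-1}=(n+1)\cdots(n-j+2)$ is handled as in case (1), via $w\to s_iw$ and Lemma~\ref{lem:Ahat0}.
\end{itemize}

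\emph{Main obstacle.} The delicate generators are those mixing a tail value with a clique value. These are $s_{x-1}$ in case (3), $s_{m-1}$ in case (4), and the boundary reflection joining the two blocks in case (2). For these, $i$ and $i+1$ may sit at non-adjacent positions with respect to $h$, so a priori $w\dashrightarrow s_iw$ could occur and $s_i$ would move $\sigma_w$.

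For such a boundary $s_i$, I will first check whether $i$ is in the clique and $i+1=w_m$ (or $x$) with $i$ positioned before $m$; in that situation $s_iw\to w$ via the edge $(p,m)$, which is an $h$-edge since $p<m=h(p)$. Otherwise I will use Lemma~\ref{lem:Ahat0}, checking that $w$ avoids $\hpat{2134}$. This avoidance follows from the explicit forms: the clique entries other than $w_m$ are, relative to $x$, arranged so that no $w_{j'}<w_{i'}<w_m<w_{m+1}$ pattern with $i'<j'<m$ arises. If that avoidance fails for some shape, I would fall back on Lemma~\ref{lem:key} and show that $\widetilde{\mathcal A}_{s_i,w}$ consists of $u$ with $\ell_h(u)>\ell_h(w)$, hence is empty.

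Finally, since each listed group is generated by these $s_i$, it is contained in $\Stab(\sigma_w)$.
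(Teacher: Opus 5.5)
Your overall strategy is the paper's: check the generators of each Young subgroup one at a time, using Proposition~\ref{prop:simple action}(2) and Lemma~\ref{lem:Ahat0}. Cases (1), (2), (4) and all tail generators go through as you describe. In type i) you use these tools for $s_i$, $i>n$, instead of Theorem~\ref{thm:separation}, which is fine.

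\textbf{The gap.} It is in case (3), at exactly the generators you called routine. For $h=h_{m,n}$, an occurrence of $\hpat{2134}$ amounts to indices $i'<j'<m$ with $w_{j'}<w_{i'}<w_m<w_{m+1}$. Type iii) only prescribes $w_m\cdots w_k=x\,(x+1)\cdots(x+n)$. The values $1,\dots,x-1$ may sit in positions $[m-1]$ in any order, so for $x\ge 3$ your claim that avoidance ``follows from the explicit forms'' is false. For example, take $h=h_{5,1}$ and $w=6\,2\,1\,5\,3\,4$, which is of type iii) with $x=3$. The entries $2,1,3,4$ at positions $2,3,5,6$ form $\hpat{2134}$. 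The generator $s_5$ of $\Sn{\{5,6\}}$ satisfies $w\to s_5w$, since $6$ precedes $5$ at positions $1<4\le h(1)$.

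So for the generators $s_i$ with $n+x+1\le i\le k-1$ and $w\to s_iw$, Lemma~\ref{lem:Ahat0} is unavailable. Your fallback Lemma~\ref{lem:key} fails too: $s_iw$ only swaps two values larger than $x+n$, so it keeps the same pattern occurrence. Conversely, the generators you flagged as delicate are harmless. The value $x-1$ always sits at some position $q<m$, so $s_{x-1}w=ws_{q,m}$ with $m\le h(q)$, i.e.\ $s_{x-1}w\to w$. The same holds for $s_{m-1}$ in type iv).

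\textbf{The missing step.} It is the paper's argument. First, $s_iw\in\Gh$, because the set of values in positions $[m-1]$ is unchanged. So Lemma~\ref{lem:Atil} gives $\mathcal A_{s_i,w}\subset\Atil{i}$, and it remains to show $\Atil{i}=\varnothing$.
\begin{itemize}
\item If $u\dra s_iu$, then $u^{-1}(i)>h(u^{-1}(i+1))\ge m$. This is because all positions in $[m]$ are pairwise $h$-adjacent.
\item But $u\ge w$ forces $\{u_m,\dots,u_k\}\up\le\{x,\dots,x+n\}\up$ by Proposition~\ref{prop:Bruhat}(3). Hence no value $i>x+n$ can occupy a position after $m$.
\end{itemize}
Therefore $\mathcal A_{s_i,w}=\varnothing$, and Proposition~\ref{prop:simple action}(3) gives $s_i\cdot\sigma_w=\sigma_w$ in ordinary cohomology. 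The reason is not an $h$-length comparison, as your fallback suggests. There is simply no $u\in[w,w_0]$ with $u\dra s_iu$.
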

\begin{proof}
    We first consider when $w$ is of type i). Then $w$ admits an $m$-block decomposition. By Theorem~\ref{thm:separation}, for $n<j<k$, $s_j$ acts trivially on $\sigma_w$ since the dot action is trivial for the complete graph case. For  $j\leq n$, since $ w\rightarrow s_jw$ and  \( w \) is a \( \hpat{2134} \)-avoiding $h$-admissible permutation, $s_j$ also acts trivially on $\sigma_w $ by Lemma~\ref{lem:Ahat0}. Therefore, we obtain $\widehat{\sigma}_w=k!\, \sigma_w$. 

    Next, suppose that $w$ is of type ii), or $w$ is of type iii) with $x=1$. 
    Since $w^{-1}(1)<w^{-1}(2)<\cdots<w^{-1}(n-j+1)$, we get $s_iw\to w$ for $1\leq i\leq n-j$, so $s_1, \dots, s_{n-j}$ stabilize $\sigma_w$. On the other hand, since $w(m)=\min\{w(i)\mid 1\leq i\leq m\}$, $w$ avoids the pattern $\hpat{2134}$. For $n-j+2\leq i\leq k$, since $w\to s_i w$, it follows from Lemma~\ref{lem:Ahat0} that $\Sn{\{n-j+2, \dots, k-1, k\}}$ is a subgroup of $\Stab(\sigma_w)$.

    Finally, we assume that $w$ is of type iii) with $x>1$, or $w$ is of type iv). Since $s_iw\to w$ for $i=x-1,x,\dots,x+n-1$, $\Sn{\{x-1,x,\dots,x+n\}}$ is a subgroup of $\Stab(\sigma_w)$. Thus the case of type iv) is settled. We now continue with the case of type iii) with $x>1$. For $i=x+n+1, \dots, k-1$, if  $s_iw \rightarrow w$ then $s_i$ stabilizes $\sigma_w$, thus let us assume that $w \rightarrow s_iw$.  Note that $s_iw$ is an $h$-admissible permutation, and hence $\mathcal A_{s_i, w}\subset \Atil{i}=\{u\in [w, w_0] \,|\, u \dra s_i u,\, \lh{u}\leq \lh{w} \}$ by Lemma~\ref{lem:Atil}. If $u\in  \Atil{i}$, then $u \dra s_i u$, which implies that $(u^{-1})_{i}>m$. Hence $u\notin [w, w_0]$ because $w_j, j\in [m, m+n]$, are all smaller than or equal to $x+n<i$. This is a contradiction. Therefore, $\mathcal A_{s_i, w}=\Atil{i}=\varnothing$ and $s_i\cdot \sigma_w=\sigma_w$, which proves the desired assertion for type iii) with $x>1$.
\end{proof}

\begin{lemma}\label{lem: iii)-1} Let $h=h_{m,n}$ with $n>0$ and let $w\in \Gh$ be an $h$-admissible permutation of type iii) with $x\coloneqq x_w>1$ or of type iv). 
Then, for $i\in [x-2]$,
\[s_i\cdot\sigma_w=\sigma_w+\sum_{v\ll w} c_v \sigma_v \text{\,\,\, for some constant \,\,\, $c_v$'s}\,.\] 
\end{lemma}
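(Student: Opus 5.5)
We split according to the relative positions of the values $i$ and $i+1$ in $w$. Since $w$ is of type iii) with $x>1$ or of type iv), we have $w_m w_{m+1}\cdots w_{m+n}=x\,(x+1)\cdots(x+n)$. Hence every value in $[x-1]$ sits in a position of $[m-1]$. For $i\in[x-2]$, let $p=w^{-1}(i)$ and $q=w^{-1}(i+1)$. Then $p,q\le m-1$, so $\{p,q\}$ is an edge of $G_h$, because $h(\min(p,q))=m$.

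If $p<q$, then $s_iw=ws_{p,q}$ with $\ell(s_iw)>\ell(w)$ and $p<q\le h(p)$, so $s_iw\rightarrow w$. Proposition~\ref{prop:simple action}(2) then gives $s_i\cdot\sigma_w=\sigma_w$, and there is nothing more to prove.

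The real case is $p>q$, where $w\rightarrow s_iw$. First we check that $s_iw\in\Gh$. Swapping the values $i$ and $i+1$ inside the clique positions keeps every value $y\le x-1$ in $[m-1]$. The conditions $w^{-1}(y+1)\le h(w^{-1}(y))$ for $y\le x-1$ are automatic there, since $h=m$ on $[m-1]$ and the value $x$ sits at position $m$. The conditions in the tail are unchanged. So Lemma~\ref{lem:Atil} applies, and $\mathcal A_{s_i,w}\subset\Atil{i}=\{u\in[w,w_0]\mid u\dra s_iu,\ \lh{u}\le\lh{w}\}$. By Proposition~\ref{prop:simple action}(3), passing to ordinary cohomology (the term $(t_{i+1}-t_i)\sigma^T_{s_iw}$ dies), we get
\[
s_i\cdot\sigma_w=\sigma_w+\sum_{u\in\mathcal A_{s_i,w}}(\tau_u-\tau_{s_iu}).
\]
By Lemma~\ref{lem:tau2}, every $\sigma_v$ occurring in $\tau_u$ or $\tau_{s_iu}$ satisfies $v\ge_h u$ or $v\ge_h s_iu$, and in particular $v\ge s_iu$ in the Bruhat order. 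So it suffices to show that every such $v$ satisfies $v\ll w$, that is, $j_v>0$, or $j_v=0$ and $x_v<x$.

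The key step, and the expected obstacle, is to extract this strict inequality from the condition $u\dra s_iu$. The condition says that the positions $a=u^{-1}(i)$ and $b=u^{-1}(i+1)$ are not joined by an edge of $G_h$. Since $[m-1]\cup\{m\}$ spans a clique together with the edge $\{m,m+1\}$, at least one of $a,b$ must lie in $[m+1,k]$. Thus $u$, and likewise $s_iu$, has a value $\le x-1$ among positions $m,\dots,k$. We want to transport this to every $v$ in the expansion. By Proposition~\ref{prop:Bruhat}(3), the condition $v\ge s_iu$ means that the sorted tail sets of $v$ are entrywise $\le$ those of $s_iu$. Hence $\min\{v_m,\dots,v_k\}\le x-1$.

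Finally we pass to $\widetilde v\in\Gh$ with $v\in P(\widetilde v)$. Here I would use Definition~\ref{def:index} and the structure of $P(\cdot)$ from Lemma~\ref{lem:u} and Lemma~\ref{lem:P(w)}. If $j_{\widetilde v}=0$, then $\widetilde v_m<\widetilde v_{m+1}<\cdots$ is increasing along the path, so $v$ has the same increasing pattern and $x_v$ is the number of clique entries below $v_m$, plus one. A tail value $\le x-1$ forces $v_m\le x-1$ and hence $x_v\le x-1<x$. Otherwise $j_{\widetilde v}>0$. Either way $v\ll w$. The delicate point I expect to need care is exactly this translation from Bruhat-order information on $v$ to the invariants $(j_v,x_v)$ of $\widetilde v$. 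If the direct argument fails, I would instead use the chain property (Proposition~\ref{prop:h-chain}) along $u\le_h v$ and check that each $h$-transposition step preserves the property of having a tail value $\le x-1$.
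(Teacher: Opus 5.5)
Your proof is correct and follows the paper's argument: you split on $s_iw\to w$ versus $w\to s_iw$, use Lemma~\ref{lem:Atil} with Proposition~\ref{prop:simple action}(3) and Lemma~\ref{lem:tau2}, and then use a tail-set Bruhat comparison to rule out any $v$ whose $h$-admissible representative is of type iii) or iv) with $x_v\ge x$. Your key step is actually the more careful form of the paper's. You transfer the small tail value from $s_iu$ (for $u\in\mathcal A_{s_i,w}$) to $v$ via $v\ge s_iu$, whereas the paper attributes the dashed-arrow condition directly to the appearing permutation and compares only with $w$ and $s_iw$. You also check $s_iw\in\Gh$ explicitly, which the paper only asserts.
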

\begin{proof} 
We first note that since we assume $x>1$, for each $i\in [x-2]$ the following hold:
\begin{itemize}
    \item both $i$ and $i+1$ are in $\{w_j\mid j\in [m-1]\}$, and
    \item $w_j=(s_iw)_j=x+j-m\geq x$ for $j=m,\dots,k$.
\end{itemize}
If $s_iw\to w$, then $s_i$ stabilizes $\sigma_w$, so the claim holds. Now we assume that $w\to s_iw$. Since $s_iw\in\Gh$, we have $$s_i\cdot \sigma_w=\sigma_w+\sum_{v\in \mathcal{A}_{s_i,w}}(\tau_v-\tau_{s_iv}),$$ where $\mathcal{A}_{s_i,w}$ is a subset of $\Atil{i}=\{v\in [w, w_0] \,|\, v \dra s_i v,\, \lh{v}\leq \lh{w} \}$ by Lemma~\ref{lem:Atil}.

We claim that if $v'\not\ll w$, then $\sigma_{v'}$ does not appear in the sum $\sum_{v\in \mathcal{A}_{s_i,w}}(\tau_v-\tau_{s_iv}).$ Suppose not, i.e., there is a permutation $v'$ such that $\widetilde{v'}\in \Gh$ is of type iii) or iv) with $x_{v'}=x_{\widetilde{v'}}\geq x$ and $\sigma_{v'}$ appears in the sum $\sum_{v\in \mathcal{A}_{s_i,w}}(\tau_v-\tau_{s_iv}),$ which implies that $v'\geq w$ or $v'\geq s_iw$ by Lemma~\ref{lem:tau2}. However, since $v'\dra s_iv'$, $i+1$ belongs to $\{v'_{m+1},\dots,v'_{m+n}\}$, so $$\{v'_m,\dots,v'_{m+n}\}\up \not\geq \{w_m,\dots,w_{m+n}\}\up =\{(s_iw)_m,\dots,(s_iw)_{m+n}\}\up,$$ which is a contradiction.
\end{proof}

The following simple lemma will be used later in Section~\ref{sec:decomposition}.

\begin{lemma}\label{lem:wl} In $\Sn{k}$, for $1< x\leq l<k-1$, we have 
\[ (s_x s_{x+1} \cdots s_{l+1})(s_x s_{x+1} \cdots s_l)=(s_{x+1}\cdots s_{l+1})(s_x\cdots s_{l+1}).\]
\end{lemma}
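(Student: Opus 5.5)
We have to prove the identity $(s_x s_{x+1}\cdots s_{l+1})(s_x s_{x+1}\cdots s_l)=(s_{x+1}\cdots s_{l+1})(s_x\cdots s_{l+1})$ in $\Sn{k}$. The plan is to use only the Coxeter relations: the braid relation $s_a s_{a+1} s_a = s_{a+1} s_a s_{a+1}$ and the commutation $s_a s_b = s_b s_a$ for $|a-b|\geq 2$. As an independent check, I will also compare both sides directly as permutations. The hypotheses $1<x\le l<k-1$ only guarantee that every index lies in $[k-1]$; nothing else about them is used.

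The main step is a conjugation identity. Let $c=s_x s_{x+1}\cdots s_{l+1}$. For $x\le a\le l$, I claim
\[
c\, s_a = s_{a+1}\, c,
\]
that is, $c$ shifts indices up by one. To prove it, move $s_a$ leftward through $s_{a+2}\cdots s_{l+1}$; these factors commute with $s_a$. This leaves $s_x\cdots s_{a-1}\,(s_a s_{a+1} s_a)\, s_{a+2}\cdots s_{l+1}$. The braid relation turns the middle into $s_{a+1}s_a s_{a+1}$. Then $s_{a+1}$ commutes leftward past $s_x,\dots,s_{a-1}$, which gives $s_{a+1}c$.

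Applying the claim successively for $a=x,x+1,\dots,l$ to the left-hand side gives
\[
c\,(s_x s_{x+1}\cdots s_l) = s_{x+1}\, c\, (s_{x+1}\cdots s_l) = \cdots = (s_{x+1}s_{x+2}\cdots s_{l+1})\, c.
\]
The right-hand end is exactly the right-hand side of the lemma. A formal induction on $l-x$ packages this, with base case $l=x$: $s_x s_{x+1} s_x = s_{x+1} s_x s_{x+1}$.

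For the sanity check, compose as functions with the rightmost factor applied first. The product $s_x\cdots s_l$ is the cycle $x\mapsto l+1\mapsto l\mapsto\cdots\mapsto x+1\mapsto x$, and $c$ is the analogous cycle on $[x,l+2]$. Both sides then act on $[x,l+2]$ as $x\mapsto l+1$, $x+1\mapsto l+2$, and $y\mapsto y-2$ for $x+2\le y\le l+2$, and they fix everything else. The only real obstacle is bookkeeping the commutations in the conjugation step, and the explicit computation above settles any doubt.
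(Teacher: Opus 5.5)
Your main argument is correct and complete. The shift identity $c\,s_a=s_{a+1}\,c$ for $x\le a\le l$ (commute $s_a$ left past $s_{a+2},\dots,s_{l+1}$, apply one braid relation, commute $s_{a+1}$ left past $s_x,\dots,s_{a-1}$), iterated over the letters of $s_x\cdots s_l$, gives the lemma directly.

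The paper takes a different route: it inducts on $l$. It moves $s_{l+1}$ to the right to write the left side as $(s_x\cdots s_l)(s_x\cdots s_{l-1})(s_{l+1}s_l)$, applies the induction hypothesis to the first two factors, then finishes with the braid relation $s_ls_{l+1}s_l=s_{l+1}s_ls_{l+1}$ and one more commutation. Both proofs use only Coxeter relations. Yours isolates a reusable fact: conjugation by $c$ shifts each $s_a$ with $x\le a\le l$ up by one index, so $c\,u=u^{+}c$ for any word $u$ in these generators, where $u^{+}$ is the index-shifted word. The paper's proof is a self-contained induction that feeds the lemma back into itself.

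Your sanity check, however, is wrong as written, even though the proof does not need it. With the rightmost factor applied first, $s_x\cdots s_l$ sends $y\mapsto y+1$ for $x\le y\le l$ and $l+1\mapsto x$. The cycle you wrote is the inverse of this. Both sides of the lemma then act by
\[
y\mapsto y+2 \ (x\le y\le l),\qquad l+1\mapsto x+1,\qquad l+2\mapsto x.
\]
Your formula ($x\mapsto l+1$, $x+1\mapsto l+2$, $y\mapsto y-2$) is neither this permutation nor its inverse. Already for $x=l$, both sides equal $s_xs_{x+1}s_x$, the transposition of $x$ and $x+2$, while your formula gives a $3$-cycle. So the explicit computation does not ``settle any doubt''; the Coxeter-relation argument is what actually proves the lemma.
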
 
\begin{proof} If $l=2$ then $(s_2 s_3)s_2=(s_3)(s_2 s_3)$, so the lemma is true. Let $l>2$ and suppose that the lemma is true for all $2\leq l'<l$. Then 
\begin{align*}
(s_x s_{x+1} \cdots s_{l+1})(s_x s_{x+1} \cdots s_l)&=(s_x s_{x+1} \cdots s_{l})(s_x  \cdots  s_{l-1})(s_{l+1} s_l)\\
&=(s_{x+1}\cdots s_{l})(s_x\cdots s_{l})(s_{l+1} s_l)=(s_{x+1}\cdots s_{l})(s_x\cdots s_{l-1}s_{l+1}s_ls_{l+1})\\
& =(s_{x+1}\cdots s_{l}s_{l+1})(s_x\cdots s_{l-1}s_ls_{l+1})\,,
\end{align*}
which completes the proof.
\end{proof}

\begin{definition}\label{def:M}
Let $d$ be an integer such that $0\leq d\leq d_h$.
\begin{enumerate}
    \item We define a  $\mathbb C\Sn{k}$ submodule  $M$ of $H^{2d}(\Hess(S,h))$ as
\[M\coloneqq \sum_{u\in \Gh^d}\mathbb C\Sn{k}(\widehat{\sigma}_u)\subset H^{2d}(\Hess(S,h))\,.\]
\item For each $w\in\Gh^d$, we define a submodule $M_{\ll w}$ of $M$ as 
\[M_{\ll w}\coloneqq \sum_{u\in \Gh^d, u\ll w}\mathbb{C} \Sn{k}(\widehat{\sigma}_u).\]
\item For $\alpha,\beta \in H^{2d}(\Hess(S,h))$, we use the notation $\alpha \equiv\beta \pmod{M'}$ when $\alpha-\beta \in M'$ for a $\mathbb{C} \Sn{k}$ submodule $M'$ of $H^{2d}(\Hess(S,h))$.
\end{enumerate}
\end{definition}

We prove a proposition that will play a crucial role in the proof of  Theorem~\ref{thm:main}. 

\begin{proposition}\label{lem2:main}
    Let $w\in \Gh^d$ be an $h$-admissible permutation with $\bb(w)=(k)$ satisfying $0\leq j=j_w<n$ and $x_w=1$.
    Let $$\widetilde{S}\coloneqq\{w_{k-1}(\ell_{k-1})w_{k-2}(\ell_{k-2})\cdots w_{n-j+1}(\ell_{n-j+1})\mid n-j+1\geq \ell_{n-j+1}\geq \cdots\ell_{k-1}\geq 0\},$$ and
    $$ S\coloneqq\{w_{k-1}(\ell_{k-1})w_{k-2}(\ell_{k-2})\cdots w_{n-j+1}(\ell_{n-j+1})\mid n-j+1> \ell_{n-j+1}\geq \cdots\ell_{k-1}\geq 0\}.$$
    For $w^*\coloneqq (w_{n-j+1}(n-j)) w$, if $s_1\cdot\sigma_{w^*}\equiv \sigma_{w^*}\pmod{M_{\ll w}}$, then we have
    $$\sum_{u\in\widetilde{S}-S}u\cdot\sigma_w\equiv \sum_{v\in P(w)}d_v\sigma_v\pmod{M_{\ll w}}$$
    for nonnegative integers $d_v$.
\end{proposition}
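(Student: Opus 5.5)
The plan is to reduce every term $u\cdot\sigma_w$ with $u\in\widetilde S-S$ to a single application of the hypothesis $s_1\cdot\sigma_{w^*}\equiv\sigma_{w^*}\pmod{M_{\ll w}}$. After that, the remaining simple reflections should act by the $\dashrightarrow$-moves of Proposition~\ref{prop:representative}, which keep us inside $P(w)$.

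First we record the facts about $w$. Since $\bb(w)=(k)$, $x_w=1$ and $j<n$, the permutation $w$ is of type ii) (if $j>0$) or of type iii) with $x=1$ (if $j=0$). Lemma~\ref{lem:P(w)} with $x=1$ gives $P(w)=\{uw\mid u\in S\}$, and each step of the factorization is a $\dashrightarrow$-move. Hence Proposition~\ref{prop:representative} gives $u\cdot\sigma_w=\sigma_{uw}$ for every $u\in S$. In particular $w^*=(w_{n-j+1}(n-j))w\in P(w)$ and $\sigma_{w^*}=w_{n-j+1}(n-j)\cdot\sigma_w$. We also note that $M_{\ll w}$ is a $\mathbb C\Sn{k}$-submodule, so congruences modulo $M_{\ll w}$ are preserved by the dot action.

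Next, the elements $u\in\widetilde S-S$ are exactly those with $\ell_{n-j+1}=n-j+1$. In that case
\[
w_{n-j+1}(n-j+1)=s_1s_2\cdots s_{n-j+1}=s_1\,w_{n-j+1}(n-j).
\]
So $u=u'\,s_1\,w_{n-j+1}(n-j)$, where $u'=w_{k-1}(\ell_{k-1})\cdots w_{n-j+2}(\ell_{n-j+2})$ with $n-j+1\ge\ell_{n-j+2}\ge\cdots\ge\ell_{k-1}\ge 0$. Therefore
\[
u\cdot\sigma_w=u'\cdot\bigl(s_1\cdot\sigma_{w^*}\bigr)\equiv u'\cdot\sigma_{w^*}=\bigl(u'\,w_{n-j+1}(n-j)\bigr)\cdot\sigma_w\pmod{M_{\ll w}}.
\]

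The remaining task is to show that each $u'\,w_{n-j+1}(n-j)\cdot\sigma_w$ is a nonnegative integer combination of classes $\sigma_v$ with $v\in P(w)$. The obstacle is that $u'$ may have $\ell_{n-j+2}=n-j+1$, so that $u'w_{n-j+1}(n-j)$ is not in the normal form describing $S$: its factors reach down to $s_2$ when only $s_3,\dots$ are allowed. I would use Lemma~\ref{lem:wl} to rewrite consecutive pairs of factors $(s_x\cdots s_{l+1})(s_x\cdots s_l)$ as $(s_{x+1}\cdots s_{l+1})(s_x\cdots s_{l+1})$. This pushes the offending low indices to the right, one factor at a time.

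In each rewritten word, every simple reflection either acts as a $\dashrightarrow$-move, which sends some $\sigma_v$ to $\sigma_{s_av}$ with $s_av\in P(w)$, or is an $s_a$ with $s_av\to v$, which fixes $\sigma_v$ by Proposition~\ref{prop:simple action}(2). Both cases contribute a coefficient $1$. A remaining $s_1$ in front of a class in $P(w)$ whose shape is again that of $w^*$ is handled by the same hypothesis once more. I expect this bookkeeping, checking which of the three cases of Proposition~\ref{prop:simple action} occurs at each letter, to be the hardest step. If a case $v\to s_av$ with nonempty $\mathcal A_{s_a,v}$ appeared, it would break the argument, and I would try to rule it out with Lemma~\ref{lem:Ahat0}, since these $v$ satisfy $v_m=\min\{v_1,\dots,v_m\}$ and hence avoid $\hpat{2134}$. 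Summing over $u\in\widetilde S-S$ then yields coefficients $d_v\ge 0$ that count how many $u$ land on each $v$.
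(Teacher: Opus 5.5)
Your plan is the paper's proof: strip $s_1$ once using the hypothesis, rewrite with Lemma~\ref{lem:wl}, and absorb the leftover reflection, so that each term becomes a single $\sigma_v$ with $v\in P(w)$. One correction is needed at the step you flagged, because the leftover letter $s_a$ ($a\ge n-j+2$) can fall in the case $w\to s_aw$ (for instance $a=n-j+2$ when $w$ is of type ii)); there Proposition~\ref{prop:simple action}(2) does not apply, and Lemma~\ref{lem:Ahat0} cannot be used on a non-admissible $v\in P(w)$, so instead use Lemma~\ref{lem:wl} to write $w_a(n-j+1)w_{a-1}(n-j)=w_a(n-j)w_{a-1}(n-j)s_a$, commute $s_a$ past the lower factors $w_b(n-j)$ with $b\le a-2$ so that it acts on the admissible class $\sigma_w$ itself, and absorb it there by Lemma~\ref{lem:stabilizer}(2).
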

\begin{proof}
    For convenience, we make a table of the values of $w$ and $w^*$ at positions $p\coloneqq(w^{-1})_{n-j+2}$, and $m+j,m+j+1,\dots,m+n$.
    \[\begin{array}{c|ccccccccc}
    \hline
        &  p & m+j & m+j+1 & m+j+2 & \cdots & m+n\\
    \hline
    w & n-j+2 & 1 & 2&3&\cdots & n-j+1\\
    w^* &   2  & 1 & 3 & 4 &  \cdots & n-j+2\\
    \hline
    \end{array}\]
    
    Note that $P(w)=\{uw\mid u\in S\}$ by Lemma~\ref{lem:P(w)}, and $$\widetilde{S}-S=\{w_{k-1}(\ell_{k-1})w_{k-2}(\ell_{k-2})\cdots w_{n-j+1}(\ell_{n-j+1})\mid n-j+1 = \ell_{n-j+1}\geq \cdots\ell_{k-1}\geq 0\}.$$ Hence $w^*\in P(w)$, and we have $(w_{n-j+1}(n-j))\cdot\sigma_w=\sigma_{(w_{n-j+1}(n-j))w}=\sigma_{w^*}$ by Proposition~\ref{prop:simple action}(1). 

    Since we assume $s_1\cdot\sigma_{w^*}\equiv \sigma_{w^*}\pmod{M_{\ll w}}$, we get $$w_{n-j+1}(n-j+1)\cdot\sigma_w=s_1w_{n-j+1}(n-j)\cdot\sigma_w=s_1\cdot\sigma_{w^*}\equiv\sigma_{w^*} \pmod{M_{\ll w}},$$ and we also get
    \begin{align*}
        & (w_{k-1}(\ell_{k-1})\cdots w_{n-j+2}(\ell_{n-j+2})w_{n-j+1}{ (n-j+1)})\cdot\sigma_w\\
        &\equiv (w_{k-1}(\ell_{k-1})\cdots w_{n-j+2}{ (\ell_{n-j+2})})\cdot\sigma_{w^*}\pmod{M_{\ll w}}\\
        &=(w_{k-1}(\ell_{k-1})\cdots w_{n-j+2}{ (\ell_{n-j+2})}w_{n-j+1}(n-j))\cdot\sigma_{w}.
    \end{align*}
    Since $0\leq \ell_{k-1}\leq\cdots\leq \ell_{n-j+2}\leq \ell_{n-j+1}\leq n-j+1$, if none of $\ell_{k-1},\dots,\ell_{n-j+2}$ is equal to $n-j+1$, then by Lemma~\ref{lem:P(w)}, we have
    $$(w_{k-1}(\ell_{k-1})\cdots w_{n-j+2}(\ell_{n-j+2})w_{n-j+1}({ n-j+1}))\cdot\sigma_w\equiv \sigma_u \pmod{M_{\ll w}}$$ for some $u\in P(w)$.  Thus, we consider $u=w_{n-j+2}(n-j+1)w_{n-j+1}(n-j+1)$ next. Since  $w_{n-j+2}(n-j+1) = s_{2}\cdots s_{n-j+1}s_{n-j+2}$, we have 
    \begin{align*}
    &(w_{n-j+2}(n-j+1))(w_{n-j+1}(n-j+1))\cdot\sigma_w\\
    &\equiv s_2\cdots s_{n-j+1}s_{n-j+2}\cdot  \sigma_{w^*} \pmod{M_{\ll w}}\\
    &=(s_2\cdots s_{n-j+1}s_{n-j+2})(s_2\cdots s_{n-j}s_{n-j+1})\cdot \sigma_w \\
    &=(s_3\cdots s_{n-j+1}s_{n-j+2})(s_2\cdots s_{n-j+1}s_{n-j+2})\cdot \sigma_w\\
    &=(s_3\cdots s_{n-j+1}s_{n-j+2})(s_2\cdots s_{n-j+1})\cdot \sigma_w\\
    &= \sigma_{w_{n-j+2}(n-j)w_{n-j+1}(n-j)w}\,.
    \end{align*}
    In the above, the third equality holds due to Lemma~\ref{lem:wl}, the fourth equality holds because $s_{n-j+2}$ stabilizes $\sigma_w$ by Lemma~\ref{lem:stabilizer}, and the last equality holds  by Proposition~\ref{prop:representative}. Therefore, if none of the $l_{k-1}, \dots, l_{n-j+3}$ is equal to $n-j+1$, then 
    \begin{equation*}
    (w_{k-1}(l_{k-1}) \cdots w_{n-j+3}(\ell_{n-j+3}) w_{n-j+2}(n-j+1)w_{n-j+1}(n-j+1))\cdot \sigma_w\equiv\sigma_{u} \pmod{M_{\ll w}}  
    \end{equation*}
    for some  $u\in P(w)$.
    We can follow the same argument to show that if $n-j+1=l_{n-j+2}=l_{n-j+3}=\cdots= l_a$ for $a\geq n-j+3$, then for any $n-j+1> l_{a+1}\geq \cdots \geq l_{k-1}$ we have
    \begin{equation*}
    (w_{k-1}(l_{k-1}) \cdots w_{n-j+2}(l_{n-j+2})w_{n-j+1}(n-j+1))\cdot \sigma_w\equiv\sigma_{u}  \pmod{M_{\ll w}}  
    \end{equation*}
    for some $u\in P(w)$.
    This let us conclude that for nonnegative integers $d_u$'s,
    $$\sum\left(w_{k-1}(l_{k-1}) \cdots w_{n-j+2}(l_{n-j+2})w_{n-j+1}(n-j+1)\right)\cdot\sigma_w \equiv \sum_{u\in P(w)} d_u\sigma_u \pmod{M_{\ll w}}
    \,,$$
     where the sum is over the sequences ${n-j+1\geq l_{n-j+2}\geq\cdots\geq l_{k-1}}$.
\end{proof}

\section{Partner Permutations}\label{sec:new}

In understanding $M_{\ll w}$ for $w\in \Gh$ of type ii) or iii), it is very useful to consider the behavior of a certain permutation, called the partner of $w$. In this section we define the partner permutations and investigate the properties of them in the relations to the class $\sigma_w$.  
\begin{definition}[Partner permutation]\label{def:partner} 
Let $w$ be an $h$-admissible permutation of type ii) or iii). Let $j=j_w$ and $x=x_w$. Then we define a permutation  $\underline{w}=s_{x, x+2}w'$ for $w'\coloneqq u^x w$, which we call  the \emph{partner permutation of $w$}, where 
\[
    u^x=\begin{cases}
        s_{x+2}\cdots s_{x+n-j} &\text{ if }n-j>1, \text{ and}\\
        id &\text{ if }n-j=1.
    \end{cases}
\]
\end{definition}

\begin{remark}\label{rmk:partner}
Let $w$ be an $h$-admissible permutation $w$ of type ii) or iii).
\begin{enumerate}
    \item When $n-j>1$, $u^{x}$ is the cycle permutation $(x+2, x+3, \dots , x+n-j,  x+n-j+1)$.
    \item Since $j<n$, we have $(w_{m+j}, w_{m+j+1}, \cdots ,w_{m+n}) = (x, x+1, \cdots, x+n-j)$. Then the permutation $w'=u^xw$ satisfies that $((w')^{-1})_{x+2}=(w^{-1})_{x+n-j+1}<m+j$ and
    $$(w'_{m+j}, w'_{m+j+1},w'_{m+j+2}, \cdots ,w'_{m+n}) = (x, x+1, x+3, \cdots, x+n-j+1).$$
    \item The partner permutation $\underline{w}$ is not necessarily  $h$-admissible, and $\bb(\underline{w})\neq (k)$. 
\end{enumerate}
\end{remark}

\begin{example}\label{ex:underbar}
\begin{enumerate}

\item For $w=\cblock{6\, 7\, 5\, 4} 1\, 2\, 3$ in the second example of Example~\ref{ex:(k)}, 
$\bb(w)=(7)$ and  $w$ is of type ii) with $j=1$. Here, $x=1$, $u^x=(3,4)=s_3$, $w'=s_3 w= \cblock{6\,7\,5\,3} 1\,2\,4\in P(w)$ and $\underline{w}=s_{1, 3}w'= \cblock{6\,7\,5\,1} 3\,2\,4 < w'$ in the Bruhat order. \\
Note that $\underline{w}\not\in \Gh$; $\widetilde{\underline{w}}=\cblock{ 6\,7\,5\,3} 4\,1\,2$ and $j_{\underline{w}}= j_{\widetilde{\underline{w}}}=2=j_w+1$. Moreover $\bb(\underline{w})=(5,2)$.

\item For $w=\cblock{5\, 7\, 6\, 1} 2\, 3\, 4$ in the third example of Example~\ref{ex:(k)}, 
$\bb(w)=(7)$ and  $w$ is of type  iii) with $x=1$. Here $u^x=(3, 4, 5)=s_3s_4$, $w'=(3, 4, 5) w= \cblock{3\,7\,6\,1} 2\,4\,5\in P(w)$ and $\underline{w}=s_{1, 3}w'=\cblock{1\,7\,6\,3}2\,4\,5<w'$ in the Bruhat order. Note that $\bb(\underline{w})=(4,3)$.

\item If $h=h_{3,4}$ and $w=\cblock{1\,7\,2}3\,4\,5\,6\,\in\Gh$,
 then $\bb(w)=(7)$ and $w$ is of type iii).
In this case $x=2$, $u^x=(4, 5, 6,7)=s_4s_5s_6$, $w'=u^x w=\cblock{1\,4\,2} 3\, 5\,6\,7\in P(w)$ and $\underline{w}=s_{2, 4}w'= \cblock{1\,2\,4}3\,5\,6\,7<w'$ in the Bruhat order. 
Note also that $w'\in P(w)$, $\bb(\underline{w})=(3, 4)$ and $s_{3} \underline{w}\in\Gh$.
\end{enumerate} 
\end{example}

Let $w\in\Gh$ be a permutation of type ii) or iii). 
Setting $(j,x)\coloneqq (j_w,x_w)$, we make a table of values of permutations $w$, $w'=u^x w$, $s_{x+1}w'$, $\underline{w}$, $s_{x+1}\underline{w}$, and $\widetilde{\underline{w}}$ at positions $p\coloneqq (\underline{w}^{-1})_x$, $m+j,m+j+1,\dots,m+n$. Note that if $w$ is of type ii), then $x=1$ and $p=m+j-1\geq m$; if $w$ is of type iii), then $j=0$ and $p<m$. This table will be used repeatedly when we verify properties of the partner permutation.
    \begin{table}[ht]
    \centering
    \[\begin{array}{c|ccccccccc}
    \hline
        &  p & m+j & m+j+1 & m+j+2 & \cdots & m+n\\
    \hline
    w & { x+n-j+1} & x & x+1&x+2&\cdots & x+n-j\\
    w' &   x+2  & x & x+1 & x+3 &  \cdots & x+n-j+1\\
     s_{x+1}w'              &   x+1  & x & x+2 & x+3 &  \cdots & x+n-j+1\\
     \underline{w} & x & x+2 & x+1 & x+3 & \cdots & x+n-j+1\\
     s_{x+1}\underline{w} & x & x+1 & x+2& x+3& \cdots & x+n-j+1\\
    \widetilde{\underline{w}} & y-1 & y & 1 & 2 & \cdots & x+n-j\\
    \hline
    \end{array}\]
    \caption{Values of the relevant permutations at selected positions.}
\label{tab:permutation-values}
\end{table}

\begin{lemma} \label{lem:partner} Let $h=h_{m,n}$ with $n>0$,  
and  $w\in \Gh$ be an $h$-admissible permutation of type ii) or iii).
\begin{enumerate}
\item $w' \dasharrow s_{x+1}w'$, and both $w'$ and $s_{x+1}w'$ are in $P(w)$.
\item $\ell_h(w)=\ell_h(w')=\ell_h(\underline{w})$,  $\ell(w')=\ell(\underline{w})+1$, and $\underline{w}<w'$ in the Bruhat order.
\item $\underline{w} \rightarrow s_{x+1} \underline{w}$\, and $s_{x+1} \underline{w}\in\Gh$.
\item If $u\in \mathcal A_{s_{x+1}, \underline{w}}$, then $\underline{w}< u$ and $s_{x+1}\underline{w}< s_{x+1}u$.
\item $\bb(\underline{w})=(m+j_w, n-j_w)$.
\item $\underline{w}\not\in \Gh$ and $j_{\underline{w}}=j_{\widetilde{\underline{w}}}=j+1$.
\end{enumerate}
\end{lemma}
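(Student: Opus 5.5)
The six items are mostly direct combinatorial checks, and I will read them off Table~\ref{tab:permutation-values} after treating types ii) and iii) uniformly. Outside the positions $p, m+j,\dots,m+n$, the permutations $w$, $w'$, $\underline{w}$ and $s_{x+1}w'$, $s_{x+1}\underline{w}$ agree except for the value relabelings forced by $u^x$ and $s_{x,x+2}$. Since $u^x$ cyclically permutes $\{x+2,\dots,x+n-j+1\}$, it changes only the position $p'=(w^{-1})_{x+n-j+1}$ among positions before $m+j$, where it places $x+2$; this position is $p$ in the notation of the table.

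For (1), I compare relative orders along edges of $G_h$. The only edge on which $w'$ could differ from $w$ is an edge incident to $p$. The value at $p$ changes from $x+n-j+1$, which is larger than all of $x,\dots,x+n-j$, to $x+2$. I will check that the tail values $x+1,x+3,\dots$ are not $h$-adjacent to $p$ in a way that flips an order. When $p<m$, the neighbours of $p$ are $[m]$, and $x+2$ versus the others keeps the order because the values in $[m]$ different from the tail are unchanged. When $p=m+j-1$ (type ii), the only relevant neighbour is $m+j$, with value $x$ on both sides. Proposition~\ref{prop:representative}(2), applied along the cycle $s_{x+2}\cdots s_{x+n-j}$ with $\dashrightarrow$ at each step, gives $w'\in P(w)$. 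For $s_{x+1}w'$, the values $x+1$ and $x+2$ sit at positions $m+j+1$ and $p$, which are not $h$-adjacent, since $p<m+j$ and $h(p)\le m+j$ or $h(p)=p+1=m+j$. So $w'\dashrightarrow s_{x+1}w'$, and Proposition~\ref{prop:representative}(1) puts $s_{x+1}w'$ in $P(w)$.

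For (2) and (3), I use that $\underline{w}=s_{x,x+2}w'$ swaps the values $x$ and $x+2$ at positions $p<m+j$. This lowers $\ell$ by exactly one, because the value $x+1$ lies to the right of both positions. Hence $\underline{w}<w'$. Positions $p$ and $m+j$ are not $h$-adjacent except possibly in type ii), where $p=m+j-1$. I will verify that $\ell_h$ is preserved by listing the $h$-edges meeting positions $p$ and $m+j$. For (3), $s_{x+1}\underline{w}$ places $x,x+1$ at $p$ and $m+j$, and $x+2$ at $m+j+1$. Its $h$-admissibility then follows the pattern of Lemma~\ref{lem:block_generator}, checking $w^{-1}(y+1)\le h(w^{-1}(y))$ around these values. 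The arrow $\underline{w}\to s_{x+1}\underline{w}$ follows because positions $m+j$ and $m+j+1$ are $h$-adjacent.

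For (5) and (6), I apply Definition~\ref{def:lambda} directly to $\underline{w}$. The descent $x+2>x+1$ at $m+j\to m+j+1$ receives a solid bar, since no bar precedes $m+j$: the preceding entry is $x$ or smaller, or $\underline{w}_m$ is not the block minimum. Afterwards the tail increases, so $\bb(\underline{w})=(m+j,n-j)$. The claim $\underline{w}\notin\Gh$ holds because $x+1$ at $m+j+1$ has successor $x+2$ at $m+j$, which is fine, while the predecessor value $x$ at $p$ needs $x+1$ within $h(p)<m+j+1$. Finally, $j_{\underline{w}}=j+1$ comes from Remark (1) after Definition~\ref{def:index}, since $1$ sits at position $m+j+1$ in $\widetilde{\underline{w}}$.

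I expect (4) to be the main obstacle. If $u\in\mathcal A_{s_{x+1},\underline{w}}$, then $u\ge \underline{w}$ by Proposition~\ref{prop:computing the value}(1), and $u\ne\underline{w}$ by definition. For $s_{x+1}\underline{w}<s_{x+1}u$, I use $u\dashrightarrow s_{x+1}u$, so $u>s_{x+1}u$. Then $x+1\notin D_L(s_{x+1}u)$ while $x+1\in D_L(\underline{w})$, and the lifting property (Proposition~\ref{prop:lifting}), applied to $s_{x+1}\underline{w}<\underline{w}$ and $u$, yields $s_{x+1}\underline{w}\le s_{x+1}u$. Equality would force $u=\underline{w}$, so the inequality is strict.
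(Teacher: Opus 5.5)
Your route matches the paper's. You read everything off Table~\ref{tab:permutation-values}, and you get (4) from the lifting property applied to $s_{x+1}\underline{w}<\underline{w}\le u$ with $x+1\in D_L(u)\setminus D_L(s_{x+1}\underline{w})$. But the one nontrivial step of (2), which you defer, rests on a false claim.

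You write that positions $p$ and $m+j$ are not $h$-adjacent except possibly in type ii). They are $h$-adjacent in every case. In type iii) one has $j=0$, $p<m$ and $h(p)=m=m+j$. In type ii) one has $p=m+j-1\ge m$ and $h(p)=p+1=m+j$. Your own description in (1) of the neighbours of $p$ as $[m]$ already contradicts the claim.

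This adjacency is exactly what makes $\ell_h$ invariant. Swapping the values $x$ and $x+2$ at $p$ and $m+j$ can reverse an $h$-edge only in two ways: the edge is $(p,m+j)$ itself, or it joins $p$ or $m+j$ to position $m+j+1$, which holds $x+1$. The pair $(p,m+j+1)$ is not an edge, since $h(p)=m+j$. So the $h$-inversion $(p,m+j)$ of $w'$ disappears, and $\underline{w}$ gains the $h$-inversion $(m+j,m+j+1)$. Writing $L(v)$ for the set of $h$-inversions of $v$, this says $L(w')\setminus\{(p,m+j)\}=L(\underline{w})\setminus\{(m+j,m+j+1)\}$, which is the paper's argument. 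The equality $\ell_h(w)=\ell_h(w')$ is immediate from $w'\in P(w)$. If $(p,m)$ really were not an edge in type iii), your listing would give $\ell_h(\underline{w})=\ell_h(w')+1$ instead.

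The same adjacency is what (3) needs, so ``following the pattern of Lemma~\ref{lem:block_generator}'' should be replaced by an explicit check.
\begin{itemize}
\item In $s_{x+1}\underline{w}$ the values $x$ and $x+1$ sit at $p$ and $m+j$, so admissibility at $x$ is precisely $m+j\le h(p)$.
\item In type iii), admissibility at $x-1$ holds because $w^{-1}(x-1)<m$, so $h(w^{-1}(x-1))=m>p$.
\item The value $x+n-j+1$ sits at position $k$, where $h(k)=k$.
\item Every remaining pair of consecutive values occupies either the same positions as in $w$, or consecutive positions of the increasing run $m+j,\dots,m+n$.
\end{itemize}

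Three smaller points:
\begin{itemize}
\item In (5), for type ii) with $j\ge 2$, you must also note that the bars before $\underline{w}_{m+1},\dots,\underline{w}_{m+j-1}$ are dashed. This holds because $\underline{w}_m=n+1$ is the minimum of the first $m$ entries.
\item In (6), applying the remark after Definition~\ref{def:index} to $\widetilde{\underline{w}}$ uses $\bb(\widetilde{\underline{w}})=\bb(\underline{w})\neq(k)$. This holds because $\bb$ depends only on comparisons along $h$-edges.
\item In (1), the membership $w'\in P(w)$ follows from your edge comparison, or from Proposition~\ref{prop:representative}(1) applied step by step. It does not follow from part (2) of that proposition, which goes in the opposite direction.
\end{itemize}
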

\begin{proof} 
\begin{description}[leftmargin=0pt, labelsep=0.5em, itemindent=0pt, itemsep=0.5em]
    \item[(1)] It is clear that $w' \dasharrow s_{x+1}w'$. Note that $u^x$ keeps the relative orders among the last $n-j+1$ numbers and the relative orders among the first $m+j$ numbers in $w$, see Remark~\ref{rmk:partner}(2). Accordingly, $w'\in P(w)$. Moreover, since $w' \dasharrow s_{x+1}w'$, Proposition~\ref{prop:representative}(1) implies that  $w'$ and $s_{x+1}w'$ are in $P(w)$.
\item[(2)] From (1), we have $\ell_h(w)=\ell_h(w')$. Note that for $v\in \Sn{k}$, if we let $L(v)\coloneqq \{(i,i')\mid i<i'\leq h(i)\text{ and }\,v(i)>v(i')\}$, then $\ell_h(v)=|L(v)|$. Now let $w'_a=x+2$ and $w'_b=x$. Then $w'_{b+1}=x+1$, $(a,b)\in L(w')$, and $(b,b+1)\not\in L(w')$. On the other hand, $(a,b)\not\in L(\underline{w})$, $(b,b+1)\in L(\underline{w})$, and $L(w')\setminus\{(a,b)\}=L(\underline{w})\setminus\{(b,b+1)\}$. Therefore, $\ell_h(\underline{w})=\ell_h(w')$ and $\underline{w}<w'$ in the Bruhat order.

\item[(3)] It is clear that $\underline{w}\to s_{x+1}\underline{w}$. For simplicity, let $v=s_{x+1}\underline{w}$. Then $(v^{-1})_x=(w^{-1})_{x+n-j+1}$ and $$(v_{m+j},v_{m+j+1},\dots,v_{m+n})=(x+1,x+2,\dots,x+n-j+1),$$ which implies that $v\in\Gh$. 

\item[(4)] Since $\underline{w}\to s_{x+1}\underline{w}$, if $u\in \mathcal{A}_{s_i,w}$, then $u>\underline{w}$, so $u>s_{x+1}\underline{w}$. Since $x+1\in D_L(u)-D_L(s_{x+1}w)$, by the lifting property of the Bruhat order (Proposition~\ref{prop:lifting}), we get $s_{x+1}\underline{w}<s_{x+1}u$.
\item[(5)] This is clear from the definition of $\underline{w}$.
\item[(6)] From Table~\ref{tab:permutation-values}, we have
    \[(\underline{w}^{-1})_{\underline{w}_p+1}=(\underline{w}^{-1})_{x+1}=m+j+1>m+j=h(p),\]
    so $\underline{w}\not\in \Gh$. Furthermore, $j_{\underline{w}}=j_{\widetilde{\underline{w}}}=j+1$.
\end{description}
\end{proof}

Note that Lemma~\ref{lem:partner} allows us to apply several useful results, including Theorem~\ref{thm:separation} and Lemma~\ref{lem:Atil}, to the partner permutation $\underline{w}$ when $w\in\Gh$ of type ii) or iii). Moreover, $\underline{w}\ll w$.

\begin{proposition}\label{prop:P(w)_classes}
    Let $h=h_{m,n}$ with $n>0$, and  $w\in \Gh$ be an $h$-admissible permutation of type ii) or iii). Let $x\coloneqq x_w$. Then,  
    for any $u\in P(w)$, we have $$\sigma_w-\sigma_u \in \mathbb{C}\Sn{k}(\sigma_{w'}-\sigma_{s_{x+1} w'}).$$
\end{proposition}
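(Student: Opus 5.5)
The plan is to reduce everything to translates of the single element
\[
D\coloneqq\sigma_{w'}-\sigma_{s_{x+1}w'}=(1-s_{x+1})\cdot\sigma_{w'},
\]
where the second equality holds because $w'\dasharrow s_{x+1}w'$ (Lemma~\ref{lem:partner}(1)) and Proposition~\ref{prop:simple action}(1) applies. Write $N\coloneqq\mathbb{C}\Sn{k}(D)$. The claim is that every "elementary difference" $\sigma_v-\sigma_{s_av}$, with $v\dasharrow s_av$ and both $v,s_av\in P(w)$, lies in $N$.

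\textbf{Step 1: reduce to elementary differences.} By Proposition~\ref{prop:representative}(2), every $u\in P(w)$ is reached from $w$ by a chain $w\dasharrow s_{a_1}w\dasharrow\cdots\dasharrow u$ inside $P(w)$. The telescoping sum
\[
\sigma_w-\sigma_u=\sum_r\bigl(\sigma_{v_{r-1}}-\sigma_{v_r}\bigr),\qquad v_r=s_{a_r}v_{r-1},
\]
then writes $\sigma_w-\sigma_u$ as a sum of elementary differences, so it suffices to place each of these in $N$.

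\textbf{Step 2: write classes in $P(w)$ as translates of $\sigma_{w'}$.} Since $w'\in P(w)$, the same proposition (together with the support description in Proposition~\ref{prop:support}(2)) gives $\sigma_v=(vw'^{-1})\cdot\sigma_{w'}$ for every $v\in P(w)$. Fix an elementary step $v\dasharrow s_av$ and put $g=vw'^{-1}$. Then
\[
\sigma_v-\sigma_{s_av}=(1-s_a)g\cdot\sigma_{w'}=g(1-t)\cdot\sigma_{w'},
\qquad t\coloneqq g^{-1}s_ag .
\]
Here $t$ is the transposition of the two values $w'_q,w'_r$, where $q,r$ are the positions of $a$ and $a+1$ in $v$. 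So it is enough to show $(1-t)\cdot\sigma_{w'}\in N$ for each transposition $t$ that arises this way.

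\textbf{Step 3: locate the positions $q,r$.} I would use the explicit shape of $P(w)$ from Lemma~\ref{lem:P(w)} and Table~\ref{tab:permutation-values}. For $v\in P(w)$, position $m+j$ carries $x$, positions $m+j+1,\dots,k$ are increasing, and the positions $<m+j$ keep their relative order. A dashed step $v\dasharrow s_av$ swaps two values at positions not joined by an edge of $G_h$ while keeping $s_av\in P(w)$. The two blocks are internally ordered, so the swap cannot occur inside one block; hence one of $q,r$ is a position $<m+j$ and the other is a position $>m+j$.

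\textbf{Step 4: the main obstacle, reducing $t$ to $s_{x+1}$.} In $w'$ the entries at positions $>m+j$ are $x+1,x+3,\dots,x+n-j+1$, so $t$ need not equal $s_{x+1}$. I would handle this by induction along the last block. The base case is the step pairing the position of $x+2$ with position $m+j+1$, which is exactly $D$. For the inductive step, I would show that a transposition $t$ pairing a later last-block position with a first-block position can be conjugated back to the base case by a product of simple reflections $s_b$ satisfying $s_b\cdot\sigma_{w'}=\sigma_{s_bw'}$ along dashed arrows inside $P(w)$. This uses the cycle $u^x$ from Definition~\ref{def:partner} and the coset description of Proposition~\ref{prop:coset representative}. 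The point is that $(1-t)\cdot\sigma_{w'}$ should equal $h(1-s_{x+1})\cdot\sigma_{w'}$ for a suitable $h$ with $h\cdot\sigma_{w'}=\sigma_{hw'}$, possibly after adding and subtracting intermediate classes of $P(w)$ and telescoping again. Checking that the required reflections really are dashed moves that stay inside $P(w)$, and do not produce the correction terms $\tau_u$ of Proposition~\ref{prop:simple action}(3), is the delicate combinatorial verification. I would do it using Table~\ref{tab:permutation-values}, treating type ii) (where $p\ge m$) and type iii) (where $p<m$) separately.
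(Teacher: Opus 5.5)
Your Steps 1--3 are sound. The telescoping along a dashed chain inside $P(w)$ is fine, and so is the identity $\sigma_v=(vw'^{-1})\cdot\sigma_{w'}$ for $v\in P(w)$. The observation that a dashed step inside $P(w)$ swaps a position $q<m+j$ with a position $r>m+j$ is also correct. The gap is Step 4. It carries the entire content of the proposition, it is left as a plan, and the mechanism you propose there cannot work.

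Writing $t=hs_{x+1}h^{-1}$, one has $(1-t)\cdot\sigma_{w'}=h(1-s_{x+1})h^{-1}\cdot\sigma_{w'}$. This is the translate $h\cdot D$ only if $h$ \emph{fixes} $\sigma_{w'}$. If instead $h^{-1}$ moves $\sigma_{w'}$ along dashed arrows, you get $h\cdot(1-s_{x+1})\cdot\sigma_{h^{-1}w'}$. That is a translate of another difference of exactly the kind you are trying to control, so the argument is circular.

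In fact dashed relations alone can never suffice. Suppose $g$ moves both $\sigma_{w'}$ and $\sigma_{s_{x+1}w'}$ along dashed chains. Then $g\cdot D=\sigma_y-\sigma_{y'}$ with $y'=y\,s_{p,m+j+1}$, since $p$ and $m+j+1$ are the positions of $x+2$ and $x+1$ in $w'$. Such pairs form a matching on $P(w)$. For $n-j\ge 2$ the permutation $w$ is unmatched, because swapping positions $p$ and $m+j+1$ in $w$ puts $x+n-j+1$ ahead of $x+2$ in the increasing last block. So no combination of such differences has a nonzero coefficient on $\sigma_w$, while $\sigma_w-\sigma_u$ does.

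The missing ingredient is stabilizer information, namely Proposition~\ref{prop:simple action}(2) as packaged in Lemma~\ref{lem:stabilizer}. The paper uses it in two places:
\begin{enumerate}
\item The reflections $s_{x+2},\dots,s_{x+n-j}$ fix $\sigma_{s_{x+1}w'}$, because its last block $x+2,\dots,x+n-j+1$ is consecutive and increasing. They move $\sigma_{w'}$ along dashed arrows. Hence $(s_{x+n-j}\cdots s_{x+2})\cdot D=\sigma_w-\sigma_{s_{x+1}w'}$ and $(s_{x+n-j-1}\cdots s_{x+2})\cdot D=\sigma_{s_{x+n-j}w}-\sigma_{s_{x+1}w'}$, which gives $\sigma_w-\sigma_{s_{x+n-j}w}\in N$.
\item The paper then inducts over the parametrization of $P(w)$ in Lemma~\ref{lem:P(w)}. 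Every $s_y$ with $y\neq x+n-j$ fixes $\sigma_w$, so $\sigma_w-\sigma_u=s_y\cdot(\sigma_w-\sigma_{s_yu})$. The case $y=x+n-j$ is corrected using $\sigma_w-\sigma_{s_{x+n-j}w}\in N$.
\end{enumerate}

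Your conjugation idea can be rescued with the same input. Since $\sigma_{w'}=u^x\cdot\sigma_w$, Lemma~\ref{lem:stabilizer} yields a Young subgroup $\Sn{L}\times\Sn{F}\subseteq\Stab(\sigma_{w'})$. For $x=1$, $L$ and $F$ are the sets of values of $w'$ in positions $\ge m+j$ and $<m+j$. For $x>1$, $L=\{x-1,x,x+1,x+3,\dots,x+n+1\}$ and $F=\{x+2,x+n+2,\dots,k\}$. In both cases $x+1\in L$ and $x+2\in F$, and each $t$ from your Step 3 swaps an element of $F$ with an element of $L$. Therefore $t=hs_{x+1}h^{-1}$ with $h\in\Stab(\sigma_{w'})$, and $(1-t)\cdot\sigma_{w'}=h\cdot D\in N$. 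As written, however, this step is missing, and the condition you impose on $h$ is the opposite of the one needed.
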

\begin{proof}

From Table~\ref{tab:permutation-values}, we obtain 
    $$w\dasharrow s_{x+n-j}w \dasharrow \cdots \dasharrow (s_{x+2}\cdots s_{x+n-j})w=w'$$
    and $s_{x+2},s_{x+3},\dots,s_{x+n-j}\in \Stab(\sigma_{s_{x+1}w'}).$ Then we have 
    \begin{equation*}
        \begin{split}
            (s_{x+n-j-1}\cdots s_{x+2})\cdot(\sigma_{w'}-\sigma_{s_{x+1} w'})&=\sigma_{(s_{x+n-j-1}\cdots s_{x+2})w'}-\sigma_{s_{x+1} w'}\\
    &=\sigma_{s_{x+n-j}w}-\sigma_{s_{x+1}w'}
    \in \mathbb{C}\Sn{k}(\sigma_{w'}-\sigma_{s_{x+1} w'})
        \end{split}
    \end{equation*}
    and
    \begin{equation*}
        \begin{split}
            (s_{x+n-j}\cdots s_{x+2})\cdot(\sigma_{w'}-\sigma_{s_{x+1} w'})&=\sigma_{(s_{x+n-j}\cdots s_{x+2})w'}-\sigma_{s_{x+1} w'}\\
    &=\sigma_w-\sigma_{s_{x+1}w'}\in \mathbb{C}\Sn{k}(\sigma_{w'}-\sigma_{s_{x+1} w'})\,,
        \end{split}
    \end{equation*}
    which implies that 
    \begin{equation}\label{eq1}
        \sigma_w-\sigma_{s_{x+n-j}w}\in \mathbb{C}\Sn{k}(\sigma_{w'}-\sigma_{s_{x+1} w'}).
    \end{equation}
    
    By Lemma~\ref{lem:P(w)}, 
    we have 
  \[P(w)=\{w_{k-1}(l_{k-1})w_{k-2}(l_{k-2})\cdots w_{n-j+x}(l_{n-j+x})w\,|\,n-j\geq l_{n-j+x}\geq l_{n-j+x+1}\geq \cdots \geq l_{k-1}\geq 0 \},\]
    so each $u\in P(w)$ corresponds to a decreasing sequence $n-j\geq l_{n-j+x}\geq l_{n-j+x+1}\geq \cdots \geq l_{k-1}\geq 0$. We use an induction on the sum $l_u\coloneqq \sum_{i=n-j+x}^{k-1} l_i$ of $u$. If $l_u=0$ then $u=w$ and the assertion is trivial.

    If $l_u=1$ then $u=s_{x+n-j}w$, so it follows from~\eqref{eq1} that $\sigma_w-\sigma_u\in\mathbb{C}\Sn{k}(\sigma_{w'}-\sigma_{s_{x+1} w'}).$

    Now we assume that the assertion holds for all $v$ with $l_v<l$ where $l>0$. Let $u\in P(w)$ be a permutation with $l_u=l$. Then there is a simple transposition $s_y$, { $y\geq x+1$,} such that $s_yu\in P(w)$,  { $s_y u\dasharrow  u$}, and $l_{s_yu}=l_u-1$ by Lemma~\ref{lem:P(w)}.  
    By the induction hypothesis, $\sigma_{w}-\sigma_{s_y u}$ is in $\mathbb{C}\Sn{k}(\sigma_{w'}-\sigma_{s_{x+1} w'})$. Since $s_y\in \Stab(\sigma_w)$ when $y\neq x+n-j$, by Lemma~\ref{lem:stabilizer},
    whereas
$w\dasharrow s_yw$ when $y=x+n-j$, we obtain 
    $$s_y\cdot(\sigma_{w}-\sigma_{s_y u})=\begin{cases}
    			\sigma_{w}-\sigma_{u} & \text{if $y\neq x+n-j$,}\\
                 \sigma_{s_{x+n-j}w}-\sigma_{u} & \text{if $y=x+ n-j$\,}.
    		 \end{cases}$$		
    The case $y\neq x+n-j$ follows immediately. If $y=x+n-j$, then the result follows by subtracting \eqref{eq1} from the second term above. Therefore, $\sigma_{w}-\sigma_{u}\in \mathbb{C}\Sn{k}(\sigma_{w'}-\sigma_{s_{x+1} w'})$.
\end{proof}

In the following, we prepare two lemmas that will be used in the proof
of Proposition~\ref{prop:partner1} for the case where $w$ is of type iii).

\begin{lemma}\label{lem:type3_step1_2}
    Let $h=h_{m,n}$ with $n>0$, and  $w\in \Gh$ be of type iii) with $x\coloneqq x_w$. For $v$ with $\ell_h(v)=\ell_h(\underline{w})$, if $v>\underline{w}$ or $v>s_{x+1}\underline{w}$, then $v\ll w$ or $v\in \{w',s_{x+1}w'\}$.
\end{lemma}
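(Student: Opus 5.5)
Here $w$ is of type iii), so $j=j_w=0$, $x=x_w$, and $(w_m,\dots,w_k)=(x,x+1,\dots,x+n)$. By Table~\ref{tab:permutation-values}, $\underline{w}$ and $s_{x+1}\underline{w}$ agree with $w'$ outside the positions $p<m$ and $m,m+1$. The plan is to use Proposition~\ref{prop:Bruhat}(3) to compare the multiset of tail values $\{v_{m},\dots,v_k\}\up$ with that of $\underline{w}$ or $s_{x+1}\underline{w}$. We then split according to whether $\bb(\widetilde v)=(k)$ and, if so, according to the type of $\widetilde v$.

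\textbf{Tail values.} Let $z=\underline{w}$ or $z=s_{x+1}\underline{w}$. Then $\{z_m,\dots,z_k\}=\{x+1,x+2,\dots,x+n+1\}\setminus\{x+2\}\cup\{x+1\}$; more precisely it equals $\{x+1,x+2,x+3,\dots,x+n+1\}$ read from the table, with $z_m\in\{x+1,x+2\}$. Since $v>z$, Proposition~\ref{prop:Bruhat}(3) gives $\{v_m,\dots,v_k\}\up\le\{z_m,\dots,z_k\}\up$ entrywise. In addition, each smaller tail window satisfies $\{v_{k-t},\dots,v_k\}\up\le$ the corresponding window of $z$, which is $\{x+n+1-t,\dots,x+n+1\}$.

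\textbf{Case analysis on $\widetilde v$.} Let $\widetilde v$ be the $h$-admissible representative of $v$. If $j_{\widetilde v}>0$, then $v\ll w$ by Definition~\ref{def:order}, because $j_w=0$. So assume $j_{\widetilde v}=0$, which means $\widetilde v$ is of type iii) or iv) with $x_{\widetilde v}=x_v$. If $x_v<x$, again $v\ll w$. Hence the remaining situation is $j_v=0$ and $x_v\ge x$. By Lemma~\ref{lem:u}(1) we then have $v_m=x_v$, and $v_m<v_{m+1}<\dots<v_k$ since $v\in P(\widetilde v)$ preserves the relative order along the path edges. The tail condition above gives $v_m\le z_m\le x+2$.

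\textbf{Remaining case.} We now pin down $v$ completely when $v_m\ge x$, using the increasing tail together with the tail bounds $v_{k-t}\le x+n+1-t$.

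If $v_m=x+1$ or $x+2$, the tail $\{v_m,\dots,v_k\}$ is squeezed between bounds. Consider $\{x+1,\dots,x+n+1\}$ with one value removed so that it still dominates $z$. This forces the tail to coincide with that of $s_{x+1}w'$ or $w'$. We must then use $\ell_h(v)=\ell_h(\underline{w})$, together with the fact that the head entries in positions $<m$ form a fixed set and, as I expect, that $v_p$ is forced. The aim is to conclude $v\in\{w',s_{x+1}w'\}$. In this step I would argue via $h$-length: the head block $[m-1]$ contributes all its inversions to $\ell_h$. Because $\ell_h(v)=\ell_h(\underline w)$ and $v>z$, the head word of $v$ must have the same inversion number as that of $z$ and dominate it in Bruhat order on prefixes, and this forces equality of the heads up to the single swap at position $p$.

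If instead $v_m=x$, then the tail is a window $\{x,\dots\}$ with a tail value $x$. Comparing with $z$'s tail, whose smallest entry is $x+1$, violates nothing, so this case needs separate treatment. Here $\widetilde v$ has $x_v=x$, and we must show $v\in P(w)$. Then the $h$-length and Bruhat constraints would place $v$ among $\{w',s_{x+1}w'\}$ or contradict $\ell_h(v)=\ell_h(\underline w)$ by counting the inversion between positions $m-1$ and $m$.

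\textbf{Main obstacle.} I expect the main obstacle to be this last step, where $x_v=x$ or $x+1$: one has to pin down the head entries exactly using only the equality of $h$-lengths and the Bruhat comparison. I would first try to show that the head set $\{v_1,\dots,v_{m-1}\}$ is forced, and then that among the permutations of $K_m$'s vertices only the two expected arrangements have the correct inversion count while staying above $z$.
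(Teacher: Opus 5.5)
There is a genuine gap, and it cannot be closed: the statement is false as written.

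Your opening reduction agrees with the paper's. If $v\not\ll w$, then $j_v=0$, $x_v\ge x$, and $v_m=x_v<v_{m+1}<\dots<v_k$. Comparing the last $n+1$ positions with those of $\underline{w}$ or $s_{x+1}\underline{w}$ gives $v_m\le x+1$, not merely $v_m\le x+2$. So only $x_v\in\{x,x+1\}$ remains. Both of the steps you leave open then fail.

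\textbf{The case $v_m=x+1$.} The tail is forced to be $(x+1,x+2,\dots,x+n+1)$, which is the tail of $s_{x+1}\underline{w}$. It is not the tail of $w'$ or $s_{x+1}w'$, since both of those have $x$ in position $m$. The head set is forced to be $[x]\cup[x+n+2,k]$. Because $\ell_h(s_{x+1}\underline{w})=\ell_h(\underline{w})-1$, the hypothesis $\ell_h(v)=\ell_h(\underline w)$ only says that the head of $v$ has one more inversion than that of $s_{x+1}\underline{w}$. That forces nothing.

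Concretely, let $m=4$, $n=1$, $h=(4,4,4,5,5)$ and $w=3\,4\,5\,1\,2$, which is of type iii) with $x=1$. Then $w'=w$, $\underline{w}=1\,4\,5\,3\,2$ and $s_2\underline{w}=1\,4\,5\,2\,3$. Take $v=1\,5\,4\,2\,3$, obtained by swapping positions $2$ and $3$ of $s_2\underline w$. It satisfies $v>_h s_2\underline{w}$ and $\ell_h(v)=3=\ell_h(\underline{w})$. Yet $v\in\Gh$ is of type iii) with $(j_v,x_v)=(0,2)$, so $v\not\ll w$, and $v\notin\{w',s_2w'\}$.

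\textbf{The case $v_m=x$.} Your plan is to show $v\in P(w)$ and then pin $v$ down. This already fails for $v=w$ whenever $n\ge 2$. We have $w=s_{x+n}\cdots s_{x+2}w'$ with the length increasing at each step, so $w>w'>\underline{w}$. By Lemma~\ref{lem:partner}(2), $\ell_h(w)=\ell_h(w')=\ell_h(\underline{w})$. But $w\not\ll w$, and $w\notin\{w',s_{x+1}w'\}$, since these three permutations differ at position $p$. For example, take $m=3$, $n=2$, $w=4\,5\,1\,2\,3$, $w'=3\,5\,1\,2\,4$, $\underline{w}=1\,5\,3\,2\,4$.

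\textbf{The paper's proof.} It breaks at the same two places, so there is no valid argument to compare against.
\begin{itemize}
\item In the case $x_v>x$ it asserts $v[m-1]\cap\{x+1,\dots,x+n+1\}\neq\emptyset$ ``because $v\neq s_{x+1}\underline{w}$''. The first example contradicts this: there $v[3]=(s_2\underline w)[3]=\{1,4,5\}$.
\item In subcases (c), where $y\ge x+3$, it asserts $\ell(v)=\ell(w')$. Equal $h$-lengths with increasing tails only equalize the inversions among positions $1,\dots,m$. The inversions between positions $\le m$ and $>m$ number $n(k-x-n-1)+(y-x-1)$, so in fact $\ell(v)-\ell(w')=y-x-2$. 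The second example has $y=x+n+1$.
\end{itemize}

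What is needed is a corrected statement. For its use in Proposition~\ref{prop:partner1}, you should keep the stronger information given by Lemma~\ref{lem:tau2}: $u\le_h v$ or $s_{x+1}u\le_h v$ for some $u\in\mathcal A_{s_{x+1},\underline w}$ with $u\dashrightarrow s_{x+1}u$. The weaker conditions $v>\underline w$ or $v>s_{x+1}\underline w$ are not enough. This stronger information handles the case $x_v=x+1$: combined with Lemma~\ref{lem:partner}(4), it forces $u$ (resp.\ $s_{x+1}u$) to have the same head and tail value sets as $v$. Hence it has an increasing tail, which contradicts $u\dashrightarrow s_{x+1}u$.
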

\begin{proof}
    Suppose that $v\not\ll w$. Then $j_v=j_w=0$ and $x_v\geq x_w=x
    $. If $x_v>x$, then by Lemma~\ref{lem:u}(1) we have
            $$[x]\subset v[m-1]\quad\text{ and }\quad x+1\leq x_v=v_m<v_{m+1}<v_{m+2}<\dots<v_{m+n}.$$
    On the other hand, the values of $\underline{w}$ and $s_{x+1}\underline{w}$ are given in the following table:
       \[\begin{array}{c|cccccccc}
        \hline
                               & m & m+1 & m+2 & \cdots & m+n\\
        \hline
         \underline{w}                & x+2 & x+1 & x+3 &  \cdots & x+n+1\\
        s_{x+1}\underline{w}                & x+1 & x+2 & x+3 &  \cdots & x+n+1\\
        \hline
        \end{array}\] 
    Note that $\underline{w}[m-1]=(s_{x+1}\underline{w})[m-1]=\{1,\dots,x\}\cup\{x+n+2,\dots,m+n\}$. However,
    $$v[m-1]\cap\{x+1,\dots,x+n+1\}\neq \emptyset$$ because $v\neq s_{x+1}\underline{w}$. Hence, $v[m-1]\up <\underline{w}[m-1]\up=(s_{x+1}\underline{w})[m-1]\up$, which implies that neither $v>\underline{w}$ nor $v>s_{x+1}\underline{w}$ in the Bruhat order. 
    Therefore, we only need to check when $x_v=x$.
    
    Now we assume $x_v=x$. Then, by Lemma~\ref{lem:u}(1) we have
    \begin{equation}\label{eq:v}
        [x-1]\subset v[m-1]\quad\text{ and }\quad x=v_m<v_{m+1}<v_{m+2}<\dots<v_{m+n}.
    \end{equation}
    Since both $\underline{w}_{m+n}$ and { $(s_{x+1}\underline{w})_{m+n}$} are not greater than $x+n+1$, if $v>\underline{w}$ or $v>s_{x+1}\underline{w}$, then $v_{m+n}\leq x+n+1$. By~\eqref{eq:v}, there exists a unique element $y$ such that $$\{v_{m+1},\dots,v_{m+n}\}=[x+1,x+n+1]-\{y\}.$$ 
    
    Now we divide the proof into two cases; $v>\underline{w}$ or $v>s_{x+1}\underline{w}$. In each case, we will show the following:
    \begin{itemize}[topsep=2pt, itemsep=0pt, parsep=0pt]
        \item If $v>\underline{w}$, then $y=x+2$ and $v=w'$.
        \item If $v>s_{x+1}\underline{w}$, then either $y=x+1$ and $v=s_{x+1}w'$, or $y=x+2$ and $v=w'$.
    \end{itemize}
    In each case, we prove the statement by dividing it into three subcases; $y=x+1$, $y=x+2$, or $y\geq x+3$. 

    We first consider the case $v>\underline{w}$.
    \textbf{(a)} If $y=x+1$, then $v[m]\up < \underline{w}[m]\up$, which is a contradiction.
    \textbf{(b)} If $y=x+2$, then $v_m\,v_{m+1}\,\dots\,v_{m+n}=w'_m\,w'_{m+1}\,\dots\,w'_{m+n}$. Since $\ell_h(v)= \ell_h(\underline{w})=\ell_h(w')$ by Lemma~\ref{lem:partner}(2), the number of inversions in $v_1\,v_2\,\dots\,v_m$ equals that of inversions in $w'_1\,w'_2\,\dots\,w'_m$. Accordingly, $\ell({ v})=\ell(w')$. By Lemma~\ref{lem:partner}(2), we have $\ell(w')=\ell(\underline{w})+1$. Hence $\ell(v)=\ell(\underline{w})+1$ while $\underline{w} < v$. We can conclude that $v=s_{i,j}\underline{w}$ for a transposition $s_{i,j}$, and the choice $(i,j)$ is uniquely determined; $(i, j)=(x, x+2)$ by comparing $v_m$ and $\underline{w}_m$.  This shows that if $y=x+2$, then $v=w'$.
    \textbf{(c)} If $y\geq x+3$, then the values of $v$ and $\underline{w}$ at positions $m, m+1,\dots,m+n$ are given as follows.
        \[\begin{array}{c|cccccccc}
\hline
                    & m & m+1 & m+2 & \cdots & i & i+1 & \cdots & m+n\\
\hline
  v                 & x & x+1 & x+2 & \cdots & y-1 & y+1 & \cdots & x+n+1\\
 \underline{w} & x+2 & x+1 & x+3 & \cdots & y & y+1 & \cdots & x+n+1\\
\hline
\end{array}\]
Since $\ell_h(v)=\ell_h(w')$, $v_m<v_{m+1}<\cdots<v_{m+n}$, and $w'_m<w'_{m+1}<\cdots<w'_{m+n}$, the number of inversions in $v_1\, v_2\, \dots\, v_m$ equals that of inversions in $w'_1\, w'_2\, \dots \, w'_m$. Hence we consequently have $\ell(v)=\ell(w')$. This implies that $\ell(v)=\ell(w')=\ell(\underline{w})+1$ while $\underline{w}< v$. Therefore there exists a transposition $s_{i,j}$ satisfying $v=s_{i,j}\underline{w}$, which however can not hold as we can check from the table of values of $v$ and $\underline{w}$.

Now we consider the case $v>s_{x+1}\underline{w}$.
    \textbf{(a)} If $y=x+1$, then the values for $v$ and $s_{x+1}w'$ at positions ${ m,} m+1, \dotsm m+n$ are the same. Note that $s_{x+1}\underline{w} < s_{x+1}w'$ and $s_{x+1}\underline{w} < v$. Moreover by Lemma~\ref{lem:partner}(2), $\ell(s_{x+1}\underline{w})=\ell(\underline{w})-1=\ell(w')-2= \ell(s_{x+1}w')-1$, while $\ell_h(v)=\ell_h(\underline{w}) = \ell_h(w')=\ell_h(s_{x+1}w')$.
    This means that the number of inversions in $v_1\, v_2\, \dots\, v_m$ equals that of inversions in $(s_{x+1}w')_1\, (s_{x+1}w')_2\, \dots \, (s_{x+1}w')_m$. Hence we have $\ell(v)=\ell(s_{x+1}w')=\ell(s_{x+1}\underline{w})+1$. 
    We can conclude that $v=s_{i, j}(s_{x+1}\underline{w})$ for a transposition $s_{i, j}$, and $(i,j)$ is uniquely determined; $(i, j)=(x, x+1)$ by comparing $v_m=(s_{x+1}w')_m=x$ and $(s_{x+1}\underline{w})_m=x+1$.  Since $\underline{w}=s_{x, x+2}w'$, we obtain $v=s_xs_{x+1}\underline{w}=s_{x+1}w'$.
    \textbf{(b)} If $y=x+2$, then 
    $v_m\, v_{m+1}\, \dots\, v_{m+n}=w'_m\, w'_{m+1} \cdots w'_{m+n}$. Since $\ell_h(v)=\ell_h(w')$, the numbers of inversions in $v_1\, v_2\, \dots\, v_m$ equals that of inversions in $w'_1\, w'_2\, \dots \, w'_m$, and we consequently have $\ell(v)=\ell(w')$.
    By Lemma~\ref{lem:partner}(2), we have $\ell(w')=\ell(s_{x+1}\underline{w})+2$, hence $\ell(v)=\ell(\underline{w})+2$ while $s_{x+1}\underline{w}< v$.
    We can conclude that $v=s_{x+1}s_{x}(s_{x+1}\underline{w})$  by comparing $(v_m, v_{m+1})$ with $((s_{x+1}\underline{w})_m, (s_{x+1}\underline{w})_{m+1})$.  Hence we obtain $v=s_{x+1}s_{x}s_{x+1}\underline{w}=w'$.
    \textbf{(c)} If $y\geq x+3$, then the values of $v$ and $s_{x+1}\underline{w}$ at positions $m, m+1, \dots, m+n$ are given as follows. 
\[\begin{array}{c|cccccccc}
\hline
                    & m & m+1 & m+2 &\cdots & i & i+1 & \cdots & m+n\\
\hline
  v                 & x & x+1  &x+2 & \cdots & y-1 & y+1 & \cdots & x+n+1\\
 s_{x+1}\underline{w} & x+1 & x+2 & x+3 & \cdots & y & y+1 & \cdots & x+n+1\\
\hline
\end{array}\]
Since $\ell_h(v)=\ell_h(w')$, $v_m<v_{m+1}<\cdots<v_{m+n}$, and $w'_m<w'_{m+1}<\cdots<w'_{m+n}$,
the numbers of inversions in $v_1\, v_2\, \dots\, v_m$ equals that of inversions in $w'_1\, w'_2\, \dots \, w'_m$, and we consequently have $\ell(v)=\ell(w')$.
By Lemma~\ref{lem:partner}(2), we have $\ell(w')=\ell(s_{x+1}\underline{w})+2$, and $\ell(v)=\ell(s_{x+1}\underline{w})+2$ while $s_{x+1}\underline{w}\leq v$.
We can conclude that $(s_{x+1}\underline{w})v^{-1}$ is a permutation of length $2$, which cannot hold as we can check from the table of values of $v$ and $s_{x+1}\underline{w}$.

   This completes the proof of the lemma.
\end{proof}

    \begin{lemma}\label{lem:sigma of partner} Let $h=h_{m,n}$ with $n>0$,
and $w\in \Gh$ be of type iii). Let $x\coloneqq x_w=x_{w'}$ and $p$ be the index with $\underline{w}_p=x$. Then $\sigma_{\underline{w}}^T(s_{x+1}w')=0$. Let $$S\coloneqq \{(i,j)\,|\, i<j\leq m, w'_i>w'_j,  (i,j)\neq (p, m) \}.$$
\begin{enumerate}[topsep=2pt, itemsep=0pt, parsep=0pt]
    \item The value $\sigma_{\underline{w}}^T(w')$ is given by \begin{align*}
    \sigma_{\underline{w}}^T(w')&= (t_{x+1}-t_{x})\prod_{(i,j)\in S} (t_{w'_j}-t_{w'_i}).
    \end{align*}
    \item When $v$ is either $w'$ or $s_{x+1}w'$, the value $\sigma_{s_{x+1}\underline{w}}(v)$ is given by 
    \begin{align*}
        \sigma_{s_{x+1}\underline{w}}^T(v)&= \prod_{(i,j)\in S} (t_{v_j}-t_{v_i}).
    \end{align*}
\end{enumerate}
Consequently, we get
\begin{equation*}
    (s_{x+1}\cdot\sigma_{\underline{w}}^T)(w')=0\text{ and } 
    (s_{x+1}\cdot\sigma_{\underline{w}}^T)(s_{x+1}w')=(t_{x+2}-t_{x})\prod_{(i,j)\in S} (t_{(s_{x+1}w')_j}-t_{(s_{x+1}w')_i}).
\end{equation*}
\end{lemma}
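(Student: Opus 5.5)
The plan is to compute all the listed localizations with Proposition~\ref{prop:computing the value}. The first step is to fix the combinatorics. For $w$ of type iii) we have $j=0$, and $p=(\underline{w}^{-1})_x<m$. Table~\ref{tab:permutation-values} gives $\underline{w}_p=x$, $\underline{w}_m=x+2$, $\underline{w}_{m+1}=x+1$, while $w'_p=x+2$, $w'_m=x$, $w'_{m+1}=x+1$. Also $w'=s_{x,x+2}\underline{w}=\underline{w}s_{p,m}$, and $s_{x+1}w'$ differs from $w'$ only in the values $x+1,x+2$ at positions $m+1$ and $p$. The GKM graph $\Gamma_h$ has edges $v\,s_{i,j}$ only for $i<j\le m$ or $(i,j)=(a,a+1)$ with $a\ge m$. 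The first thing to establish is the vanishing $\sigma_{\underline{w}}^T(s_{x+1}w')=0$, which amounts to showing $s_{x+1}w'\notin\supp(\sigma^T_{\underline{w}})$. Since $\underline{w}\notin\Gh$, Proposition~\ref{prop:support}(2) gives $\supp=\phi[\widetilde{\underline{w}},w_0]$. I would instead use Proposition~\ref{prop:computing the value}(1), which forces $\underline{w}\le_h s_{x+1}w'$. By Proposition~\ref{prop:h-chain} such a relation is realized by a chain of $h$-edges increasing length. Now $\ell(s_{x+1}w')=\ell(\underline{w})$, because $\ell(w')=\ell(\underline{w})+1$ by Lemma~\ref{lem:partner}(2) and $s_{x+1}w'<w'$. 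So they are distinct elements of equal length, which is impossible, and the vanishing follows.

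For (1), the edge $\{\underline{w},w'\}$ is an $h$-edge ($p<m\le h(p)$), and $\ell(w')=\ell(\underline{w})+1$. I expect $w'$ to be a vertex of $\Gamma_{\underline{w},h}$ whose degree equals that of $\underline{w}$. Proposition~\ref{prop:computing the value}(4) would then give $\sigma^T_{\underline{w}}(w')$ as the product of the labels of the edges at $w'$ leaving $\Gamma_{\underline{w},h}$. Concretely, I would identify the support near $w'$ using the product structure. By Lemma~\ref{lem:separation}, $\underline{w}$ admits an $m$-block decomposition with $\bb(\underline{w})=(m,n)$, since $\widetilde{\underline{w}}$ has the same $m$-block shape. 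Proposition~\ref{prop:block_prod} and the localization formula in the proof of Theorem~\ref{thm:separation} then factor $\sigma^T_{\underline{w}}(v)$ for $v\in\Sn{k}^{[m]}$. The factors are a flag-variety (complete graph $K_m$) Schubert class on the first block, a path-graph class on the second block, and the Euler class $e^T(N_\iota)|_v=t_{v_m}-t_{v_{m+1}}$ from Lemma~\ref{lem:equiv_euler}, since the only crossing pair is $(m,m+1)$. Two points need checking. First, $w'$, $s_{x+1}w'$ and $\underline{w}$ must all lie in $\Sn{k}^{[m]}$, which would make the block values $x,x+1,\dots$ occupy the second block; I need to verify this against the table, where the second block is $\{1,\dots,n\}$ after reindexing. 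Second, if instead the relevant split comes from Corollary~\ref{cor:separation}, I would use $i=m$ there. The path factor at $w'^{\da m}$ is the point class of a smooth variety, a product of the missing path-edge labels. The $K_m$ factor is the ordinary Schubert class of the flag variety evaluated at a point one reflection $s_{p,m}$ above. Through the standard formula for $\sigma_u(us_{p,m})$ with $\ell$ increasing by one, this yields the product over inversions in the first block except $(p,m)$, which is exactly $\prod_{(i,j)\in S}(t_{w'_j}-t_{w'_i})$. The remaining linear factor $(t_{x+1}-t_x)$ comes from the crossing edge, $t_{w'_{m+1}}-t_{w'_m}$ up to the paper's sign convention.

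For (2), $s_{x+1}\underline{w}\in\Gh$ by Lemma~\ref{lem:partner}(3). The points $v=w'$ and $v=s_{x+1}w'$ sit one Bruhat step above it, with the same $h$-length, in $\Gamma_{s_{x+1}\underline{w},h}$. I would apply Proposition~\ref{prop:computing the value}(4) directly: the degree condition holds since these differ from $s_{x+1}\underline{w}$ by a single $h$-edge. The outside edges are the first-block inversions other than the one swapped, giving the product over $S$ with $v$ substituted. The final consequences follow from the dot action formula $(s_{x+1}\cdot\sigma)(v)=\sigma(s_{x+1}v)|_{t_{x+1}\leftrightarrow t_{x+2}}$. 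At $v=w'$ this gives the vanishing value at $s_{x+1}w'$. At $v=s_{x+1}w'$ it gives (1) with $t_{x+1}\leftrightarrow t_{x+2}$, and the product over $S$ transforms into the one indexed by $s_{x+1}w'$.

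The main obstacle is the degree and support verification needed to invoke Proposition~\ref{prop:computing the value}(4), or equivalently the block-factorization bookkeeping, including the correct sign of the crossing-edge factor. I would handle it by first checking a small case, e.g.\ $h_{3,2}$, explicitly.
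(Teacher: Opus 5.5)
\textbf{Verdict: there is a genuine gap in (1), and your justification in (2) fails at $v=w'$.} Your vanishing argument is fine, and more explicit than the paper's: $\ell(s_{x+1}w')=\ell(\underline{w})$ and the two permutations are distinct, so $\underline{w}\not\le s_{x+1}w'$. The final dot-action step is also fine.

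\textbf{The gap in (1).} The permutation $\underline{w}$ does \emph{not} admit an $m$-block decomposition.
\begin{itemize}
\item For $w$ of type iii), the entries of $\underline{w}$ in positions $m+1,\dots,m+n$ are $x+1,x+3,\dots,x+n+1$. These never form $[n]$, because $x\ge 1$. For example, $w=5\,7\,6\,1\,2\,3\,4$ gives $\underline{w}=1\,7\,6\,3\,2\,4\,5$. Likewise $w',s_{x+1}w'\notin\Sn{k}^{[m]}$.
\item Lemma~\ref{lem:separation} requires $h$-admissibility. The block property depends on actual values, so it is not inherited from $\widetilde{\underline{w}}$ across $P(\widetilde{\underline{w}})$.
\item Consequently Proposition~\ref{prop:block_prod} and the localization formula of Theorem~\ref{thm:separation} say nothing about $\sigma^T_{\underline{w}}(w')$.
\end{itemize}

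The missing step is to transport to the admissible representative. Set $\phi=\widetilde{\underline{w}}\,\underline{w}^{-1}$. Proposition~\ref{prop:representative}(2) gives $\sigma^T_{\underline{w}}(w')=\phi^{-1}\bigl(\sigma^T_{\widetilde{\underline{w}}}(\phi w')\bigr)$, with $\phi^{-1}$ acting on the variables.
\begin{itemize}
\item $\phi w'=\widetilde{\underline{w}}\,s_{p,m}$ swaps the adjacent values $y-1,y$. So it covers $\widetilde{\underline{w}}$ and has $h$-length one larger.
\item Hence its degree in $\Gamma_{\widetilde{\underline{w}},h}$ is $d_h-\ell_h(\widetilde{\underline{w}})$, and Proposition~\ref{prop:computing the value}(4) applies.
\item The result is the product of the first-block inversion labels other than $(p,m)$, times the label $t_1-t_{y-1}$ of the edge $(m,m+1)$. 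Applying $\phi^{-1}$ turns $t_1-t_{y-1}$ into $t_{x+1}-t_x$.
\end{itemize}
This is the paper's proof.

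If you prefer to factor $\sigma^T_{\widetilde{\underline{w}}}$ over the $m$-block, two corrections are needed.
\begin{itemize}
\item The path factor is $\sigma_{\mathrm{id}}=1$, not a point class, since the second block of $\widetilde{\underline{w}}$ is the identity.
\item Lemma~\ref{lem:equiv_euler} gives $t_{v_m}-t_{v_{m+1}}$, which transports to $t_x-t_{x+1}$. That is the opposite sign to the GKM label. You must reconcile this rather than leave it ``up to convention'', because these signs are what yield $a=1$, $b=-1$ in Proposition~\ref{prop:partner1}.
\end{itemize}

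\textbf{The problem in (2).} Your justification fails for $v=w'$: $w'$ is neither one Bruhat step nor one $h$-edge above $s_{x+1}\underline{w}$. Indeed:
\begin{itemize}
\item $w'=(s_{x+1}\underline{w})\,s_{m,m+1}\,s_{p,m}$;
\item $\ell(w')=\ell(s_{x+1}\underline{w})+2$;
\item $\ell_h(s_{x+1}\underline{w})=\ell_h(w')-1$.
\end{itemize}

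So the degree condition at $w'$ has to be checked by hand. The $\ell_h(w')$ down-$h$-edges at $w'$ are $(p,m)$ and the pairs in $S$. Exactly one of them, $w's_{p,m}=\underline{w}$, stays in $[s_{x+1}\underline{w},w_0]$; you must show the others leave.
\begin{itemize}
\item For $(i,j)\in S$ one has $\ell(w's_{i,j})\le\ell(s_{x+1}\underline{w})+1$. So $w's_{i,j}\ge s_{x+1}\underline{w}$ would force equality, or a difference by a single transposition.
\item In positions $m,m+1$, the permutation $w's_{i,j}$ carries $(x,x+1)$ or $(w'_i,x+1)$ with $w'_i\ne x+2$. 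The permutation $s_{x+1}\underline{w}$ carries $(x+1,x+2)$ there.
\item Both positions therefore differ without their values being interchanged, which rules out both possibilities.
\end{itemize}

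Then the degree of $w'$ is $(d_h-\ell_h(w'))+1=d_h-\ell_h(s_{x+1}\underline{w})$, and Proposition~\ref{prop:computing the value}(4) applies. For $v=s_{x+1}w'=(s_{x+1}\underline{w})\,s_{p,m}$, your single-edge reasoning is correct.
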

\begin{proof}
For convenience, we make a table of values of permutations we are interested in, where   $\widetilde{\underline{w}}$ is the $h$-admissible permutation such that $ \underline{w} \in P(\widetilde{\underline{w}})$, $y= (\widetilde{\underline{w}})_m$ and $\phi= \widetilde{\underline{w}} \,\underline{w}^{-1}$.
\[\begin{array}{c|ccccccccc}
\hline
                              &  p & m & m+1 & m+2 & \cdots & m+n\\
\hline
         w'              &   x+2  & x & x+1 & x+3 &  \cdots & x+n+1\\
  s_{x+1}w'              &   x+1  & x & x+2 & x+3 &  \cdots & x+n+1\\
 \underline{w}          &   x  & x+2 & x+1 & x+3 & \cdots & x+n+1\\
 \widetilde{\underline{w}}=\phi\underline{w} &y-1  &y  & 1  & 2 & \cdots & n\\
 \phi w'   &y  &y-1  & 1  & 2 & \cdots & n\\
\hline
\end{array}\]

\smallskip

Since $s_{x+1}w'\not\geq \underline{w}$, we get $\sigma_{\underline{w}}^T(s_{x+1}w')=0$ by Proposition~\ref{prop:computing the value}(1). In the following, we will use Proposition~\ref{prop:computing the value}(4) to compute the $\sigma_{\underline{w}}^T(w')$, $\sigma_{s_{x+1}\underline{w}}^T(w')$, and $\sigma_{s_{x+1}\underline{w}}^T(s_{x+1}w')$. Note that $s_{x+1}\underline{w}$ is $h$-admissible, but $\underline{w}$ is not. 

\begin{description}[leftmargin=0pt, labelsep=0.5em, itemindent=0pt, itemsep=0.5em]
\item[(1)] 
Let  $\widetilde{\underline{w}}\in\Gh$ be the $h$-admissible permutation such that $\underline{w} \in  P(\widetilde{\underline{w}})$. Then  $\phi\left(\supp(\sigma^T_{\underline{w}})\right)= [\widetilde{\underline{w}}, w_0]$  where $\phi= \widetilde{\underline{w}} \,\underline{w}^{-1}$. Moreover, $\Gamma_{\underline{w}, h}$ is isomorphic to $\Gamma_{\widetilde{\underline{w}}, h}$ through $\phi$, where $\alpha_{\underline{w}}(\{u, v \})=\alpha_{\widetilde{\underline{w}}}(\{\phi(u), \phi(v) \})$ for $\{u, v \}\in E_{\underline{w}}$.
 Note that $w'=s_{x, x+2}\underline{w}= \underline{w} s_{p,m}>_h \underline{w}$. Furthermore, the degree of $w'$ in  $\Gamma_{\underline{w}, h}$ is the same as the degree of $\underline{w}$, which is $\ell_h(\underline{w})$, because for any $i<j\leq h(i)$, $(i,j)\neq (p,m)$, we have $(\widetilde{\underline{w}})_i<(\widetilde{\underline{w}})_j$ if and only if $(\phi w')_i<(\phi w')_j$. Applying Proposition~\ref{prop:computing the value}(4), we compute the value $\sigma_{\underline{w}}^T(w')$:
$$\sigma_{\underline{w}}^T(w')=\phi^{-1}\left( \sigma_{\widetilde{\underline{w}}}^T(\phi(w'))\right)=\phi^{-1}\left[(t_1-t_{y-1})\prod_{(i,j)}\left(t_{(\phi w')_j} -t_{(\phi w')_i} \right)\right]\,,$$
where the product is over the $(i,j)\neq(p,m)$ such that $i<j\leq m$ and $(\phi w')_i>(\phi w')_j$. This shows (1).
\item[(2)] 
We first note that $s_{x+1}\underline{w}$, $s_{x+1}w'$, and $w'$ differ only at positions $p,m,m+1$, and that none of them has a descent at any of the positions $m,m+1,\dots,m+n-1$. Since $\{s_{x+1}w',s_{x+1}w's_{p,m}\}$ and $\{w',w's_{p,m}\}$ belong to  $E_{s_{x+1}\underline{w}}$, the following statements are equivalent for $u\in\{s_{x+1}\underline{w}, s_{x+1}w'\}$:
\begin{itemize}
    \item[-] $\{u,us_{i,j}\}\in E-E_{s_{x+1}\underline{w}}$.
    \item[-] $\{w',w's_{i,j}\}\in E-E_{s_{x+1}\underline{w}}$.
    \item[-] $i<j\leq m$, $(i,j)\neq (p,m)$, and $w'_i>w'_j$.
\end{itemize} Therefore, $s_{x+1}\underline{w}$, $s_{x+1}w'$, and $w'$ have the same degree in $\Gamma_{s_{x+1}\underline{w}}$. Applying Proposition~\ref{prop:computing the value}(4) to compute the values $\sigma_{s_{x+1}\underline{w}}^T(s_{x+1}w')$ and $\sigma_{s_{x+1}\underline{w}}^T(w')$, we get (2).
\end{description}

The last part follows from the definition of the dot action. This completes the proof.
\end{proof}

The next proposition gives a uniform formula, but its proof naturally splits into two cases according to whether $w$ is of type ii) or type iii). The proof for the case of type iii) relies on the two lemmas above.
\begin{proposition}\label{prop:partner1}
    Let $h=h_{m,n}$ with $n>0$, and  $w\in \Gh$ be an $h$-admissible permutation of type ii) or iii). Let $x\coloneqq x_w$. Then 
    $$\displaystyle s_{x+1} \cdot \sigma_{\underline{w}}=\sigma_{\underline{w}}+(\sigma_{w'}-\sigma_{s_{x+1} w'})+\sum_{v\ll w} c_v \sigma_v.$$
\end{proposition}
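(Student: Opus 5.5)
The plan is to apply Lemma~\ref{lem:Atil} and the general formula of Proposition~\ref{prop:simple action}(3) to the pair $(\underline{w}, s_{x+1}\underline{w})$. This is legitimate because Lemma~\ref{lem:partner}(3) gives $\underline{w}\to s_{x+1}\underline{w}$ with $s_{x+1}\underline{w}\in\Gh$. In ordinary cohomology this yields
\[
s_{x+1}\cdot\sigma_{\underline{w}}=\sigma_{\underline{w}}+\sum_{u\in\mathcal A_{s_{x+1},\underline{w}}}(\tau_u-\tau_{s_{x+1}u}),
\]
so everything reduces to identifying the correction term modulo classes $\sigma_v$ with $v\ll w$. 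By Lemma~\ref{lem:tau2} and Lemma~\ref{lem:partner}(4), every $\sigma_v$ occurring in $\tau_u$ or $\tau_{s_{x+1}u}$ has $v\ge_h u>\underline{w}$ or $v\ge_h s_{x+1}u> s_{x+1}\underline{w}$. Each such $v$ also has $\ell_h(v)=\ell_h(\underline{w})$, because the $\tau$'s lie in the same degree. I would then show that any such $v$ with $v\not\ll w$ is $w'$ or $s_{x+1}w'$.

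I would split into the two types. For type iii) (so $j=0$), this is exactly Lemma~\ref{lem:type3_step1_2}. Hence the correction is $a\,\sigma_{w'}+b\,\sigma_{s_{x+1}w'}+\sum_{v\ll w}c_v\sigma_v$, and it remains to show $a=1$ and $b=-1$.

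To get the coefficients I would pass to the equivariant formula of Proposition~\ref{prop:simple action}(3), or equivalently compare both sides through localization. The term $(t_{x+2}-t_{x+1})\sigma^T_{s_{x+1}\underline{w}}$ must be accounted for here. I would evaluate at the two fixed points $w'$ and $s_{x+1}w'$, using Lemma~\ref{lem:sigma of partner}, which already computes $(s_{x+1}\cdot\sigma^T_{\underline{w}})$, $\sigma^T_{\underline{w}}$ and $\sigma^T_{s_{x+1}\underline{w}}$ there. One also needs that the classes $\sigma_v$ with $v\ll w$ vanish at $w'$ and $s_{x+1}w'$, or at least contribute nothing to the relevant leading coefficient. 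This should follow from support considerations, since $v\le w'$ forces $v\in\{\underline{w},s_{x+1}\underline{w},w',s_{x+1}w'\}$ among degree-$\ell_h(\underline{w})$ elements. Also, $\sigma^T_{w'}(w')$ and $\sigma^T_{s_{x+1}w'}(s_{x+1}w')$ are given by Proposition~\ref{prop:computing the value}(2).

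Comparing the values at $w'$ gives $a$. At that point $s_{x+1}\cdot\sigma^T_{\underline{w}}$ vanishes, $\sigma^T_{\underline{w}}(w')=(t_{x+1}-t_x)\prod_S$, and the term $(t_{x+2}-t_{x+1})\sigma^T_{s_{x+1}\underline{w}}(w')$ combines with these to force the coefficient of $\sigma^T_{w'}(w')$ to be $1$. Comparing at $s_{x+1}w'$ similarly forces $b=-1$. Lifting to equivariant classes is harmless because any extra equivariant terms are multiples of $t$'s and disappear in ordinary cohomology.

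For type ii) ($x=1$, $j\ge1$), I expect to use the block structure instead. By Lemma~\ref{lem:partner}(5), $\underline{w}$ admits an $(m+j)$-block decomposition with right block an identity-like piece. Moreover $s_{x+1}=s_2$ with $2\ne k-(m+j)=n-j$ unless $n-j=2$; that edge case has to be handled directly. Theorem~\ref{thm:separation} then reduces the action of $s_2$ to a smaller lollipop/path computation. Alternatively, I would repeat the type iii) argument with a type-ii) analogue of Lemma~\ref{lem:type3_step1_2}, obtained from the same Bruhat comparison of the sets $v[m+j-1]$ and of the increasing tails.

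The main obstacle is this support classification: showing that every degree-$\ell_h(\underline{w})$ permutation above $\underline{w}$ or $s_{x+1}\underline{w}$ other than $w'$ and $s_{x+1}w'$ is $\ll w$. The sign and coefficient bookkeeping in the localization comparison is the secondary difficulty. For type ii) I would first try to mimic the case analysis on the missing value $y$ from Lemma~\ref{lem:type3_step1_2}, now applied to the positions $m+j-1,\dots,m+n$.
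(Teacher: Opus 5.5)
\textbf{Type iii).} This part follows the paper. You use Proposition~\ref{prop:simple action}(3) with Lemmas~\ref{lem:Atil}, \ref{lem:tau2}, \ref{lem:partner}(4) and \ref{lem:type3_step1_2} to reduce the correction term to $a\,\sigma_{w'}+b\,\sigma_{s_{x+1}w'}$ modulo classes indexed by $v\ll w$. You then localize at $w'$ and $s_{x+1}w'$ using Lemma~\ref{lem:sigma of partner}.

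One step needs tightening. Your candidate list $\{\underline{w},s_{x+1}\underline{w},w',s_{x+1}w'\}$ contains $\underline{w}$. But $\underline{w}\ll w$ and $\sigma^T_{\underline{w}}(w')\neq 0$, so your claim that the $v\ll w$ terms vanish at $w'$ is not justified as stated. You need the strict inequalities $v\ge_h u>\underline{w}$ or $v\ge_h s_{x+1}u>s_{x+1}\underline{w}$, together with $\ell(w')=\ell(\underline{w})+1$. These show that only $v\in\{w',s_{x+1}w'\}$ can be nonzero at those two points; this is the paper's four-case ``first assertion''.

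\textbf{Type ii).} This part has a genuine gap. Lemma~\ref{lem:partner}(5) only computes the composition $\bb(\underline{w})=(m+j,n-j)$. It does not give an $(m+j)$-block decomposition, because Lemma~\ref{lem:separation} requires $h$-admissibility and $\underline{w}\notin\Gh$. In fact, by Table~\ref{tab:permutation-values}, the last $n-j$ entries of $\underline{w}$ are $2,4,5,\dots,n-j+2$, not $[n-j]$. For example, $\underline{w}=6\,7\,5\,1\,3\,2\,4$ in Example~\ref{ex:underbar}(1) has $\bb(\underline{w})=(5,2)$ but no $5$-block decomposition.

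The correct cut is $i=m+j-2$. Positions $m+j-1,\dots,m+n$ carry $1,3,2,4,\dots,n-j+2$, so $\underline{w}^{\da i}=s_2\in\Sn{n-j+2}$ with $h^{\da i}=(2,3,\dots,n-j+2,n-j+2)$. Since $k-i=n-j+2\neq 2$, there is no edge case, and Theorem~\ref{thm:separation} reduces the problem to computing $s_2\cdot\sigma_{s_2,h^{\da i}}$ on a path. So the right block is not ``identity-like''. If it were, $s_2$ would fix it and no $\sigma_{w'}-\sigma_{s_2w'}$ term could arise.

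The missing idea is the computation of this block. Proposition~\ref{prop:s_i} gives
\[
s_2\cdot\sigma_{s_2}=\sigma_{s_2}+\sum_{u\in\mathcal{A}_2}(\sigma_u-\sigma_{s_2u}),
\]
and you must then classify $\mathcal{A}_2$. Each $u\in\mathcal{A}_2$ has a single descent, at some position $a$, with $3$ before it and $2$ after it.
\begin{itemize}
\item For $u=3\,1\,2\,4\cdots(n-j+2)$ one gets $({}^{\da i}\underline{w}:u)=w'$ and $({}^{\da i}\underline{w}:s_2u)=s_2w'$, which produces the term $\sigma_{w'}-\sigma_{s_2w'}$.
\item Every other $u$ has either $u_{a+1}=1$ with $a\ge 2$, or $u_1=1$ and $u_{a+1}=2$ with $a\ge 3$. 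Then $v=({}^{\da i}\underline{w}:u)$ has $x_v=1$ and $j_v=j+a-1>j$, so $v\ll w$. Also $s_2v\ll w$, since $\widetilde{v}=\widetilde{s_2v}$.
\end{itemize}

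Your fallback of repeating the type iii) argument is only a plan. It would need type ii) analogues of both Lemma~\ref{lem:type3_step1_2} and Lemma~\ref{lem:sigma of partner}, and you supply neither.
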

\begin{proof}
    We first consider when $w\in\Gh$ is a permutation of type ii) with $(j_w,x_w)=(j,1)$.
    Since $\underline{w}$ admits $(m+j-2)$-block decomposition; { see Table~\ref{tab:permutation-values},} if we apply Theorem~\ref{thm:separation} then  
    \[s_{2} \cdot \sigma_{\underline{w}}=(\sigma_{^{\da{m+j-2}}\underline{w}}:(s_2\cdot\sigma_{\underline{w}^{\da{m+j-2}}})).
    \] 
    Noting that $\underline{w}^{\da{m+j-2}}=s_2\in \Sn{n-j+2}$, we apply Proposition~\ref{prop:s_i}  with $h^{\da{m+j-2}}=(2, 3, \dots, n-j+2, n-j+2)$ to obtain 
    $$s_2\cdot\sigma_{s_2,h^{\da{m+j-2}}}=\sigma_{s_2,h^{\da{m+j-2}} }+\sum_{u\in \mathcal{A}_2} \left (\sigma_{u,h^{\da{m+j-2}}}-\sigma_{s_2u,h^{\da{m+j-2}}}\right)\,,$$
    where $\mathcal{A}_2=\{u \in \Sn{n-j+2}\,|\, s_2\leq u, \ell_{h^{\da{m+j-2}}}(u)=1, u\dasharrow s_2u \}$.

    Note that $u=3\,1\,2\,4\cdots\, (n-j+2)\in \mathcal{A}_2$ and \[(^{\da{m+j-2}}\underline{w}: u)=w',\qquad (^{\da{m+j-2}}\underline{w}: s_2u)=s_2w'.\] Hence, $u$ gives the term $\sigma_{w'}-\sigma_{s_2w'}$. It remains to show that the other permutations in \(\mathcal{A}_2\) correspond to permutations \(v\) satisfying \(v \ll w\).
    We thus consider the permutations $u\in \mathcal{A}_2$ such that  $u\neq 3\,1\,2\,4\cdots\, (n-j+2)$. Since $\ell_{h^{\da{m+j-2}}}(u)=1$, $u$ has one descent; that is, there is $a>0$ such that $u_1<u_2<\cdots <u_a$, $u_{a+1}<u_{a+2}<\cdots <u_{n-j+2}$ and $u_a>u_{a+1}$. Moreover, since $u\dasharrow s_2u$, $3\in \{u_1, \dots, u_a\}$ and $2\in \{u_{a+1},\dots, u_{n-j+2} \}$. There are two possibilities: (a) $u_1=1, u_{a+1}=2$, and (b) $u_{a+1}=1, u_{a+2}=2$. In case (a), $u_2=3$ and $a+1>3$. In case (b),  $a+1 > 2$ because we consider $u\neq 3\,1\,2\,4\cdots\, n-j+2$. Hence, in any case, the permutation $v=({}^{\da m+j-2}\underline{w},u)$ has $j_v=\{(m+j-2)+(a+1)\}-m>j$ and $x_v=1$, so $v\ll w$. Since $\widetilde{v}=\widetilde{s_2v}$, we have $(j_{s_2v}, x_{s_2v})=(j_v,x_v)$, so $s_2v\ll w$. This completes the proof when $w$ is of type ii).
 
    Now we consider when $w$ is of type iii) with $(j_w,x_w)=(0,x)$. Then  \( \underline{w} \ra s_{x+1}\underline{w} \) and \( s_{x+1}\underline{w}\in \Gh \).   By Proposition~\ref{prop:simple action}(3), we have 
\[s_{x+1} \cdot \sigma^T_{\underline{w}}=\sigma^T_{\underline{w}}+(t_{x+2}-t_{x+1}) \sigma^T_{s_{x+1}\underline{w}}+\sum_{u\in \mathcal A_{s_{x+1}, \underline{w}}} (\tau_{u}-\tau_{s_{x+1}u})\,,\] where
$ \mathcal A_{s_{x+1}, \underline{w}} \subset 
 \widetilde{\mathcal A}_{s_{x+1}, \underline{w}} =\{u\in [\underline{w}, w_0] \,|\, u \dra s_{x+1} u,\, \lh{u}\leq \lh{\underline{w}} \}$ by Lemma~\ref{lem:Atil}.

{ If $\sigma_v^T$ appears in $\sum_{u\in \mathcal A_{s_{x+1}, \underline{w}}}(\tau_{u}-\tau_{s_{x+1}u})$, then $\ell_h(v)=\ell_h(\underline{w})$ and by Lemma~\ref{lem:tau2} there exists $u\in \widetilde{\mathcal A}_{s_{x+1}, \underline{w}}$ such that $u\leq_h v$ or $s_{x+1}u\leq_h v$.} Since $u>\underline{w}$ and $s_{x+1}u>s_{x+1}\underline{w}$ for each $u\in \mathcal{A}_{s_{x+1},\underline{w}}$  by Lemma~\ref{lem:partner}(4), we get $v>\underline{w}$ or $v>s_{x+1}\underline{w}$. Hence, $v\ll w$ or $v\in \{w',s_{x+1}w'\}$ by Lemma~\ref{lem:type3_step1_2}. We therefore get
\[s_{x+1} \cdot \sigma^T_{\underline{w}}=\sigma^T_{\underline{w}}+(t_{x+2}-t_{x+1}) \sigma^T_{s_{x+1}\underline{w}}+a\sigma_{w'}^T+b\sigma_{s_{x+1}w'}^T+\sum_{v\ll w} c_v\sigma_v^T\,.\] Hence it suffices to prove that $a=1$ and $b=-1$. To this end, we prove the following two assertions. First, we show that for $\sigma_v$ appearing in
\(
\sum_{u\in \mathcal A_{s_{x+1}, \underline{w}}}
(\tau_u-\tau_{s_{x+1}u}),
\) if \(
w'\in \operatorname{supp}(\sigma_v^T) \) or \(
s_{x+1}w'\in \operatorname{supp}(\sigma_v^T)
\), then $v \in \{w',s_{x+1}w'\}$. Second, we show that the values of \[
s_{x+1}\cdot \sigma_{\underline{w}}^T
-\sigma_{\underline{w}}^T
-(t_{x+2}-t_{x+1})\sigma_{s_{x+1}\underline{w}}^T
\]
at $w'$ and $s_{x+1}w'$ are exactly
\[
\sigma_{w'}^T(w')
\quad\text{and}\quad
-\sigma_{s_{x+1}w'}^T(s_{x+1}w'),
\]
respectively. The first assertion implies $\sum_{v\ll w} c_v\sigma_v^T(w')=\sum_{v\ll w} c_v\sigma_v^T(s_{x+1}w')=0$. Combining this with the second assertion, we obtain $a=1$ and $b=-1$.
    
We begin by proving the first assertion. Suppose that $\sigma_v$ appears in \(
\sum_{u\in \mathcal A_{s_{x+1}, \underline{w}}}
(\tau_u-\tau_{s_{x+1}u}).
\) Then by Lemmas ~\ref{lem:Atil} and ~\ref{lem:tau2}, there exists a permutation $u\in\mathcal{A}_{s_{x+1},\underline{w}}\subset \widetilde{\mathcal{A}}_{s_{x+1},\underline{w}}$ such that either $u\leq_h v$ or $s_{x+1}u\leq_h v$. Note that $\underline{w}<u$ and $s_{x+1}\underline{w}<s_{x+1}u$ by Lemma~\ref{lem:partner}(4). We now prove the assertion by considering the four possible combinations of the following two alternatives: \[ u\leq_h v \quad\text{or}\quad s_{x+1}u\leq_h v, \] and \[ w'\in \operatorname{supp}(\sigma_v^T) \quad\text{or}\quad s_{x+1}w'\in \operatorname{supp}(\sigma_v^T). \]
(i) Assume $u\leq_h v$ and $w'\in \operatorname{supp}(\sigma_v^T)$. Then $\underline{w}<u\leq_h v$ implies $\ell(v)>\ell(\underline{w})$. Since {$w'\geq v$}, from Lemma~\ref{lem:partner}(2), we get $\ell(w')\geq \ell(v)>\ell(\underline{w})=\ell(w')-1$, which implies that $\ell(v)=\ell(w')$. Accordingly, $v=w'$. (ii) Assume $u\leq_h v$ and $s_{x+1}w'\in \operatorname{supp}(\sigma_v^T)$. Then $s_{x+1}w'\geq v$, and we have $\ell(s_{x+1}w')\geq \ell(v)>\ell(w')-1$, which contradicts $\ell(s_{x+1}w')=\ell(w')-1$. (iii) Assume $s_{x+1}u\leq_h v$ and $s_{x+1}{  w'}\in \operatorname{supp}(\sigma_v^T)$. Then we get
$$\ell(w')-1=\ell(s_{x+1}w')\geq \ell(v)>\ell(s_{x+1}\underline{w})=\ell(\underline{w})-1=\ell(w')-2,$$
which implies $\ell(s_{x+1}w')=\ell(v)$. Accordingly, $v=s_{x+1}w'$. (iv) Assume $s_{x+1}u\leq_h v$ and $w'\in \operatorname{supp}(\sigma_v^T)$. Then we have $s_{x+1}\underline{w}<s_{x+1}u\leq_h v \leq w'$. Since $x+1\in D_L(w')-D_L(s_{x+1}u)$, we have $u\leq w'$ by Proposition~\ref{prop:lifting}. Therefore, 
$$\ell(\underline{w})=\ell(s_{x+1}\underline{w})+1<\ell(s_{x+1}u)+1=\ell(u)\leq \ell(w')=\ell(\underline{w})+1.$$
Accordingly, $u=w'$, which implies $s_{x+1}w'\leq_h v\leq w'$. Therefore, $v\in\{w',s_{x+1}w'\}$.

It remains to prove the second assertion. We will use Lemma~\ref{lem:sigma of partner}. Let $S\coloneqq \{(i,j)\mid i<j\leq m, w'_i>w'_j,  (i, j)\neq (p,m) \}$. We first evaluate $s_{x+1}\cdot \sigma_{\underline{w}}^T
-\sigma_{\underline{w}}^T
-(t_{x+2}-t_{x+1})\sigma_{s_{x+1}\underline{w}}^T$ at $w'$: 
\begin{align*}
&(s_{x+1} \cdot \sigma^T_{\underline{w}})(w')-\sigma^T_{\underline{w}}(w')-(t_{x+2}-t_{x+1}) \sigma^T_{s_{x+1}\underline{w}}(w')\\
 &= 0-(t_{x+1}-t_{x})\prod_{(i,j)\in S} (t_{w'_j}-t_{w'_i})-(t_{x+2}-t_{x+1}) \prod_{(i,j)\in S} (t_{w'_j}-t_{w'_i})\\
 &=(t_{x}-t_{x+2})\prod_{(i,j)\in S} (t_{w'_j}-t_{w'_i})\\
 &=\sigma_{w'}^T(w'),
\end{align*}
where the last equality follows from Proposition~\ref{prop:computing the value}(2). Now we evaluate $s_{x+1}\cdot \sigma_{\underline{w}}^T
-\sigma_{\underline{w}}^T
-(t_{x+2}-t_{x+1})\sigma_{s_{x+1}\underline{w}}^T$ at $s_{x+1}w'$:
 \begin{align*} &(s_{x+1} \cdot \sigma^T_{\underline{w}})(s_{x+1}w')-\sigma^T_{\underline{w}}(s_{x+1}w')-(t_{x+2}-t_{x+1}) \sigma^T_{s_{x+1}\underline{w}}(s_{x+1}w')\\
 &= (t_{x+2}-t_{x})\prod_{(i,j)\in S} (t_{(s_{x+1}w')_j}-t_{(s_{x+1}w')_i})-0-(t_{x+2}-t_{x+1}) \prod_{(i,j)\in S} (t_{(s_{x+1}w')_j}-t_{(s_{x+1}w')_i}) \\
 &=(t_{x+1}-t_{x})\prod_{(i,j)\in S} (t_{(s_{x+1}w')_j}-t_{(s_{x+1}w')_i})\\
 &=-\sigma_{s_{x+1}w'}^T(s_{x+1}w'),
 \end{align*}
where the last equality follows from Proposition~\ref{prop:computing the value}(2).
\end{proof}

From the work we have done to understand the class $\sigma_{\underline{w}}$, we can prove an important result that will be used in proving Theorem~\ref{thm:main} in the final section.

\begin{proposition}\label{lem1:main}
    Let $w\in\Gh^d$ be an $h$-admissible permutation of type ii) or iii). If $\sigma_v\in M_{\ll w}$ for every $v\ll w$ and
    \begin{equation}\label{eq1:main}
        \widehat{\sigma}_w\equiv \sum_{u\in P(w)}c_u\sigma_u\pmod{M_{\ll w}} \text{ for }c_u>0,
    \end{equation} then $\displaystyle\sigma_w\in \sum_{u\in S_w}\mathbb{C}\Sn{k}(\widehat{\sigma}_u)$, where $S_w=\{w\}\cup\{u\in \Gh^d \mid u\ll w\}$. Accordingly, $\sigma_w\in  M$.
\end{proposition}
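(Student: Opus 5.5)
The strategy is to show that every class $\sigma_u$, $u\in P(w)$, is congruent to $\sigma_w$ modulo $M'\coloneqq\sum_{u\in S_w}\mathbb{C}\Sn{k}(\widehat{\sigma}_u)$. Once this holds, \eqref{eq1:main} gives
\[
\widehat{\sigma}_w\equiv \Bigl(\sum_{u\in P(w)}c_u\Bigr)\sigma_w \pmod{M'}.
\]
Since $\widehat{\sigma}_w\in M'$ and $\sum c_u>0$, we can divide and conclude $\sigma_w\in M'$. The inclusion $M'\subset M$ is then immediate, because $S_w\subset \Gh^d$.

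The reduction to a single difference is Proposition~\ref{prop:P(w)_classes}. It says $\sigma_w-\sigma_u\in\mathbb{C}\Sn{k}(\sigma_{w'}-\sigma_{s_{x+1}w'})$ for every $u\in P(w)$, where $x=x_w$. So it suffices to prove
\[
\sigma_{w'}-\sigma_{s_{x+1}w'}\in M_{\ll w}\subset M'.
\]
For this we use the partner permutation $\underline{w}$. Proposition~\ref{prop:partner1} gives
\[
\sigma_{w'}-\sigma_{s_{x+1}w'}=s_{x+1}\cdot\sigma_{\underline{w}}-\sigma_{\underline{w}}-\sum_{v\ll w}c_v\sigma_v .
\]
Each $\sigma_v$ with $v\ll w$ lies in $M_{\ll w}$ by hypothesis. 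It remains to see that $\sigma_{\underline{w}}\in M_{\ll w}$, since $M_{\ll w}$ is an $\Sn{k}$-submodule and would then contain $s_{x+1}\cdot\sigma_{\underline{w}}-\sigma_{\underline{w}}$.

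The main point is to check that $\underline{w}\ll w$ in the partial order of Definition~\ref{def:order}, so that the hypothesis applies to $v=\underline{w}$. By Lemma~\ref{lem:partner}(6), $j_{\underline{w}}=j_{\widetilde{\underline{w}}}=j_w+1>j_w$, so $\underline{w}\ll w$ by definition, as already remarked after Lemma~\ref{lem:partner}. We also need $\ell_h(\underline{w})=d$, so that $\sigma_{\underline{w}}$ lives in $H^{2d}$. This holds because $\ell_h(\underline{w})=\ell_h(w')=\ell_h(w)=d$ by Lemma~\ref{lem:partner}(2).

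I expect the delicate step to be interpreting the hypothesis ``$\sigma_v\in M_{\ll w}$ for every $v\ll w$''. It must be read for arbitrary permutations $v$ of $h$-length $d$, not only $h$-admissible ones, since $\underline{w}\notin\Gh$. If only admissible $v$ were allowed, I would instead write $\underline{w}=\phi\,\widetilde{\underline{w}}$ using Proposition~\ref{prop:representative}(2). This gives $\sigma_{\underline{w}}=g\cdot\sigma_{\widetilde{\underline{w}}}$ for some $g\in\Sn{k}$, with $\widetilde{\underline{w}}\ll w$ admissible, and so $\sigma_{\underline{w}}$ lies in the $\Sn{k}$-module $M_{\ll w}$ anyway. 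Either way, combining the steps gives $\sigma_w\in M'\subset M$.
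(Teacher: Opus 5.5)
Your proposal is correct and follows the paper's proof. You use Proposition~\ref{prop:partner1} together with $\underline{w}\ll w$ (Lemma~\ref{lem:partner}(6)) to place $\sigma_{w'}-\sigma_{s_{x+1}w'}$ in $M_{\ll w}$, then Proposition~\ref{prop:P(w)_classes} to get every difference $\sigma_w-\sigma_u$ with $u\in P(w)$, and finally the congruence \eqref{eq1:main}. Reducing to $(\sum_u c_u)\sigma_w$ modulo $M'$ and dividing is a cleaner phrasing of the paper's ``$|P(w)|-1$ independent differences'' step. Your remark that the hypothesis must apply to the non-admissible $\underline{w}$, or be reduced to $\widetilde{\underline{w}}$ via Proposition~\ref{prop:representative}, is accurate.
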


\begin{proof}
We assume $w$ is of type ii) or iii). By Proposition~\ref{prop:partner1}, we have:
 $$ s_{x+1} \cdot \sigma_{\underline{w}}=\sigma_{\underline{w}}+(\sigma_{w'}-\sigma_{s_{x+1} w'})+\sum_{v\ll w}c_v\sigma_{v}\,,$$
from which we have, { by the assumption of the lemma,}
$$\sigma_{w'}-\sigma_{s_{x+1} w'}\equiv s_{x+1} \cdot \sigma_{\underline{w}}-\sigma_{\underline{w}}\pmod{M_{\ll w}}\,. $$ 
Note that both $w'$ and $s_{x+1} w'$ are in $P(w)$ by Lemma~\ref{lem:partner}(1). We also note that both $\sigma_{\underline{w}}$ and $s_{x+1}\cdot\sigma_{\underline{w}}$ belong to $M_{\ll w}$ because $\underline{w}\ll w$ by Lemma~\ref{lem:partner}(6). Therefore
$$\sigma_{w'}-\sigma_{s_{x+1}{ w'}}\in M_{\ll w}$$
Due to Proposition~\ref{prop:P(w)_classes}, by letting $\Sn{k}$ act on $\sigma_{w'}-\sigma_{s_{x+1} w'}$, we can obtain $|P(w)|-1$ linearly independent classes $(\sigma_{u}-\sigma_{v})\in M_{\ll w}$ for $u, v\in P(w)$.
In addition, \eqref{eq1:main} tells us that $$\sum_{u\in P(w)} c_u \sigma_u \in \sum_{u\in S_{w}}\mathbb{C}\Sn{k}(\widehat{\sigma}_u) \subset  M,$$ where $c_u$ are all positive. Therefore, for any $u\in P(w)$, the class $\sigma_u$ can be obtained as a linear combination of elements in $\sum_{u\in S_{w}}\mathbb{C}\Sn{k}(\widehat{\sigma}_u)$.
\end{proof}

\section{Proof of Theorem~\ref{thm:main}}\label{sec:decomposition}

In this section, we prove Theorem~\ref{thm:main}. For lollipop Hessenberg functions, the elements
\(\widehat{\sigma}_{w}\) for $w\in\Gh^d$ make the desired permutation
module decomposition of \(H^{2d}(\Hess(S,h))\).

We let $h=h_{m, n}$ and  $k=m+n$. If $n=0$ then $\Gh=\Sn{k}$, $\widehat{\sigma}_u=k!\sigma_u$ and $\mathbb C\Sn{k}(\widehat{\sigma}_u)\cong M^{(k)}$ for each $u\in \Sn{k}$ by Remark~\ref{rmk:generator_flag} and Corollary~\ref{cor:dot action on flag}. Therefore, 
$$H^{2d}(\Hess(S,h))=\bigoplus_{u\in \Gh^d}\mathbb C\Sn{k}(\widehat{\sigma}_u)\cong \bigoplus_{u\in \Gh^d} M^{\bb(u)}=\bigoplus_{u\in \Gh^d} M^{(k)}$$ holds and we assume that $n>0$ throughout the section.

We recall from Definition~\ref{def:M} that, for each $0\leq d\leq d_h$ and $ w\in\Gh^d$,
\[M\coloneqq \sum_{u\in \Gh^d}\mathbb C\Sn{k}(\widehat{\sigma}_u) \quad\text{and}\quad M_{\ll w}\coloneqq \sum_{u\in \Gh^d, u\ll w}\mathbb{C} \Sn{k}(\widehat{\sigma}_u) \,,
\]
and $M_{\ll w}\subset M$ are $\mathbb C\Sn{k}$ submodules of  $H^{2d}(\Hess(S,h))$.

\begin{proposition}\label{prop:M}
If $H^{2d}(\Hess(S,h))\subset M$; that is $H^{2d}(\Hess(S,h))= M$, then 
\[ H^{2d}(\Hess(S,h))=\bigoplus_{u\in \Gh^d}\mathbb C\Sn{k}(\widehat{\sigma}_u)\cong \bigoplus_{u\in \Gh^d} M^{\bb(u)}\,.\]
In particular, for each $u\in\Gh^d$, $\Stab(\widehat{\sigma}_u)=\mathcal{S}_u$ and $\mathbb C\Sn{k}(\widehat{\sigma}_u)\cong M^{\bb(u)}$.
\end{proposition}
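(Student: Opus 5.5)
The argument is a dimension count. Assume $H^{2d}(\Hess(S,h))=M=\sum_{u\in\Gh^d}\mathbb C\Sn{k}(\widehat{\sigma}_u)$. We compare $\dim M$ with the sum of the dimensions of the summands. By Theorem~\ref{thm:Acyclic},
\[
\dim H^{2d}(\Hess(S,h))=\sum_{u\in\Gh^d}\dim M^{\lambda(u)}=\sum_{u\in\Gh^d}\frac{k!}{|\Sn{\bb(u)}|},
\]
where $|\Sn{\bb(u)}|=\prod_i b_i!$. On the other hand,
\[
\dim M\le \sum_{u\in \Gh^d}\dim \mathbb C\Sn{k}(\widehat{\sigma}_u).
\]
So it suffices to show, for each $u\in\Gh^d$, that
\[
\dim\mathbb C\Sn{k}(\widehat{\sigma}_u)\le k!/|\mathcal S_u|.
\]
Here $\mathcal S_u=u\Sn{\bb(u)}u^{-1}$, so $|\mathcal S_u|=|\Sn{\bb(u)}|$.

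For this bound, note that $\widehat{\sigma}_u=\sum_{v\in\mathcal S_u}v\cdot\sigma_u$ is invariant under $\mathcal S_u$: for $g\in\mathcal S_u$, left multiplication by $g$ permutes $\mathcal S_u$, so $g\cdot\widehat\sigma_u=\widehat\sigma_u$. Hence $\mathcal S_u\subset\Stab(\widehat{\sigma}_u)$. The orbit $\Sn{k}\cdot\widehat{\sigma}_u$ then consists of at most $[\Sn{k}:\mathcal S_u]$ vectors, one for each coset $g\mathcal S_u$, and these vectors span $\mathbb C\Sn{k}(\widehat{\sigma}_u)$. Equivalently, the map $M^{\bb(u)}\cong \operatorname{Ind}_{\mathcal S_u}^{\Sn{k}}\mathbf 1\to \mathbb C\Sn{k}(\widehat\sigma_u)$, $g\mathcal S_u\mapsto g\cdot\widehat\sigma_u$, is a well-defined surjective $\Sn{k}$-map.

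Combining these inequalities gives
\[
\dim H^{2d}=\dim M\le\sum_u\dim\mathbb C\Sn{k}(\widehat\sigma_u)\le\sum_u k!/|\mathcal S_u|=\dim H^{2d}.
\]
All inequalities are therefore equalities. Equality in the first means the sum defining $M$ is direct. Equality in the second, summand by summand (each term is individually bounded), means each surjection $M^{\bb(u)}\to\mathbb C\Sn{k}(\widehat\sigma_u)$ is an isomorphism. In particular, the vectors $g\cdot\widehat\sigma_u$ for distinct cosets $g\mathcal S_u$ are linearly independent, hence distinct. This forces $\Stab(\widehat\sigma_u)=\mathcal S_u$: if some $g\notin\mathcal S_u$ fixed $\widehat\sigma_u$, then the cosets $g\mathcal S_u$ and $\mathcal S_u$ would give the same vector. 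Finally, $M^{\bb(u)}\cong M^{\lambda(u)}$ by rearranging parts, as in the remark after Theorem~\ref{thm:main}.

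The only point needing care is that $\widehat\sigma_u\neq0$; this is not actually needed, since the dimension equality already forces each summand to have full dimension $k!/|\mathcal S_u|>0$. So I expect no real obstacle: the proof is pure linear algebra, resting on Theorem~\ref{thm:Acyclic} for the dimension count.
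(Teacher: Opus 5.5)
Your proof is correct and is essentially the paper's argument: the same dimension squeeze $\dim M\le\sum_u\dim\mathbb C\Sn{k}(\widehat\sigma_u)\le\sum_u[\Sn{k}:\mathcal S_u]=\dim H^{2d}(\Hess(S,h))$, resting on $\mathcal S_u\subseteq\Stab(\widehat\sigma_u)$ and the dimension count from Theorem~\ref{thm:Acyclic}. The paper bounds the middle term by the orbit sizes $|\Sn{k}/\Stab(\widehat\sigma_u)|$. You instead make explicit the surjection $\operatorname{Ind}_{\mathcal S_u}^{\Sn{k}}\mathbf 1\to\mathbb C\Sn{k}(\widehat\sigma_u)$, which justifies the isomorphism $\mathbb C\Sn{k}(\widehat\sigma_u)\cong M^{\bb(u)}$ that the paper leaves implicit.
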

\begin{proof}
For each $u\in\Gh^d$, 
$\mathcal{S}_u$ is a subgroup of $\Stab(\widehat{\sigma}_u)$ by the definition of $\widehat{\sigma}_u$. Hence
$$ \dim_\mathbb{C} M \leq \sum_{u\in\Gh^d} |\Sn{k}/\Stab(\widehat{\sigma}_u)|\leq  \sum_{u\in\Gh^d} |\Sn{k}/\mathcal{S}_u|=\sum_{u\in\Gh^d} \dim_\mathbb{C} M^{\bb(u)}=  \dim_\mathbb{C}(H^{2d}(\Hess(S,h)))\,,$$
where the last equality is valid from
Theorem~\ref{thm:Acyclic}. Since $H^*(\Hess(S,h)))=M$ by the assumption, we can conclude that all the inequalities are actually the equalities, and $M$ is a direct sum of the $\mathbb C\Sn{k}(\widehat{\sigma}_u)$'s.
\end{proof}

By Proposition~\ref{prop:M}, Theorem~\ref{thm:main} follows once we show that \(H^{2d}(\Hess(S,h))=M\). To this end, we show that \(\sigma_w\in M\) for every \(w\in\Sn{k}\) with \(\ell_h(w)=d\).

Before we start to prove Theorem~\ref{thm:main}, we introduce some notation. 
\begin{definition} Let $h=h_{m, n}$ and $k=m+n$.
 For $w\in \Gh$, if $\bb(w)=(b_1, \dots, b_\ell)$
  then we use $^\da w$ and $w^\da$ for $^{\da i}w$ and $w^{\da i}$, respectively, where $i\coloneqq b_1+\cdots+b_{\ell-1}$. In particular, if $\ell=1$ then $^\da w$ is the empty word and $w^\da=w$. 
\end{definition}

Now we are ready to prove Theorem~\ref{thm:main}.

\smallskip

\noindent\underline{\bf Proof of Theorem~\ref{thm:main}}

We first note that, due to Proposition~\ref{prop:representative} and Proposition~\ref{prop:M}, it is enough to show the following statement:
\begin{quote}
    $(*)$\hspace{2cm} $\sigma_w\in M=\sum_{u\in \Gh^d}\mathbb C\Sn{k}(\widehat{\sigma}_u)$ for every $w\in \Gh^d$.
\end{quote}
We use the induction on $k$ to prove $(*)$.

\smallskip
When $k=2$, the only possible case is when $h=h_{1,1}=(2,2)$. In this case, $\widehat{\sigma}_w=2\sigma_w$ for all $w\in\Sn{k}$; therefore $(*)$ holds.

\smallskip
Let $k>2$ and we assume that $(*)$ is  true for all $2\leq k'<k$ and $h_{m',n'}$ with $m'+n'=k'$, $m'\geq 1$, and $n'\geq 0$.
Let $m\geq 1, n\geq 0$ be  integers such that $m+n=k$ and we consider the Hessenberg function $h=h_{m,n}$.
Let $w\in \Gh$ and let $\bb(w)=(b_1, \dots, b_\ell)$. Then $b_1\geq m$ by the definition of $\bb(w)$. 
We use the induction with respect to the partial order $\ll$ on $\Sn{k}$ given in Definition~\ref{def:order}. We recall that $\ll$ is given in terms of the pair $(j_w,x_w)$.

\smallskip

\noindent\underline{Initial Step:} Let $w\in\Gh$ be an $h$-admissible permutation with $(j_w,x_w)=(n,1)$.

If $\bb(w)=(k)$ then $w$ is of type i) and $\sigma_w=c\widehat{\sigma}_w$ for a positive constant $c$ by Lemma~\ref{lem:stabilizer} (1).

If $\bb(w)=(b_1, \dots, b_\ell)\neq (k)$, then $0<b_\ell\coloneqq k-i \leq n$. Hence $w^{\da } \in \mathcal{G}_{h^{\da}}$ is an $h^{\da }$-admissible permutation of type i) in $\Sn{k-i}$, where $h^{\da }=h_{1,k-i-1}=(2,3,\dots,k-i,k-i)$. Thus  $\widehat{\sigma}_{w^{\da }}=(k-i)!\sigma_{w^{\da }}$. Then from the induction hypothesis on $k$ and Theorem~\ref{thm:separation}, we have
\begin{equation}\label{eq:type1}
    \sigma_w\in \sum_{u\in \mathcal{G}_{^{\da} h}} \mathbb C\Sn{\{k-i+1,\dots,k \}}(\widehat\sigma_{u}:\sigma_{w^{\da }}),
\end{equation}
 and 
$$(\widehat\sigma_{u}:\sigma_{w^{\da }})=\sum_{v\in \mathcal{S}_u} \left((v\cdot\sigma_u):\sigma_{w^{\da }}\right)=\sum_{v\in \mathcal{S}_u}v\cdot(\sigma_u : \sigma_{w^{\da}})=\sum_{v\in \mathcal{S}_u} v\cdot\sigma_{(u:w^{\da })} $$
for each $u\in \mathcal{G}_{^{\da} h}\,$.
Here, we note that $(u : w^{\da})\in\Gh$ by Lemma~\ref{lem:block_generator}.
For  $u\in \mathcal{G}_{^{\da} h}$, if $u^\da$ is of type i), then $(u^\da:w^{\da })$ is also of type i), and then  we have 
$$\mathcal{S}_{(u:w^{\da })}=\mathcal{S}_{(^\da u: (u^\da: w^{\da }))}=\mathcal{S}_{^\da u}\times \mathcal{S}_{(u^\da: w^{\da })}=\mathcal{S}_{^\da u}\times \Sn{k'}.$$
Since $\mathcal{S}_u=\mathcal{S}_{^\da u}\times \mathcal{S}_{u^\da}$ and $ u^\da$ is of type i), if $u^\da$ is a permutation of $i'$ integers,
\begin{align*}
(\widehat{\sigma}_u:\sigma_{w^{\da}})&= \sum_{(v',v'')\in \mathcal{S}_{^\da u}\times \mathcal{S}_{u^\da}} \left(((v'\cdot\sigma_{^\da u}):(v''\cdot\sigma_{u^\da})):\sigma_{w^{\da }}\right)\\
&=(i')!\sum_{v'\in \mathcal{S}_{^\da u}} \left((v'\cdot\sigma_{^\da u}):\sigma_{(u^\da:w^{\da })}\right) \\
&=\frac{(i')!}{k'!}(\widehat{\sigma}_{^{\da}u}:\widehat{\sigma}_{(u^{\da}:w^{\da})})\\
&=\frac{(i')!}{(k')!}\widehat\sigma_{(u:w^{\da })}
\end{align*} 
by Lemma~\ref{lem:stabilizer}(1).
When $u^\da$ is of type ii), iii) or iv), since $\mathcal{S}_{(u:w^{\da })}=\mathcal{S}_u\times\Sn{k-i}$ and $w^{\da i}\in\mathcal{G}_{h^{\da i}}$ is of type i),  we can conclude that $$(\widehat{\sigma}_u:\sigma_{w^{\da}})=\sum_{v'\in \mathcal{S}_u} v'\cdot\sigma_{(u:w^{\da })}=\frac{1}{(k-i)!}\widehat\sigma_{(u:w^{\da })}.$$ Therefore, $(\widehat\sigma_{u}:\sigma_{w^{\da }})$ is a (nonzero) constant multiple of $\widehat\sigma_{(u:w^{\da })}$ for each $u\in \mathcal{G}_{^{\da} h}$. Then~\eqref{eq:type1} implies that 
$$ \sigma_w\in \sum_{u\in \mathcal{G}_{^{\da{}}h}} \mathbb C\Sn{\{k-i+1, \dots,n \}}(\widehat\sigma_{(u:w^{\da })})\subset \sum_{v\in\Gh,\, j_v=n}\mathbb{C}\Sn{k}(\widehat{\sigma}_v)\subset M\,.$$
Therefore, $(*)$ holds for $w\in\Gh$ with $(j_w,x_w)=(n,1)$.

\smallskip

\noindent\underline{Induction Step:} Let $w\in \Gh$ be an $h$-admissible permutation with $0\leq j_w<n$ and assume that $(*)$ holds for all $u\in\Gh$ with $u\ll w$.

We divide the argument into the following four cases in accordance with the order $\ll$:
\[
\begin{array}{ll}
\text{(1)} & 0<j_w<n,\quad x_w=1,\\[1mm]
\text{(2)} & j_w=0,\quad x_w=1,\\[1mm]
\text{(3)} & j_w=0,\quad 1<x_w<m,\\[1mm]
\text{(4)} & j_w=0,\quad x_w=m.
\end{array}
\]
In case (1) with $\bb(w)=(k)$, case (2), and case (3), we will prove the congruence
\begin{equation}\label{eq3:main}
\widehat{\sigma}_w \equiv \sum_{u\in P(w)} c_u\sigma_u \pmod{M_{\ll w}},
\qquad c_u>0.
\end{equation}
The desired assertion in these cases then follows from Proposition~\ref{lem1:main}.
The remaining cases will be proved by adapting the arguments used in the base case.

\smallskip

\noindent\underline{Case (1): $0<j_w<n,\quad x_w=1$}\\ 
 In this case, we show the following statements using decreasing induction on $j$:  
\begin{itemize}
 \item    If $w\in \Gh^d$ satisfies $\bb(w)= (k)$, then 
$\displaystyle \sigma_w\in \sum_{u\in S_w}\mathbb C\Sn{k}(\widehat{\sigma}_u)$, where $S_w=\{w\}\cup\{u\in \Gh^d \mid u\ll w\}$.
\item If $w\in \Gh^d$ satisfies $\bb(w)\neq (k)$, then $\displaystyle \sigma_w\in \sum_{u\in \Gh^d,\, j_u\geq j_w}\mathbb C\Sn{k}(\widehat{\sigma}_u)$.
\end{itemize}   
We note that when $j_w=n$ and $x_w=1$, the assertions are proved in the Initial Step. Let $w\in \Gh^d$ have $j_w<n$ with $x_w=1$, and we assume that the assertions are true for all $j_w<j'\leq n$. Let $j\coloneqq j_w$.

We first consider the case when $\bb(w)=(k)$; that is $w$ is of type ii).
By Lemma~\ref{lem:stabilizer}(2) 
and Proposition~\ref{prop:coset representative}, 
\begin{equation}\label{eq:lem2main}
\begin{split}
\widehat{\sigma}_w=\sum_{v\in \Sn{k}} v\cdot \sigma_w & =(n-j+1)!(m+j-1)!\sum_{v\in \widetilde{S}} v\cdot \sigma_w\\
 &=(n-j+1)!(m+j-1)!\left(\sum_{u\in P(w)} \sigma_u+\sum_{v\in \widetilde{S}-S} v\cdot \sigma_w\right)\,,
\end{split}
\end{equation} where 
$$\widetilde{S}\coloneqq \{w_{k-1}(l_{k-1}) w_{k-2}(l_{k-2})\cdots w_{n-j+1}(l_{n-j+1}) \, |\, n-j+1\geq l_{n-j+1}\geq l_{n-j+2}\geq \cdots \geq l_{k-1} \geq 0\},$$ 
$$S\coloneqq \{w_{k-1}(l_{k-1}) w_{k-2}(l_{k-2})\cdots w_{n-j+1}(l_{n-j+1}) \, |\, n-j+1> l_{n-j+1}\geq l_{n-j+2}\geq \cdots \geq l_{k-1} \geq 0\}\,.$$

Letting $w^*\coloneqq (w_{n-j+1}(n-j))w\in P(w)$, we have   $$w^*= w_1 \cdots w_{m+j-2}\, 2 \,1 \,3 \,4\,\dots (n-j+2) \text{ and } (w_{n-j+1}(n-j+1))\cdot\sigma_w=s_1\cdot \sigma_{w^*}\,.$$
If we let $i\coloneqq m+j-2$ then $w^*$ admits an $i$-block decomposition with $^{\da i} w^*=(w_1\cdots w_{m+j-2})$. Using Theorem~\ref{thm:separation} and Proposition~\ref{prop:s_i}, we get
 $$s_1\cdot \sigma_{w^*}=\left(\sigma_{^{\da i}w}:(\sigma_{s_1}+\sum_{u\in \mathcal{A}_1}\left(\sigma_{u}-s_1\cdot\sigma_{u} \right))\right)\,,$$
where $\mathcal{A}_1=\{ u\in\Sn{n-j+2} \,|\, s_1\leq u, \ell_h(u)=1, u \dasharrow s_1u \}$. Since each $u\in \mathcal{A}_1$ satisfies $j_u>2$, we can conclude that 
$$s_1\cdot \sigma_{w^*}= \sigma_{w^*}+\sum_{u\in \mathcal{A}_1} (\sigma_{( ^{\da i}w:u)}+s_1\cdot  \sigma_{( ^{\da i}w:u)})\,.$$
For each $u\in \mathcal{A}_1$, $j_{(^{\da i}w:u)}$ is strictly greater than $j_w$ and by the induction hypothesis on $j$, we know that $\sigma_{( ^{\da i}w:u)}\in M_{\ll w}$ and thus $s_1\cdot\sigma_{( ^{\da i}w:u)}\in M_{\ll w}$. 
This shows that
\begin{equation}\label{eq:not1}(w_{n-j+1}(n-j+1))\cdot\sigma_w=s_1\cdot \sigma_{w^*}\equiv \sigma_{w^*} \pmod{M_{\ll w}}.
\end{equation}
Therefore, by Proposition~\ref{lem2:main}, we can conclude that
for nonnegative integers $d_u$'s,
$$\sum_{v\in\widetilde{S}-S}{v\cdot\sigma_w}\equiv \sum_{u\in P(w)} d_u\sigma_u \pmod{M_{\ll w}}.$$
Combining this with~\eqref{eq:lem2main}, we obtain
\[
\widehat{\sigma}_w=c\left(\sum_{u\in P(w)}\sigma_u + \sum_{v\in \widetilde{S}-S}v\cdot\sigma_w\right)\equiv c\left(\sum_{u\in P(w)}\sigma_u + \sum_{u\in P(w)}d_u\sigma_u\right) \pmod{M_{\ll w}}.
\]
Since $c$ is positive and $d_u$'s are nonnegative, (\ref{eq3:main}) holds, and we have 
$\displaystyle \sigma_w\in \sum_{u\in S_w}\mathbb C\Sn{k}(\widehat{\sigma}_u)$, where $S_w=\{w\}\cup\{u\in \Gh^d \mid u\ll w\}$ by Proposition~\ref{lem1:main}.

\smallskip

Now we consider the case when $\bb(w)\neq (k)$.
In this case, $0<b_\ell\coloneqq k-i\leq n$, $h^{\da }=h_{1, k-i-1}=(2,3,\dots, k-i, k-i)$ and $w^{\da} \in \mathcal{G}_{h^{\da }}$ is an $h^{\da }$-admissible permutation of type ii) or $w^{\da}=id$.
By the induction hypothesis on $k$ and Theorem~\ref{thm:separation},
\begin{equation}\label{eq:sigma_w}
\begin{split}
 \sigma_w=(\sigma_{^{\da}w}: \sigma_{w^{\da}})&\in \left(\left(\sum_{u\in \mathcal{G}_{^{\da{}}h}} \mathbb C\Sn{\{i+1, \dots k \}}\widehat\sigma_{u}\right):\left(\sum_{v\in S_{w^\da}}\mathbb C\Sn{k-i}\widehat{\sigma}_{v}\right)\right)\\
 &\subset \sum_{(u,v)\in \mathcal{G}_{^{\da{}}h}\times S_{w^\da}}\mathbb C\Sn{k}(\widehat\sigma_{u}:\widehat{\sigma}_{v}) \,,
 \end{split}
 \end{equation}
where $S_{w^\da}=\{w^\da\}\cup \{u\in \mathcal{G}_{h^{\da}}\,|\, j_u>j_{w^\da}\}$.
For convenience, we use the following notation:
\[\quad M_{\geq j}\coloneqq \sum_{u\in \Gh,\,j_u\geq j}\mathbb{C}\Sn{k}(\widehat{\sigma}_u).\]
For $(u,v)\in \mathcal{G}_{^{\da}h}\times S_{w^\da}$, we will show that $(\widehat\sigma_{u}:\widehat{\sigma}_{v})\in M_{\geq j}$, which proves $\sigma_w\in M_{\geq j}$. We write $(\widehat\sigma_{u}:\widehat{\sigma}_{v})$ as follows:
$$(\widehat\sigma_{u}:\widehat{\sigma}_{v})= \sum_{(v', v'')\in \mathcal{S}_u\times \mathcal{S}_v } \left((v'\cdot\sigma_u):(v''\cdot\sigma_{v})\right)=\sum_{(v', v'')\in \mathcal{S}_u\times \mathcal{S}_v } (v'v'')\cdot\sigma_{(u:v)}\,.$$
(a) Suppose that $v\in S_{w^\da}\setminus\{w^\da \}$. Then $j_v>j_{w^\da}$, so $j_{(u:v)}>j_w$. Thus, by the induction hypothesis on $j$, we have $\sigma_{(u:v)}\in M_{\ll w}$ and $(\widehat\sigma_{u}:\widehat{\sigma}_{v})\in M_{\ll w}\subset M_{\geq j}$.\\
(b) Suppose that $v=w^\da\in S_{w^\da}$. Then there are two subcases:\\
- If $u^\da$ is not of type i), then $\mathcal{S}_{(u:v)}=\mathcal{S}_u\times \mathcal{S}_v$, which implies that $(\widehat\sigma_{u}:\widehat{\sigma}_{v})=\widehat\sigma_{(u:v)}\in M_{\geq j}$.\\
- If $u^\da$ is of type i), then  $(u^\da: v)$ is of type ii). When 
$u^\da=u$, since $j_{(u:v)}=j_w=j$, we obtain $\sigma_{(u:v)}\in M_{\geq j}$ from the proof of the case when $j_w=j$ and $w$ is of type ii) in the above. When $u^\da\neq u$, we know $\mathcal{S}_{(u:v)}=\mathcal{S}_{^\da u}\times \mathcal{S}_{(u^\da : v)}$ since the last part of $^\da u$ cannot be of type i). It follows from the induction hypothesis on $k$, we know that $\sigma_{(u^\da : v)}\in M_{\geq j_{(u^\da:v)}}$. Hence we get $\sigma_{(u:v)}=\sigma_{(^\da u:(u^\da:v))}\in M_{\geq j}$.

\smallskip

\noindent\underline{Case (2): $j_w=0$ and $x_w=1$}\\ This is the case when $w$ is of type iii) with $x_w=1$. 
We will show that $s_1\cdot\sigma_{w^*}\equiv \sigma_{w^*}\pmod{M_{\ll w}}$, where $w^*=(w_{n+1}(n)) w\in P(w)$. Then by  Propositions~\ref{lem2:main} and~\ref{lem1:main} we can conclude that $\displaystyle\sigma_w\in \sum_{u\in S_w}\mathbb{C}\Sn{k}(\widehat{\sigma}_u)$, where $S_w=\{w\}\cup\{u\in \Gh^d \mid u\ll w\}$ and thus $\sigma_w\in M$.

Since $w^*\rightarrow s_1w^*$ and $s_1w^*$ is an $h$-admissible permutation,  by Lemma~\ref{lem:Atil}
we have 
\begin{align}\label{eq:s_1 w*}
s_1\cdot\sigma^T_{w^*} = \sigma^T_{w^*}+(t_2-t_1)\sigma^T_{s_1w^*} + \sum_{v  \in \mathcal A_{s_1, w^*}} (\tau_{v}-\tau_{s_1 v})\,,
\end{align}
where \(\mathcal{A}_{s_1, w^*}\) is a subset of $\widetilde{\mathcal{A}}_{s_1, w^*}=\{v\in [w^*, w_0] \,|\, v \dra s_1 v,\, \lh{v}\leq \lh{w^*} \}$. \\
In the following, we will show that $\displaystyle\sum_{v  \in \mathcal A_{s_1, w^*}} (\tau_{v}-\tau_{s_1 v})=\sum_{u\ll w}c_u\sigma_u^T$ for some constants $c_u$.
Note here that $w^*\in P(w)$, 
and $u\ll w$ if and only if $u\ll w^*$.

First, we consider the case when $\sigma^T_u$ appears in $\sum_{v  \in \mathcal A_{s_1, w^*}} \tau_{v}$. In this case, there exists $v\in \mathcal A_{s_1, w^*}\subset \widetilde{\mathcal{A}}_{s_1, w^*}$ satisfying $v\leq_h u$ and hence $w^*\leq u$ by Lemma~\ref{lem:tau2}. We note that $v\in A_{s_1, w^*}$ satisfies $(v^{-1})_1>m$ because $\mathcal A_{s_1, w^*}\subset \widetilde{\mathcal{A}}_{s_1, w^*}$ and $v\dra s_1v$. Since $v\leq u$, we get $(u^{-1})_1\geq (v^{-1})_1>m$, which implies that $u$ is neither of type iii) nor iv), which shows that 
$\sum_{v  \in \mathcal A_{s_1, w^*}} \tau_{v}=\sum_{u\ll w}a_u\sigma_u^T$ for some constants $a_u$.

Next, we consider the case when $\sigma^T_u$ appears in $\sum_{v  \in \mathcal A_{s_1, w^*}} \tau_{s_1v}$. Then there exists $v\in \mathcal A_{s_1, w^*}\subset \widetilde{\mathcal{A}}_{s_1, w^*}$ satisfying $s_1v\leq_h u$ and $w^*\leq v$  by Lemma~\ref{lem:tau2}. Hence, $((s_1v)^{-1})_2=(v^{-1})_1 > m$. We note that, since $1\in D_L(w^*)\cap D_L(v)$, we have $s_1w^*\leq s_1v$ and $s_1w^*\leq u$ by Proposition~\ref{prop:lifting}. Hence if we let $p=((s_1w^*)^{-1})_1$ then  $(v^{-1})_2=((s_1v)^{-1})_1\geq p$ and $(u^{-1})_1\geq p$.  

Suppose that $u\not\ll w$ and $\sigma^T_u$ appears in $\sum_{v  \in \mathcal A_{s_1, w^*}} \tau_{s_1v}$. Since $w$ is of type iii) with $x_w=1$, $u$ is of type iii) or iv) with $x_u\geq 1$. By Lemma~\ref{lem:u}(1), we know that  $u_m=x_u$. If $x_u\geq 2$, then $1,2\in\{u_1,\dots,u_m\}$, which contradicts $s_1v\leq u$ for some $v  \in \mathcal A_{s_1, w^*}$. Hence, we have $x_u=1$ and $u_m=1<u_{m+1}<\cdots<u_{m+n}$. Since $s_1v\leq u$, we get $u_{m+1}=2$. Moreover, since $s_1w^*\leq u$, we have $u_{m+n}\leq n+2$. Therefore, there exists a unique integer $y$ such that $\{u_m,u_{m+1},\dots,u_{m+n}\}=[n+2]-\{y\}$ for $y>2$.
Furthermore, $\ell_h(w^*)=\ell_h(w)=\ell_h(u)$ implies that the number of inversions in $w^*_1 \cdots w^*_m$ and the number of inversions in $u_1\cdots u_m$ must be the same, which because $u_m=w^*_m=1$, also implies that the number of inversions in  $w^*_1 \cdots w^*_{m-1}$ and the number of inversions in $u_1\cdots u_{m-1}$ must be the same. 
Now, noting that the number of inversions in $w^*_1 \cdots w^*_{m-1}$ and the number of inversions in $(s_1w^*)_1 \cdots (s_1w^*)_{m-1}$ are the same, $s_1w^*\leq u$, and 
\begin{align*}
&\{(s_1w^*)_1, \cdots, (s_1w^*)_{m-1}\}=\{1\}\cup [n+3, { k}], \\
&\{u_1, \cdots, u_{m-1}\}=\{y\}\cup [n+3,{ k}] \text{ for }  3\leq y\leq n+2\,,
\end{align*}
we conclude that 
\[ u_j=w^*_j=w_j \text{ for }  j\in[m-1]-\{p\},  \text{ and } u_p=y\,.\]
We can also conclude that the corresponding $v\in A_{s_1, w^*}$ satisfying $s_1w^*\leq s_1v\leq_h u$ satisfies 
\[ v_j=u_j\text{ for }  j\in[m+n]-\{p, m, m+1\}, \text{ and } v_p=2, v_m=y, v_{m+1}=1  \,.\]
One can check that these $u$'s are all in $P(w)$. For each $u$ we let $y_u\coloneqq u_p$. 
The values of permutations of interests are listed in Table~\ref{tab:permutation-values2}, where $y \in [3, n+2]$. 

\begin{table}[ht]
\centering
\begin{tabular}{c|ccccccccc}
\hline
& \(p\) & \(m\) & \(m+1\) & \(\cdots\) & \(i-1\) & \(i\) & \(\cdots\) & \(m+n\)\\
\hline
\(w=\phi(w^*)\)
& \(n+2\) & \(1\) & \(2\) & \(\cdots\) & \(y-1\) & \(y\) & \(\cdots\) & \(n+1\)\\
\(w^*\)
& \(2\) & \(1\) & \(3\) & \(\cdots\) & \(y\) & \(y+1\) & \(\cdots\) & \(n+2\)\\
\(u\)
& \(y\) & \(1\) & \(2\) & \(\cdots\) & \(y-1\) & \(y+1\) & \(\cdots\) & \(n+2\)\\
\(v\)
& \(2\) & \(y\) & \(1\) & \(\cdots\) & \(y-1\) & \(y+1\) & \(\cdots\) & \(n+2\)\\
\(\widetilde{s_1v}=\psi(s_1v)\)
& \(n+1\) & \(n+2\) & \(1\) & \(\cdots\) & \(y-2\) & \(y-1\) & \(\cdots\) & \(n\)\\
\(\psi(u)\)
& \(n+2\) & \(n+1\) & \(1\) & \(\cdots\) & \(y-2\) & \(y-1\) & \(\cdots\) & \(n\)\\
\hline
\end{tabular}
\caption{Values of the permutations $w,w^*,u,v,\widetilde{s_1v}=\psi(s_1v),\psi(u)$}
\label{tab:permutation-values2}
\end{table}

We show that the coefficient of $\sigma_u^T$ in the expansion of $\tau_{s_1v}$ is zero; that is, $\sigma_u$ where $u$ is of type iii) with $x_u=1$ does not appear in the sum $\sum_{v'  \in \mathcal A_{s_1, w^*}} \tau_{s_1v'}$. To compute the coefficient of $\sigma_u^T$ in the expansion of $\tau_{s_1v}$
we first consider all $\sigma_\pi^T$ with $u\in \supp(\sigma^T_\pi)$, that could appear in the sum $\sum_{v'  \in \mathcal A_{s_1, w^*}} (\tau_{v'}-\tau_{s_1v'})$. Let $\sigma_\pi^T$ where $u\in \supp(\sigma^T_\pi)$ appear in the sum $\sum_{v'  \in \mathcal A_{s_1, w^*}} (\tau_{v'}-\tau_{s_1v'})$. 
If $\sigma_\pi$ appears in  $\sum_{v'  \in \mathcal A_{s_1, w^*}} \tau_{v'}$ then $w^*<v'\leq \pi\leq u$ for some $v'\in \mathcal A_{s_1, w^*}$, and $v'_m$ must be $1$ while $((v')^{-1})_2\leq m+1$. This is not possible since $v'\dra s_1 v'$. If, on the other hand, $\sigma_\pi$ appears in  $\sum_{v'  \in \mathcal A_{s_1, w^*}} \tau_{s_1v'}$ \, then  $s_1w^*<s_1v'\leq \pi\leq u$ for some $v'\in \mathcal A_{s_1, w^*}$, and we know from the argument above that $v'=v$ is the only possible permutation. We further can see that since $u=s_{1,y} (s_1v)$ and $u_i=(s_1v)_i>y$ for all $i\in [p+1, m-1]$ there is no $\pi$ such that $s_1v< \pi < u$.
This means that there are only two possibilities: $\pi=s_1v$ and $\pi=u$.
Let $\sigma_{s_1v}$ and $\sigma_u$ appear in $\sum_{v'  \in \mathcal A_{s_1, w^*}} \tau_{s_1v'}$ with coefficients $d_1$ and $d_2$, respectively; then 
$$s_1\cdot\sigma^T_{w^*}(u) = \sigma^T_{w^*}(u)+(t_2-t_1)\sigma^T_{s_1w^*}(u) + d_1 \sigma^T_{s_1v}(u)+d_2\sigma^T_u(u)\,.$$
Remember that $\widetilde{w^*}=w$; and let $\phi\coloneqq w(w^*)^{-1}$. Then 
$\phi(s_1u)=\phi(u)_p=y-1$ while $\phi(s_1u)=\phi(u)_i=w_i$ for $i\in [p-1]$.  
Thus 
$s_1\cdot\sigma^T_{w^*}(u) = \sigma^T_{w^*}(u)=0$.
To compute $\sigma^T_{s_1w^*}(u)$, noting that $s_1w^*\in \Gh^d$ and $s_1w^*\leq_h u$, we look at the vertex $u$ in the  graph $\Gamma_{s_1w^*, h}=(V, E)$: Since 
$$\prod_{\{u, u'\}\in E-E_u}\alpha(\{u, u'\})=  \prod_{(i,j)\in S'} (t_{u_j}-t_{u_i})\,,$$  where
$S'=\{(i, j)\,|\, 1\leq i<j\leq m, u_i>u_j,\text{ and }(i,j)\neq (p,m)\}$ and $|S'|=\ell_h(s_1w^*)$, we can conclude that $\sigma^T_{s_1w^*}(u)= \prod_{(i,j)\in S'} (u_j-u_i)$ by Proposition~\ref{prop:computing the value}.
To compute $\sigma^T_{s_1v}(u)$ let $\widetilde{s_1v}=\psi(s_1v )$, then 
$$\sigma^T_{s_1v}(u)=\psi^{-1}(\sigma^T_{\psi(s_1v)}(\psi(u))) = \psi^{-1}\left( \prod_{(i,j)\in S''} t_{\psi(u)_j}-t_{\psi(u)_i}\right)\,,   $$
where $S''=S'\cup\{(m,m+1)\}$.
We note that $\psi(u)_i>\psi(u)_j$ is equivalent to $u_i>u_j$ for $1\leq i<j\leq m$. Thus, we have  
\begin{align*} 
\sigma^T_{s_1v}(u)&=\psi^{-1}\left( \prod_{(i,j)\in S''} t_{\psi(u)_j}-t_{\psi(u)_i}\right)\\
& = (t_{\psi^{-1}(n+1)}-t_{\psi^{-1}(1)})\prod_{(i,j)\in S'} (t_{u_j}-t_{u_i})=(t_1-t_2)\prod_{(i,j)\in S'} (t_{u_j}-t_{u_i})\,.
\end{align*}
From the computations above, we can conclude that 
\begin{align*}
d_2\sigma^T_u(u)&=d_2(t_1-t_y)\prod_{(i,j)\in S'} (t_{u_j}-t_{u_i})\\
 &=s_1\cdot\sigma^T_{w^*}(u) - \sigma^T_{w^*}(u)-(t_2-t_1)\sigma^T_{s_1w^*}(u) - d_1\sigma^T_{s_1v}(u)\\
 &= 0-0-(t_2-t_1)\prod_{(i,j)\in S'} (t_{u_j}-t_{u_i})-d_1(t_1-t_2)\prod_{(i,j)\in S'} (t_{u_j}-t_{u_i})\,,
 \end{align*}
and we can conclude that $d_1=1$ and $d_2=0$. Therefore, $\sigma_u^T$ with $u\not\ll w$ does not appear in $\sum_{v  \in \mathcal A_{s_1, w^*}} (\tau_{s_1 v})$.

Therefore, if $\sigma^T_u$ appears in $\sum_{v  \in \mathcal A_{s_1, w^*}} (\tau_{v}-\tau_{s_1 v})$ then $u\ll w$. Accordingly, $s_1\cdot \sigma_{w^*}\equiv \sigma_{w^*}\pmod{M_{\ll w}},$ which implies that $\sigma_w\in {M}$ by Proposition~\ref{lem1:main}.

\smallskip

\noindent\underline{Case (3): $j_w=0$ and $1<x_w<m$}\\
This is the case when $w$ is of type iii) with $x_w>1$. Let $x=x_w$.
We remind that $w_m\, w_{m+1}\, \cdots \, w_{k}= x\, (x+1)\, \cdots \, (x+n)$, and $H\coloneqq\Sn{\{ x-1, x, \dots, x+n \}} \times \Sn{\{ x+n+1, \dots, k \}}$ is a subgroup of $\Stab(\sigma_w)$ by Lemma~\ref{lem:stabilizer}(3). We also note that $s_i\cdot \sigma_w\equiv\sigma_w\pmod{M_{\ll w}}$ for $i\in [x-2]$ 
from Lemma~\ref{lem: iii)-1}, and
we have
\[P(w)=\{w_{k-1}(l_{k-1}) w_{k-2}(l_{k-2})\cdots w_{n+x}(l_{n+x})w \, |\, n\geq l_{n+x}\geq l_{n+x+1}\geq \cdots \geq l_{k-1}\geq 0 \}\]
from Lemma~\ref{lem:P(w)}.  We will show that $\widehat{\sigma}_w \equiv \sum_{u\in P(w)} c_u\sigma_u \pmod{M_{\ll w}}$ holds for positive $c_u$'s and use Proposition~\ref{lem1:main} to finish the proof.

Letting $H=\Sn{\{1, \dots, x+n \}} \times \Sn{\{ x+n+1, \dots, k \}}$, we have 
\begin{equation}\label{eq:H}\widehat{\sigma}_w=\sum_{u\in \Sn{k}} u\cdot \sigma_w\equiv (n+x)!(m-x)!\sum_{\bar{u}\in \Sn{k}/H}u\cdot \sigma_w  \pmod{{ M_{\ll w}}},\, \end{equation}
and $R\coloneq\{w_{k-1}(l_{k-1}) \cdots w_{x+n}(l_{x+n})\,|\, n+x\geq l_{x+n}\geq \cdots\geq l_{k-1}\}$ is a set of coset representatives of $\Sn{k}/H$.
Thus we consider the sum $\sum_{g\in R} g\cdot \sigma_w$.

If we let $R'\coloneqq \{w_{k-1}(l_{k-1}) \cdots w_{x+n}(l_{x+n})\,|\, n\geq l_{x+n}\geq \cdots\geq l_{k-1}\}\subset R$, then by Lemma~\ref{lem:P(w)}, 
we have 
\begin{equation}\label{eq:R'2} 
\sum_{g'\in R'} g'\cdot \sigma_w=\sum_{v\in P(w)} \sigma_v\,,
\end{equation}
and $R-R'=\{w_{k-1}(l_{k-1}) \cdots w_{x+n}(l_{x+n})\,|\,\,n+x\geq l_{x+n}>n \text{ and } l_{x+n}\geq \cdots\geq l_{m+n-1}\}\,. $ \\
Therefore we need to compute $\sum_{g\in R-R'} g\cdot \sigma_w$ and we first consider the case when $l_{x+n}=n+1$.

Note that $w_{x+n}(n+1)=s_xs_{x+1}\cdots s_{x+n-1}s_{x+n}=s_x w_{x+n}(n)$ and  $w_{x+n}(n+1)\cdot \sigma_w=s_x\cdot \sigma_{w^*}$ where $w^*=w_{x+n}(n) w\in P(w)$ satisfies $$w_m^*\, w_{m+1}^*\,w_{m+2}^* \cdots w_{m+n}^* =  x\, (x+2)\,( x+3)\,\cdots\, (x+n+1). $$
Since $w^*\rightarrow s_xw^*$ and $s_xw^*\in \Gh^d$,  we have $s_x \cdot\sigma_{w^*} = \sigma_{w^*} + \sum_{u   \in \mathcal A_{s_x, w^*}} (\tau_u-\tau_{s_x u})$ where \(\mathcal{A}_{s_x, w^*}\) is a subset of $\widetilde{\mathcal{A}}_{s_x, w^*}=\{u\in [w^*, w_0] \,|\, u \dra s_x u,\, \lh{u}\leq \lh{w^*} \}$ by Lemma~\ref{lem:Atil}. We claim that if $\sigma_v$ appears in the sum  $\sum_{u  \in \mathcal A_{s_x, w^*}} (\tau_u-\tau_{s_x u})$ then $v\ll w$.

Suppose that there exists $v\not\ll w$ such that $\sigma_v$ appears in $\sum_{u  \in \mathcal A_{s_x, w^*}} (\tau_u-\tau_{s_x u})$. Then there exists $u\in\widetilde{\mathcal{A}}_{s_x, w^*}$ such that $u\leq_h v$ or $s_xu\leq_h v$. Since $v\not\ll w$, $v$ is of type iii) or iv) with $x_v\geq x$. 
Hence, we get $v_m=x_v<v_{m+1}<\cdots<v_k$
where we note that $v_m=x_v$ comes from Lemma~\ref{lem:u}(1).
On the other hand, since $u\dra s_xu$, we have $(u^{-1})_x>m$ and $(u^{-1})_{x+1}\leq m$, which implies $u\not<v$. Thus $s_xu\leq_h v$. Furthermore, since $x\in D_L(u)-D_L(s_xw^*)$ and $s_xw^*<u$, we get $w^*<s_xu\leq_h v$, so  $v_m\leq x$ and $v_k\leq x+n+1$, which implies that $x_w=v_m=x$ and $\{v_i\mid i=m+1,\dots,k\}\subset [x+1,x+n+1]$.
Note that from $s_xw^*<s_xu\leq_h v$, we get
\begin{align*}
    \{(s_xw^*)_i\mid i=m+1, \dots, k\}\up &=\{x+2, x+3, \dots, x+n+1\}\up\\
    &\geq \{(s_xu)_i\mid i=m+1, \dots, k\}\up\\
    &\geq \{v_i \mid i=m+1, \dots, k\}\up.
\end{align*}
Since $x+1\in \{(s_xu)_i\mid i=m+1, \dots, k\}\up $, there exists  $y\in [x+2, x+n+1] $ such that $\{(s_xu)_i\mid i=m+1, \dots, k\}=\{ x+1\}\cup ([x+2, x+n+1]-\{y\})$. Then $ \{(s_xw^*)_i\mid i=m, \dots, k\}\up=\{x+1, x+2, \dots, x+n+1\}\up\geq \{(s_xu)_i\mid i=m, \dots, k\}\up$ and $\{(s_xu)_i\mid i=m, \dots, k\}\up \geq \{v_i \mid i=m, \dots, k\}\up$, where $\{v_i \mid i=m, \dots, k\}\subset [x, x+n+1]$, which implies that $(s_xu)_m=y$ since $x\not\in \{(s_xu)_i\mid i=m, \dots, k\}$. We can conclude that $$\{(s_xu)_i\mid i=m, \dots, k\}=[x+1, x+n+1]=\{ (s_xw^*)_i\mid i=m, \dots, k\},$$ which contradicts $w^*<s_xu$ because $\{(w^*)_i\mid i=m,m+1,\dots,k\}$ contains $x$.  
Therefore, we obtain $s_x\cdot\sigma_{w^*}=\sigma_{w^*}+\sum_{v\ll w}c_v\sigma_v$, which implies that $$w_{x+n}(n+1)\cdot \sigma_w=s_x \cdot\sigma_{w^*} \equiv \sigma_{w^*}  \pmod{ M_{\ll w}}.$$  We then consider $w_{x+n+1}(l_{x+n+1})w_{x+n}(n+1)$.    Since 
\begin{align*}
w_{x+n+1}(l_{x+n+1})w_{x+n}(n+1)\cdot \sigma_w&\equiv w_{x+n+1}(l_{x+n+1})\cdot\sigma_{w^*} \pmod{M_{\ll w}}\\
&= w_{x+n+1}(l_{x+n+1})w_{x+n}(n)\cdot \sigma_{w}\,,
\end{align*}
if $l_{x+n+1}\neq n+1$ then $w_{x+n+1}(l_{x+n+1})w_{x+n}(n+1)\cdot \sigma_w\equiv\sigma_v\pmod{M_{\ll w}}  $ for some $v\in P(w)$. We thus let $l_{x+n+1}= n+1$; then 
\begin{align*}
w_{x+n+1}(n+1)w_{x+n}(n+1)\cdot \sigma_w &\equiv s_{x+1}s_{x+2}\cdots s_{x+n+1}w_{x+n}(n)\cdot \sigma_{w} \pmod{M_{\ll w}}\\
&= (s_{x+1}s_{x+2}\cdots s_{x+n+1})(s_{x+1}\cdots s_{x+n})\cdot \sigma_{w}\\
&=  (s_{x+2}\cdots s_{x+n+1})(s_{x+1}\cdots s_{x+n}s_{x+n+1})\cdot \sigma_{w}\\
&= (s_{x+2}\cdots s_{x+n+1})(s_{x+1}\cdots s_{x+n})\cdot \sigma_{w}\\
&= w_{x+n+1}(n) w_{x+n}(n)\cdot \sigma_{w}\,,
\end{align*}
where the third equality is due to Lemma~\ref{lem:wl} and the fourth equality is valid because $s_{x+n+1}$ stabilizes $\sigma_{w}$. From the above observation and Lemma~\ref{lem:P(w)}, we obtain that  
\begin{equation}\label{eq:iii)-1} w_{x+n+1}(n+1)w_{x+n}(n+1)\cdot \sigma_w\equiv\sigma_u \pmod{M_{\ll w}}  \text{ for some } u\in P(w)\,.\end{equation} 
We can follow the same argument to show that for $n+1\geq l_{n+x+1}\geq \cdots \geq l_{k-1}$, we have $$w_{k-1}(l_{k-1}) \cdots w_{x+n}(n+1)\cdot \sigma_w\equiv\sigma_v \pmod{M_{\ll w}}  \text{ for some } v\in P(w)\,.$$ 
Now we consider $l_{x+n}=n+2$ and continue to use $w^*=w_{x+n}(n) w\in P(w)$ as above. Then $w_{x+n}(n+2)\cdot \sigma_w=s_{x-1}w_{x+n}(n+1)\cdot \sigma_{w}\equiv s_{x-1}\cdot\sigma_{w^*} \pmod{M_{\ll w}} $. Since $s_{x-1}w^*\rightarrow w^*$, we have $s_{x-1}\cdot\sigma_{w^*}=\sigma_{w^*}$ that implies
$w_{x+n}(n+2)\cdot\sigma_w =(s_{x-1}s_x)\cdot\sigma_{w^*}\equiv\sigma_{w^*}\pmod{M_{\ll w}} $.
Since 
$$w_{x+n+1}(l_{x+n+1})w_{x+n}(n+2)\cdot \sigma_w\equiv w_{x+n+1}(l_{x+n+1})w_{x+n}(n)\cdot\sigma_{w}  \pmod{M_{\ll w}}\,,$$ we obtain from Lemma~\ref{lem:P(w)}  
and (\ref{eq:iii)-1}) that 
$w_{x+n+1}(l_{x+n+1})w_{x+n}(n+2)\cdot \sigma_w\equiv \sigma_u \pmod{M_{\ll w}}$ for some $u\in P(w)$ if $l_{x+n+1}\leq n+1$. If $l_{x+n+1}=n+2$ then 
\begin{align*}
w_{x+n+1}(n+2)w_{x+n}(n+2)\cdot \sigma_w &\equiv s_xs_{x+1}\cdots s_{x+n+1}w_{x+n}(n+1)\cdot \sigma_{w}   \pmod{M_{\ll w}}\\
&= (s_x s_{x+1}\cdots s_{x+n+1})(s_{x}\cdots s_{x+n})\cdot \sigma_{w}\\
&=  (s_{x+1}\cdots s_{x+n+1})(s_{x}\cdots s_{x+n}s_{x+n+1})\cdot \sigma_{w}\\
&= (s_{x+1}\cdots s_{x+n+1})(s_{x}\cdots s_{x+n})\cdot \sigma_{w}\\
&= w_{x+n+1}(n+1) w_{x+n}(n+1)\cdot \sigma_{w}\,,
\end{align*}
where the third equality is due to Lemma~\ref{lem:wl} and the fourth equality is valid because $s_{x+n+1}\in \Stab(\sigma_w)$.
Therefore, we have $w_{x+n+1}(n+2)w_{x+n}(n+2)\cdot \sigma_w\equiv \sigma_u  \pmod{M_{\ll w}} $ for some $u\in P(w)$. 
We can further show that for $n+2\geq l_{n+x+1}\geq \cdots \geq l_{m+n-1}$, we have $$w_{m+n-1}(l_{m+n-1}) \cdots w_{x+n}(n+2)\cdot \sigma_w\equiv \sigma_v \pmod{M_{\ll w}} \text{ for some } v\in P(w)\,.$$ 
By repeating the same calculation, we can show that
\begin{equation}\label{eq:n+a} 
w_{k-1}(l_{k-1}) \cdots w_{x+n}(n+a)\cdot \sigma_w\equiv \sigma_v   \pmod{M_{\ll w}} \text{ for some } v\in P(w)\,,
\end{equation}
for any $1\leq a \leq x$ and $n+a\geq l_{n+x+1}\geq \cdots \geq l_{k-1}$.
Consequently, together with (\ref{eq:H}) and (\ref{eq:R'2}), we obtain
\begin{equation*}
\widehat{\sigma}_w=(n+x)!(m-x)!\sum_{g\in R} g\cdot \sigma_w\equiv \sum_{u\in P(w)} c_u \sigma_u   \pmod{M_{\ll w}} \text{ for positive integers $c_u$'s}\,.
\end{equation*}
Therefore, $\sigma_w\in M$ by Proposition~\ref{lem1:main}.

\smallskip

\noindent\underline{Case (4): $j_w=0$ and $x_w=m$.}\\ This is the case when $w$ is of type iv). We recall from Lemma~\ref{lem:stabilizer}(4) that $\Sn{\{m-1,m,\dots,k\}}$ is a subgroup of $\Stab(\sigma_w)$; from Lemma~\ref{lem: iii)-1}, we have $s_i\cdot\sigma_w\equiv\sigma_w\pmod{M}$
for each $i\in [m-2]$. Hence, we have
$$\widehat{\sigma}_w=\sum_{v\in\Sn{k}}v\cdot\sigma_w\equiv k!\sigma_w\pmod{M},$$
which implies that $\sigma_w\in M$.

\begin{flushright} {\bf End of Proof of Theorem~\ref{thm:main}.}
\end{flushright}

\section*{Acknowledgements}
	The authors are grateful to JiSun Huh for sharing insights and for helpful discussions.
			

\end{document}